\xpretocmd{\@adminfootnotes}{\let\@makefntext\BHFN@OldMakefntext}{}{}
\renewcommand\@makefntext[1]{%
  \@ifundefined{@makefnmark}
    {}
    {%
     \renewcommand\@makefnmark{%
       \mbox{%
         \textsuperscript{%
           \normalfont
           \hyperref[\BackrefFootnoteTag]{\@thefnmark}%
         }%
       }\,%
     }%
     \BHFN@OldMakefntext{#1}%
  }%
}
\DeclareMathOperator{\Ker}{Ker}
\DeclareMathOperator{\Aut}{Aut}
\DeclareMathOperator{\Inn}{Inn}
\DeclareMathOperator{\Der}{Der}
\DeclareMathOperator{\Id}{Id}
\DeclareMathOperator{\codim}{codim}
\DeclareMathOperator{\rk}{rk}
\DeclareMathOperator{\pr}{pr}
\DeclareMathOperator{\vspan}{span}
\DeclareMathOperator{\Ad}{Ad}
\DeclareMathOperator{\ad}{ad}
\DeclareMathOperator{\GL}{GL}
\DeclareMathOperator{\SL}{SL}
\DeclareMathOperator{\Lie}{Lie}
\DeclareMathOperator{\Gr}{Gr}
\DeclareMathOperator{\DD}{DD}
\DeclareMathOperator{\height}{ht}
\DeclareMathOperator{\hgt}{ht}
\newcommand{\Z}{\mathbb{Z}}
\newcommand{\R}{\mathbb{R}}
\newcommand{\C}{\mathbb{C}}
\newcommand{\Oo}{\mathbb{O}}
\newcommand{\E}{\mathbb{E}}
\newcommand{\SO}{\hspace{0.5pt}\mathrm{SO}}
\newcommand{\U}{\hspace{0.5pt}\mathrm{U}}
\newcommand{\Sp}{\hspace{0.5pt}\mathrm{Sp}}
\newcommand{\Spin}{\hspace{0.5pt}\mathrm{Spin}}
\newcommand{\defeq}{\vcentcolon=}
\newcommand{\defqe}{=\vcentcolon}
\newcommand{\ccdot}{\hspace{-1.2pt} \cdot}
\newcommand{\wt}[1]{\widetilde{{#1}}}
\newcommand{\mk}[1]{\mathfrak{{#1}}}
\newcommand{\mm}[1]{\mathrm{{#1}}}
\newcommand{\mc}[1]{\mathcal{{#1}}}
\newcommand{\wh}[1]{\widehat{{#1}}}
\newcommand{\set}[1]{\hspace{-0.8pt} \left \{ \hspace{0.03em} {#1} \hspace{0.03em} \right \}}
\newcommand{\bilin}[2]{\langle \hspace{1pt} {#1} \hspace{0.5pt} , {#2} \hspace{1pt} \rangle \hspace{0.02em}}
\newcommand{\cross}[2]{\langle \hspace{1pt} {#1} \hspace{1pt} | \hspace{1.5pt} {#2} \hspace{1pt} \rangle \hspace{0.02em}}
\newcommand{\restr}[2]{{\left.\kern-\nulldelimiterspace #1 \vphantom{\big|} \right|_{#2}}}
\renewcommand{\H}{\mathbb{H}}
\renewcommand{\S}{\mathbb{S}}
\DeclareFontFamily{U}{mathx}{}
\DeclareFontShape{U}{mathx}{m}{n}{<-> mathx10}{}
\DeclareSymbolFont{mathx}{U}{mathx}{m}{n}
\DeclareMathAccent{\widecheck}{0}{mathx}{"71}
\newcommand\altxrightarrow[2][0pt]{\mathrel{\ensurestackMath{\stackengine%
  {\dimexpr#1-7.5pt}{\xrightarrow{\phantom{#2}}}{\scriptstyle\!#2\,}%
  {O}{c}{F}{F}{S}}}}
\newcommand{\isoto}{\altxrightarrow[1pt]{\sim}}
\newcommand{\mysetminusD}{\hbox{\tikz{\useasboundingbox (-0.5pt,-0.5pt) rectangle (5pt,8pt); \draw[line width=0.6pt,line cap=round] (3.5pt,-1.5pt) -- (0,7.25pt);}} \hspace{-1pt}}
\newcommand{\mysetminusT}{\mysetminusD}
\newcommand{\mysetminusS}{\hbox{\tikz{\draw[line width=0.45pt,line cap=round] (2pt,0) -- (-0.5pt,5pt);}} \hspace{1pt}}
\newcommand{\mysetminusSS}{\hbox{\tikz{\draw[line width=0.4pt,line cap=round] (1.5pt,0) -- (0,3pt);}}}
\newcommand{\mysetminus}{\mathbin{\mathchoice{\mysetminusD}{\mysetminusT}{\mysetminusS}{\mysetminusSS}}}
\newcommand{\extp}{\@ifnextchar^\@extp{\@extp^{\,}}}
\def\@extp^#1{\mathop{\bigwedge\nolimits^{\!#1}}}
\newcommand{\overbar}[1]{\mkern 1.5mu\overline{\mkern-2mu#1\mkern-1.5mu}\mkern 1.5mu}
\DeclareRobustCommand{\loplus}{\mathbin{\mathpalette\dog@lsemi{+}}}
\DeclareRobustCommand{\roplus}{\mathbin{\mathpalette\dog@rsemi{+}}}
\newcommand{\dog@rsemi}[2]{\dog@semi{#1}{#2}{-90,90}}
\newcommand{\dog@lsemi}[2]{\dog@semi{#1}{#2}{270,90}}
\newcommand{\dog@semi}[3]{%
  \begingroup
  \sbox\z@{$\m@th#1#2$}%
  \setlength{\unitlength}{\dimexpr\ht\z@+\dp\z@\relax}%
  \makebox[\wd\z@]{\raisebox{-\dp\z@}{%
    \begin{picture}(1,1)
    \linethickness{\variable@rule{#1}}
    \roundcap
    \put(0.5,0.5){\makebox(0,0){\raisebox{\dp\z@}{$\m@th#1#2$}}}
    \put(0.5,0.5){\arc[#3]{0.5}}
    \end{picture}%
  }}%
  \endgroup
}
\newcommand{\variable@rule}[1]{%
  \fontdimen8  
  \ifx#1\displaystyle\textfont3\else
    \ifx#1\textstyle\textfont3\else
      \ifx#1\scriptstyle\scriptfont3\else
        \scriptscriptfont3\relax
  \fi\fi\fi
}
\def\th@plain{%
  \thm@notefont{}
  \itshape 
}
\def\th@definition{%
  \thm@notefont{}
  \normalfont 
}
\newtheorem{thm}{Theorem}[section]
  \crefname{thm}{Theorem}{Theorems} 
\newtheorem*{thm*}{Theorem}
\newtheorem*{mainthm}{Main theorem}
  \crefname{conj}{Conjecture}{Conjectures}
\newtheorem{prop}[thm]{Proposition}
  \crefname{prop}{Proposition}{Propositions}
\newtheorem{lem}[thm]{Lemma}
  \crefname{lem}{Lemma}{Lemmas}
\newtheorem{cor}[thm]{Corollary}
  \crefname{cor}{Corollary}{Corollaries}
  \crefname{fact}{Fact}{Facts}
  \crefname{openprA}{Open problem}{Open problems}
  \crefname{conjA}{Conjecture}{Conjectures}
\newtheorem{thmA}{Theorem}
  \crefname{thmA}{Theorem}{Theorems}
\theoremstyle{definition}
\newtheorem{defn}[thm]{Definition}
  \crefname{defn}{Definition}{Definitions}
\newtheorem{rem}[thm]{Remark}
  \crefname{rem}{Remark}{Remarks}
\newtheorem{ex}[thm]{Example}
  \crefname{ex}{Example}{Examples}
\newtheorem{obs}[thm]{Observation}
  \crefname{obs}{Observation}{Observations}
\newtheorem*{agr}{Agreement}
\newtheorem*{openq}{Open question}
\theoremstyle{remark}
\numberwithin{equation}{section}
\crefname{subsection}{Subsection}{Subsections}
\newcommand{\customlabel}[2]{%
   \protected@write \@auxout {}{\string \newlabel {#1}{{#2}{\thepage}{#2}{#1}{}} }%
   \hypertarget{#1}{}
}
\begin{document}

\title[Classification of cohomogeneity-one actions]{Classification of cohomogeneity-one actions on symmetric spaces of noncompact type}
\author{V\'ictor Sanmart\'{i}n-L\'{o}pez, Ivan Solonenko}

\address{CITMAga, 15782 Santiago de Compostela, Spain.\newline\indent Department of Mathematics, Universidade de Santiago de Compostela, Spain}
\email{victor.sanmartin@usc.es}

\address{Universit\"{a}t Stuttgart, Institut f\"{u}r Geometrie und Topologie, Stuttgart, Germany}
\email{ivan.solonenko@mathematik.uni-stuttgart.de}

\begin{abstract}
We complete the classification of isometric cohomogeneity-one actions on all symmetric spaces of noncompact type up to orbit equivalence.
\end{abstract}

\keywords{Cohomogeneity-one action, symmetric space of noncompact type}

\subjclass[2020]{53C35, 57S20}

\thanks{The first author has been supported by Grant PID2022-138988NB-I00 funded by MICIU/AEI/10.13039/ 501100011033 and by ERDF, EU, and by ED431C 2023/31 (Xunta de Galicia).}

\maketitle

\counterwithout{equation}{section}

\section{Introduction}\label{sec:intro}
A proper isometric action of a Lie group $G$ on a connected Riemannian manifold $M$ is said to be of \textit{cohomogeneity one} if it has an orbit of codimension one. Cohomogeneity-one actions have been in the limelight because they can be used to construct Riemannian metrics with special properties: for instance, Einstein metrics, metrics with special holonomy, or metrics of positive sectional curvature (\cite{berard-bergery_cohomogeneity_one,böhm_C1_einstein,bryant_salamon_c1_g2_spin7,grove_wilking_ziller_pos_curv_c1}). One usually starts with a cohomogeneity-one action $G \curvearrowright M$ and dilates the existing metric $g$ along the orbits of $G$ to produce a new $G$-invariant metric, which might retain some properties of $g$ or gain new ones. Since most geometric structures on manifolds are governed by systems of partial differential equations, one can take advantage of the following principle: if a system of PDEs is invariant under a cohomogeneity-one action, it can be reduced to ODEs. For example, this can be applied to the Ricci flow, since it preserves isometries and hence the property of a metric of being invariant under a cohomogeneity-one action (see, e.g., \cite{bettiol_krishnan_4dim_C1_nonnegative,bettiol_krishnan_4dim_C1_positive}). Cohomogeneity-one actions are also an important object of study in submanifold geometry, since their principal orbits are homogeneous hypersurfaces and thus are isoparametric and have constant principal curvatures. Besides, cohomogeneity-one actions are one of the chief examples of hyperpolar (and hence polar) actions. Originally, these attracted interest due to their applications to topology: in \cite{bott_variationally_complete,bott_samelson_variationally_complete}, Bott and Samelson studied so-called variationally complete actions on symmetric spaces, which enabled them to apply Morse theory to the spaces of loops and paths on the ambient space; and in \cite{conlon_hyperpolar_implies_variationally_complete}, Conlon showed that hyperpolar actions are variationally complete.

Due to their prominence, it is natural to ask, what are all possible cohomogeneity-one actions on a given Riemannian manifold? This becomes a classification problem when we set a suitable geometric notion of equivalence: two actions are called \textit{orbit-equivalent} if there is an ambient isometry identifying their orbits. This problem is essentially equivalent to classifying homogeneous hypersurfaces up to isometric congruence. In this paper, by \textit{classification} we mean providing a complete and explicit list of actions up to orbit-equivalence---but we do allow some actions in the resulting list to be orbit-equivalent to each other. Figuring out exactly which ones are is referred to as the \textit{problem of congruence}; we do not pursue it here, although we will make occasional remarks about it along the way. 

The existence of at least one cohomogeneity-one action (from now on, \textit{C1-action}) implies a large isometry group, so it is sensible to first pose the classification problem for homogeneous spaces and in particular their most well-studied subclass of symmetric spaces. Historically, the first two symmetric spaces where the problem was solved were the Euclidean space $\E^n$ and the real hyperbolic space $\R H^n$. In them, it follows from the classification of isoparametric hypersurfaces---which all turn out to be homogeneous---by Somigliana, Levi-Civita, and Segre for $\E^n$ (\cite{somigliana_isoparametric,levi-civita_isoparametric,segre_isoparametric}) and by Cartan for $\R H^n$ (\cite{cartan_isoparametric}). The same approach does not work as smoothly for the sphere: famously, there are plenty of inhomogeneous isoparametric hypersurfaces in $\S^n$, but more importantly, the classification of isoparametric hypersurfaces in spheres still appears to be unfinished (see \cite{siffert_isoparam}). In \cite{hsiang_lawson}, Hsiang and Lawson proved that every C1-action on $\S^n$ is orbit-equivalent to the action induced by the isotropy representation of a symmetric space of rank 2 and dimension $n+1$. C1-actions on the other simply connected compact symmetric spaces of rank one---that is, on the projective spaces over the real division algebras $\C, \H$, and $\Oo$---were classified by Takagi (\cite{takagi_C1}), D'Atri (\cite{d'atri_C1_HP^n}), and Iwata (\cite{iwata_C1_HP^n,iwata_C1_OP^2}). In \cite{kollross_hyperpolar_c1_classification}, Kollross classified C1-actions on all irreducible symmetric spaces of compact type. His proof relied on Dynkin's classification of maximal subgroups of semisimple Lie groups (\cite{dynkin_subgroups_classical_english,dynkin_subalgebras_english}). In particular, Kollross showed that the moduli space of C1-actions on any such space is finite. On reducible symmetric spaces of compact type, the classification problem is still open.

On the other hand, the story of C1-actions on symmetric spaces of \textit{noncompact} type has been much more intricate and convoluted. The reason for this is that, informally speaking, noncompact semisimple Lie groups have ``way more" subgroups than compact ones. Excluding Cartan's classification for $\R H^n$, the story essentially begins with \cite{berndt_bruck}, where Berndt and Br\"{u}ck constructed a plethora of new examples of C1-actions on symmetric spaces of noncompact type and rank one. In particular, they obtained a one-parameter family of mutually orbit-non-equivalent C1-actions on the complex hyperbolic space $\C H^n$, thus showing that the moduli space of such actions can be not only infinite but even nondiscrete. Berndt and Br\"{u}ck's main tool was a novel method that allows one to construct C1-actions on a given space by solving a certain problem in the representation theory of reductive groups. They went on to solve that problem for $\C H^n$ and the Cayley hyperbolic plane $\Oo H^2$ but left the case of the quaternionic hyperbolic space $\H H^n$ open as it led to a pretty elaborate problem in quaternion algebra.

A C1-action on a symmetric space of noncompact type can have either one or zero singular orbits. In \cite{berndt_tamaru_foliations}, Berndt and Tamaru classified C1-actions without singular orbits on all irreducible symmetric spaces of noncompact type. They discovered that there are two distinct types of such actions: a continuous family parametrized by the projectivization of a maximal flat in $M$, and a discrete family parametrized by the simple roots of the restricted root system of $M$. They also solved the problem of congruence for these actions. In a later article \cite{hyperpolar_homogeneous_foliations} with D\'{i}az-Ramos, they generalized that result and classified homogeneous hyperpolar foliations on all symmetric spaces of noncompact type. In particular, that extends the classification in \cite{berndt_tamaru_foliations} to reducible spaces. In \cite{solonenko_foliations}, the second author also solved the problem of congruence in the reducible case. Berndt and Tamaru made another advancement in \cite{berndt_tamaru_totally_geodesic_singular_orbit}: they classified C1-actions with a totally geodesic singular orbit on all irreducible symmetric spaces of noncompact type. They showed that such an orbit is almost always a reflective submanifold and then used the classification of those obtained by Leung in \cite{leung_classification_of_reflective_submanifolds,leung_congruence}. Later in \cite{berndt_tamaru_rank_one}, Berndt and Tamaru proved that, in rank one, every C1-action with a non-totally-geodesic singular orbit arises via the construction method introduced by Berndt and Br\"{u}ck. As a consequence, that completed the classification for $\C H^n$ and $\Oo H^2$. The aforementioned quaternion algebra problem was solved much later in \cite{protohomogeneous}, which concluded the classification for $\H H^n$ and thus all rank-one spaces.

The next big development occurred in \cite{berndt_tamaru_cohomogeneity_one}. In that paper, Berndt and Tamaru generalized the method for constructing C1-actions from \cite{berndt_bruck} to spaces of arbitrary rank---the resulting procedure was given the name \textit{nilpotent construction}. The gist of this method is as follows: given a space $M$, one constructs a series of representations of certain reductive subgroups of the isometry group $I(M)$; in each of those representations, one has to look for subspaces satisfying certain properties---each such subspace gives rise to a C1-action on $M$. We will refer to the problem of finding all such subspaces as the \textit{nilpotent construction problem}. Berndt and Tamaru also invented another procedure---called the \textit{canonical extension}---allowing one to construct actions on spaces of higher rank from those on their certain totally geodesic submanifolds called boundary components. Since a boundary component is itself a symmetric space of noncompact type, one gets an inductive procedure for extending C1-actions into higher rank. The main result of \cite{berndt_tamaru_cohomogeneity_one} asserts that, on any (possibly reducible) symmetric space of noncompact type, if a C1-action has a non-totally-geodesic singular orbit, it must arise via either the canonical extension or the nilpotent construction. One of the big underpinnings of that result was Mostow's description of maximal subalgebras of real semisimple Lie algebras in \cite{mostow_maximal_subgroups}. By that point, there remained only two obstacles on the path to full classification. The first of them was understanding C1-actions with a totally geodesic singular orbit on \textit{reducible} spaces. This problem was bypassed in \cite{DR_DV_Otero_C1} by means of Dynkin's structural result \cite[Th.\hspace{2pt}15.1]{dynkin_subalgebras_english} (see also \cite[Th.\hspace{2pt}2.1]{kollross_hyperpolar_c1_classification}) on maximal subalgebras of semisimple Lie algebras---which largely complements the one by Mostow.

To complete the classification, one is left to solve the nilpotent construction problem. To this day, this has only been done in rank one, on some spaces of rank two (\cite{berndt_tamaru_cohomogeneity_one,berndt_DV_C1_rank_2,solonenko_hypersurfaces}), as well as on $\SL(n,\R)/\SO(n), \, n \ge 3$ (\cite{DR_DV_Otero_C1}). The aim of the current article is to resolve this problem for all symmetric spaces of noncompact type and thus conclude the classification---see the \hyperlink{thm:main}{Main theorem} below.

We now briefly introduce the necessary context and lay out our main result. (For more details, see Sections \ref{sec:preliminaries} and \ref{sec:c1-actions}.) Let $M = G/K$ be a symmetric space of noncompact type, where $G = I^0(M)$ is the identity component of the isometry group of $M$ and $K$ is the isotropy group of some point $o \in M$. The isometry Lie algebra $\mk{g}$ splits into a Cartan decomposition $\mk{g} = \mk{k} \oplus \mk{p}$, where $\mk{k} = \Lie(K)$. If we write $\uptheta$ for the corresponding Cartan involution and $B$ for the Killing form of $\mk{g}$, then we can define an inner product on $\mk{g}$ by the formula $B_\uptheta(X,Y) = -B(X,\uptheta Y)$. Pick a maximal abelian subspace $\mk{a}$ in $\mk{p}$. This induces the restricted root space decomposition $\mk{g} = \mk{g}_0 \oplus \bigoplus_{\upalpha \in \Upsigma} \mk{g}_\upalpha$ with respect to the restricted root system $\Upsigma \subset \mk{a}^*$. We have $\mk{g}_0 = \mk{k}_0 \oplus \mk{a}$, where $\mk{k}_0$ is the centralizer of $\mk{a}$ in $\mk{k}$. Fix a choice of simple roots $\Uplambda \subset \Upsigma$; this yields a nilpotent subalgebra $\mk{n} = \bigoplus_{\upalpha \in \Upsigma^+} \mk{g}_\upalpha$ of $\mk{g}$ and an Iwasawa decomposition $\mk{g} = \mk{k} \oplus \mk{a} \oplus \mk{n}$.

Take a one-dimensional subspace $\ell \subseteq \mk{a}$ and consider $\mk{h}_\ell = (\mk{a} \ominus \ell) \oplus \mk{n}$, where the first summand stands for the orthogonal complement of $\ell$ in $\mk{a}$. This is a Lie subalgebra of $\mk{g}$, and its corresponding Lie subgroup $H_\ell$ acts on $M$ with cohomogeneity one and no singular orbits---hence its orbits induce a Riemannian foliation. Similarly, take a simple root $\upalpha_i \in \Uplambda$ and any one-dimensional subspace $\ell_{\upalpha_i} \subseteq \mk{g}_{\upalpha_i}$. Then $\mk{h}_{\upalpha_i} = \mk{a} \oplus (\mk{n} \ominus \ell_{\upalpha_i})$ is a Lie subalgebra of $\mk{g}$ whose corresponding Lie subgroup $H_{\upalpha_i}$ also acts on $M$ with cohomogeneity one and no singular orbits. A different choice of $\ell_{\upalpha_i}$ in $\mk{g}_{\upalpha_i}$ leads to an orbit-equivalent action.

Suppose for a moment that  $M = M_1 \times M_2$, where $M_1$ and $M_2$ are homothetic symmetric spaces of noncompact type and rank one, and write $\mk{g}_i$ for their isometry Lie algebras. Pick an isomorphism $\upvarphi \colon \mk{g}_1 \isoto \mk{g}_2$ and consider the diagonal subalgebra $\mk{g}_\upvarphi = \set{X + \upvarphi(X) \mid X \in \mk{g}_1} \subset \mk{g}_1 \oplus \mk{g}_2 = \mk{g}$. Then the corresponding connected Lie subgroup $G_\upvarphi \subset G_1 \times G_2 = G$ (here $G_i = I^0(M_i)$) acts on $M$ with cohomogeneity one and a totally geodesic singular orbit---we call this a \textit{diagonal C1-action} on $M$.

Coming back to a general $M$, take any subset $\Upphi \subseteq \Uplambda$ of simple roots. We can define a subspace $\mk{a}_\Upphi = \bigcap_{\upalpha \in \Upphi} \Ker(\upalpha)$ of $\mk{a}$ together with its centralizer $\mk{l}_\Upphi = Z_\mk{g}(\mk{a}_\Upphi)$ and the subalgebra $\mk{m}_\Upphi = \mk{l}_\Upphi \ominus \mk{a}_\Upphi$. The $o$-orbit of the connected Lie subgroup corresponding to $\mk{m}_\Upphi$ is called a \textit{boundary component} of $M$ and denoted by $B_\Upphi$. This is a totally geodesic submanifold and a symmetric space of noncompact type in its own right. Its restricted root system $\Upsigma_\Upphi$ can be realized as the root subsystem of $\Upsigma$ generated by $\Upphi$. Note that $\Upsigma_\Upphi$ inherits a choice $\Upphi$ of simple roots. This allows us to define a subalgebra $\mk{n}_\Upphi = \bigoplus_{\uplambda \in \Upsigma^+ \mysetminus \Upsigma_\Upphi^+} \mk{g}_\uplambda$. The isometry Lie algebra $\mk{g}'_\Upphi$ of $B_\Upphi$ can be realized as the subalgebra of $\mk{g}$ generated by $\mk{g}_\upalpha$ as $\upalpha$ runs through $\Upsigma_\Upphi$. The connected Lie subgroup of $G$ corresponding to $\mk{g}'_\Upphi$ is denoted by $G'_\Upphi$ and is a finite covering of $I^0(B_\Upphi)$. Let $H_\Upphi \subseteq G'_\Upphi$ be a Lie subgroup with Lie algebra $\mk{h}_\Upphi \subseteq \mk{g}'_\Upphi$. We can then define a subalgebra $\mk{h}_\Upphi^\Uplambda = \mk{h}_\Upphi \oplus \mk{a}_\Upphi \oplus \mk{n}_\Upphi$ of $\mk{g}$, whose corresponding connected Lie subgroup $H_\Upphi^\Uplambda \subseteq G$ acts on $M$ with the same cohomogeneity as that of $H_\Upphi \curvearrowright B_\Upphi$. This procedure is called the \textit{canonical extension}.

Write $\Uplambda = \set{\upalpha_1, \ldots, \upalpha_r}$, let $H^1, \ldots, H^r$ be the dual basis for $\mk{a}$, and denote $H^\Upphi = \sum_{\upalpha_i \in \Upphi} H^i$. The subalgebra $\mk{n}_\Upphi$ is graded: $\mk{n}_\Upphi = \bigoplus_{\upnu \ge 1} \mk{n}_\Upphi^\upnu$, where $\mk{n}_\Upphi^\upnu$ is the eigenspace of $\ad(H^\Upphi)$ in $\mk{n}_\Upphi$ with eigenvalue $\upnu$. Define $L_\Upphi = Z_G(\mk{a}_\Upphi)$ and $K_\Upphi = K \cap L_\Upphi$. The orbit $L_\Upphi \cdot o$ contains $B_\Upphi$ and is denoted by $F_\Upphi$. The adjoint representation of $K_\Upphi$ on $\mk{g}$ is orthogonal and preserves the grading on $\mk{n}_\Upphi$. For every subspace $\mk{w} \subseteq \mk{n}_\Upphi^1$, the sum $\mk{n}_{\Upphi,\mk{w}} = \mk{w} \oplus \bigoplus_{\upnu \ge 2} \mk{n}_\Upphi^\upnu$ is a subalgebra, and its corresponding Lie subgroup is denoted by $N_{\Upphi,\mk{w}}$. Such a subspace $\mk{w}$ is called \textit{admissible} if $N_{L_\Upphi}(\mk{w})$ acts transitively on $F_\Upphi$ and \textit{protohomogeneous} if $N_{K_\Upphi}(\mk{w})$ acts transitively on the unit sphere in $\mk{v} = \mk{n}_\Upphi^1 \ominus \mk{w}$ (see Remark \ref{rem:w_vs_v}). If $\mk{w}$ satisfies these conditions and has $\codim_{\mk{n}_\Upphi^1}(\mk{w}) \ge 2$, then the subgroup $N^0_{L_\Upphi}(\mk{w})N_{\Upphi,\mk{w}}$ acts on $M$ with cohomogeneity one and has a singular orbit of codimension equal to $\codim_{\mk{n}_\Upphi^1}(\mk{w})$. This is called the \textit{nilpotent construction}, and the problem of finding all such subspaces $\mk{w}$ is called the \textit{nilpotent construction problem}. Note that if two admissible and protohomogeneous subspaces $\mk{w}, \mk{w}' \subseteq \mk{n}_\Upphi^1$ differ by an element of $K_\Upphi$, they give rise to orbit-equivalent actions. For reasons to be explained in Section \ref{sec:c1-actions} (see Theorem \ref{thm:classification_of_c1_actions}), one only has to consider this problem when $\Upphi$ is of the form $\Uplambda \mysetminus \set{\upalpha_j}$ for some simple root $\upalpha_j$. In that case, it is customary to replace $\Upphi$ with $j$ in subscripts and superscripts---e.g., $\mk{a}_j$ instead of $\mk{a}_\Upphi$, $\mk{n}_{j,\mk{w}}$ instead of $\mk{n}_{\Upphi,\mk{w}}$, and so on. The nilpotent construction problem has only been solved for a handful of spaces, and only on five of them it has given rise to new C1-actions not obtainable via other methods described above. Let us go through each of them briefly.

For rank-one spaces, we have $\Uplambda = \set{\upalpha_1}$ and $\mk{n}_1^1 = \mk{g}_{\upalpha_1}$, and every subspace of $\mk{n}_1^1$ is trivially admissible. In this case, it is more convenient to describe $\mk{w}$ in terms of its orthogonal complement $\mk{v}$. If $M = \C H^{n+1}$, we have $\mk{n}_1^1 \simeq \C^n, \, K_1 \simeq \U(n)$, and the representation of $K_1$ on $\mk{n}_1^1$ is the tautological representation of the unitary group. It was shown in \cite{berndt_bruck} that $\mk{w} \subseteq \mk{n}_1^1$ is protohomogeneous if and only if for every nonzero $v \in \mk{v}$, the angle between $Jv$ (here $J$ is the complex structure on $\mk{n}_1^1 \simeq \C^n$) and $\mk{v}$ is a constant not depending on $v$; this angle is then called the \textit{K\"{a}hler angle} of $\mk{v}$. Such a subspace $\mk{v}$---and thus the resulting C1-action---is determined by its K\"{a}hler angle and dimension up to congruence by $\U(n)$. Disregarding those subspaces that lead to C1-actions with a totally geodesic singular orbit, we obtain from \cite{berndt_bruck} that the moduli space of C1-actions on $\C H^{n+1}$ coming from the nilpotent construction and no other method is given by $\bm{\mc{M}_{\C H^{n+1}}} = (0,\frac{\uppi}{2}) \times \set{2, 4, \ldots, 2 \lfloor \frac{n}{2} \rfloor} \hspace{1pt} \sqcup \hspace{1pt} \set{\frac{\uppi}{2}} \times \set{2, 3, \ldots, n}$. If $M = \H H^{n+1}$, the representation $K_1 \curvearrowright \mk{n}_1^1$ is equivalent to the standard representation $\Sp(n)\Sp(1) \curvearrowright \H^n$. In that same paper, Berndt and Br\"{u}ck devised a generalization of the notion of K\"{a}hler angle to real subspaces of $\H^n$, which they called the \textit{quaternion-K\"{a}hler angle}. This time, this is a \textit{triple} of angles $(\upvarphi_1, \upvarphi_2, \upvarphi_3) \in [0,\frac{\uppi}{2}]^3$. As was shown in \cite{protohomogeneous}, for every protohomogeneous subspace $\mk{w}$ of $\H^n$, $\mk{v}$ has constant quaternion-K\"{a}hler angle and is \textit{almost} determined by it together with its dimension up to $\Sp(n)\Sp(1)$: for certain angles and dimensions, there might be two noncongruent subspaces. Therefore, the moduli space $\bm{\mc{M}_{\H H^{n+1}}}$ of C1-actions on $\H H^{n+1}$ arising from the nilpotent construction but no other method is a certain subset of the disjoint union of a collection of cubes $[0,\frac{\uppi}{2}]^3$. That subset (as well as the corresponding subspaces) was given an explicit description in \cite{protohomogeneous} (see Theorem A therein). If $M = \Oo H^2$, the representation $K_1 \curvearrowright \mk{n}_1^1$ is equivalent to the spin representation $\Spin(7) \curvearrowright \R^8$. The corresponding protohomogeneous subspaces were classified in \cite{berndt_bruck,berndt_tamaru_rank_one}, and the resulting moduli space can be described as $\bm{\mc{M}_{\Oo H^2}} = \set{2,3,6,7} \hspace{1pt} \sqcup \hspace{1pt} [0,1] \times \set{4}$. For each of these rank-one spaces, the C1-action corresponding to $\mk{w}$ is given on the level of Lie algebras by $ \mk{h}_{1,\mk{w}} = N_{\mk{k}_0}(\mk{w}) \oplus \mk{a} \oplus \mk{w} \oplus \mk{g}_{2\upalpha_1}$. Finally, consider the rank-two spaces $\mm{G}_2^2/\SO(4)$ and $\mm{G}_2(\C)/\mm{G}_2$. Let us write $\Uplambda = \set{\upalpha_1, \upalpha_2}$, where $\upalpha_2$ is the short simple root. As shown in \cite{berndt_tamaru_cohomogeneity_one}, for either space, the nilpotent construction yields only one new C1-action, which corresponds to the choice $j = 2$ and $\mk{w} = \set{0}$; the corresponding subgroup $H_{2,0}$ has Lie algebra $\mk{h}_{2,0} = \mk{l}_2 \oplus \mk{n}_2^2 \oplus \mk{n}_2^3$.

Coming back to the general case one more time, suppose $M = \prod_i M_i$ is the de Rham decomposition of $M$. The group $G$ decomposes as $G = \prod_i G_i$, where $G_i = I^0(M_i)$. An isometric C1-action $H \curvearrowright M$ is called \textit{decomposable} if it is orbit-equivalent to the action of a subgroup of $G$ of the form $H_i \times \prod_{l \ne i} G_l$ for some $i$, where $H_i \subset G_i$. In this case, $H_i$ acts on the de Rham factor $M_i$ also with cohomogeneity one. We are now in a position to formulate the main result of the article.

\begin{mainthm}\hypertarget{thm:main}{}
    Let $M = G/K$ be a symmetric space of noncompact type and $H$ a connected Lie group acting on $M$ properly and isometrically. Then $H$ acts with cohomogeneity one if and only if its action is orbit-equivalent to one of the following:
    
    \begin{enumerate}[\normalfont (a)]
        \item\customlabel{thm:main:a}{a} The action of $H_\ell$ for some one-dimensional linear subspace $\ell \subseteq \mk{a}$.
        \item\customlabel{thm:main:b}{b} The action of $H_{\upalpha_i}$ for some simple root $\upalpha_i \in \Uplambda$.
        \item\customlabel{thm:main:c}{c} The canonical extension of a C1-action with a totally geodesic singular orbit on an irreducible boundary component $B_\Upphi$ of $M$.
        \item\customlabel{thm:main:d}{d} The canonical extension of a diagonal C1-action on a reducible rank-2 boundary component $B_\Upphi$ of $M$ whose de Rham factors are homothetic.
        \item\customlabel{thm:main:e}{e} A decomposable action $H_i \times \prod_{l \ne i} G_l \curvearrowright M_i \times \prod_{l \ne i} M_l = M$, where the de Rham factor $M_i$ and the action $H_i \curvearrowright M_i$ are one of the following:

        \begin{enumerate}[\normalfont (1)]
            \item\customlabel{thm:main:e:1}{1} $M_i$ is isometric to
            
            \begin{itemize}
                \item $\Gr^*(k,\C^{2k+n}) \; (k \ge 1, n \ge 2)$,
                \item $\SO(2n+1, \H)/\U(2n+1) \; (n \ge 2)$, or
                \item $\mm{E}_6^{-14}/\Spin(10)\U(1)$,
            \end{itemize}
            and $H_i \curvearrowright M_i$ is the canonical extension of an action on the boundary component $\C H^{n+1}$ {\normalfont(}resp., $\C H^3$ or $\C H^5${\normalfont)} of $M_i$ belonging to $\mc{M}_{\C H^{n+1}}$ {\normalfont(}resp., $\mc{M}_{\C H^3}$ or $\mc{M}_{\C H^5}${\normalfont)}.
            \item\customlabel{thm:main:e:2}{2} $M_i \simeq \Gr^*(k,\H^{2k+n}) \; (k,n \ge 1)$ and $H_i \curvearrowright M_i$ is the canonical extension of an action on the boundary component $\H H^{n+1}$ of $M_i$ belonging to $\mc{M}_{\H H^{n+1}}$.
            \item\customlabel{thm:main:e:3}{3} $M_i \simeq \Oo H^2$ and $H_i \curvearrowright M_i$ is an action belonging to $\mc{M}_{\Oo H^2}$.
            \item\customlabel{thm:main:e:4}{4} $M_i \simeq \mm{G}_2^2/\SO(4)$ and $H_i = H_{2,0}$.
            \item\customlabel{thm:main:e:5}{5} $M_i \simeq \mm{G}_2(\C)/\mm{G}_2$ and $H_i = H_{2,0}$.
        \end{enumerate}
    \end{enumerate}
\end{mainthm}

Notice that the actions in \eqref{thm:main:b} and \eqref{thm:main:c} are also decomposable (see \cite[Th.\hspace{2pt}C]{DR_DV_Otero_C1}). It is worth pointing out that this theorem does not give a complete description of the moduli space of C1-actions yet, as one still needs to solve the problem of congruence for the actions described in \eqref{thm:main:a}-\eqref{thm:main:e}. We will make some occasional remarks about that in Section \ref{sec:c1-actions} (see also \cite[Rem.\hspace{2pt}1.1]{DR_DV_Otero_C1}), but we will not attempt to resolve the problem of congruence in general, since that would go beyond the scope of the paper.

Let us briefly lay out our strategy. In order to prove the \hyperlink{thm:main}{Main theorem}, we will have to resolve the nilpotent construction problem. As our first step, we will tackle the admissibility condition on the subspace $\mk{w}$---which has to do with \textit{transitive} actions on symmetric spaces. In the compact irreducible case, such actions were explicitly classified by Onishchik in \cite{onishchik_transitive}. There is little hope of getting a similar result in the noncompact case, as the list would be infinite for most spaces. As a substitute, we obtain a general structural result asserting that any Lie group acting transitively and isometrically on a symmetric space of noncompact type contains a skeleton of a very specific form.

\begin{thmA}\label{thm:B}\hspace{-5pt}\footnote{In the special cases of $\R H^n$ and $\C H^n$, a similar result was obtained in \cite[Th.\hspace{2pt}3.1, 4.1]{transitive_CH^n}}
    Let $M = G/K$ be a symmetric space of noncompact type and $H \subseteq G$ a Lie subgroup acting on $M$ transitively. Then there exist an Iwasawa decomposition $\mk{g} = \mk{k} \oplus \mk{a} \oplus \mk{n}$ and a maximal abelian subspace $\mk{t}_0 \subseteq \mk{k}_0$ such that the Lie algebra $\mk{h}$ of $H$ contains a solvable subalgebra of the form $V \oplus \mk{n}$, where $V \subseteq \mk{t}_0 \oplus \mk{a}$ projects surjectively onto $\mk{a}$.
\end{thmA}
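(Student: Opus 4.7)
The plan is (i) to locate a solvable subalgebra $\mk{r} \subseteq \mk{h}$ that still acts transitively on $M$, (ii) to conjugate it into a standard ``real Borel'' $\mk{t}_0 \oplus \mk{a} \oplus \mk{n}$ for a suitable Iwasawa decomposition and maximal abelian $\mk{t}_0 \subseteq \mk{k}_0$, and (iii) to exploit the transitivity together with the bracket relations of $\mk{r}$ to force $\mk{n} \subseteq \mk{r}$; then $V$ will be read off from the abelian quotient $\mk{r}/\mk{n}$. For step (i), I would take the Levi decomposition $\mk{h} = \mk{s} \ltimes \mk{r}_0$, with $\mk{s}$ a semisimple Levi factor and $\mk{r}_0$ the solvable radical. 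By Mostow's theorem there is a Cartan involution of $\mk{g}$ preserving $\mk{s}$ (after possibly replacing $\mk{h}$ by a $G$-conjugate), giving a compatible Iwasawa decomposition $\mk{s} = \mk{k}_{\mk{s}} \oplus \mk{a}_{\mk{s}} \oplus \mk{n}_{\mk{s}}$ with $\mk{a}_{\mk{s}} \subseteq \mk{a}$ and $\mk{n}_{\mk{s}} \subseteq \mk{n}$ for an appropriate Iwasawa decomposition of $\mk{g}$. The subalgebra $\mk{r} \defeq (\mk{a}_{\mk{s}} \oplus \mk{n}_{\mk{s}}) + \mk{r}_0 \subseteq \mk{h}$ is solvable (since $\mk{r}_0$ is an ideal of $\mk{h}$ and $\mk{r}/\mk{r}_0$ is a quotient of the solvable $\mk{a}_{\mk{s}} \oplus \mk{n}_{\mk{s}}$) and satisfies $\mk{h} = \mk{k}_{\mk{s}} + \mk{r}$, which together with $\mk{h} + \mk{k} = \mk{g}$ and $\mk{k}_{\mk{s}} \subseteq \mk{k}$ forces $\mk{r} + \mk{k} = \mk{g}$. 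Step (ii) is then a consequence of Mostow's classical analysis of maximal solvable subalgebras of real semisimple Lie algebras.

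For step (iii), the key observation is that $(\mk{t}_0 \oplus \mk{a} \oplus \mk{n}) \cap \mk{k} = \mk{t}_0$, so the transitivity $\mk{r} + \mk{k} = \mk{g}$ translates into the surjectivity of the projection $\pi \colon \mk{r} \twoheadrightarrow \mk{a} \oplus \mk{n}$ along $\mk{t}_0$. Setting $\mk{r}_{\mk{t}_0} \defeq \mk{r} \cap \mk{t}_0$, one obtains a well-defined linear map $f \colon \mk{a} \oplus \mk{n} \to \mk{t}_0 / \mk{r}_{\mk{t}_0}$ sending $x$ to the class of any $t_x \in \mk{t}_0$ with $t_x + x \in \mk{r}$. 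Since $[\mk{t}_0 \oplus \mk{a} \oplus \mk{n}, \mk{t}_0 \oplus \mk{a} \oplus \mk{n}] \subseteq \mk{n}$, a direct bracket computation yields $[t_x + x, t_y + y] = [x,y] + [t_x, y] + [x, t_y] \in \mk{n}$, and the subalgebra property of $\mk{r}$ forces $f$ to vanish on this combination. Specializing to $x = X_\upalpha \in \mk{g}_\upalpha$ and $y = a \in \mk{a}$ with $\upalpha(a) \neq 0$ gives $f\bigl((\upalpha(a)\,\Id + \ad(t_a))(X_\upalpha)\bigr) = 0$; the operator $\upalpha(a)\,\Id + \ad(t_a)$ on $\mk{g}_\upalpha$ has spectrum contained in $\upalpha(a) + i\mb{R}$ (because $t_a \in \mk{t}_0 \subseteq \mk{k}$ is $B_\uptheta$-antisymmetric), hence is invertible; varying $X_\upalpha$ over $\mk{g}_\upalpha$ therefore yields $f|_{\mk{g}_\upalpha} \equiv 0$. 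Summing over all $\upalpha \in \Upsigma^+$ gives $f|_{\mk{n}} \equiv 0$, meaning that for every $x \in \mk{n}$ the lift $t_x + x$ sits in $\mk{r}_{\mk{t}_0} + x \subseteq \mk{r}$, so $x \in \mk{r}$ and $\mk{n} \subseteq \mk{r}$.

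With $\mk{n} \subseteq \mk{r}$, the quotient $\mk{r}/\mk{n}$ embeds into $(\mk{t}_0 \oplus \mk{a} \oplus \mk{n})/\mk{n} = \mk{t}_0 \oplus \mk{a}$ and, by the surjectivity of $\pi$, projects onto $\mk{a}$; letting $V \subseteq \mk{t}_0 \oplus \mk{a}$ be its image, one readily checks that $V \oplus \mk{n} \subseteq \mk{r} \subseteq \mk{h}$, as required. The main obstacle I anticipate is the positioning step (ii): although Mostow's results guarantee that every solvable subalgebra lies in a maximal one, in non-split real forms of $\mk{g}$ there may be several $G$-conjugacy classes of maximal solvable subalgebras, and one must use the transitivity of $\mk{r}$ to ensure it lies in a conjugate of the ``correct'' one $\mk{t}_0 \oplus \mk{a} \oplus \mk{n}$. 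Once that positioning is secured, the bracket argument in step (iii) closes the proof essentially at once.
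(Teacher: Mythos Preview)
Your proposal is correct and follows essentially the same three-step approach as the paper (Lemma~\ref{lem:solvable_subalgebra_transitive} and Proposition~\ref{prop:transitive_noncompact}): reduce to a transitive solvable subalgebra via a Levi--Iwasawa splitting, embed it in a Borel subalgebra and use Mostow's classification together with transitivity to force that Borel to be maximally noncompact $\mk{t}_0 \oplus \mk{a} \oplus \mk{n}$ (this is exactly how your anticipated obstacle in step~(ii) is resolved), and then a bracket computation to show $\mk{n} \subseteq \mk{r}$. Your step~(iii) via the map $f$ and the invertibility of $\upalpha(a)\,\Id + \ad(t_a)$ on $\mk{g}_\upalpha$ is a clean repackaging of the paper's argument, which instead observes directly that for each $Z \in \mk{g}_\upalpha$ the subspace $\mk{r} \cap \mk{g}_\upalpha$ contains a vector of the form $Z + [Y_1,Z]$ with $[Y_1,Z] \perp Z$, forcing $\mk{r} \cap \mk{g}_\upalpha = \mk{g}_\upalpha$.
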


Let us write $A$ and $N$ for the connected Lie subgroups of $G$ corresponding to $\mk{a}$ and $\mk{n}$, respectively. We have $G = KAN$, which is also called an Iwasawa decomposition. Informally speaking, Theorem \ref{thm:B} says that every transitive group of isometries of $M$ contains $N$ and ``almost contains" $A$ for some Iwasawa decomposition of $G$. In regard to the nilpotent construction, this theorem will let us show that the subspace $\mk{w}$ can always be assumed to respect the restricted root space decomposition of $\mk{g}$, which will significantly simplify the problem. Together with the protohomogeneity condition---or rather its Lie-algebraic reformulation---this will let us establish deep connections between the subspace $\mk{w}$ and root system $\Upsigma$ of $M$. For our next step, we observe that we should only care about actions that do not arise via any nontrivial canonical extension. We reinterpret this assumption in terms of how $\mk{w}$ interacts with the root spaces corresponding to the simple roots---which leads to very strong restrictions on $\Upsigma$, prohibits most choices of $\upalpha_j$, and rules out many spaces altogether solely based on their root systems. We then proceed to carry out a careful analysis of the singular orbit of $H_{j,\mk{w}}$ and its extrinsic geometry within the framework of the solvable model of $M$ (once again, see Section \ref{sec:preliminaries} for definitions and details). This will give us a much better understanding of whether the singular orbit is totally geodesic---and if it is, such an action can also be disregarded. Having done that, we will be able to rule out most of the remaining spaces simply by looking at their root multiplicities. At that stage, we will only be left to deal with spaces whose root systems are of type $\mm{B}_r$ or $\mm{BC}_r$. For these, we will have to resort to a more brute-force approach. We will examine the shape operators of the singular orbit using the Lie bracket relations between the root spaces of $\mk{g}$---and deduce that this orbit is bound to be totally geodesic, provided $r > 1$. Eventually, all these considerations will let us conclude that the nilpotent construction produces no actions other than the known examples or their canonical extensions, which will prove the \hyperlink{thm:main}{Main theorem}.

In our investigation of the nilpotent construction problem, we obtain two incidental results of independent interest. One of them is Theorem \ref{thm:B}, which we have already introduced above. The other side result concerns certain common classes of submanifolds of $M$ and gives them a useful uniform description---also in terms of an Iwasawa decomposition.

\begin{thmA}\label{thm:C}
    Let $M = G/K$ be a symmetric space of noncompact type and $S \subseteq M$ either of the following:
    \begin{enumerate}[\normalfont (a)]
        \item\customlabel{thm:C:a}{a} a complete connected totally geodesic submanifold,
        \item\customlabel{thm:C:b}{b} a singular orbit of a proper isometric C1-action.
    \end{enumerate}
    Then there exist an Iwasawa decomposition $G = KAN$ and a connected Lie subgroup $H \subseteq AN$ of the form $H = (H \cap A) \ltimes (H \cap N)$ such that $S = H \cdot o$.
\end{thmA}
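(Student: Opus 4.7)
The plan is to handle cases (a) and (b) largely independently, in both cases realizing $S$ as an orbit of a Lie subgroup of $AN$ via the simply transitive action of $AN$ on $M$.

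For (a), I would apply an ambient isometry to arrange $o \in S$. Then $\mk{p}' := T_o S$ is a Lie triple system in $\mk{p}$, so $\mk{g}' := [\mk{p}', \mk{p}'] \oplus \mk{p}'$ is a $\uptheta$-stable subalgebra with Cartan decomposition $\mk{g}' = \mk{k}' \oplus \mk{p}'$, and the corresponding connected subgroup $G' \subseteq G$ satisfies $S = G' \cdot o$ (two complete connected totally geodesic submanifolds through $o$ with the same tangent space must coincide). Next, I would pick a maximal abelian subspace $\mk{a}' \subseteq \mk{p}'$ and extend it to a maximal abelian $\mk{a} \subseteq \mk{p}$. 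Selecting a regular element $X' \in \mk{a}'$ for the restricted root system $\Upsigma'$ of $\mk{g}'$ and extending it by a sufficiently small $Y \in \mk{a} \ominus \mk{a}'$ to a $\Upsigma$-regular element $X = X' + \epsilon Y \in \mk{a}$, I use $X$ to define positive roots of $\Upsigma$. Smallness of the perturbation forces every root in $\Upsigma'^+$ to be the $\mk{a}'$-restriction of some root in $\Upsigma^+$, which yields $\mk{n}' \subseteq \mk{n}$. Then $S = G' \cdot o = A'N' \cdot o$ (since $K'$ fixes $o$), and setting $H := A'N' \subseteq AN$, uniqueness of the Iwasawa decomposition in $G$ gives $H \cap A = A'$ and $H \cap N = N'$, whence $H = A' \ltimes N' = (H \cap A) \ltimes (H \cap N)$.

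For (b), I would again arrange $o \in S$. If $S$ is totally geodesic, case (a) applies. Otherwise, by the structural results of Berndt and Tamaru \cite{berndt_tamaru_cohomogeneity_one}, the action is---up to orbit-equivalence---the canonical extension of a nilpotent-construction action on a boundary component. For a pure nilpotent-construction action with data $(\Upphi, \mk{w})$, the singular orbit is $S = N^0_{L_\Upphi}(\mk{w}) N_{\Upphi, \mk{w}} \cdot o$. I claim that $H := \exp(\mk{h}) \subseteq AN$, with
\begin{equation*}
\mk{h} := \mk{a} \oplus \mk{n}^{(\Upphi)} \oplus \mk{w} \oplus \bigoplus_{\upnu \ge 2} \mk{n}_\Upphi^\upnu, \qquad \mk{n}^{(\Upphi)} := \bigoplus_{\upalpha \in \Upsigma_\Upphi^+} \mk{g}_\upalpha,
\end{equation*}
satisfies $H \cdot o = S$. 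The verification breaks into: (i) $\mk{h}$ is a Lie subalgebra---the key input is that $\mk{n}^{(\Upphi)}$ normalizes $\mk{w}$, which follows from the admissibility of $\mk{w}$ via Theorem \ref{thm:B} applied to $N^0_{L_\Upphi}(\mk{w})$ acting transitively on $F_\Upphi$; (ii) $\mk{h}$ is contained in the Lie algebra of $N^0_{L_\Upphi}(\mk{w}) N_{\Upphi, \mk{w}}$, giving $H \cdot o \subseteq S$; (iii) a direct dimension count shows $\dim \mk{h} = \dim S$, and the closedness of $H$ in the simply connected solvable group $AN$ forces $H \cdot o$ to be closed in $M$, so by connectedness $H \cdot o = S$. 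Finally, $H \cap A = A$ and $H \cap N = \exp(\mk{h} \cap \mk{n})$, so $H = (H \cap A) \ltimes (H \cap N)$. The general canonical-extension case follows by inductively applying the analysis on $B_\Upphi$ and observing that the extension procedure adjoins $\mk{a}_\Upphi \oplus \mk{n}_\Upphi \subseteq \mk{a} \oplus \mk{n}$, which preserves the desired structure.

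The main obstacle is step (i) of case (b): establishing that $\mk{n}^{(\Upphi)}$ normalizes $\mk{w}$. Without it, the candidate $\mk{h}$ would fail to close under the Lie bracket and the argument would collapse. This is precisely where Theorem \ref{thm:B}, established en route, enters decisively: it guarantees that any isometric group acting transitively on $F_\Upphi$---in particular $N^0_{L_\Upphi}(\mk{w})$---must contain the nilpotent Iwasawa part of $L_\Upphi$, which forces the required normalization.
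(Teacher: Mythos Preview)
Your approach for part (a) is essentially the same as the paper's (Proposition~\ref{prop:t.g._from_AN}): pick a maximal abelian subspace inside the Lie triple system, extend it, and choose compatible positive roots via a perturbation argument. The paper separates the noncompact and flat parts of $S$ explicitly, but this is cosmetic.

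For part (b), your overall strategy matches the paper's (Proposition~\ref{prop:singular_orbit_from_AN}), and your candidate $\mk{h}$ is exactly the subalgebra the paper uses (see~\eqref{equation:tangent:orbit:an}). However, there is a genuine gap in step (i). Theorem~\ref{thm:B} does \emph{not} say that $N^0_{L_\Upphi}(\mk{w})$ contains the nilpotent Iwasawa part of $L_\Upphi$ with respect to the \emph{given} decomposition; it only says this holds for \emph{some} Iwasawa decomposition of $\mk{g}'_\Upphi$, which may differ from the one inherited from $\mk{g}$. Consequently you cannot conclude that $\mk{n}^{(\Upphi)}$ normalizes $\mk{w}$ as it stands, and $\mk{h}$ may fail to be a subalgebra (you also need $\mk{a}^\Upphi$ to normalize $\mk{w}$, not just $\mk{n}^{(\Upphi)}$). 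The paper devotes all of Subsection~\ref{sec:admissibility:redundancy} to resolving precisely this mismatch: one can replace $\mk{w}$ by a $G'_j$-congruent subspace $\mk{w}'$ (Corollary~\ref{cor:moving_w_instead}, Theorem~\ref{thm:NC_simplification}) so that $\mk{a}^j \oplus \mk{n}^j \subseteq N_{\mk{m}_j}(\mk{w}')$, without changing the orbit-equivalence class of the action. Only then does your $\mk{h}$ become a genuine subalgebra.

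The same issue recurs in your inductive treatment of canonical extensions: the Iwasawa decomposition of $G'_\Upphi$ produced by the inductive hypothesis need not agree with the one restricted from $G$. The paper handles this by conjugating the ambient Iwasawa decomposition by an element of $G'_\Upphi$ (the argument with $\uppsi$ in the proof of Proposition~\ref{prop:singular_orbit_from_AN}).
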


Since $AN$ acts on $M$ simply transitively, we can think of $M$ as a solvable Lie group with a left-invariant metric. From this viewpoint, Theorem \ref{thm:C} asserts that every submanifold of $M$ of type \eqref{thm:C:a} or \eqref{thm:C:b} can be realized as a Lie subgroup---and is thus a solvmanifold. This makes studying extrinsic and intrinsic geometry of the submanifold considerably easier (see Subsection \ref{sec:preliminaries:extrinsic_geometry_cpc}). We expect that part \eqref{thm:C:a} of the theorem might aid the ongoing efforts to classify totally geodesic submanifolds in symmetric spaces (see \cite{t.g._exceptional_ss} and further references therein). Note that submanifolds in \eqref{thm:C:a} and \eqref{thm:C:b} are examples of homogeneous minimal submanifolds (for \eqref{thm:C:b}, this is shown in \cite[Prop.\hspace{2pt}3]{berndt_bruck}).

The article is organized as follows. Section \ref{sec:preliminaries} consists of preliminaries necessary for understanding everything that follows. In Section \ref{sec:c1-actions}, we talk in depth about the known types and methods of construction of C1-actions on symmetric spaces of noncompact type and discuss the current state of their classification. In Section \ref{sec:admissibility}, we deal with the admissibility condition in the nilpotent construction and prove Theorem \ref{thm:B}. This is the backbone of the whole paper that essentially renders the nilpotent construction solvable. Section \ref{sec:t.g._and_singular_orbit} is dedicated to the proof of Theorem~\ref{thm:C}. Finally, in Section \ref{sec:solving_the_NC}, we resolve the nilpotent construction problem and prove the \hyperlink{thm:main}{Main theorem}. 

\textbf{Acknowledgments.} The authors are indebted to Miguel Dom\'{i}nguez-V\'{a}zquez for his countless comments and suggestions, as well as to Tom\'{a}s Otero for his fruitful feedback on Section \ref{sec:admissibility}. The second author is particularly grateful to Miguel for organizing his visit to Santiago de Compostela in May 2024, which largely kick-started the project that led to this paper. I am also thankful to J\"{u}rgen Berndt for introducing me to the study of C1-actions and guiding me through it over the years of my PhD. Finally, we would like to thank Hiroshi Tamaru, Alberto Rodr\'{i}guez-V\'{a}zquez, and Jos\'{e} Carlos D\'{i}az-Ramos for sharing their insights and valuable advice.

\section{Preliminaries}\label{sec:preliminaries}
This section serves as a collection of preliminaries required to understand the rest of the article. We begin with a brief refresher on some aspects of the theory of symmetric spaces of noncompact type in Subsection \ref{sec:preliminaries:symmetric_spaces_of_nc}. After that, in Subsection \ref{sec:preliminaries:parabolic}, we give a quick overview of the theory of parabolic subgroups of real semisimple Lie groups. Lastly, in Subsection \ref{sec:preliminaries:extrinsic_geometry_cpc}, we talk about a certain type of homogeneous submanifolds in symmetric spaces of noncompact type, whose extrinsic geometry such as shape operators can be studied effectively by means of the Iwasawa and restricted root space decompositions.

\subsection{Symmetric spaces of noncompact type}\label{sec:preliminaries:symmetric_spaces_of_nc}

Let $M$ be a symmetric space of noncompact type represented by a Riemannian symmetric pair $(G,K)$ with $G$ a connected noncompact semisimple Lie group and $K \subset G$ a maximal compact subgroup. We will usually write this simply as ``let $M = G/K$ be a symmetric space of noncompact type".

\begin{rem}
    By representing $M$ as $G/K$, we have a distinguished base point $eK \in M$, whose isotropy group is $K$. However, in certain circumstances, it might be beneficial to change the base point to a new one, say $o \in M$, that is better positioned. In that case, if there is no confusion, we will sometimes commit a slight abuse of notation and reuse the letter $K$ for the isotropy group of the new point $o$. This is equivalent to replacing $K$ with a conjugate subgroup but keeping the letter.
\end{rem}

The \textit{ineffectiveness kernel} of $(G,K)$ is the subgroup $I$ of elements of $G$ that act on $M$ trivially. Note that this is a closed normal subgroup. When unambiguous, we will refer to its Lie algebra $\mk{i}$, which is an ideal of $\mk{g} = \Lie(G)$, as the \textit{ineffectiveness kernel} as well.

\begin{agr}
    From now on, whenever we have a symmetric space $M$ of noncompact type and write it as $M = G/K$, we assume by default that $(G,K)$ is an \textit{almost effective} Riemannian symmetric pair. This means that the ineffectiveness kernel $I \subset G$ is finite. It follows a posteriori that $K$ is compact and $I$ coincides with the center of $G$. We refer to \cite[Sect.\hspace{2pt}2.4.1]{solonenko_thesis} for details.
\end{agr}

Fix a base point $o \in M$. The subgroup $K = G_o$ is the fixed-point set of a unique involutive automorphism $\Uptheta$ of $G$. The corresponding automorphism $\uptheta$ of $\mk{g}$ is a Cartan involution, and we will denote its corresponding Cartan decomposition by $\mk{g} = \mk{k} \oplus \mk{p}$. Here $\mk{k} = \Ker(\Id_\mk{g}\hspace{-1pt} -\hspace{1.5pt} \uptheta) = \Lie(K)$ and $\mk{p} = \Ker(\Id_\mk{g}\hspace{-1pt} +\hspace{1.5pt} \uptheta)$. The reductive complement $\mk{p}$ gets naturally identified with $T_oM$ by sending $X \in \mk{p}$ to the value of its corresponding Killing vector field $\wh{X}$ at $o$. Having fixed $\uptheta$, we have a $K$-invariant inner product on $\mk{g}$ given by $B_\uptheta(X,Y) = -B(X,\uptheta Y)$, where $B$ is the Killing form. Throughout the article, we will use $B_\uptheta$ as a default inner product on $\mk{g}$. We will also denote it by $\cross{-}{-}_{B_\uptheta}$ or even $\cross{-}{-}$ if there is no ground for confusion. Given two subspaces $U \subseteq V \subseteq \mk{g}$, $V \ominus U$ stands for the orthogonal complement of $U$ inside $V$. The summands $\mk{k}$ and $\mk{p}$ are orthogonal with respect to both $B_\uptheta$ and $B$; $B_\uptheta$ coincides with $B$ on $\mk{p} \times \mk{p}$ and equals $-B$ on $\mk{k} \times \mk{k}$. Being a $K$-invariant inner product on $\mk{p} \cong T_oM$, $\restr{B_\uptheta}{\mk{p} \times \mk{p}}$ can be extended to a $G$-invariant Riemannian metric $g_B$ on $M$. This is called the \textit{Killing metric}; it is symmetric, does not depend on the choice of a base point, and can be obtained by rescaling the original metric $g$ along each de Rham factor of $M$---in fact, any $G$-invariant metric on $M$ can be obtained in this way (and is thus also symmetric). The Killing metric enjoys the following property: if two de Rham factors of $M$ are homothetic, then they are isometric with respect to $g_B$. In general, a $G$-invariant metric on $M$ satisfying this property is said to be \textit{almost Killing}. Almost Killing metrics have the largest isometry group (all equal to $I(M,g_B)$), of which $I(M,g)$ is an open subgroup; in particular, we always have $I^0(M,g) = I^0(M,g_B)$. It can be shown that $I(M,g_B)$ is naturally isomorphic to $\Aut(\mk{g})$ (see \cite[Sect.\hspace{2pt}3]{solonenko_automorphisms} for details).

Pick a maximal abelian subspace $\mk{a}$ in $\mk{p}$. Given $\upalpha \in \mk{a}^*$, we let
$$
\mk{g}_\upalpha = \set{X \in \mk{g} \mid [H,X] = \upalpha(H)X \; \text{for every} \; H \in \mk{a}},
$$
and we also put $\Upsigma = \left\{\upalpha \in \mk{a}^* \mysetminus \{0\} \mid \mk{g}_\upalpha \ne \{0\}\right\}$. This set is called the restricted root system of $\mk{g}$ (or $M$) and its elements are called restricted roots. These induce a restricted root space decomposition $\mk{g} = \mk{g}_0 \oplus \bigoplus_{\upalpha \in \Upsigma} \mk{g}_\upalpha$. The summands of this decomposition are all pairwise orthogonal with respect to $B_\uptheta$, and we can also decompose $\mk{g}_0 = \mk{k}_0 \oplus \mk{a}$, where $\mk{k}_0 = Z_\mk{k}(\mk{a})$. The dimension of $\mk{g}_\upalpha$ is called the \textit{multiplicity} of the root $\upalpha$. It is customary to denote $\Upsigma_0 = \Upsigma \cup \set{0}$. For any $\upalpha, \upbeta \in \Upsigma_0, \, \upalpha \ne - \upbeta$, we have $[\mk{g}_\upalpha, \mk{g}_\upbeta] = \mk{g}_{\upalpha + \upbeta}$. This implies that the adjoint representation of $\mk{k}_0$ on $\mk{g}$ preserves each root space $\mk{g}_\upalpha$. We can restrict $B_\uptheta$ to an inner product on $\mk{a}$ and use it to identify $\mk{a}$ with $\mk{a}^*$ in a standard way, which in turn induces an inner product on $\mk{a}^*$. With respect to this inner product, $\Upsigma$ becomes a root system in $\mk{a}^*$. We can pull each root $\upalpha \in \Upsigma$ back under the isomorphism $\mk{a} \isoto \mk{a}^*$ and define a vector $H_\upalpha \in \mk{a}$; in other words, $H_\upalpha$ is determined by $\cross{H_\upalpha}{H} = \upalpha(H) \defqe \bilin{\upalpha}{H}$ for every $H \in \mk{a}$.

We make a choice of positive roots $\Upsigma^+ \subset \Upsigma$ and write $\Uplambda = \set{\upalpha_1, \ldots, \upalpha_r}$ for the corresponding set of simple roots. This allows us to define a nilpotent subalgebra $\mk{n} = \bigoplus_{\upalpha \in \Upsigma^+} \mk{g}_\upalpha$, which leads to a vector space decomposition $\mk{g} = \mk{k} \oplus \mk{a} \oplus \mk{n}$, also known as an Iwasawa decomposition. We will write $\Upsigma^-$ for the set $-\Upsigma^+$ of negative roots and $\DD$ for the Dynkin diagram of $\Upsigma$. Any root $\upalpha \in \Upsigma^+$ can be written as $\upalpha = \sum_{i=1}^r n_i \upalpha_i$ with $n_i \in \Z_{\ge 0}$, and the positive integer $\height(\upalpha)= \sum_{i=1}^r n_i$ is called the \textit{height} of $\upalpha$. Since $\upalpha_1, \ldots, \upalpha_r$ form a basis for $\mk{a}^*$, we can introduce the dual basis $H^1, \ldots, H^r$ for $\mk{a}$; these vectors are determined by $\cross{H^i}{H_{\upalpha_j}} = \bilin{H^i}{\upalpha_j} = \updelta_{ij}$. By definition, if $\upalpha = \sum_{i=1}^r n_i \upalpha_i$, then $\bilin{H^i}{\upalpha} = n_i$. Given a root $\upalpha \in \Upsigma$, we denote $\mk{k_\upalpha} = \mk{k} \cap (\mk{g}_\upalpha \oplus \mk{g}_{-\upalpha})$ and $\mk{p_\upalpha} = \mk{p} \cap (\mk{g}_\upalpha \oplus \mk{g}_{-\upalpha})$. This leads to decompositions $\mk{k} = \mk{k}_0 \oplus \bigoplus_{\upalpha \in \Upsigma^+} \mk{k}_\upalpha$ and $\mk{p} = \mk{a} \oplus \bigoplus_{\upalpha \in \Upsigma^+} \mk{p}_\upalpha$. Notice that we have $\mk{g}_\upalpha \oplus \mk{g}_{-\upalpha} = \mk{k}_\upalpha \oplus \mk{p}_\upalpha$ for every $\upalpha \in \Upsigma$.

We write $A$ and $N$ for the connected Lie subgroups of $G$ corresponding to $\mk{a}$ and $\mk{n}$, respectively. These subgroups are closed and simply connected. The multiplication in $G$ yields a diffeomorphism $K \times A \times N \isoto G$, which is also called an Iwasawa decomposition. The Lie group $A$ normalizes $N$, so the semidirect product $A \ltimes N$ is a solvable subgroup of $G$. (We will normally shorten $A \ltimes N$ to just $AN$.) The action $AN \curvearrowright M$ is simply transitive and the metric pulled back from $M$ to $AN$ along the diffeomorphism $AN \isoto M, \, an \mapsto an \cdot o,$ is left-invariant, hence $M$ can be thought of as a simply connected solvable Lie group with a left-invariant metric---we call this the \textit{solvable model} of $M$. The orbit $A \cdot o$ is a maximal flat in $M$ and it is isometric to the Euclidean space $\E^r$, where $r = \dim(\mk{a})$ is the rank of $M$.

\subsection{Parabolic subalgebras and subgroups}\label{sec:preliminaries:parabolic}

The theory of parabolic subgroups of real semisimple Lie groups plays a pivotal role in the geometry of symmetric spaces of noncompact type because it yields a number of important decomposition results for the space itself, as well as for its isometry group. Among other things, these decompositions can be used to construct various homogeneous objects---like submanifolds or foliations---on the ambient space. They also underpin the two methods of constructing C1-actions with a non-totally-geodesic singular orbit. Here we lay out a brief exposition of the theory and refer to \cite{berndt_tamaru_cohomogeneity_one,solonenko_hypersurfaces,solonenko_thesis} for more details. For a deeper introduction, see \cite[Ch.\hspace{0.2pt}VII, Sect.\hspace{2pt}7]{knapp} and \cite[Sect.\hspace{2pt}13.2]{submanifolds_&_holonomy}.

Let $M = G/K$ be a symmetric space of noncompact type with a fixed choice of $o \in M, \mk{a} \subset \mk{p},$ and $\Upsigma^+ \subset \Upsigma$ as above. Fix any subset~$\Upphi \subseteq \Uplambda$.

\begin{itemize}
    \item We write $\Upsigma_\Upphi$ for the root subsystem of $\Upsigma$ generated by $\Upphi$. It inherits a choice of positive roots $\Upsigma_\Upphi^+ = \Upsigma_\Upphi \cap \Upsigma^+$.
    \item Next, define $\mk{a}_\Upphi = \bigcap_{\upalpha_i \in \Upphi} \Ker(\upalpha_i) = \bigoplus_{\upalpha_j \in \Uplambda \mysetminus \Upphi} \R H^j$ and $\mk{a}^\Upphi = \mk{a} \ominus \mk{a}_\Upphi = \bigoplus_{\upalpha_i \in \Upphi} \R H_{\upalpha_i}$.
    \item The subset $\Updelta_\Upphi = \Upsigma^+ \mysetminus \Upsigma_\Upphi^+$ can be split into parts: $\Updelta_\Upphi = \bigsqcup_{\upnu \ge 1} \Updelta_\Upphi^\upnu$, where $\Updelta_\Upphi^\upnu$ consists of the roots $\uplambda \in \Updelta_\Upphi$ with $\bilin{H^\Upphi}{\uplambda} = \upnu$, and $H^\Upphi = \sum_{\upalpha_j \in \Uplambda \mysetminus \Upphi} H^j$.
    \item Let $\mk{n}_\Upphi = \bigoplus_{\uplambda \in \Updelta_\Upphi} \mk{g}_\uplambda$. This nilpotent subalgebra is graded: $\mk{n}_\Upphi = \bigoplus_{\upnu \ge 1} \mk{n}_\Upphi^\upnu$, where $\mk{n}_\Upphi^\upnu = \bigoplus_{\uplambda \in \Updelta_\Upphi^\upnu} \mk{g}_\uplambda$ is the eigenspace of $\ad(H^\Upphi)$ in $\mk{n}_\Upphi$ with eigenvalue $\upnu$.
    \item Consider $\mk{l}_\Upphi = \mk{g}_0 \oplus \bigoplus_{\upalpha \in \Upsigma_\Upphi} \mk{g}_\upalpha$. This reductive subalgebra of $\mk{g}$ centralizes $\mk{a}_\Upphi$, normalizes $\mk{n}_\Upphi$, and preserves the grading on $\mk{n}_\Upphi$. In fact, one has $\mk{l}_\Upphi = Z_\mk{g}(\mk{a}_\Upphi)$.
    \item $\mk{q}_\Upphi = \mk{l}_\Upphi \loplus \mk{n}_\Upphi$ is a parabolic subalgebra of $\mk{g}$, and this semidirect sum is called a Chevalley decomposition. Every parabolic subalgebra of $\mk{g}$ is inner-conjugate to $\mk{q}_\Upphi$ for some choice of $\Upphi$. One can show that $\mk{n}_\Upphi$ is the nilradical of $\mk{q}_\Upphi$.
    \item Going further, let $\mk{m}_\Upphi = \mk{l}_\Upphi \ominus \mk{a}_\Upphi = \mk{k}_0 \oplus \mk{a}^\Upphi \oplus \bigoplus_{\upalpha \in \Upsigma_\Upphi} \mk{g}_\upalpha \subseteq \mk{l}_\Upphi$. This is also a reductive subalgebra of $\mk{g}$, and we have a direct sum decomposition $\mk{l}_\Upphi = \mk{m}_\Upphi \oplus \mk{a}_\Upphi$. The induced splitting $\mk{q}_\Upphi = (\mk{m}_\Upphi \oplus \mk{a}_\Upphi) \loplus \mk{n}_\Upphi$ is called a Langlands decomposition. We will normally omit the parentheses and simply write $\mk{m}_\Upphi \oplus \mk{a}_\Upphi \loplus \mk{n}_\Upphi$.
    \item The $\mk{k}$- and $\mk{p}$-parts of $\mk{m}_\Upphi$ are $\mk{k}_\Upphi = \mk{k}_0 \oplus \bigoplus_{\upalpha \in \Upsigma_\Upphi^+} \mk{k}_\upalpha$ and $\mk{b}_\Upphi = \mk{a}^\Upphi \oplus \bigoplus_{\upalpha \in \Upsigma_\Upphi^+} \mk{p}_\upalpha$, respectively.
    \item Being reductive, $\mk{m}_\Upphi$ is the direct sum of its center $\mk{z}_\Upphi \subseteq \mk{k}_0$ and its semisimple part $\mk{g}_\Upphi = [\mk{m}_\Upphi, \mk{m}_\Upphi] = [\mk{l}_\Upphi,\mk{l}_\Upphi]$.
    \item $\mk{g}_\Upphi$ splits as the direct sum of its compact and noncompact semisimple parts. The latter can be described as $\mk{g}'_\Upphi = [\mk{b}_\Upphi, \mk{b}_\Upphi] \oplus \mk{b}_\Upphi$, or alternatively, as the subalgebra of $\mk{g}$ generated by $\mk{g}_\upalpha, \, \upalpha \in \Upsigma_\Upphi$. The compact semisimple part of $\mk{g}_\Upphi$ is given by $Z_{\mk{k}_0}(\mk{g}'_\Upphi) \ominus \mk{z}_\Upphi$. The sum of the center of $\mk{m}_\Upphi$ and its compact semisimple part can thus be described as $Z_{\mk{k}_0}(\mk{g}'_\Upphi) = Z_{\mk{k}_0}(\mk{b}_\Upphi)$.
    \item $\mk{g}'_\Upphi$ comes with an Iwasawa decomposition: $\mk{g}'_\Upphi = \mk{k}^\Upphi \oplus \mk{a}^\Upphi \oplus \mk{n}^\Upphi$, where $\mk{k}^\Upphi = \mk{g}'_\Upphi \cap \mk{k}_\Upphi = \mk{g}'_\Upphi \cap \mk{k}$ and $\mk{n}^\Upphi = \mk{g}'_\Upphi \cap \mk{n} = \bigoplus_{\upalpha \in \Upsigma_\Upphi^+} \mk{g}_\upalpha$.
    \item We denote the $\mk{k}_0$-part of $\mk{g}'_\Upphi$ by $\mk{k}^\Upphi_0$. Note that \label{k_0_decomposition}
    $$
    \mk{k}_0 = Z_{\mk{k}_0}(\mk{g}'_\Upphi) \oplus \mk{k}^\Upphi_0 = \mk{z}_\Upphi \oplus (Z_{\mk{k}_0}(\mk{g}'_\Upphi) \ominus \mk{z}_\Upphi) \oplus \mk{k}^\Upphi_0.
    $$
    This is a direct sum of Lie algebras, and the summands are pairwise orthogonal with respect to $B_\uptheta$ (or $B$).
    \item The connected (closed) Lie subgroups of $G$ corresponding to $\mk{g}'_\Upphi, \mk{a}_\Upphi,$ and $\mk{n}_\Upphi$ are denoted by $G'_\Upphi, A_\Upphi,$ and $N_\Upphi$, respectively. Furthermore, we define:
    $$
    \hspace{6.5ex} L_\Upphi = Z_G(\mk{a}_\Upphi), \quad Q_\Upphi = L_\Upphi \ltimes N_\Upphi = N_G(\mk{q}_\Upphi), \quad K_\Upphi = K \cap L_\Upphi, \quad M_\Upphi = K_\Upphi G'_\Upphi.
    $$
    These are closed subgroups of $G$ with Lie algebras $\mk{l}_\Upphi, \mk{q}_\Upphi, \mk{k}_\Upphi,$ and $\mk{m}_\Upphi$, respectively. We have a direct product decomposition $L_\Upphi = M_\Upphi \times A_\Upphi$. The decompositions $Q_\Upphi = L_\Upphi \ltimes N_\Upphi$ and $Q_\Upphi = (M_\Upphi \times A_\Upphi) \ltimes N_\Upphi \defqe M_\Upphi \times A_\Upphi \ltimes N_\Upphi$ are also called Chevalley and Langlands decompositions of the parabolic subgroup $Q_\Upphi$, respectively. The adjoint representation of $L_\Upphi$ on $\mk{n}_\Upphi$ respects its grading. If we write $K^\Upphi, A^\Upphi,$ and $N^\Upphi$ for the connected Lie subgroups of $G'_\Upphi$ with Lie algebras $\mk{k}^\Upphi, \mk{a}^\Upphi,$ and $\mk{n}^\Upphi$, respectively, then we have a global Iwasawa decomposition $G'_\Upphi = K^\Upphi A^\Upphi N^\Upphi$.
    \item The orbit $B_\Upphi = G'_\Upphi \ccdot o = M_\Upphi \ccdot o$ is a \textit{boundary component} of $M$; it is a totally geodesic submanifold (corresponding to the Lie triple system $\mk{b}_\Upphi$) and itself a symmetric space of noncompact type and rank $r_\Upphi = |\Upphi|$. We can think of its Dynkin diagram $\DD_\Upphi$ as the subdiagram of $\DD$ spanned by the nodes corresponding to the roots in $\Upphi$. This space is represented by the almost effective Riemannian symmetric pair $(G'_\Upphi, K^\Upphi)$, as well as by the Riemannian symmetric pair $(M^0_\Upphi, K^0_\Upphi)$. Note that the latter has ineffectiveness kernel $Z_{\mk{k}_0}(\mk{g}'_\Upphi)$, which is compact. 
    \item The Langlands decomposition of $Q_\Upphi$ induces---via the transitive action $Q_\Upphi \curvearrowright M$---a horospherical decomposition $B_\Upphi \times A_\Upphi \times N_\Upphi \cong M$ (just a diffeomorphism). We can summarize this with the following diagram:
    \begin{equation*}
    \begin{tikzcd}
    M_\Upphi \times A_\Upphi \ltimes N_\Upphi \ar[r, "\sim" yshift = -0.2ex] \ar[d, twoheadrightarrow] & Q_\Upphi \ar[d, twoheadrightarrow] \ar[dr, twoheadrightarrow] \\
    B_\Upphi \times A_\Upphi \times N_\Upphi \ar[r, "\sim" yshift = -0.2ex] & Q_\Upphi/K_\Upphi \ar[r, "\sim" yshift = -0.2ex] & M
    \end{tikzcd}
    \end{equation*}
    With respect to the Langlands and horospherical decompositions, the action of $Q_\Upphi$ on $M$ can be written as:
    \begin{gather}\label{horospherical_decomposition_action}
    M_\Upphi \times A_\Upphi \ltimes N_\Upphi \curvearrowright B_\Upphi \times A_\Upphi \times N_\Upphi, \nonumber \\
    (m,a,n) \cdot (m' \cdot o,a',n') = ((mm') \cdot o,aa',(m'a')^{-1} n (m'a') n').
    \end{gather}
    \item The submanifold $F_\Upphi = L_\Upphi \cdot o \cong B_\Upphi \times A_\Upphi \subseteq M$ is also totally geodesic and is a symmetric space of rank $r$ with an $(r-r_\Upphi)$-dimensional flat factor; its splitting into the product of $B_\Upphi$ and $A_\Upphi \cong A_\Upphi \cdot o$ is a Riemannian product decomposition. For lack of a better word, we call $F_\Upphi$ an \textit{extended boundary component}. It corresponds to the Lie triple system $\mk{f}_\Upphi = \mk{b}_\Upphi \oplus \mk{a}_\Upphi$.
\end{itemize}

We will be mostly interested in the situations where $\Upphi$ is of the form $\Uplambda \mysetminus \set{\upalpha_j}$ for some simple root $\upalpha_j$. In that case, in all of the objects defined above, we simply replace the subscript or superscript $\Upphi$ with $j$; we also write $\Uplambda_j = \Uplambda \mysetminus \set{\upalpha_j}$. We then have $H^\Upphi = H^j$, and for each $\upalpha \in \Upsigma^+$, $\bilin{\upalpha}{H^j}$ gives the coefficient corresponding to $\upalpha_j$ in the expression of $\upalpha$ with respect to the simple system $\Uplambda$. Hence, the set $\Updelta_j^1$ can be described as
\begin{equation}\label{equation:levelone:string}
\Updelta_j^1 = \Bigl\{ \upalpha_j + \sum_{i \ne j} n_i \upalpha_i \in \Upsigma^+ \mid n_i \in \Z_{\ge 0} \Bigr\}.   
\end{equation}
In Section \ref{sec:solving_the_NC} we will need to determine explicitly the set $\Updelta_j^1$ for some particular choices of $\upalpha_j$. In order to do so, we will make use of the following generalization of the classical notion of a root string. Let $\Upphi \subseteq \Uplambda$ once again be an arbitrary subset of simple roots, and let $\uplambda \in \Upsigma$ be any root. We define the \emph{$\Upphi$-string of $\uplambda$} as the set of all the elements of $\Upsigma_0$ of the form $\uplambda + \sum_{\upalpha_i \in \Upphi} n_i \upalpha_i$ with $n_i \in \mathbb{Z}$. This notion of string was introduced by the first author in~\cite{sanmartin-strings}, where he also calculated all possible $\Upphi$-strings for all choices of $\Upsigma, \Upphi,$ and $\uplambda$. The base idea of that computation is simple: if $\uplambda$ is the root of minimum height in its $\Upphi$-string, then $\Upphi \cup \{ \uplambda \}$ is a simple system for the root subsystem $\Upsigma_{\Upphi, \uplambda} = \vspan_\Z(\Upphi \cup \{ \uplambda \}) \cap \Upsigma$ of $\Upsigma$. Note that for such $\uplambda$, every root $\uplambda + \sum_{\upalpha_i \in \Upphi} n_i \upalpha_i$ in the $\Upphi$-string has all $n_i$ nonnegative (see \cite[Prop.\hspace{2pt}2.49]{knapp}). The argument is then based on calculating explicitly the $\Upphi$-string of $\uplambda$ for any possible pair~$(\Upsigma, \Upsigma_{\Upphi, \uplambda})$. Under this new perspective, $\Updelta_j^1$ is nothing but the $\Uplambda_j$-string of $\upalpha_j$, as one can see from~\eqref{equation:levelone:string} and the fact that $\upalpha_j$ has minimum height in its $\Uplambda_j$-string. Hence, for any root system $\Upsigma$ and any choice of $j$, we have an explicit description of $\Updelta_j^1$. We will make great use of that in Section \ref{sec:solving_the_NC}.

\subsection{The extrinsic geometry of singular orbits}\label{sec:preliminaries:extrinsic_geometry_cpc}

As we have explained, any symmetric space $M$ of noncompact type can be regarded as a solvable Lie group $AN$ endowed with a certain left-invariant metric. One of the key underpinnings of this work is the fact that the singular orbit of a nilpotent construction action\footnote{Recall from Theorem \ref{thm:C} that this is in fact true for any C1-action. We will show this in Section \ref{sec:t.g._and_singular_orbit}.} on $M$ can always be realized as a Lie subgroup of $AN$. This property singles out a very special class of submanifolds of $M$, whose geometry is tied particularly closely to the structure of the isometry Lie algebra $\mk{g}$. In the following lines, we introduce a few tools that will help us study the extrinsic geometry of such submanifolds.

Let $M = G/K$ be a symmetric space of noncompact type. In this subsection, we assume that the metric on $M$ is Killing (see Remark \ref{rem:choice_of_metric}). As usual, we fix a point $o \in M$, a maximal abelian subspace $\mk{a} \subset \mk{p}$, and a set of positive roots $\Upsigma^+ \subset \Upsigma$. These choices give rise to an Iwasawa decomposition $G = KAN$ and an identification $AN \cong M$. The isomorphism $\mk{a} \oplus \mk{n} \isoto T_oM \cong \mk{p}$ is given simply by the projection onto $\mk{p}$ along $\mk{k}$. As follows from the proof of~\cite[Prop.\hspace{2pt}4.4]{tamaru_parabolic_Einstein}, the induced left-invariant metric on $AN$ is related to the inner product on $\mk{g}$ via the equation
\begin{equation}\label{equation:inner:relation:b:theta:an}
\cross{X}{Y}_{AN} = \cross{X_\mk{a}}{Y_\mk{a}}_{B_\uptheta} +\frac{1}{2} \cross{X_\mk{n}}{Y_\mk{n}}_{B_\uptheta} \quad \text{for any $X$, $Y \in \mk{a} \oplus \mk{n}$},
\end{equation}
where $\mk{a}$ and $\mk{n}$ in the subscript denote the $\mk{a}$- and $\mk{n}$-components of a vector, respectively. Since $\mk{a}$ and $\mk{n}$ are mutually orthogonal with respect to both inner products, we see from \eqref{equation:inner:relation:b:theta:an} that $\cross{X}{Y}_{AN} = \cross{X}{Y}_{B_\uptheta}$ whenever $X$ or $Y$ lies in $\mk{a}$; similarly, $\cross{X}{Y}_{AN} = \frac{1}{2}\cross{X}{Y}_{B_\uptheta}$ if $X$ or $Y$ lies in $\mk{n}$. The Levi-Civita connection on $AN$ also admits a neat expression. First, we need the following simple equality:
\begin{equation}\label{equation:inner:b:theta}
\cross{\ad(X) Y}{Z}_{B_\uptheta} = - \cross{Y}{\ad(\uptheta X) Z}_{B_\uptheta} \quad \text{for any $X$, $Y$, $Z \in \mk{g}$}.
\end{equation}
Combining this with the Koszul formula and \eqref{equation:inner:relation:b:theta:an} leads to:
\begin{equation}\label{equation:levi:civita:connection:an}
\cross{\nabla_X Y}{Z}_{AN} = \frac{1}{4} \cross{[X,Y] + [\uptheta X, Y] - [X, \uptheta Y]}{Z}_{B_\uptheta} \quad \text{for any $X$, $Y$, $Z \in \mk{a} \oplus \mk{n}$}.
\end{equation}
Now, let $H$ be a connected Lie subgroup of $AN$ with Lie algebra $\mk{h}$. Under the identification between $M$ and $AN$, the orbit $H \cdot o$ corresponds to $H$, the tangent space $T_o(H \cdot o)$ becomes $\mk{h}$, and the normal space $N_o(H \cdot o)$ becomes $\mk{v} = (\mk{a} \oplus \mk{n}) \ominus \mk{h}$, where the orthogonal complement is taken with respect to $\cross{-}{-}_{AN}$. We would like to have a handy description of the shape operators of $H \cdot o$. Since this is a homogeneous submanifold, it suffices to examine its shape operators only at $o$.
\begin{lem}\label{lem:AN_shape_operators_general}
    Given $\upxi \in \mk{v}$, the shape operator $\mc{A}_\upxi$ of $H \cdot o$ at $o$ is given by the following formula:
    \begin{equation}\label{AN_shape_operators_general}
        \cross{\mc{A}_\upxi X}{Y}_{AN} = \frac{1}{4}\cross{[\upxi,X] - [\uptheta\upxi,X]}{Y}_{B_\uptheta} \quad \text{for any $X, Y \in \mk{h}$}.
    \end{equation} 
\end{lem}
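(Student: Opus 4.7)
The plan is to reduce the shape-operator computation to the Levi-Civita formula \eqref{equation:levi:civita:connection:an} already derived on $AN$, and then simplify the resulting expression using the skew-symmetry identity \eqref{equation:inner:b:theta} together with the hypothesis that $\mk{h}$ is a subalgebra of $\mk{a} \oplus \mk{n}$. I extend $X$ and $Y$ to left-invariant vector fields on $AN$; since $X, Y \in \mk{h} = \Lie(H)$, these extensions are tangent to $H \cdot o$ at every point, so
\begin{equation*}
\cross{\mc{A}_\upxi X}{Y}_{AN} \;=\; \cross{\II(X,Y)}{\upxi}_{AN} \;=\; \cross{\nabla_X Y}{\upxi}_{AN},
\end{equation*}
and by \eqref{equation:levi:civita:connection:an},
\begin{equation*}
4\cross{\mc{A}_\upxi X}{Y}_{AN} \;=\; \cross{[X,Y]}{\upxi}_{B_\uptheta} \,+\, \cross{[\uptheta X, Y]}{\upxi}_{B_\uptheta} \,-\, \cross{[X, \uptheta Y]}{\upxi}_{B_\uptheta}.
\end{equation*}

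I then handle the three summands separately. The first vanishes: because $\mk{h}$ is a Lie subalgebra, $[X,Y] \in \mk{h}$, and it lies in $\mk{n}$ since $[\mk{a} \oplus \mk{n},\, \mk{a} \oplus \mk{n}] \subseteq \mk{n}$. As $\upxi \in \mk{v}$ is $\cross{\cdot}{\cdot}_{AN}$-orthogonal to $\mk{h}$, we get $\cross{[X,Y]}{\upxi}_{AN} = 0$, and \eqref{equation:inner:relation:b:theta:an} then forces $\cross{[X,Y]}{\upxi}_{B_\uptheta} = 0$. For the second summand, \eqref{equation:inner:b:theta} applied with $\uptheta X$ in place of $X$ gives $\cross{[\uptheta X, Y]}{\upxi}_{B_\uptheta} = -\cross{Y}{[X, \upxi]}_{B_\uptheta} = \cross{[\upxi, X]}{Y}_{B_\uptheta}$. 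For the third, \eqref{equation:inner:b:theta} yields $\cross{[X, \uptheta Y]}{\upxi}_{B_\uptheta} = -\cross{\uptheta Y}{[\uptheta X, \upxi]}_{B_\uptheta}$; a short check (using that $\uptheta$ is a $B$-preserving involution) shows that $B_\uptheta$ is $\uptheta$-invariant, and since $\uptheta$ is a Lie algebra automorphism this rewrites as $-\cross{Y}{[X, \uptheta \upxi]}_{B_\uptheta} = \cross{[\uptheta \upxi, X]}{Y}_{B_\uptheta}$. Assembling the three contributions produces \eqref{AN_shape_operators_general}.

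I do not anticipate any serious obstacle here: the argument is essentially a bookkeeping calculation built on the two identities supplied earlier. The point worth emphasizing is that the subalgebra condition on $\mk{h}$ is exactly what kills the term $\cross{[X,Y]}{\upxi}_{B_\uptheta}$, which would otherwise be symmetric in $X$ and $Y$; it is precisely this cancellation that produces the asymmetric final formula with $X$ inside the bracket and $Y$ outside. If $\mk{h}$ were merely a linear subspace, one would only recover the full Koszul-type expression, without the clean shape-operator description.
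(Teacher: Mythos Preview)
Your proof is correct and uses essentially the same ingredients as the paper: the Levi--Civita formula \eqref{equation:levi:civita:connection:an}, the skew-symmetry identity \eqref{equation:inner:b:theta}, and the key vanishing $\cross{[X,Y]}{\upxi}_{B_\uptheta}=0$ coming from $[X,Y]\in\mk{h}\cap\mk{n}$. The only cosmetic difference is that the paper starts from $\mc{A}_\upxi X=-(\nabla_X\upxi)^\top$ and computes $\cross{\nabla_X\upxi}{Y}_{AN}$, which lands directly on $\frac14\cross{[\upxi,X]-[\uptheta X,\upxi]-[\uptheta\upxi,X]}{Y}_{B_\uptheta}$ with two of the three terms already in the desired form; you instead compute $\cross{\nabla_X Y}{\upxi}_{AN}$ and then move $\upxi$ across two brackets via \eqref{equation:inner:b:theta}. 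Both routes hinge on the same subalgebra observation, so they are equivalent.
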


\begin{proof}
    By definition, $\mc{A}_\upxi X = -(\nabla_X \upxi)^\top$, where $\top$ denotes the orthogonal projection to $\mk{h}$ along $\mk{v}$. Given $Y \in \mk{h}$, we use \eqref{equation:levi:civita:connection:an} to compute:
    $$
    \cross{\mc{A}_\upxi X}{Y}_{AN} = -\cross{(\nabla_X \upxi)^\top}{Y}_{AN} = -\cross{\nabla_X \upxi}{Y}_{AN} = \frac{1}{4} \cross{[\upxi,X] - [\uptheta X, \upxi] - [\uptheta \upxi,X]}{Y}_{B_\uptheta}.
    $$
    By comparing this to \eqref{AN_shape_operators_general}, it only remains to show that $\cross{[\uptheta X, \upxi]}{Y}_{B_\uptheta} = 0$. We calculate:
    $$
    \cross{[\uptheta X, \upxi]}{Y}_{B_\uptheta} = -\cross{\upxi}{[X,Y]}_{B_\uptheta} = -\cross{\upxi}{[X,Y]}_{AN} = 0.
    $$
    Here the first equality follows from \eqref{equation:inner:b:theta}, the second from the fact that $[X,Y] \in \mk{n}$, and the last one from $[X,Y] \in \mk{h}$. This completes the proof.
\end{proof}
Formula \eqref{AN_shape_operators_general} can be simplified still further provided we know some additional information about $\mk{h}$: for example, if $\mk{h} = (\mk{h} \cap \mk{a}) \loplus (\mk{h} \cap \mk{n})$ (which is often the case, see Theorem \ref{thm:C}) or if $\mk{h}$ contains nonzero vectors lying in a single root space. We will see a particular instance of that in the context of the nilpotent construction in Section \ref{sec:solving_the_NC} (see Proposition \ref{proposition:shape:orbit:an}). As a preparation for that, we need the following two results, which provide an insight into how different root spaces behave with respect to the Lie bracket.

\begin{lem}\label{lemma:a:k0} 
 Let $\upalpha \in \Sigma^{+}$, and let $X,Y \in \mk{g}_{\upalpha}$ be orthogonal. Then:
\begin{enumerate}[\normalfont (a)]
\item\customlabel{lemma:a:k0:1}{a} $[\uptheta X, X] = 2 ||X||^2_{AN} H_{\upalpha} = ||X||^2_{B_{\uptheta}} H_{\upalpha}$.
\item\customlabel{lemma:a:k0:2}{b} $[\uptheta X, Y] \in \mk{k}_0$.
\end{enumerate} 
\end{lem}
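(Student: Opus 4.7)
The plan is to show both statements by first pinning down where the brackets live using the root space structure and the Cartan involution, and then identifying $\mk{a}$-components by computing $B_\uptheta$-pairings with arbitrary $H \in \mk{a}$. Since $\uptheta \mk{g}_\upalpha = \mk{g}_{-\upalpha}$, both $[\uptheta X, X]$ and $[\uptheta X, Y]$ automatically lie in $[\mk{g}_{-\upalpha}, \mk{g}_\upalpha] \subseteq \mk{g}_0 = \mk{k}_0 \oplus \mk{a}$, so the only work is to determine the $\mk{a}$-component in each case.

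For part \eqref{lemma:a:k0:1}, I first observe that applying $\uptheta$ to $[\uptheta X, X]$ yields $[X, \uptheta X] = -[\uptheta X, X]$, so $[\uptheta X, X]$ is a $(-1)$-eigenvector of $\uptheta$ inside $\mk{g}_0$; hence it sits in $\mk{a}$. To compute this vector, I pair with arbitrary $H \in \mk{a}$: using that $\uptheta H = -H$ and the $\ad$-invariance of $B$, together with $[H, \uptheta X] = -\upalpha(H) \uptheta X$, I get
\[
\cross{[\uptheta X, X]}{H}_{B_\uptheta} = -B([\uptheta X, X], \uptheta H) = B(X, [H, \uptheta X]) \cdot (-1) \cdot (-1) = \|X\|_{B_\uptheta}^2 \, \upalpha(H),
\]
which by definition of $H_\upalpha$ means $[\uptheta X, X] = \|X\|_{B_\uptheta}^2 H_\upalpha$. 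Finally, \eqref{equation:inner:relation:b:theta:an} gives $\|X\|_{B_\uptheta}^2 = 2\|X\|_{AN}^2$, yielding the second equality in \eqref{lemma:a:k0:1}.

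For part \eqref{lemma:a:k0:2}, the same computation, replacing the second $X$ by $Y$, produces
\[
\cross{[\uptheta X, Y]}{H}_{B_\uptheta} = \upalpha(H)\, \cross{Y}{X}_{B_\uptheta} = 0
\]
for every $H \in \mk{a}$, because $X$ and $Y$ are orthogonal (note that $B_\uptheta$- and $\cross{-}{-}_{AN}$-orthogonality inside $\mk{n}$ coincide up to the factor $1/2$). Hence the $\mk{a}$-component of $[\uptheta X, Y]$ vanishes, and $[\uptheta X, Y] \in \mk{k}_0$.

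There is no serious obstacle here; the whole argument is a routine manipulation of the Cartan involution, root space grading, and invariance of the Killing form. The only subtlety to keep track of is the factor of two distinguishing $\cross{-}{-}_{AN}$ from $\cross{-}{-}_{B_\uptheta}$ on $\mk{n}$, which is precisely what produces the two equivalent expressions in \eqref{lemma:a:k0:1}.
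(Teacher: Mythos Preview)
Your proof is correct. The paper does not give its own argument for this lemma but simply cites \cite[Lem.\hspace{2pt}2.3]{berndt_SL_CPC}; your direct computation via the root space grading, the behavior of $\uptheta$ on $\mk{g}_0$, and pairing against arbitrary $H \in \mk{a}$ is exactly the standard argument one finds in that reference.
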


For a proof, see \cite[Lem.\hspace{2pt}2.3]{berndt_SL_CPC}. Note that we do not need to specify whether $X$ and $Y$ are orthogonal with respect to $\cross{-}{-}_{AN}$ or $B_\uptheta$, as those are proportional on $\mk{g}_\upalpha$.

\begin{lem}\label{lem:string_injective}
    Let $\upalpha,\upbeta \in \Upsigma$ be non-proportional roots and suppose that the $\upbeta$-string of $\upalpha$ has the form $\set{\upalpha, \upalpha + \upbeta, \ldots, \upalpha + m\upbeta}$ for some positive integer $m$. Then for any nonzero $X \in \mk{g}_\upbeta$ and any $1 \le k \le m$, the operator $\ad(X)^k \colon \mk{g}_\upalpha \to \mk{g}_{\upalpha + k\upbeta}$ is injective.
\end{lem}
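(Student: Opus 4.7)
The plan is to reduce the lemma to an application of $\mk{sl}_2$ representation theory on the finite-dimensional subspace
\[
V = \bigoplus_{k=0}^{m}\mk{g}_{\upalpha+k\upbeta},
\]
using a standard $\mk{sl}_2$-triple built from $X$ and $\uptheta X$.

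First I would use Lemma~\ref{lemma:a:k0}\ref{lemma:a:k0:1} to write $[\uptheta X, X] = \|X\|^2_{B_\uptheta}\, H_\upbeta$, which is a nonzero multiple of $H_\upbeta$ because $X \ne 0$. Rescaling $X$ and $-\uptheta X$ by a common positive scalar then produces a standard $\mk{sl}_2$-triple $(H,E,F)$ in $\mk{g}$ with $H = 2 H_\upbeta/|\upbeta|^2$, $E$ a positive scalar multiple of $X$, and $F$ a scalar multiple of $-\uptheta X$. The relations $[H,E]=2E,\ [H,F]=-2F,\ [E,F]=H$ are immediate from $[H_\upbeta,X]=|\upbeta|^2 X$ and the Lemma.

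Next I would verify that $V$ is an $\mk{sl}_2$-submodule of $\mk{g}$: non-proportionality of $\upalpha$ and $\upbeta$ rules out $\upalpha = \pm\upbeta$, and the hypothesis on the $\upbeta$-string of $\upalpha$ forces $\mk{g}_{\upalpha+k\upbeta}=0$ for $k<0$ or $k>m$, so $E$ and $F$ preserve $V$, while $H$ acts on $\mk{g}_{\upalpha+k\upbeta}$ by the scalar $(\upalpha+k\upbeta)(H)$. The key observation is that any $v \in \mk{g}_\upalpha$ satisfies $F\cdot v \in \mk{g}_{\upalpha-\upbeta}=0$, so $v$ is a lowest weight vector, and the standard root-string symmetry (applied to the $\upbeta$-string of $\upalpha$, which runs from $\upalpha$ to $\upalpha + m\upbeta$) gives its $H$-weight as $\upalpha(H) = 2\bilin{\upalpha}{\upbeta}/|\upbeta|^2 = -m$.

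The conclusion is then immediate from the representation theory of $\mk{sl}_2$: a nonzero lowest weight vector of weight $-m$ generates an irreducible submodule of dimension $m+1$ on which $E^k$ remains nonzero for every $0 \le k \le m$. Since $E$ is a nonzero scalar multiple of $X$, this yields $\ad(X)^k v \ne 0$ for every nonzero $v \in \mk{g}_\upalpha$ and every $1 \le k \le m$, which is the injectivity of $\ad(X)^k \colon \mk{g}_\upalpha \to \mk{g}_{\upalpha+k\upbeta}$. I do not anticipate any real obstacle here: once the $\mk{sl}_2$-triple and the module $V$ are in place, the rest is a routine application of standard facts, and the only genuine bookkeeping is verifying the lowest-weight value $-m$ via the symmetric-string identity.
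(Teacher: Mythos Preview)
Your proof is correct and takes a genuinely different, more uniform route than the paper. The paper argues case by case: it cites \cite[Lem.\,2.4(i)]{berndt_SL_CPC} for $k=1$, \cite[Prop.\,4.2(iv)]{berndt_SL_CPC} for $k=m=2$, and then handles the residual case $m=3,\ k\in\{2,3\}$ (which only occurs in type $\mm{G}_2$) by chaining identities of the form ``$\ad(\uptheta X)\circ\ad(X)^k(Y)$ is a nonzero multiple of $\ad(X)^{k-1}(Y)$'' taken from \cite[Lem.\,2.4]{berndt_SL_CPC} and \cite[Prop.\,2.48(g)]{knapp}. Those identities are of course exactly the $\mk{sl}_2$-relations you are invoking, so the underlying mechanism is the same; what you gain is a single self-contained argument that works for all $m,k$ at once and does not rely on the bound $m\le 3$ or on the external references. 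What the paper's approach buys is that it stays entirely within the concrete bracket formulas already established in \cite{berndt_SL_CPC}, without having to set up the $\mk{sl}_2$-module structure explicitly. One small point: Lemma~\ref{lemma:a:k0} is stated for $\upalpha\in\Upsigma^+$, so strictly speaking you should first reduce to the case where $\upbeta$ is positive (as the paper does) before invoking it, though the formula is of course insensitive to this choice.
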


\begin{proof}
    By choosing a different system of simple roots if necessary, we may assume without loss of generality that both $\upalpha$ and $\upbeta$ are positive. It follows from \cite[Prop.\hspace{2pt}2.48]{knapp} that $m \in \set{1,2,3}$. The case of $k=1$ (and any $m$) was done in \cite[Lem.\hspace{2pt}2.4(i)]{berndt_SL_CPC}, and the case $k = m = 2$ was shown in Proposition 4.2(iv) of the same paper. Therefore, we only need to deal with\footnote{This can only occur when the irreducible component of $\Upsigma$ containing $\upalpha$ and $\upbeta$ is isomorphic to $\mm{G}_2$.} $m = 3, \, k = 2,3$. Take a nonzero vector $Y \in \mk{g}_\upalpha$. By combining \cite[Lem.\hspace{2pt}2.4]{berndt_SL_CPC} with \cite[Prop.\hspace{2pt}2.48(g)]{knapp}, we obtain that $\ad(\uptheta X) \circ \ad(X)^3(Y)$ is a nonzero multiple of $\ad(X)^2(Y)$, and $\ad(\uptheta X) \circ \ad(X)^2(Y)$ is a nonzero multiple of $\ad(X)(Y)$. But we already know that this latter vector is nonzero. Consequently, $\ad(X)^2(Y)$ must be nonzero, and hence so does $\ad(X)^3(Y)$. This concludes the proof.
\end{proof}

In Section \ref{sec:solving_the_NC}, we will constantly use another important property of singular orbits of C1-actions: their principal curvatures, counted with multiplicities, do not depend on the normal unit direction. More rigorously, given such an orbit $S \subset M$ and two normal unit vectors $\upxi, \upnu \in NS$ (possibly at different points of $S$), the spectra of the shape operators $\mc{A}_\upxi$ and $\mc{A}_\upnu$ coincide. This is a simple consequence of the facts that $S$ is a homogeneous submanifold and that the slice representation of the underlying C1-action at any point of $S$ is transitive on the unit sphere in the normal space to $S$ at that point. Submanifolds with this noteworthy geometric property were investigated and called \emph{CPC submanifolds} in~\cite{berndt_SL_CPC}. In fact, one of the goals of that article was to show that singular orbits of C1-actions are far from being the only class of submanifolds that enjoy the CPC property. 

\section{Cohomogeneity-one actions}\label{sec:c1-actions}
In this section, we discuss what is known about cohomogeneity-one actions on symmetric spaces of noncompact type and review the current progress in their classification. We begin by describing the four known types and two methods of construction of such actions.

\subsection{Types of cohomogeneity-one actions}\label{sec:c1-actions:types}

Let $M = G/K$ be a symmetric space of noncompact type, and let $H$ be a connected Lie group acting on $M$ properly, isometrically, and with cohomogeneity one. As was shown in \cite{berndt_bruck}, there can only be two scenarios:

\begin{enumerate}[(a)]
    \item\customlabel{topology_of_C1_actions:a}{a} $H$ has no singular orbits in $M$ and its orbits form a Riemannian foliation by hypersurfaces. We call that a homogeneous codimension-one foliation. In that case, every orbit is principal.
    \item\customlabel{topology_of_C1_actions:b}{b} $H$ has precisely one singular orbit $S$, all the other orbits are principal, and they can be described as the equidistant tubes of various radii around $S$. In that case, $S$ is diffeomorphic to $\R^k$ for some $k$, while each principal orbit is diffeomorphic to $\R^k \times \S^{n-k-1}$, where $n = \dim(M)$.
\end{enumerate}

Two isometric actions $H_1 \curvearrowright M$ and $H_2 \curvearrowright M$ are called \textit{orbit-equivalent} if there exists an isometry $\upvarphi \in I(M)$ that identifies their orbits: for every $p \in M, \, \upvarphi(H_1 \cdot p) = H_2 \cdot \upvarphi(p)$. Similarly, two submanifolds $S_1, S_2 \subseteq M$ are called \textit{congruent} if there exists $\uppsi \in I(M)$ such that $\uppsi(S_1) = S_2$. We also need the notion of \textit{strong} orbit equivalence (resp., \textit{strong} congruence): this is when the above isometry can be chosen in $I^0(M)$. The problem of distinguishing between orbit-equivalent actions or congruent submanifolds is generally referred to as \textit{the problem of congruence}.

From now on, we fix a symmetric space $M = G/K$ of noncompact type, a base point $o \in M$, a maximal abelian subspace $\mk{a} \subset \mk{p}$, and a choice of positive roots $\Upsigma^+ \subset \Upsigma$.

Let $\ell \subseteq \mk{a}$ be a one-dimensional subspace. The connected Lie subgroup $H_\ell$ of $G$ with Lie algebra $\mk{h}_\ell = (\mk{a} \ominus \ell) \oplus \mk{n}$ is closed, and its action on $M$ has cohomogeneity one and no singular orbits. Moreover, all of its orbits are isometrically congruent to each other. The orbit foliation of $H_\ell$ is denoted by $\mc{F}_\ell$ and is said to be of \textit{horospherical type}. This name was coined in \cite{DR_DV_Otero_C1} and comes from the fact that for some choices of $\ell$, the leaves of $\mc{F}_\ell$ are horospheres. Consider the group $\Aut^\mathrm{w}(\DD)$ of symmetries of the Dynkin diagram that preserve the simple root multiplicities. It permutes the basis $\Uplambda$ of $\mk{a}^*$ and thus acts on $\mk{a}^*$ and $\mk{a}$. Provided the metric on $M$ is almost Killing, two such actions by $H_\ell$ and $H_{\ell'}$ are orbit-equivalent if and only if the lines $\ell, \ell' \subseteq \mk{a}$ differ by an element of $\Aut^\mathrm{w}(\DD)$. For more details as well as a more general statement on congruence, see \cite{berndt_tamaru_foliations,solonenko_foliations}.

Let $\upalpha_i \in \Uplambda$ be any simple root and $\ell_{\upalpha_i} \subseteq \mk{g}_{\upalpha_i}$ a one-dimensional subspace. The connected Lie subgroup $H_{\upalpha_i}$ of $G$ with Lie algebra $\mk{h}_{\upalpha_i} = \mk{a} \oplus (\mk{n} \ominus \ell_{\upalpha_i})$ is closed and its action on $M$ has cohomogeneity one and no singular orbits. The resulting foliation is denoted by $\mc{F}_{\upalpha_i}$ and is said to be of \textit{solvable type} (also coined in \cite{DR_DV_Otero_C1}). This foliation has a unique minimal leaf---the one passing through $o$. Moreover, for every $t>0$, the two leaves at distance $t$ from the minimal one are congruent to each other and to no other leaf. Were we to choose another line $\ell'_{\upalpha_i}$ in $\mk{g}_{\upalpha_i}$, the resulting action would be strongly orbit-equivalent to that for $\ell_{\upalpha_i}$. When the metric on $M$ is almost Killing, two such actions by $H_{\upalpha_i}$ and $H_{\upalpha_j}$ are orbit-equivalent if and only if the simple roots $\upalpha_i$ and $\upalpha_j$ differ by an element of $\Aut^\mathrm{w}(\DD)$. Once again, see \cite{berndt_tamaru_foliations,solonenko_foliations} for details and a general version of the congruence criterion.

The next two types of C1-actions are those with a totally geodesic orbit. The only symmetric space of noncompact type that admits a totally geodesic hypersurface is $\R H^n$, and that hypersurface (the hyperbolic hyperplane $\R H^{n-1} \subset \R H^n$) is a leaf of the (unique up to congruence) foliation of solvable type on $\R H^n$. This means that we need only consider C1-actions with a totally geodesic \textit{singular} orbit. According to \eqref{topology_of_C1_actions:b} above, such an action is fully determined by its singular orbit up to orbit equivalence. In \cite{berndt_tamaru_totally_geodesic_singular_orbit}, Berndt and Tamaru classified such actions for $M$ irreducible and discovered that they are intimately related to reflective submanifolds. A submanifold of a Riemannian  manifold is called \textit{reflective} if it can be realized as a connected component of the set of fixed points of an involutive isometry of the ambient space. Such submanifolds are automatically totally geodesic. In \cite{leung_classification_of_reflective_submanifolds,leung_congruence}, Leung classified reflective submanifolds in all irreducible symmetric spaces of compact type up to congruence. Later on, it was shown in \cite{berndt_tamaru_totally_geodesic_singular_orbit} that if a C1-action on $M$ has a totally geodesic singular orbit, that orbit has to be reflective---apart from 5 exceptions, all mysteriously related to the group $\mm{G}_2$ (see Theorem 4.2 therein). Using Leung's classification and the duality between symmetric spaces of compact and noncompact type, Berndt and Tamaru calculated exactly which reflective submanifolds arise in this way. In \cite{leung_congruence}, Leung showed that if two reflective submanifolds in a compact irreducible symmetric space are isometric, then they are congruent, albeit not necessarily strongly congruent. That result also carries over to the noncompact type via the duality. But if the singular orbits of two C1-actions are congruent, then the actions themselves are orbit-equivalent. 
So a C1-action with a totally geodesic singular orbit is determined up to orbit equivalence by the isometry class of that orbit. For the complete list of these orbits, see \cite[Th.\hspace{2pt}3.1, 3.3]{berndt_tamaru_totally_geodesic_singular_orbit}.

The second type of C1-actions with a totally geodesic singular orbit was studied in \cite{DR_DV_Otero_C1} and occurs on reducible spaces---but of very specific form. Suppose $M = M_1 \times M_2$ is the product of two homothetic rank-one spaces; this means that $M_i$ is a hyperbolic space over $\R, \C,$ or $\H$, or the Cayley hyperbolic plane $\Oo H^2$, but the radii of $M_1$ and $M_2$ can be different. We can write $\mk{g} = \mk{g}_1 \oplus \mk{g}_2$, where $\mk{g}_i$ is the isometry Lie algebra of $M_i$. Fix an isomorphism $\upvarphi \colon \mk{g}_1 \isoto \mk{g}_2$  and consider the diagonal subalgebra $\mk{g}_\upvarphi = \set{X + \upvarphi(X) \mid X \in \mk{g}_1}$. The corresponding connected Lie subgroup $G_\upvarphi$ of $G = G_1 \times G_2$ (here we take $G = I^0(M)$ and $G_i = I^0(M_i)$) acts on $M$ with cohomogeneity one and has a totally geodesic (even reflective) singular orbit homothetic to $M_1$ and $M_2$. We call this a \textit{diagonal C1-action} on $M$. Choosing another isomorphism $\uppsi \colon \mk{g}_1 \isoto \mk{g}_2$ results in a strongly orbit-equivalent action provided the isomorphism $\uppsi^{-1} \circ \upvarphi$ of $\mk{g}_1$ is inner. The group $\Aut(\mk{g}_1)$ has two connected components if $M_i \simeq \R H^n$ or $\C H^n$ and only one if $M_i \simeq \H H^n$ or $\Oo H^2$ (see \cite{gundogan}). Consequently, this construction yields one strong-orbit-equivalence class of C1-actions if $M_i \simeq \H H^n$ or $\Oo H^2$ and at most two if $M_i \simeq \R H^n$ or $\C H^n$.

Now, we describe a general construction, introduced in \cite{berndt_tamaru_cohomogeneity_one} and called the \textit{canonical extension}, that allows to construct new C1-actions on higher-rank symmetric spaces from those on their boundary components. We come back to the general case of an arbitrary $M = G/K$ and fix a choice of $o, \, \mk{a},$ and $\Upsigma^+$. Pick a subset $\Upphi \subset \Uplambda$. Suppose $H_\Upphi$ is a connected Lie group acting properly and isometrically on the boundary component $B_\Upphi$. Factoring out the ineffectiveness kernel, we may assume it is a subgroup of $I^0(B_\Upphi)$ and then lift it along $G'_\Upphi \twoheadrightarrow I^0(B_\Upphi)$ to a connected Lie subgroup of $G'_\Upphi \subset M_\Upphi$ (still denoted by the same symbol). Consider the subgroup 
$$
H_\Upphi^\Uplambda = H_\Upphi \times A_\Upphi \ltimes N_\Upphi \subset M_\Upphi \times A_\Upphi \ltimes N_\Upphi = Q_\Upphi.
$$
This group acts on $M$ properly and with the same cohomogeneity as that of $H_\Upphi \curvearrowright B_\Upphi$. Moreover, with respect to the horospherical decomposition, its orbits can be described as $(H_\Upphi \cdot p) \times A_\Upphi \times N_\Upphi$, where $p \in B_\Upphi$. In particular, if $H_\Upphi \cdot p$ is a singular orbit of $H_\Upphi$ in $B_\Upphi$, then $H_\Upphi^\Uplambda \cdot p$ is that of $H_\Upphi^\Uplambda$ in $M$. The Lie algebra of $H_\Upphi^\Uplambda$ can be described as $\mk{h}_\Upphi^\Uplambda = \mk{h}_\Upphi \oplus \mk{a}_\Upphi \loplus \mk{n}_\Upphi$, where $\mk{h}_\Upphi = \Lie(H_\Upphi) \subseteq \mk{g}'_\Upphi$. The action $H_\Upphi^\Uplambda \curvearrowright M$ (as well as the group $H_\Upphi^\Uplambda$ and the Lie algebra $\mk{h}_\Upphi^\Uplambda$) is called the \textit{canonical extension} of $H_\Upphi \curvearrowright B_\Upphi$ (resp., of $H_\Upphi$ and $\mk{h}_\Upphi$). If two actions on $B_\Upphi$ are \textit{strongly} orbit-equivalent, then their canonical extensions are (strongly) orbit-equivalent. However, non-strongly orbit-equivalent actions on $B_\Upphi$ can lead to orbit-non-equivalent actions on $M$ (see the example on p.\ 139 in \cite{berndt_tamaru_cohomogeneity_one}). This subtlety is important when one wants to study the problem of congruence for C1-actions (see \cite[Rem.\hspace{2pt}1.1]{DR_DV_Otero_C1}). It is also worth mentioning that the canonical extension method is transitive. Given two subsets $\Upphi \subset \Uppsi \subset \Uplambda$, we can regard $B_\Upphi$ as a boundary component of $B_\Uppsi$. In particular, given an isometric action on $B_\Upphi$, we could first canonically extend it to $B_\Uppsi$, and then extend the result to $M$, but that would be orbit-equivalent to the canonical extension of the original action directly to $M$. In other words, given a Lie subalgebra $\mk{h}_\Upphi \subseteq \mk{g}'_\Upphi$, we have $(\mk{h}_\Upphi^\Uppsi)^\Uplambda = \mk{h}_\Upphi^\Uplambda$ (see \cite[Lem.\hspace{2pt}4.2]{DR_DV_Otero_C1} for a proof). For more details on the canonical extension method, see \cite{berndt_tamaru_cohomogeneity_one}.

Finally, we have reached the central point of this paper: the \textit{nilpotent construction}. In its current form, this method was also invented in \cite{berndt_tamaru_cohomogeneity_one}, although its simpler version for rank-one spaces goes all the way back to \cite{berndt_bruck}. One more time, fix a subset $\Upphi \subset \Uplambda$. Let $\mk{w} \subseteq \mk{n}_\Upphi^1$ be any subspace. We introduce the following notation:
$$
\mk{n}_{\Upphi,\mk{w}} = \mk{w} \oplus \bigoplus_{\upnu \ge 2} \mk{n}_\Upphi^\upnu, \quad \mk{h}_{\Upphi,\mk{w}} = N_{\mk{l}_\Upphi}(\mk{w}) \loplus \mk{n}_{\Upphi,\mk{w}} \subseteq \mk{l}_\Upphi \loplus \mk{n}_\Upphi = \mk{q}_\Upphi;
$$
and we write $N_{\Upphi,\mk{w}}$ and $H_{\Upphi,\mk{w}}$ for the corresponding connected Lie subgroups of $G$. These are both closed subgroups. Note that $H_{\Upphi,\mk{w}} = N^0_{L_\Upphi}(\mk{w}) \ltimes N_{\Upphi,\mk{w}} \subseteq L_\Upphi \ltimes N_\Upphi = Q_\Upphi$. The following conditions are equivalent:

\begin{enumerate}[(i)]
    \item $F_\Upphi \subseteq H_{\Upphi, \mk{w}} \ccdot o$.
    \item $N_{L_\Upphi}(\mk{w})$ acts transitively on $F_\Upphi$.
    \item The image of the projection of $N_{\mk{l}_\Upphi}(\mk{w})$ to $\mk{p}$ along $\mk{k}$ equals $\mk{b}_\Upphi \oplus \mk{a}_\Upphi$.
\end{enumerate}

In case $\Upphi = \Uplambda_j$ for some $\upalpha_j \in \Uplambda$, we can say a little more. First of all, since $\mk{a}_j = \R H^j$ and $\ad(H^j)$ acts on $\mk{n}_j^1$ as the identity, we have $N_{\mk{l}_j}(\mk{w}) = N_{\mk{m}_j}(\mk{w}) \oplus \mk{a}_j$ and $N_{L_j}(\mk{w}) = N_{M_j}(\mk{w}) \times A_j$. What is more, the above conditions are also equivalent to:

\begin{enumerate}
    \item[(iv)] $N_{M_\Upphi}(\mk{w})$ acts transitively on $B_\Upphi$. \label{admissibility_M}
    \item[(v)] The image of the projection of $N_{\mk{m}_\Upphi}(\mk{w})$ to $\mk{p}$ along $\mk{k}$ equals $\mk{b}_\Upphi$.
\end{enumerate}

Let us come back to the case of a general subset $\Upphi$. We will constantly make use of the following fact, which was stated in its infinitesimal version in \cite{berndt_tamaru_cohomogeneity_one}:

\begin{lem}\label{lem:NC_normalizers_vs_Uptheta}
    Let $\mk{w} \subseteq \mk{n}_\Upphi^1$ be any subspace, and consider $\mk{v} = \mk{n}_\Upphi^1 \ominus \mk{w}$. Then one has:

    \begin{enumerate}[\normalfont (a)]
        \item\customlabel{lem:NC_normalizers_vs_Uptheta:a}{a} $\Uptheta N_{L_\Upphi}(\mk{w}) = N_{L_\Upphi}(\mk{v})$ and $\uptheta N_{\mk{l}_\Upphi}(\mk{w}) = N_{\mk{l}_\Upphi}(\mk{v})$.
        \item\customlabel{lem:NC_normalizers_vs_Uptheta:b}{b} $\Uptheta N_{M_\Upphi}(\mk{w}) = N_{M_\Upphi}(\mk{v})$ and $\uptheta N_{\mk{m}_\Upphi}(\mk{w}) = N_{\mk{m}_\Upphi}(\mk{v})$.
        \item\customlabel{lem:NC_normalizers_vs_Uptheta:c}{c} $N_{K_\Upphi}(\mk{w}) = N_{K_\Upphi}(\mk{v})$ and $N_{\mk{k}_\Upphi}(\mk{w}) = N_{\mk{k}_\Upphi}(\mk{v})$.
    \end{enumerate}
\end{lem}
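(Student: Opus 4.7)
The proof rests on three structural facts already recorded in the excerpt: (i) both $\mk{l}_\Upphi$ and $\mk{m}_\Upphi$ are $\uptheta$-stable, and the corresponding groups $L_\Upphi$ and $M_\Upphi$ are $\Uptheta$-stable (this is immediate from $\uptheta|_\mk{a} = -\mathrm{id}$, $\mk{l}_\Upphi = Z_\mk{g}(\mk{a}_\Upphi)$, and $\mk{m}_\Upphi = \mk{l}_\Upphi \ominus \mk{a}_\Upphi$); (ii) $K_\Upphi \subseteq K$ acts on $\mk{g}$ by isometries of $B_\uptheta$ and preserves $\mk{n}_\Upphi^1$ because $L_\Upphi$ preserves the grading of $\mk{n}_\Upphi$; (iii) the identity $B_\uptheta(X,Y) = -B(X, \uptheta Y)$, the $\Ad(G)$-invariance of $B$, and the intertwining relation $\Ad(\Uptheta g) = \uptheta \circ \Ad(g) \circ \uptheta$ valid for every $g \in G$, together with $\uptheta^2 = \mathrm{id}$.

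Part (c) would be dispatched first, since it drops straight out of (ii): an element $k \in K_\Upphi$ normalizes $\mk{w}$ inside $\mk{n}_\Upphi^1$ if and only if it normalizes its $B_\uptheta$-orthogonal complement $\mk{v}$. The Lie-algebra version is obtained by differentiating, or by running the same reasoning with $\ad$ in place of $\Ad$.

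For part (a) the plan is, given $l \in N_{L_\Upphi}(\mk{w})$, to show that $\Uptheta l$ (which still lies in $L_\Upphi$ by (i)) lies in $N_{L_\Upphi}(\mk{v})$. For arbitrary $X \in \mk{v}$ and $Y \in \mk{w}$ one computes
\begin{align*}
B_\uptheta\!\bigl(\Ad(\Uptheta l) X,\, Y\bigr)
&= -B\!\bigl(\Ad(\Uptheta l)X,\, \uptheta Y\bigr) = -B\!\bigl(X,\, \Ad(\Uptheta(l^{-1}))\,\uptheta Y\bigr) \\
&= -B\!\bigl(X,\, \uptheta\,\Ad(l^{-1})\, Y\bigr) = B_\uptheta\!\bigl(X,\, \Ad(l^{-1}) Y\bigr),
\end{align*}
where the chain uses (iii). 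Since $l^{-1} \in N_{L_\Upphi}(\mk{w})$, the vector $\Ad(l^{-1}) Y$ still lies in $\mk{w}$, so the final expression vanishes by $\mk{v} \perp_{B_\uptheta} \mk{w}$. This proves $\Uptheta N_{L_\Upphi}(\mk{w}) \subseteq N_{L_\Upphi}(\mk{v})$; swapping the roles of $\mk{w}$ and $\mk{v}$ and invoking $\Uptheta^2 = \mathrm{id}$ gives the reverse inclusion. The infinitesimal statement is obtained by running the same computation with $\ad$ in place of $\Ad$ (or by differentiating at the identity).

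Part (b) follows verbatim from the same argument, after observing that $M_\Upphi$ and $\mk{m}_\Upphi$ are $\Uptheta$- and $\uptheta$-stable respectively. I do not foresee a real obstacle anywhere; the lemma is essentially an exercise in tracking the compatibility between $\uptheta$, $B$, and $B_\uptheta$, and the only genuinely substantive input is the intertwining $\Ad(\Uptheta g) = \uptheta \circ \Ad(g) \circ \uptheta$.
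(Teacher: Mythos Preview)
Your proof is correct and follows essentially the same route as the paper: both hinge on the computation showing that $B_\uptheta(\Ad(\Uptheta l)X, Y)$ reduces, via $\Ad$-invariance of $B$ and the intertwining $\Ad(\Uptheta g) = \uptheta \Ad(g) \uptheta$, to $B_\uptheta(X, \Ad(l^{-1})Y) = 0$. The paper first passes to the effective quotient $G/Z(G)$ to realize $\Uptheta$ as conjugation by $s_o$, whereas you invoke the intertwining relation directly (which is valid for any Lie group automorphism and its differential); this is a cosmetic difference, and your ordering (doing (c) first via orthogonality of the $K_\Upphi$-action) is also mentioned in the paper as an alternative.
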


\begin{proof}
    First of all, we need only prove the first statement in each of the three parts, since the second one is its infinitesimal version. Part \eqref{lem:NC_normalizers_vs_Uptheta:c} will follow from \eqref{lem:NC_normalizers_vs_Uptheta:a} because $K_\Upphi = L_\Upphi \cap K$ is $\Uptheta$-stable. (Alternatively, it follows from the fact that the representation of $K_\Upphi$ on $\mk{n}_\Upphi^1$ is orthogonal.) The proof for \eqref{lem:NC_normalizers_vs_Uptheta:b} is essentially the same as for \eqref{lem:NC_normalizers_vs_Uptheta:a}, so we only deal with \eqref{lem:NC_normalizers_vs_Uptheta:a}. First, by considering the covering $\uppi \colon G \twoheadrightarrow G/Z(G) \cong I^0(M)$, it is easy to see that it suffices to prove the statement for $G/Z(G)$. In other words, we may assume that $G \cong I^0(M)$ and thus the Riemannian symmetric pair $(G,K)$ is effective. In this case, $\Uptheta$ can be described as the conjugation $C_{s_o}$, where $s_o \in I(M)$ is the geodesic symmetry at the base point $o \in M$. Given $g \in N_{L_\Upphi}(\mk{w})$, we need to show that $\Ad(\Uptheta(g)) = \Ad(s_o g s_o) = \uptheta \Ad(g) \uptheta$ preserves $\mk{v}$. Pick any $Y \in \mk{v}$ and $X \in \mk{w}$. We want to show that $\uptheta \hspace{-1pt} \Ad(g) \uptheta(Y)$ is still orthogonal to $X$. We compute:
    $$
    \cross{X}{\uptheta \Ad(g) \uptheta(Y)} = -B(X, \Ad(g) \uptheta(Y)) = -B(\Ad(g^{-1})X,\uptheta Y) = \cross{\Ad(g^{-1})X}{Y} = 0,
    $$
    since $\Ad(g^{-1})X \in \mk{w}$. We deduce that $\Uptheta N_{L_\Upphi}(\mk{w}) \subseteq N_{L_\Upphi}(\mk{v})$. Applying $\Uptheta$ to each side and switching the roles of $\mk{w}$ and $\mk{v}$ gives the reverse inclusion.
\end{proof}

We embrace the notation from the lemma and denote the orthogonal complement $\mk{n}_\Upphi^1 \ominus \mk{w}$ by $\mk{v}$. 

\begin{defn}\label{defn:NC_original}
    A subspace $\mk{w} \subseteq \mk{n}_\Upphi^1$ is called

    \begin{enumerate}[(a)]
    \item \textsc{admissible} if $N_{L_\Upphi}(\mk{w})$ acts transitively on $F_\Upphi$, and
    \item \textsc{protohomogeneous} if $N_{K_\Upphi}(\mk{w})$ acts transitively on the unit sphere in $\mk{v}$.
\end{enumerate}
\end{defn}

The main merit of this construction and these definitions is the following result of Berndt and Tamaru from \cite{berndt_tamaru_cohomogeneity_one}:

\begin{thm}\label{thm:berndt_tamaru_NC}
    Let $M = G/K$ be a symmetric space of noncompact type with a fixed choice of $o, \mk{a}, \Upsigma^+,$ and $\Upphi$.
    \begin{enumerate}[\normalfont (a)]
        \item\customlabel{thm:berndt_tamaru_NC:a}{a} Suppose $\mk{w} \subset \mk{n}_\Upphi^1$ is an admissible and protohomogeneous subspace of $\codim_{\mk{n}_\Upphi^1}(\mk{w}) \ge 2$. Then the subgroup $H_{\Upphi,\mk{w}}$ acts on $M$ with cohomogeneity one and has $F_\Upphi \times N_{\Upphi,\mk{w}} \subset F_\Upphi \times N_\Upphi \cong M$ as its orbit through $o$. That orbit has codimension equal to $\codim_{\mk{n}_\Upphi^1}(\mk{w})$ and is thus singular.
        \item\customlabel{thm:berndt_tamaru_NC:b}{b} Let $\mk{w} \subset \mk{n}_\Upphi^1$ and $k \in K_\Upphi$. Then $\mk{w}' = \Ad(k)(\mk{w})$ is admissible {\normalfont(}resp., protohomogeneous{\normalfont)} if and only if $\mk{w}$ is. Moreover, the actions of $H_{\Upphi,\mk{w}}$ and $H_{\Upphi,\mk{w}'}$ are orbit-equivalent by means of $k$.
    \end{enumerate}
\end{thm}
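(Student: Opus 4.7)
The plan is to analyze the orbit of $H_{\Upphi,\mk{w}}$ through $o$ using the horospherical decomposition \eqref{horospherical_decomposition_action}, and then to identify the normal space at $o$ with the $K_\Upphi$-module $\mk{v}$, so that the transitivity hypotheses translate into the geometric statements we need.

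For part \eqref{thm:berndt_tamaru_NC:a}, I would first decompose $H_{\Upphi,\mk{w}} = N^0_{L_\Upphi}(\mk{w}) \ltimes N_{\Upphi,\mk{w}}$ and apply \eqref{horospherical_decomposition_action} to the base point. Admissibility means $N_{L_\Upphi}(\mk{w}) \cdot o = F_\Upphi$, and connectedness of $F_\Upphi$ upgrades this to $N^0_{L_\Upphi}(\mk{w}) \cdot o = F_\Upphi$. The subgroup $N_{\Upphi,\mk{w}}$ is normalized by $N^0_{L_\Upphi}(\mk{w})$ (because $\Ad(L_\Upphi)$ preserves the grading of $\mk{n}_\Upphi$ and every element of $N^0_{L_\Upphi}(\mk{w})$ preserves $\mk{w}$), so \eqref{horospherical_decomposition_action} yields $H_{\Upphi,\mk{w}} \cdot o = F_\Upphi \times N_{\Upphi,\mk{w}}$ as a subset of $M \cong F_\Upphi \times N_\Upphi$. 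Its codimension equals $\dim \mk{n}_\Upphi - \dim \mk{n}_{\Upphi,\mk{w}} = \dim \mk{n}_\Upphi^1 - \dim \mk{w} = \codim_{\mk{n}_\Upphi^1}(\mk{w})$, which is at least two by hypothesis.

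To conclude that the cohomogeneity is exactly one, I would identify $N_o(H_{\Upphi,\mk{w}} \cdot o) \cong \mk{v}$. Under $T_oM \cong \mk{p}$, and using that $\pr_\mk{p}$ restricted to each root space $\mk{g}_\upalpha$ is a scalar multiple of an isometry onto $\mk{p}_\upalpha$ with the $\mk{p}_\upalpha$'s pairwise orthogonal, a direct calculation yields $\pr_\mk{p}(\mk{h}_{\Upphi,\mk{w}}) = \mk{b}_\Upphi \oplus \mk{a}_\Upphi \oplus \pr_\mk{p}(\mk{n}_{\Upphi,\mk{w}})$, whose orthogonal complement in $\mk{p}$ is $\pr_\mk{p}(\mk{v})$---a $K_\Upphi$-equivariant copy of $\mk{v}$. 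The isotropy of $H_{\Upphi,\mk{w}}$ at $o$ is $N^0_{L_\Upphi}(\mk{w}) \cap K \subseteq K_\Upphi$, and its identity component contains $N^0_{K_\Upphi}(\mk{w})$. Protohomogeneity says that $N_{K_\Upphi}(\mk{w})$ acts transitively on the unit sphere of $\mk{v}$; since $\codim_{\mk{n}_\Upphi^1}(\mk{w}) \ge 2$ that sphere is connected, so the identity component acts transitively as well. Consequently the slice representation is transitive on the unit normal sphere, which forces $H_{\Upphi,\mk{w}} \curvearrowright M$ to have cohomogeneity one and $F_\Upphi \times N_{\Upphi,\mk{w}}$ to be its singular orbit.

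For part \eqref{thm:berndt_tamaru_NC:b}, conjugation by $k \in K_\Upphi$ preserves $L_\Upphi$, $K_\Upphi$, and the grading of $\mk{n}_\Upphi$, with $\Ad(k)$ acting orthogonally on $\mk{n}_\Upphi^1$. Hence $k N_{L_\Upphi}(\mk{w}) k^{-1} = N_{L_\Upphi}(\mk{w}')$, $k N_{K_\Upphi}(\mk{w}) k^{-1} = N_{K_\Upphi}(\mk{w}')$, and $\mk{v}' = \mk{n}_\Upphi^1 \ominus \mk{w}' = \Ad(k)\mk{v}$. Admissibility for $\mk{w}'$ then follows from admissibility for $\mk{w}$ (since $k \cdot o = o$ and $F_\Upphi$ is $K_\Upphi$-invariant), and likewise protohomogeneity for $\mk{w}'$ follows from protohomogeneity for $\mk{w}$ (since $\Ad(k)$ sends the unit sphere of $\mk{v}$ bijectively to that of $\mk{v}'$). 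Conjugation also maps $H_{\Upphi,\mk{w}}$ onto $H_{\Upphi,\mk{w}'}$, so viewing $k$ as an isometry of $M$ yields $k \cdot (H_{\Upphi,\mk{w}} \cdot p) = H_{\Upphi,\mk{w}'} \cdot (k \cdot p)$ for every $p \in M$, which is the required orbit equivalence. The only step that goes beyond formal manipulation is the identification of $N_o(H_{\Upphi,\mk{w}} \cdot o)$ with $\mk{v}$; once this is in place, the codimension-$\ge 2$ clause is exactly what promotes protohomogeneity to transitivity of the slice representation and thus pins down the cohomogeneity to one.
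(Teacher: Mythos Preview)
The paper does not actually prove this theorem: it is stated as a result of Berndt and Tamaru and attributed to \cite{berndt_tamaru_cohomogeneity_one}, with no proof given in the present article. So there is no ``paper's own proof'' to compare against.

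That said, your argument is correct and is essentially the standard one. The orbit computation via \eqref{horospherical_decomposition_action} is exactly right, and your identification of the normal space with $\pr_\mk{p}(\mk{v})$ is sound: the projection $\pr_\mk{p} = \tfrac{1}{2}(\Id - \uptheta)$ restricted to $\mk{n}_\Upphi^1$ is a $K_\Upphi$-equivariant scalar multiple of an isometry onto $\bigoplus_{\uplambda \in \Updelta_\Upphi^1} \mk{p}_\uplambda$, so orthogonality of $\mk{w}$ and $\mk{v}$ passes to their images. One small point worth making explicit is why the isotropy of $H_{\Upphi,\mk{w}}$ at $o$ lies entirely in $N^0_{L_\Upphi}(\mk{w})$: this follows because $N_\Upphi$ acts simply transitively on the $N_\Upphi$-factor in the horospherical decomposition, so no nontrivial element of $N_{\Upphi,\mk{w}}$ can fix $o$. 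With that, your passage from protohomogeneity to transitivity of the slice representation (using connectedness of the sphere when $\dim \mk{v} \ge 2$) is clean, and part \eqref{thm:berndt_tamaru_NC:b} is a direct conjugation argument as you wrote.
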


For any subspace $\mk{w}$ as in Theorem \ref{thm:berndt_tamaru_NC}\eqref{thm:berndt_tamaru_NC:a}, the action of $H_{\Upphi,\mk{w}}$ is called a \textit{nilpotent construction action}. Note that, under the projection $\mk{a} \oplus \mk{n} \isoto \mk{p} \cong T_oM$ along $\mk{k}$, $\mk{v}$ gets identified with the normal space to the orbit $H_{\Upphi,\mk{w}} \cdot o$ at $o$. For more details on the nilpotent construction, see \cite{berndt_tamaru_cohomogeneity_one,berndt_DV_C1_rank_2}.

\begin{rem}\label{rem:w_vs_v}
    A word of caution is in order. Our notation and definitions of admissibility and protohomogeneity are slightly (but inconsequentially) different from those used in other papers on the subject, such as \cite{berndt_tamaru_cohomogeneity_one,protohomogeneous,solonenko_hypersurfaces,DR_DV_Otero_C1}. In these papers, the perspective was not on $\mk{w}$ but on its orthogonal complement $\mk{v}$. More precisely, in a situation where a subspace $\mk{w} \subset \mk{n}_\Upphi^1$ is admissible or protohomogeneous according to Definition \ref{defn:NC_original}, the authors of the aforementioned papers say instead that $\mk{v}$ is admissible or protohomogeneous, respectively. (Note that the terms \textit{protohomogeneous} and \textit{admissible} were only coined in \cite{protohomogeneous} and \cite{solonenko_hypersurfaces}.) They also denote the subgroups $N_{\Upphi,\mk{w}}$ and $H_{\Upphi,\mk{w}}$ by $N_{\Upphi,\mk{v}}$ and $H_{\Upphi,\mk{v}}$. This is of course only a matter of notation and it has no implications for the nilpotent construction---but one should be careful not to confuse the two notational agreements. We will see in Subsection \ref{sec:admissibility:redundancy} how our perspective proves beneficial in certain circumstances.
\end{rem}

\subsection{The current state of classification}\label{sec:c1-actions:current_state}

Now we formulate a result that sums up what is currently known about C1-actions on symmetric spaces of noncompact type and their classification. In the present form, this result was established in \cite{DR_DV_Otero_C1}, although that paper is largely underpinned by \cite{berndt_tamaru_cohomogeneity_one}.

\begin{thm}\label{thm:classification_of_c1_actions}
Let $M = G/K$ be a symmetric space of noncompact type and rank $r$. Let $H$ be a connected Lie group acting on $M$ properly and isometrically. Then $H$ acts with cohomogeneity one if and only if its action is orbit-equivalent to one of the following:

\begin{enumerate}[\normalfont (a)]
        \item\customlabel{thm:classification_of_c1_actions:a}{a} The action of $H_\ell$ for some one-dimensional linear subspace $\ell \subseteq \mk{a}$.
        \item\customlabel{thm:classification_of_c1_actions:b}{b} The action of $H_{\upalpha_i}$ for some simple root $\upalpha_i \in \Uplambda$.
        \item\customlabel{thm:classification_of_c1_actions:c}{c} The canonical extension of a C1-action with a totally geodesic singular orbit on an irreducible boundary component $B_\Upphi$ of $M$.
        \item\customlabel{thm:classification_of_c1_actions:d}{d} The canonical extension of a diagonal C1-action on a reducible rank-2 boundary component $B_\Upphi$ of $M$ whose de Rham factors are homothetic.
        \item\customlabel{thm:classification_of_c1_actions:e}{e} The action of $H_{j, \mk{w}}$ for some $j \in \set{1, \ldots, r}$, where $\mk{w} \subset \mk{n}_j^1$ is an admissible and protohomogeneous subspace of $\codim_{\mk{n}_j^1}(\mk{w}) \ge 2$.
    \end{enumerate}
\end{thm}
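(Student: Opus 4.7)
The plan is to split the argument by the topological type of the orbits of $H$ and, for each case, to invoke the appropriate prior classification result. The only nontrivial combinatorial point is that nested canonical extensions can be collapsed to a single canonical extension thanks to the transitivity identity $(\mk{h}_\Upphi^\Uppsi)^\Uplambda = \mk{h}_\Upphi^\Uplambda$, proved in \cite[Lem.\hspace{2pt}4.2]{DR_DV_Otero_C1}, which will be used silently each time we descend to a boundary component.

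By \eqref{topology_of_C1_actions:a}--\eqref{topology_of_C1_actions:b}, $H$ has either no singular orbit or exactly one. In the first situation the orbits form a homogeneous codimension-one foliation on $M$, and the classification of such foliations on every symmetric space of noncompact type, including the reducible ones, obtained by Berndt, Tamaru, and D\'{i}az-Ramos in \cite{hyperpolar_homogeneous_foliations}, asserts that the action is orbit-equivalent to some $\mc{F}_\ell$ or $\mc{F}_{\upalpha_i}$, which gives \eqref{thm:classification_of_c1_actions:a} or \eqref{thm:classification_of_c1_actions:b}.

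Suppose now that $H$ has a (unique) singular orbit $S$, and first assume $S$ is totally geodesic. When $M$ is irreducible, the classification of Berndt and Tamaru in \cite{berndt_tamaru_totally_geodesic_singular_orbit}---which rests on Leung's classification of reflective submanifolds \cite{leung_classification_of_reflective_submanifolds,leung_congruence}, supplemented by five non-reflective $\mm{G}_2$-type exceptions---is exhaustive, and it places the action in \eqref{thm:classification_of_c1_actions:c} with $\Upphi = \Uplambda$. When $M$ is reducible, I would invoke \cite{DR_DV_Otero_C1}: by means of Dynkin's structural description \cite[Th.\hspace{2pt}15.1]{dynkin_subalgebras_english} of maximal subalgebras of semisimple Lie algebras, one shows that such an action is either the canonical extension of a totally geodesic singular-orbit action on an irreducible boundary component, yielding \eqref{thm:classification_of_c1_actions:c}, or the canonical extension of a diagonal C1-action on a rank-two boundary component whose two rank-one de Rham factors are homothetic, yielding \eqref{thm:classification_of_c1_actions:d}.

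Finally, assume $S$ is not totally geodesic. The main result of \cite{berndt_tamaru_cohomogeneity_one}, whose proof relies on Mostow's description of maximal subalgebras of real semisimple Lie algebras \cite{mostow_maximal_subgroups}, asserts that the action is orbit-equivalent either to the canonical extension of some C1-action on a proper boundary component $B_\Upphi$, or to a nilpotent construction action $H_{\Upphi,\mk{w}} \curvearrowright M$ for an admissible and protohomogeneous subspace $\mk{w} \subseteq \mk{n}_\Upphi^1$ of codimension at least $2$. In the former possibility, a descending induction on $\dim B_\Upphi$ together with the transitivity of the canonical extension feeds the argument back into the earlier cases and into \eqref{thm:classification_of_c1_actions:c}--\eqref{thm:classification_of_c1_actions:d}; in the latter, the same transitivity lets us replace $\Upphi$ by a subset of the form $\Uplambda_j$ for some single $j$, producing \eqref{thm:classification_of_c1_actions:e}. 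The converse implications---that each listed action is indeed of cohomogeneity one---are immediate from the definitions of the foliations $\mc{F}_\ell$ and $\mc{F}_{\upalpha_i}$, the definition of the canonical extension, and Theorem \ref{thm:berndt_tamaru_NC}. The most delicate point to watch is that orbit equivalence after canonical extension is compatible with strong orbit equivalence upstairs; since the statement only asks for orbit equivalence, this is handled by the corresponding remarks in \cite{berndt_tamaru_cohomogeneity_one,DR_DV_Otero_C1}.
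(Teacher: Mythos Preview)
The paper does not prove this theorem; it is stated as a result established in \cite{DR_DV_Otero_C1}, building on \cite{berndt_tamaru_cohomogeneity_one}. Your outline is essentially the argument carried out in those references and is correct in structure: the split into foliation/singular-orbit cases, the invocation of \cite{hyperpolar_homogeneous_foliations} for foliations, of \cite{berndt_tamaru_totally_geodesic_singular_orbit} and \cite{DR_DV_Otero_C1} for the totally geodesic case, and of \cite{berndt_tamaru_cohomogeneity_one} for the non-totally-geodesic case, with induction via transitivity of canonical extension, is exactly how the proof is assembled in the literature.

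One inaccuracy worth flagging: you phrase the conclusion of \cite{berndt_tamaru_cohomogeneity_one} as producing a nilpotent construction action $H_{\Upphi,\mk{w}}$ for a general subset $\Upphi$, and then say that transitivity of the canonical extension lets you replace $\Upphi$ by some $\Uplambda_j$. That is not how it works. Berndt--Tamaru's main theorem already outputs the nilpotent construction in the maximal-parabolic form $H_{j,\mk{w}}$; the reduction to a single $j$ is internal to their argument (via Mostow's description of maximal subalgebras), not a consequence of transitivity of canonical extension. Transitivity of canonical extension says that extending from $B_\Upphi$ to $B_\Uppsi$ and then to $M$ agrees with extending directly from $B_\Upphi$ to $M$; it does not by itself convert a nilpotent construction with one choice of $\Upphi$ into one with a strictly larger $\Upphi'$. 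The relevant companion fact you may have in mind is \cite[Lem.\hspace{2pt}4.3]{DR_DV_Otero_C1}, which shows that the canonical extension of a nilpotent construction action on a boundary component is again a nilpotent construction action on the ambient space---this is what makes the induction close in case \eqref{thm:classification_of_c1_actions:e}, but it is a separate statement from transitivity.
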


From our discussion in Subsection \ref{sec:c1-actions:types}, it is evident that part \eqref{thm:classification_of_c1_actions:e} stands out among the others: it does not produce an explicit list of actions but rather reduces the classification to a problem in representation theory. We also see that the nilpotent construction is only relevant when $\Upphi = \Uplambda_j$ for some $j$. For this reason, for the rest of the article, whenever we say \textit{nilpotent construction}, we tacitly assume that $\Upphi$ is of this form, unless otherwise specified. Note that actions of type \eqref{thm:classification_of_c1_actions:e} in Theorem \ref{thm:classification_of_c1_actions} are decomposable (see \cite[Th.\hspace{2pt}C]{DR_DV_Otero_C1}). Consequently, when dealing with the nilpotent construction problem, we may always assume that $M$ is irreducible.

\begin{rem}[The choice of a metric]\label{rem:choice_of_metric}
    Suppose $M = G/K$ is a symmetric space of noncompact type and $H$ is a connected Lie group acting on $M$ isometrically. Rescaling the existing Riemannian metric along the de Rham factors of $M$ may change the full isometry group $I(M)$ but not its identity component $I^0(M)$ (see \cite[Prop.\hspace{2pt}2.1.60]{solonenko_thesis}). This implies that after any such rescaling, the action $H \curvearrowright M$ would still be isometric. Therefore, when classifying C1-actions---and in particular dealing with the nilpotent construction---we may assume that $M$ is equipped with the Killing metric $g_B$. (See also \cite[Rem.\hspace{2pt}2.5]{DR_DV_Otero_C1}.) This simplifies certain formulas and is particularly useful when one wants to work with the solvable model of $M$. In this article, we adopt this assumption in Subsection \ref{sec:preliminaries:extrinsic_geometry_cpc} and Section \ref{sec:solving_the_NC}. Note that one cannot, however, make this assumption in general when dealing with the problem of congruence---since it affects the full isometry group.
\end{rem}

\section{The admissibility condition}\label{sec:admissibility}
This section serves as the backbone of the article: here, we will prove a structural result that will significantly simplify the nilpotent construction problem and essentially render it solvable. Our line of argument is going to be broken into three steps. First, we prove Theorem \ref{thm:B}---a general result on groups that act transitively and isometrically on symmetric spaces of noncompact type. We then use that result to show that one of the groups involved in the nilpotent construction can be assumed to look particularly nice with respect to an Iwasawa decomposition. Finally, we show how that puts serious restrictions on the subspace $\mk{w} \subset \mk{n}_j^1$ and forces it to be positioned nicely with respect to the restricted root space decomposition.

\subsection{Transitive groups of isometries of symmetric spaces of noncompact type}\label{sec:admissibility:transitive}

In \cite{onishchik_transitive}, Onishchik obtained (among other things) a classification of transitive isometric actions on irreducible symmetric spaces of compact type (see also \cite[Prop.\hspace{2pt}21]{kollross_hyperpolar_reducible}). There is no analogous result for symmetric spaces of noncompact type. The core reason for this lies in the fact that the isometry group in the noncompact case is itself noncompact and thus its subgroups are generally more complicated (for starters, they are not necessarily reductive). Our first goal in this section is to obtain a general result that will give a uniform description of transitive actions on symmetric spaces of noncompact type.

First of all, we recall the classification of maximal solvable subalgebras---also known as \textit{Borel subalgebras}---obtained by Mostow in \cite{mostow_maximal_subgroups}. Let $\mk{g}$ be a real semisimple Lie algebra and $\mk{c} \subset \mk{g}$ a Cartan subalgebra (for a background on Cartan subalgebras in real semisimple Lie algebras, see \cite[Ch.\hspace{0.2pt}VI, Sect.\hspace{2pt}6]{knapp}). There exists a Cartan involution $\uptheta$ that preserves $\mk{c}$; we write $\mk{g} = \mk{k} \oplus \mk{p}$ for the corresponding Cartan decomposition and thus have $\mk{c} = \tilde{\mk{t}} \oplus \tilde{\mk{a}}$, where $\tilde{\mk{t}} = \mk{c} \cap \mk{k}$ and $\tilde{\mk{a}} = \mk{c} \cap \mk{p}$. Note that $\mk{c}$ is abelian and consists of semisimple elements, and its subspaces $\tilde{\mk{t}}$ and $\tilde{\mk{a}}$ can be described invariantly as:
\begin{align*}
    \tilde{\mk{t}} &= \set{X \in \mk{c} \mid \text{all eigenvalues of $\ad_\mk{g}(X)$ are purely imaginary}}, \\
    \tilde{\mk{a}} &= \set{X \in \mk{c} \mid \text{all eigenvalues of $\ad_\mk{g}(X)$ are real}}.
\end{align*}
In particular, for every $X \in \tilde{\mk{a}}$, $\ad(X)$ is diagonalizable as an operator on $\mk{g}$. The dimensions of $\tilde{\mk{t}}$ and $\tilde{\mk{a}}$ are called the compact and noncompact dimensions of $\mk{c}$, respectively. Pick a maximal abelian subspace $\mk{a} \subset \mk{p}$ containing $\tilde{\mk{a}}$, write $\Upsigma \subset \mk{a}^*$ for the corresponding restricted root system, and let $\mk{g} = \mk{g}_0 \oplus \bigoplus_{\upalpha \in \Upsigma} \mk{g}_\upalpha$ stand for the induced restricted root space decomposition.

\begin{ex}\label{ex:max_nonc_Cartan_subalgebras}
    Pick a maximal abelian subspace $\mk{t}_0$ in $\mk{k}_0$. Then $\mk{t}_0 \oplus \mk{a}$ is a Cartan subalgebra of $\mk{g}$. It is said to be \textit{maximally noncompact} since it has the largest possible noncompact dimension. All maximally noncompact Cartan subalgebras are congruent via $\Inn(\mk{g})$.
\end{ex}

The following was shown in \cite{polar-c2-foliations}:

\begin{prop}\label{prop:Cartan_subalgebra_classification}
    There exists a choice of simple roots $\Uplambda$ for $\Upsigma$ and a subset $\Upphi \subseteq \Uplambda$ such that $\tilde{\mk{a}} = \mk{a}_\Upphi$ and $\tilde{\mk{t}}$ is a maximal abelian subspace of $\mk{k}_\Upphi$.
\end{prop}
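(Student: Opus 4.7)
The plan is to build $\Uplambda$ and $\Upphi$ from the root subsystem
$$
\Upsigma_0 = \set{\upalpha \in \Upsigma \mid \upalpha|_{\tilde{\mk{a}}} = 0}.
$$
The argument breaks naturally into three steps: (i) choose a positive system on $\Upsigma$ such that a simple system for $\Upsigma_0$ is contained in the resulting simple system $\Uplambda$ of $\Upsigma$; (ii) verify that the corresponding $\Upphi \subseteq \Uplambda$ satisfies $\mk{a}_\Upphi = \tilde{\mk{a}}$; and (iii) deduce that $\tilde{\mk{t}}$ is maximal abelian in $\mk{k}_\Upphi$.

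For (i), I would pick $H_0 \in \tilde{\mk{a}}$ with $\upalpha(H_0) \ne 0$ for every $\upalpha \in \Upsigma \mysetminus \Upsigma_0$, together with any $H_1 \in \mk{a}$ regular for $\Upsigma_0$, and declare $\Upsigma^+ = \set{\upalpha \in \Upsigma \mid \upalpha(H_0 + t H_1) > 0}$ for sufficiently small $t > 0$. Then $\Upsigma_0^+ \defeq \Upsigma_0 \cap \Upsigma^+ = \set{\upalpha \in \Upsigma_0 \mid \upalpha(H_1) > 0}$ is a positive system for $\Upsigma_0$, and I claim $\Upphi \defeq \Uplambda \cap \Upsigma_0$ is a simple system for it: any decomposition $\upalpha = \upbeta + \upgamma$ with $\upalpha \in \Upphi$ and $\upbeta, \upgamma \in \Upsigma^+$ would force $\upbeta(H_0) = \upgamma(H_0) = 0$ (since $\upbeta(H_0), \upgamma(H_0) \ge 0$ and $\upalpha(H_0) = 0$), hence $\upbeta, \upgamma \in \Upsigma_0^+$, contradicting the indecomposability of $\upalpha$ inside $\Upsigma_0^+$.

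Step (ii) reduces, since $\Upsigma_\Upphi = \Upsigma_0$, to showing $\bigcap_{\upalpha \in \Upsigma_0} \Ker(\upalpha) = \tilde{\mk{a}}$ in $\mk{a}$. The inclusion $\supseteq$ is clear. For the reverse---which I expect to be the main obstacle---I would set $\mk{l} = Z_\mk{g}(\tilde{\mk{a}}) = \mk{g}_0 \oplus \bigoplus_{\upalpha \in \Upsigma_0} \mk{g}_\upalpha$ and observe that an $H$ in the left-hand side lies in $\mk{a} \cap \mk{z}(\mk{l})$. If such $H$ did not belong to $\tilde{\mk{a}}$, then $\tilde{\mk{t}} \oplus (\tilde{\mk{a}} + \R H)$ would be abelian (using $[\tilde{\mk{t}}, H] = 0$, which follows from $\tilde{\mk{t}} \subset \mk{l}$) and would consist entirely of semisimple elements---here I would invoke that $H \in \mk{p}$ makes $\ad(H)$ self-adjoint with respect to $B_\uptheta$, hence diagonalizable over $\R$---while strictly containing $\mk{c}$. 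This contradicts the maximality of $\mk{c}$ as a toral subalgebra of $\mk{g}$.

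For (iii), I would observe that $\mk{c} \subseteq \mk{l}_\Upphi$ is itself a Cartan subalgebra of the reductive Lie algebra $\mk{l}_\Upphi = Z_\mk{g}(\mk{a}_\Upphi) = Z_\mk{g}(\tilde{\mk{a}})$, because $Z_{\mk{l}_\Upphi}(\mk{c}) = \mk{l}_\Upphi \cap Z_\mk{g}(\mk{c}) = \mk{l}_\Upphi \cap \mk{c} = \mk{c}$. The Lie-algebra decomposition $\mk{l}_\Upphi = \mk{m}_\Upphi \oplus \mk{a}_\Upphi$, in which $\mk{a}_\Upphi = \tilde{\mk{a}}$ is central, then forces $\tilde{\mk{t}} = \mk{c} \cap \mk{m}_\Upphi$ to be a Cartan subalgebra of $\mk{m}_\Upphi$, and the inclusion $\tilde{\mk{t}} \subset \mk{k} \cap \mk{m}_\Upphi = \mk{k}_\Upphi$ is automatic since $\mk{a}_\Upphi \subset \mk{p}$. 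Finally, $Z_{\mk{k}_\Upphi}(\tilde{\mk{t}}) = \mk{k}_\Upphi \cap Z_{\mk{m}_\Upphi}(\tilde{\mk{t}}) = \mk{k}_\Upphi \cap \tilde{\mk{t}} = \tilde{\mk{t}}$ yields the desired maximal abelianness.
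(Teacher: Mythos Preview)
Your argument is correct, and the paper does not supply its own proof of this proposition---it simply cites \cite{polar-c2-foliations}---so there is nothing in-paper to compare against.

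One small wording issue in step (i): as phrased, you take $\upalpha \in \Upphi \subseteq \Uplambda$ and consider a putative decomposition $\upalpha = \upbeta + \upgamma$ in $\Upsigma^+$, which is already impossible since $\upalpha$ is simple in $\Upsigma^+$. What your $H_0$-argument actually establishes (and what you need for the claim $\Upsigma_\Upphi = \Upsigma_0$ used in step (ii)) is the converse direction: every simple root of $\Upsigma_0^+$ lies in $\Uplambda$. Indeed, if some $\upalpha$ simple in $\Upsigma_0^+$ were decomposable as $\upbeta + \upgamma$ in $\Upsigma^+$, then since the choice of sufficiently small $t$ forces $\upbeta(H_0), \upgamma(H_0) \ge 0$ for all $\upbeta, \upgamma \in \Upsigma^+$, and their sum is $\upalpha(H_0) = 0$, both would lie in $\Upsigma_0^+$, contradicting simplicity of $\upalpha$ there. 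With this rephrased, steps (ii) and (iii) go through exactly as you wrote them; you might also note in (ii) that a general element $T + A$ with $T \in \tilde{\mk{t}}$ and $A \in \tilde{\mk{a}} + \R H$ is semisimple because $\ad(T)$ and $\ad(A)$ are commuting normal operators for $B_\uptheta$.
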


In the context of this proposition, maximally noncompact Cartan subalgebras correspond precisely to $\Upphi = \varnothing$. In \cite[Sect.\hspace{2pt}4]{mostow_maximal_subgroups}, Mostow showed that every Borel subalgebra of $\mk{g}$ contains a Cartan subalgebra. From that, he was able to get a complete description of all Borel subalgebras. Here we give a reformulation of Mostow's result obtained in \cite[Th.\hspace{2pt}2.2]{polar-c2-foliations}, which is slightly better suited for our purposes:

\begin{thm}\label{thm:Borel_subalgebra_classification}
    Let $\mk{g}$ be a real semisimple Lie algebra and $\mk{b} \subset \mk{g}$ a Borel subalgebra. Then there exists a choice of a Cartan decomposition $\mk{g} = \mk{k} \oplus \mk{p}$, a maximal abelian subspace $\mk{a} \subset \mk{p}$, a set of simple roots $\Uplambda \subset \Upsigma$, a subset $\Upphi \subseteq \Uplambda$, and a maximal abelian subspace $\tilde{\mk{t}} \subseteq \mk{k}_\Upphi$ such that $\mk{b} = \tilde{\mk{t}} \oplus \mk{a}_\Upphi \oplus \mk{n}_\Upphi$.
\end{thm}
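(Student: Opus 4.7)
The plan is to derive this theorem as a direct translation of Mostow's classical classification of maximal solvable subalgebras of real semisimple Lie algebras \cite{mostow_maximal_subgroups} into the language of parabolic subalgebras introduced in Subsection \ref{sec:preliminaries:parabolic}, leveraging Proposition \ref{prop:Cartan_subalgebra_classification} as the bridge between the two.

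First, I would invoke Mostow's theorem to produce a Cartan subalgebra $\mk{c} \subseteq \mk{b}$. Choosing a Cartan involution $\uptheta$ that stabilizes $\mk{c}$ (which exists by a standard conjugation argument applied to any Cartan involution), I would decompose $\mk{c} = \mk{t}' \oplus \tilde{\mk{a}}$ into its compact and noncompact parts with $\mk{t}' = \mk{c} \cap \mk{k}$ and $\tilde{\mk{a}} = \mk{c} \cap \mk{p}$. Mostow's more refined statement provides the structure $\mk{b} = \mk{t}' \oplus \tilde{\mk{a}} \oplus \mk{n}'$, where $\mk{n}'$ is the sum of all $\ad(\tilde{\mk{a}})$-weight spaces on $\mk{g}$ whose weight is strictly positive on some chosen regular element of $\tilde{\mk{a}}$. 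A short maximality argument then shows that $\mk{t}'$ must be maximal abelian in $\mk{k} \cap Z_{\mk{g}}(\tilde{\mk{a}})$: if not, one could enlarge $\mk{t}'$ within this centralizer without sacrificing solvability, contradicting the maximality of $\mk{b}$.

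Next, I would apply Proposition \ref{prop:Cartan_subalgebra_classification} to $\mk{c}$. This produces a maximal abelian subspace $\mk{a} \subseteq \mk{p}$ containing $\tilde{\mk{a}}$, a system of simple roots $\Uplambda \subset \Upsigma$, and a subset $\Upphi \subseteq \Uplambda$ such that $\tilde{\mk{a}} = \mk{a}_\Upphi$ and such that $\tilde{\mk{t}} \defeq \mk{t}'$ is maximal abelian in $\mk{k}_\Upphi$. Since $Z_{\mk{g}}(\tilde{\mk{a}}) = Z_{\mk{g}}(\mk{a}_\Upphi) = \mk{l}_\Upphi$ and hence $\mk{k} \cap Z_{\mk{g}}(\tilde{\mk{a}}) = \mk{k}_\Upphi$, the maximality conditions on $\mk{t}' = \tilde{\mk{t}}$ from the two steps are compatible. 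The remaining task is to identify Mostow's nilpotent piece $\mk{n}'$ with $\mk{n}_\Upphi$. The vector $H^\Upphi = \sum_{\upalpha_j \in \Uplambda \mysetminus \Upphi} H^j$ lies in $\mk{a}_\Upphi = \tilde{\mk{a}}$, vanishes on every $\uplambda \in \Upsigma_\Upphi$, and satisfies $\uplambda(H^\Upphi) = \upnu > 0$ for every $\uplambda \in \Updelta_\Upphi^\upnu$. Taking $H^\Upphi$ as Mostow's regular element, the positively weighted $\ad(\tilde{\mk{a}})$-eigenspaces assemble precisely into $\mk{n}_\Upphi = \bigoplus_{\uplambda \in \Updelta_\Upphi} \mk{g}_\uplambda$, so $\mk{n}' = \mk{n}_\Upphi$, and we conclude $\mk{b} = \tilde{\mk{t}} \oplus \mk{a}_\Upphi \oplus \mk{n}_\Upphi$.

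The main conceptual subtlety, I expect, will be the coordination between the choices at the different steps: Mostow's decomposition is intrinsic to $\mk{b}$ but formulated with respect to an arbitrary maximal abelian subspace $\mk{a} \supseteq \tilde{\mk{a}}$ and an ordering on $\tilde{\mk{a}}$, whereas Proposition \ref{prop:Cartan_subalgebra_classification} produces very specific data $\mk{a}, \Uplambda$. Verifying that these can be chosen simultaneously amounts to observing that Mostow's $\mk{n}'$ does not actually depend on which regular element of $\tilde{\mk{a}}$ is used (only on the resulting positive cone), so one may freely retroactively take this regular element to be the $H^\Upphi$ furnished by Proposition \ref{prop:Cartan_subalgebra_classification}; the rest is a bookkeeping comparison of two ways of labelling the same set of root spaces.
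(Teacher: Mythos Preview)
The paper does not prove this theorem; it is quoted verbatim from \cite[Th.\hspace{2pt}2.2]{polar-c2-foliations} as a reformulation of Mostow's classification, with no argument supplied. So there is no in-paper proof to compare against, and your outline is indeed the natural route to reconstructing the cited result.

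There is, however, a genuine gap in your coordination step. You correctly flag it as the main subtlety, but your proposed resolution does not work. The nilpotent piece $\mk{n}'$ is determined by $\mk{b}$ (it is the derived subalgebra $[\mk{b},\mk{b}]$), and with it the positive cone in $\tilde{\mk{a}}$ is fixed. You cannot ``retroactively take the regular element to be $H^\Upphi$'' unless $H^\Upphi$ already lies in that cone---and Proposition~\ref{prop:Cartan_subalgebra_classification}, being a statement purely about the Cartan subalgebra $\mk{c}$, gives you no control over this. It produces \emph{some} $(\Uplambda,\Upphi)$ with $\mk{a}_\Upphi=\tilde{\mk{a}}$, but the resulting $H^\Upphi$ may lie in the cone opposite to the one determined by $\mk{n}'$. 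Already for $\mk{g}=\mk{sl}(2,\R)$ the two Borels containing $\mk{a}$ share the same Cartan subalgebra, yet require opposite choices of $\Uplambda$.

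The fix is to reverse the order of choices. First take Mostow's regular element $H_0\in\tilde{\mk{a}}$ (so that $\mk{n}'=\bigoplus_{\uplambda(H_0)>0}\mk{g}_\uplambda$), then choose the simple system $\Uplambda$ for $\Upsigma$ so that $H_0$ lies in the closure of the positive Weyl chamber, and set $\Upphi=\{\upalpha_i\in\Uplambda:\upalpha_i(H_0)=0\}$. A direct check gives $\Updelta_\Upphi=\{\uplambda\in\Upsigma:\uplambda(H_0)>0\}$, hence $\mk{n}_\Upphi=\mk{n}'$; and comparing ranks (using the $(\Uplambda_0,\Upphi_0)$ supplied by Proposition~\ref{prop:Cartan_subalgebra_classification} as an auxiliary) yields $\mk{a}_\Upphi=\tilde{\mk{a}}$. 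Since $\mk{k}_\Upphi=\mk{k}\cap Z_{\mk{g}}(\mk{a}_\Upphi)=\mk{k}\cap Z_{\mk{g}}(\tilde{\mk{a}})$ depends only on $\tilde{\mk{a}}$ and not on $\Uplambda$, the maximality of $\tilde{\mk{t}}$ in $\mk{k}_\Upphi$ that you established in your first paragraph survives this change of simple system. With this adjustment your argument goes through.
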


We call a subspace $W$ of $\mk{g}$ \textit{Iwasawa-adapted} if $W = (W \cap \mk{k}) \oplus (W \cap \mk{a}) \oplus (W \cap \mk{n})$. We see from Theorem \ref{thm:Borel_subalgebra_classification} that \textit{every Borel subalgebra is Iwasawa-adapted with respect to some Iwasawa decomposition}.

\begin{ex}\label{ex:max_nonc_Borel_subalgebras}
    If $\mk{c} = \mk{t}_0 \oplus \mk{a}$ is a maximally noncompact Cartan subalgebra, then every Borel subalgebra containing it is of the form $\mk{t}_0 \oplus \mk{a} \oplus \mk{n}$ for some choice of $\Upsigma^+ \subset \Upsigma$. Such a Borel subalgebra is also said to be \textit{maximally noncompact}.
\end{ex}

Using Mostow's classification, we can obtain a general structural result on groups acting transitively by isometries on symmetric spaces of noncompact type. First, we need the following

\begin{lem}\label{lem:solvable_subalgebra_transitive}
    Let $M$ be a symmetric space of noncompact type. Suppose $H$ is a Lie group acting on $M$ transitively and isometrically. Then there exists a connected solvable Lie subgroup $S$ of $H$ that still acts transitively on $M$.
\end{lem}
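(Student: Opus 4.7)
The plan is to construct $S$ by combining a Levi decomposition of $\mk{h} = \Lie(H)$ with an Iwasawa decomposition of its semisimple factor, suitably aligned with the Cartan decomposition of $\mk{g}$. After replacing $H$ by the identity component of its image in $I(M)$---which still acts transitively on $M$ since orbits of identity components are open and $M$ is connected---I may assume that $H$ is a connected Lie subgroup of $G = I^0(M)$. Transitivity then translates on the Lie-algebra level into $\mk{h} + \mk{k} = \mk{g}$, where $\mk{k}$ is the isotropy subalgebra at our chosen base point $o \in M$.

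Next, I take a Levi decomposition $\mk{h} = \mk{l} \ltimes \mk{r}$, with $\mk{l}$ semisimple (possibly trivial) and $\mk{r}$ the solvable radical. By Karpelevich's theorem applied to the semisimple subalgebra $\mk{l} \subseteq \mk{g}$, there exists $g \in G$ such that $\Ad(g)(\mk{l})$ is stable under the fixed Cartan involution $\uptheta$ of $\mk{g}$. Conjugating $H$ by $g$ is harmless: any solvable transitive subgroup $S'$ of $gHg^{-1}$ pulls back to the solvable transitive subgroup $g^{-1} S' g$ of $H$. Hence, after this conjugation, I may assume that $\uptheta$ preserves $\mk{l}$. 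This produces a Cartan decomposition $\mk{l} = (\mk{l} \cap \mk{k}) \oplus (\mk{l} \cap \mk{p})$, and choosing a maximal abelian subspace $\mk{a}_L \subseteq \mk{l} \cap \mk{p}$ and a positive system of restricted roots for $\mk{l}$ yields an Iwasawa decomposition $\mk{l} = \mk{k}_L \oplus \mk{a}_L \oplus \mk{n}_L$, with $\mk{k}_L = \mk{l} \cap \mk{k} \subseteq \mk{k}$.

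I then set $\mk{s} := (\mk{a}_L \oplus \mk{n}_L) + \mk{r}$. This is a subalgebra of $\mk{h}$, since $\mk{r}$ is an ideal of $\mk{h}$ and $\mk{a}_L \oplus \mk{n}_L$ is a subalgebra of $\mk{l}$. It is solvable: $\mk{r}$ is a solvable ideal of $\mk{s}$, and the quotient $\mk{s}/\mk{r} \cong \mk{a}_L \oplus \mk{n}_L$ (using that $\mk{l} \cap \mk{r} = 0$ by the Levi decomposition) is itself solvable. Transitivity of the connected Lie subgroup $S \subseteq H$ corresponding to $\mk{s}$ then follows from a one-line computation: using $\mk{k}_L \subseteq \mk{k}$,
\[
\mk{s} + \mk{k} \supseteq \mk{k}_L + \mk{a}_L + \mk{n}_L + \mk{r} + \mk{k} = \mk{l} + \mk{r} + \mk{k} = \mk{h} + \mk{k} = \mk{g}.
\]
Thus the orbit $S \cdot o$ is open in $M$, and by connectedness of $M$ together with homogeneity of the $S$-action (so that all $S$-orbits are open and partition $M$), $S \cdot o = M$.

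The main technical ingredient is Karpelevich's theorem, used to align the Cartan involution of $\mk{l}$ with that of $\mk{g}$. Without this alignment, one could not ensure $\mk{k}_L \subseteq \mk{k}$, and therefore could not absorb the compact summand into $\mk{k}$ in the transitivity calculation---the natural solvable candidate $\mk{s}$ might then fail to project surjectively onto $\mk{p}$.
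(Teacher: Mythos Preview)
Your argument follows essentially the same route as the paper's: Levi-decompose $\mk{h}$, take an Iwasawa decomposition of the Levi factor $\mk{l}$, and arrange that the compact summand $\mk{k}_L$ lands inside the isotropy algebra, so that $\mk{s} = (\mk{a}_L \oplus \mk{n}_L) + \mk{r}$ still surjects onto $T_oM$. The only difference in the alignment step is cosmetic: the paper keeps $\mk{l}$ fixed and moves the base point (citing a result from Onishchik's \emph{Lectures} to the effect that some Cartan decomposition of $\mk{g}$ satisfies $\uppsi(\mk{k}_L)\subseteq\overline{\mk{k}}$), whereas you fix $o$ and conjugate $\mk{l}$ via Karpelevich--Mostow; these are interchangeable.

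There is, however, a gap in your last sentence. One open orbit does not imply that \emph{all} $S$-orbits are open, and nothing called ``homogeneity of the $S$-action'' delivers this: for $p = h\cdot o$ the orbit $S\cdot p$ is open iff $\Ad(h^{-1})\mk{s} + \mk{k}_H = \mk{h}$ (with $\mk{k}_H$ the isotropy of $o$ in $\mk{h}$), which does not follow from $\mk{s}+\mk{k}_H=\mk{h}$. The clean fix is to use the \emph{global} Iwasawa decomposition $L = A_L N_L K_L$ together with normality of $R$ in $H^0$ to obtain $H^0 = A_L N_L R\, K_L = S K_L$; since $K_L$ fixes $o$, this gives $S\cdot o = H^0\cdot o = M$ directly, without any appeal to other $S$-orbits. (A smaller point: your opening reduction to $H\subseteq G$ also needs a word, since a solvable transitive subgroup $\bar S$ of the image must be lifted back to $H$; the radical of $\pi^{-1}(\bar S)$ does the job, as its Levi factor has solvable---hence trivial---image in $\bar S$.)
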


\begin{proof}
    Let $G$ stand for $I^0(M)$ and write $\upvarphi \colon \mk{h} \to \mk{g} = \Lie(G)$ for the homomorphism arising from the action. Pick a Levi decomposition $\mk{h} = \mk{l} \loplus \mm{rad}(\mk{h})$ and an Iwasawa decomposition $\mk{l} = \mk{k} \oplus \mk{a} \oplus \mk{n}$, and write $\mk{l} = \mk{k} \oplus \mk{p}$ for the corresponding Cartan decomposition. Consider the restriction $\uppsi \colon \mk{l} \to \mk{g}$ of $\upvarphi$. Since both $\mk{l}$ and $\mk{g}$ are semisimple, there exists a Cartan decomposition $\mk{g} = \overline{\mk{k}} \oplus \overline{\mk{p}}$ such that $\uppsi(\mk{k}) \subseteq \overline{\mk{k}}$ (see \cite[Ch.\hspace{2pt}6, Cor.\hspace{2pt}1]{onishchik_lectures}). The subalgebra $\overline{\mk{k}} \subset \mk{g}$ is the isotropy Lie algebra of some point $o \in M$. Consider the subalgebra $\mk{s} = (\mk{a} \loplus \mk{n}) \loplus \mm{rad}(\mk{h})$ of $\mk{h}$; it is solvable as a semidirect sum of two solvable Lie algebras. Since $\upvarphi(\mk{k}) \subseteq \overline{\mk{k}}$, $\upvarphi(\mk{s})$ has the same projection in $\mk{g}/\overline{\mk{k}} \cong T_oM$ as $\upvarphi(\mk{h})$, which is the whole $T_oM$---by the transitivity of $H \curvearrowright M$. In other words, the $o$-orbit of the connected Lie subgroup $S \subseteq H$ corresponding to $\mk{s}$ is open. This orbit is then a complete totally geodesic submanifold of $M$ and thus coincides with the whole $M$.
    This completes the proof.
\end{proof}

In the proof, we have used some ideas from \cite[Prop.\hspace{2pt}2.2]{hyperpolar_homogeneous_foliations}\footnote{We would like to point out a slight issue in the proof of that proposition: the authors do not show that the subgroup $K$ is compact. However, this is rectifiable---e.g., by using an argument similar to ours.}. We are now ready to prove the main result of this subsection\footnote{Compare this to \cite[Prop.\hspace{2pt}5.1]{hyperpolar_homogeneous_foliations}.}.

\begin{prop}\label{prop:transitive_noncompact}
    Let $M$ be a symmetric space of noncompact type represented by a Riemannian symmetric pair $(G,K)$ with compact ineffectiveness kernel $\mk{i} \triangleleft \mk{g}$ {\normalfont(}note that $G$ is necessarily reductive{\normalfont)}. Write $\overline{\mk{g}} \trianglelefteq \mk{g}$ for the {\normalfont(}unique{\normalfont)} ideal complementary to $\mk{i}$. Suppose $H \subseteq G$ is a Lie subgroup that acts transitively on $M$. Then there exist
    \begin{enumerate}[\normalfont (a)]
        \item an Iwasawa decomposition $\overline{\mk{g}} = \mk{k} \oplus \mk{a} \oplus \mk{n}$,
        \item a maximal abelian subspace $\mk{t}_0 \subseteq \mk{k}_0$, and
        \item a maximal abelian subspace $\mk{t} \subseteq \mk{i}$,
    \end{enumerate}
    such that $\mk{h} = \Lie(H)$ contains a subalgebra of the form $V \oplus \mk{n}$, where $V \subseteq \mk{t} \oplus \mk{t}_0 \oplus \mk{a}$ is a subspace that projects surjectively onto $\mk{a}$.
\end{prop}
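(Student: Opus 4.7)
The plan is to combine Lemma \ref{lem:solvable_subalgebra_transitive} with Mostow's classification (Theorem \ref{thm:Borel_subalgebra_classification}). First, I would apply the lemma to pass to a connected solvable subgroup $S \subseteq H$ still acting transitively on $M$ and work with its Lie algebra $\mk{s}$. Being solvable in $\mk{g} = \mk{i} \oplus \overline{\mk{g}}$, $\mk{s}$ lies inside a maximal solvable subalgebra, which splits as $\mk{t} \oplus \mk{b}$: the $\mk{i}$-part is a maximal torus of $\mk{i}$ (since maximal solvables in a compact Lie algebra are maximal tori) and $\mk{b}$ is a Borel of the semisimple ideal $\overline{\mk{g}}$. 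Theorem \ref{thm:Borel_subalgebra_classification} writes $\mk{b} = \tilde{\mk{t}} \oplus \mk{a}_\Upphi \oplus \mk{n}_\Upphi$ for some Iwasawa data on $\overline{\mk{g}}$ and some $\Upphi \subseteq \Uplambda$.

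Next I would use transitivity, $\mk{s} + \mk{k} = \mk{g}$, to force $\Upphi = \varnothing$. Projecting the inclusion $\mk{s} \subseteq \mk{t} \oplus \tilde{\mk{t}} \oplus \mk{a}_\Upphi \oplus \mk{n}_\Upphi$ to $\mk{p}$ along $\mk{k}$ kills $\mk{t}$ and $\tilde{\mk{t}}$, preserves $\mk{a}_\Upphi$, and sends $\mk{n}_\Upphi$ isomorphically onto $\bigoplus_{\uplambda \in \Updelta_\Upphi} \mk{p}_\uplambda$. For this image to fill out $\mk{p} = \mk{a} \oplus \bigoplus_{\upalpha \in \Upsigma^+} \mk{p}_\upalpha$ one must have $\mk{a}_\Upphi = \mk{a}$ and $\Updelta_\Upphi = \Upsigma^+$, hence $\Upphi = \varnothing$. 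Thus $\tilde{\mk{t}}$ is a maximal abelian subspace $\mk{t}_0 \subseteq \mk{k}_0$, $\mk{s} \subseteq \mk{t} \oplus \mk{t}_0 \oplus \mk{a} \oplus \mk{n}$, and $\mk{s}$ projects surjectively onto $\mk{a} \oplus \mk{n}$ modulo $\mk{t} \oplus \mk{t}_0$.

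The core computation is then to upgrade this to $\mk{n} \subseteq \mk{s}$. Pick a regular $H \in \mk{a}$ (so $\upalpha(H) \ne 0$ for every $\upalpha \in \Upsigma$) and, by the surjectivity above, elements $Y_H = \tilde{t}_H + t_H + H \in \mk{s}$ and, for each $X \in \mk{g}_\upalpha$, $Y_X = \tilde{t}_X + t_X + X \in \mk{s}$, where $\tilde{t}_\bullet \in \mk{t}$ and $t_\bullet \in \mk{t}_0$. Using $[\mk{i}, \overline{\mk{g}}] = 0$, $[\mk{t}, \mk{t}] = 0$, and $[\mk{t}_0, \mk{a}] = 0$, the bracket collapses to
\[
[Y_H, Y_X] = \ad(t_H) X + \upalpha(H) X \iin \mk{s} \cap \mk{g}_\upalpha.
\]
Since $t_H \in \mk{k}$, the operator $\ad(t_H)$ on $\mk{g}_\upalpha$ has purely imaginary eigenvalues, whereas $\upalpha(H) \in \R \mysetminus \{0\}$. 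Hence $\ad(t_H) + \upalpha(H) \Id$ is invertible on $\mk{g}_\upalpha$, so as $X$ varies over $\mk{g}_\upalpha$ the brackets $[Y_H,Y_X]$ fill out all of $\mk{g}_\upalpha$, yielding $\mk{g}_\upalpha \subseteq \mk{s}$. Summing over $\upalpha \in \Upsigma^+$ gives $\mk{n} \subseteq \mk{s} \subseteq \mk{h}$.

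Finally, set $V = \mk{s} \cap (\mk{t} \oplus \mk{t}_0 \oplus \mk{a})$. The sum $V \oplus \mk{n}$ is automatically a subalgebra ($\mk{t} \oplus \mk{t}_0 \oplus \mk{a}$ being abelian and normalizing $\mk{n}$), is contained in $\mk{h}$, and the elements $Y_H \in V$ already produced show that $V$ projects surjectively onto $\mk{a}$. I expect the main subtlety to be the eigenvalue-based invertibility step for $\ad(t_H) + \upalpha(H) \Id$; everything else is essentially a careful unpacking of Mostow's classification together with the fact that $\mk{i}$ is compact and centralizes $\overline{\mk{g}}$.
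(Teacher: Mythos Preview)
Your proof is correct and follows essentially the same route as the paper: pass to a solvable subgroup via Lemma~\ref{lem:solvable_subalgebra_transitive}, embed its Lie algebra in a Borel, use Mostow's classification to force maximal noncompactness, and then bracket to extract all of $\mk{n}$. The only noteworthy difference is in the last step: the paper observes that $[t_H,Z] \perp Z$ (skew-symmetry) and concludes via a projection argument that $\mk{s} \cap \mk{g}_\upalpha = \mk{g}_\upalpha$, whereas you rephrase the same skew-symmetry as ``$\ad(t_H)$ has purely imaginary eigenvalues'' and conclude via invertibility of $\ad(t_H)+\upalpha(H)\Id$. Both are equivalent consequences of the orthogonality of the $\mk{k}_0$-action, so this is cosmetic. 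One small point: at the end you should note that the surjectivity of $V$ onto $\mk{a}$ follows because for \emph{every} $H'\in\mk{a}$ (not just the regular $H$) there is a $Y_{H'}\in\mk{s}$ with $\mk{a}\oplus\mk{n}$-projection $H'$, and such $Y_{H'}$ automatically lies in $V$; the single regular $H$ you produced does not by itself establish surjectivity.
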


The need to formulate this proposition in such a general and technical form will become evident when we apply it to the nilpotent construction in Proposition \ref{prop:adjusting_the_normalizer}. Note that Theorem \ref{thm:B} is a special case of Proposition \ref{prop:transitive_noncompact} when the pair $(G,K)$ is almost effective.

\begin{proof}[Proof of Proposition {\normalfont \ref{prop:transitive_noncompact}}]
    By Lemma \ref{lem:solvable_subalgebra_transitive}, there exists a solvable subalgebra $\mk{s} \subseteq \mk{h}$ whose corresponding connected Lie subgroup still acts transitively on $M$. Let $\mk{b} \subset \mk{g}$ be a maximal solvable Lie subalgebra containing $\mk{s}$. Since $\mk{g} = \mk{i} \oplus \overline{\mk{g}}$ is a direct sum decomposition, $\mk{b}$ must necessarily be of the form $\mk{t} \oplus \overline{\mk{b}}$, where $\mk{t}$ is maximal solvable in $\mk{i}$ and $\overline{\mk{b}}$ is one in $\overline{\mk{g}}$---otherwise, $\pr_\mk{i}(\mk{b}) \oplus \pr_{\overline{\mk{g}}}(\mk{b})$ would be a larger solvable subalgebra. We claim that $\mk{t}$ is abelian. Indeed, being a subalgebra of the compact Lie algebra $\mk{i}$, $\mk{t}$ is itself compact, hence reductive. But since it is also solvable, it must be abelian. Observe that $\overline{\mk{g}} \cong \mk{g}/\mk{i}$ is naturally isomorphic to the isometry Lie algebra of $M$ and hence is semisimple (\cite[Ch.\hspace{0.2pt}V, Th.\hspace{2pt}4.1]{helgason}). According to Theorem \ref{thm:Borel_subalgebra_classification}, we may assume that $\overline{\mk{b}}$ is Iwasawa-adapted to some Iwasawa decomposition $\overline{\mk{g}} = \mk{k} \oplus \mk{a} \oplus \mk{n}$. Let us denote the underlying Cartan decomposition by $\overline{\mk{g}} = \mk{k} \oplus \mk{p}$. In order for its corresponding Lie subgroup to act transitively, $\mk{s}$ has to project surjectively onto $\mk{a} \oplus \mk{n} \cong \mk{p}$ along $\mk{i} \oplus \mk{k}$. This is only possible when $\overline{\mk{b}} \cap \mk{a} = \mk{a}$ and $\overline{\mk{b}} \cap \mk{n} = \mk{n}$, which means that $\overline{\mk{b}}$ is maximally noncompact, i.e., $\overline{\mk{b}} = \mk{t}_0 \oplus \mk{a} \oplus \mk{n}$ for some maximal abelian subspace $\mk{t}_0 \subseteq \mk{k}_0$ (see Examples \ref{ex:max_nonc_Cartan_subalgebras} and \ref{ex:max_nonc_Borel_subalgebras}).
    
    So far, we have obtained a subalgebra $\mk{s} \subseteq \mk{t} \oplus \mk{t}_0 \oplus \mk{a} \oplus \mk{n}$ of $\mk{h}$ that projects surjectively onto $\mk{a} \oplus \mk{n}$ along $\mk{t} \oplus \mk{t}_0$. What is left to show is that $\mk{s}$ actually contains $\mk{n}$. Write $\Upsigma$ for the restricted root system in $\mk{a}^*$ and take any $\upalpha \in \Upsigma^+$ and $Z \in \mk{g}_\upalpha$. Pick $H \in \mk{a}$ such that $[H,Z] = Z$. There exist $X_1, X_2 \in \mk{t}$ and $Y_1, Y_2 \in \mk{t}_0$ such that $X_1 + Y_1 + H$ and $X_2 + Y_2 + Z$ lie in $\mk{s}$. As $\mk{t} \oplus \mk{t}_0 \oplus \mk{a}$ is abelian and $\mk{i}$ commutes with $\overline{\mk{g}}$, we have:
    $$
    \mk{s} \ni [X_1 + Y_1 + H, X_2 + Y_2 + Z] = [Y_1, Z] + Z.
    $$
    The adjoint representation of $\mk{k}_0$ (and thus $\mk{t}_0$) on $\mk{g}$ preserves each root space and is orthogonal. This means that $[Y_1, Z]$ lies in $\mk{g}_\upalpha$ and is orthogonal to $Z$. We deduce that $\mk{s} \cap \mk{g}_\upalpha$ enjoys the following property: for every $Z \in \mk{g}_\upalpha$, $\mk{s} \cap \mk{g}_\upalpha$ contains a vector whose orthogonal projection in $\R Z$ is $Z$. It is clear that $\mk{s} \cap \mk{g}_\upalpha$ must then coincide with $\mk{g}_\upalpha$. We see that $\mk{s}$ contains all positive root spaces and thus the whole $\mk{n}$. This concludes the proof.
\end{proof}

\subsection{The redundancy}\label{sec:admissibility:redundancy}

Our next goal is to apply Proposition \ref{prop:transitive_noncompact} to the admissibility condition in the nilpotent construction, since it has to do with a certain action on a boundary component being transitive. More precisely, applying that proposition would relate the normalizer $N_{\mk{m}_j}(\mk{w})$ to some Iwasawa decomposition of the subalgebra $\mk{g}'_j \subset \mk{g}$. We have to overcome the following practical difficulty:

\begin{quote}
    \textit{An Iwasawa decomposition of $\mk{g}'_j$ arising from Proposition {\normalfont \ref{prop:transitive_noncompact}} may not agree with the existing ambient Iwasawa decomposition of $\mk{g}$.}
\end{quote}

We will deal with this using a rather technical argument---but in order to make it neat, we are going to embrace a more conceptual moduli-space-like approach. Observe that some of the choices we make in the nilpotent construction---like the point $o$ or the maximal flat $\mk{a}$---turn out to be redundant in the end. Out of all the initial data we fix, what really ends up being used in the nilpotent construction are the Lie algebras $\mk{m}_j, \mk{a}_j,$ and $\mk{n}_j$ and the subspace $\mk{w}$ (and, by implication, the parabolic subalgebra $\mk{q}_j$ as well as the boundary component $B_j$). The grading on $\mk{n}_j$ can be recovered from the representation of $\mk{a}_j$ on it. The key idea is to formalize and make use of the following informal observation: altering some of the redundant data should not affect the nilpotent construction. To that end, we will have to make a slight change of perspective on the conditions of admissibility and protohomogeneity. We begin with a little detour into parabolic subgroups. Most of the exposition here is simply mimicry of the constructions bulleted in Subsection \ref{sec:preliminaries:parabolic} but without making unnecessary choices (see Remark \ref{rem:parabolic_abstract_vs_standard} below).

Let $M = G/K$ be a symmetric space of noncompact type. Recall that the equivalence classes of asymptotic geodesics in $M$ are called points at infinity (for more on this, see \cite{eberlein_nonpositively_curved}). The set of such classes is denoted by $M(\infty)$, and we also write $\overbar{M} = M \sqcup M(\infty)$. The topology on $M$ can be extended to the so-called cone topology on $\overbar{M}$, which makes $\overbar{M}$ homeomorphic to a closed Euclidean ball (with $M(\infty)$ being the boundary sphere and $M$ the interior). Given any $o \in M$ and $X \in T_oM$, we write $\upgamma_X$ for the geodesic emanating from $o$ with $\dot{\upgamma}_X(0) = X$, and we denote its asymptotic class by $\upgamma_X(\infty)$. This way, sending $X$ to $\upgamma_X(\infty)$ establishes a homeomorphism between the unit sphere in $T_oM$ and $M(\infty)$. The action of $I(M)$ (and thus of $G$) on $M$ extends naturally to a continuous action on $\overbar{M}$. For every $x \in M(\infty)$, the stabilizer $G_x$ is a parabolic subgroup, and every proper parabolic subgroup arises in this way. A general such subgroup can stabilize several points at infinity. Even though we do not need that, it is not hard to show that for maximal proper parabolic subgroups, this is no longer the case---hence they are parametrized by a subset\footnote{More precisely, by a collection of orbits of $G$ in $M(\infty)$, which is called the set of \textit{maximally singular} points at infinity in \cite{eberlein_nonpositively_curved}.} of $M(\infty)$.

Let $\wh{Q} = G_x$ be a maximal proper parabolic subgroup of $G$. Fix a base point $o \in M$ and take the unique unit vector $X \in T_oM$ with $\upgamma_X(\infty) = x$. Write $R$ for the curvature tensor of $M$ and consider the subspace $\wh{\mk{f}} = \set{Y \in T_oM \mid R(Y,X) = 0}$ of $T_oM$. This subspace is curvature-invariant (meaning, $R(\wh{\mk{f}},\wh{\mk{f}}) \hspace{1pt}\wh{\mk{f}} \subseteq \wh{\mk{f}}$), hence it corresponds to a totally geodesic submanifold $\wh{F} = \exp(\hspace{0.5pt}\wh{\mk{f}}\hspace{0.5pt})$, which is an extended boundary component.\label{parabolic_abstract} We can split $\wh{\mk{f}}$ orthogonally as the sum $\wh{\mk{b}} \oplus \R X$ of two curvature-invariant subspaces, whose corresponding totally geodesic submanifolds $\wh{B} = \exp(\wh{\mk{b}})$ (a boundary component) and $\exp(\R X)$ are the noncompact and flat parts of $\wh{F}$, respectively, and we have $\wh{F} = \wh{B} \times \exp(\R X)$. This shows that $\R X$ is actually determined by $\wh{\mk{f}}$. Consider the Cartan decomposition $\mk{g} = \mk{k} \oplus \mk{p}$, where $\mk{k} = \Lie(K)$, and let $\uptheta$ be the corresponding Cartan involution. By identifying $\mk{p}$ and $T_oM$, we can think of $\R X$ as an abelian subspace $\wh{\mk{a}} \subset \mk{p} \subset \mk{g}$ and thus define $\wh{\mk{l}} = Z_\mk{g}(\wh{\mk{a}})$ and $\wh{\mk{m}} = \wh{\mk{l}} \ominus \wh{\mk{a}}$. These subalgebras are clearly $\uptheta$-stable and have $\wh{\mk{f}}$ and $\wh{\mk{b}}$, respectively, as their $\mk{p}$-parts. It then follows that the subgroup $\wh{L} = Z_G(\wh{\mk{a}})$ has $\wh{F}$ as its $o$-orbit. We can further define $\wh{\mk{g}}$ to be the derived subalgebra of $\wh{\mk{m}}$ (or $\wh{\mk{l}}$) and $\wh{\mk{g}}'$ to be the noncompact part of $\wh{\mk{g}}$. We write $\wh{G}'$ and $\wh{A}$ for the connected Lie subgroups of $G$ corresponding to $\wh{\mk{g}}'$ and $\wh{\mk{a}}$ and let $\wh{K} = K \cap \wh{L}$ and $\wh{M} = \wh{K}\wh{G}'$. Finally, we let $\wh{\mk{n}}$ and $\wh{N}$ stand for the nilradical of $\wh{\mk{q}}$ and its corresponding connected Lie subgroup, respectively. The subalgebra $\wh{\mk{l}} = \wh{\mk{m}} \oplus \wh{\mk{a}}$ normalizes $\wh{\mk{n}}$, and we have the Langlands decomposition $\wh{\mk{q}} = \wh{\mk{m}} \oplus \wh{\mk{a}} \loplus \wh{\mk{n}}$. As an operator on $\wh{\mk{n}}$, $\ad(X)$ is diagonalizable with eigenvalues $\upvarepsilon, \ldots, k\upvarepsilon, \hspace{1pt} (\upvarepsilon > 0, k \ge 1)$, so $\wh{\mk{n}}$ becomes graded by the corresponding eigenspaces: $\wh{\mk{n}} = \bigoplus_{\upnu \ge 1} \wh{\mk{n}}^{\hspace{0.5pt}\upnu}$. The group $\wh{L}$, which also normalizes $\wh{\mk{n}}$, respects this grading. We also have the horospherical decomposition $M \cong \wh{F} \times \wh{N} \cong \wh{B} \times \wh{A} \times \wh{N}$.

Now, since $\wh{L}$ stabilizes $x$ and centralizes $\wh{\mk{a}}$, if we were to carry out these constructions at a different base point $g \cdot o \in \wh{F}, \hspace{1pt} g \in \wh{L}$, we would obtain the same subalgebras $\wh{\mk{a}}, \wh{\mk{l}}, \wh{\mk{m}}, \wh{\mk{g}}, \wh{\mk{g}}' \hspace{1pt}$, the same grading on $\wh{\mk{n}}$, as well as the same submanifold $\wh{F}$. The submanifolds $\wh{B}$ and $\exp(\R X)$ would become $g(\wh{B})$ and $g(\exp(\R X))$, which technically can differ from $\wh{B}$ and $\exp(\R X)$, but these are still the noncompact and flat parts of $\wh{F}$. If one repeats these constructions with the same $x$ but at an arbitrary point of $M$, one can easily see that $M$ gets foliated by extended boundary components all mutually congruent via the action of $\wh{N}$, and the above subalgebras of $\wh{\mk{q}}$ depend only on which leaf of this foliation we fix. Let us say that a maximal proper extended boundary component is \textit{marked} if one has a fixed orientation on its flat factor (clearly, this can be done at just one point and then extended to other points via transvections). For example, $\wh{F}$ is marked because we can orient $\exp(\R X)$ in the direction of $X$. With this additional piece of data, $\wh{F}$ alone fully recovers $x$ and thus $\wh{Q}$.

We will now relate this abstract exposition to the standard theory of parabolic subalgebras laid out in Subsection \ref{sec:preliminaries:parabolic}. Fix a base point $o \in \wh{F}$, take the unit vector $X \in T_oM$ with $\upgamma_X(\infty) = x$, and write $\mk{g} = \mk{k} \oplus \mk{p}$ for the corresponding Cartan decomposition. Identify $\mk{p}$ with $T_oM$, pick a maximal abelian subspace $\mk{a} \subset \mk{p}$ containing $X$, and write $\Upsigma \subset \mk{a}^*$ for the corresponding restricted root system. According to \cite[Prop.\hspace{2pt}2.17.13(1)]{eberlein_nonpositively_curved}, one has 
\begin{equation}\label{parabolic_via_roots}
\wh{\mk{q}} = \mk{g}_0 \oplus \smashoperator{\bigoplus_{\substack{\upalpha \in \Upsigma \mathstrut \\ \upalpha(X) \ge 0}}} \mk{g}_\upalpha.
\end{equation}
Make a choice of simple roots $\Uplambda$ such that $X$ lies in the closure of the positive Weyl chamber; this way, all the positive roots are nonnegative on $X$. If we write $\Upphi = \set{\upalpha \in \Uplambda \mid \bilin{\upalpha}{X} = 0)}$, then, using \eqref{parabolic_via_roots}, one can easily check that $\wh{\mk{q}} = \mk{q}_\Upphi$. But $\wh{\mk{q}}$ is maximal, which means that $\Upphi$ has to contain all but one simple roots. In other words, there exists a unique $\upalpha_j \in \Uplambda$ that is positive on $X$, and we have $\wh{\mk{q}} = \mk{q}_j$. From this, we immediately get $\wh{\mk{l}} = \mk{l}_j, \, \wh{\mk{a}} = \mk{a}_j, \, \wh{\mk{n}} = \mk{n}_j, \, \wh{F} = F_j, \, \wh{M} = M_j$, and so on. 

\begin{rem}\label{rem:parabolic_abstract_vs_standard}
    Having made these identifications, it is now easy to verify all the relevant statements on p.\ \pageref{parabolic_abstract} by translating them into statements about $\mk{q}_j, \mk{n}_j$, etc. For example, $\wh{\mk{f}}$ is indeed curvature-invariant because it gets identified with $\mk{f}_j$, which, as we know, is a Lie triple system.
\end{rem}

Below, we will show that a marked extended boundary component $\wh{F}$ is enough to be able to formulate the nilpotent construction. Before we do that, however, let us investigate which data turns out to be redundant. Consider the following two sets:

\vspace{-3ex}
\begin{align*}
    \wt{\mc{B}} = \{ (o,\mk{a}, \Upsigma^+, \upalpha_j) \mid \, &o \in M, \\ 
    &\mk{a} \subset \mk{p} \hspace{5pt} \text{a maximal abelian subspace}, \\
    &\Upsigma^+ \subset \Upsigma \hspace{5pt} \text{a set of positive roots}, \\
    &\upalpha_j \in \Uplambda \subseteq \Upsigma^+ \hspace{4pt} \text{a simple root} \}, \\[1ex]
    \mc{B} = \{ \wh{F} \subset M \mid \, &\wh{F} \hspace{5pt} \text{a marked maximal proper extended boundary component} \}.
\end{align*}
\vspace{-1.5ex}

Note that $\mk{p}$ is determined by $o$ as the orthogonal complement of $\mk{k} = \Lie(G_o)$ with respect to $B$. We have a map $\mc{F} \colon \wt{\mc{B}} \twoheadrightarrow \mc{B}$ that sends $(o,\mk{a}, \Upsigma^+, \upalpha_j)$ to $F_j$, marked by orienting $\mk{a}_j$ in the direction of $H^j$.

\begin{prop}\label{prop:NC_fiber_Iwasawa}
    The map $\mc{F}$ is well defined. Moreover, for any $\wh{F} \in \mc{B}$, there is a natural one-to-one correspondence between the fiber $\mc{F}^{-1}(\wh{F})$ and the set of pairs $(\wh{B}, \widecheck{K}\widecheck{A}\widecheck{N})$, where $\wh{B} \subset \wh{F}$ is the noncompact part of $\wh{F}$, and $\widecheck{K}\widecheck{A}\widecheck{N}$ is an Iwasawa decomposition of $\wh{G}'$.
\end{prop}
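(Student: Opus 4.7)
The plan is to split the argument into two tasks: checking well-definedness of $\mc{F}$ and then constructing an inverse to the natural forward map $\mc{F}^{-1}(\wh{F}) \to \set{(\wh{B}, \widecheck{K}\widecheck{A}\widecheck{N})}$. Well-definedness is essentially formal: given $(o,\mk{a},\Upsigma^+,\upalpha_j) \iin \wt{\mc{B}}$, the subset $\Uplambda_j$ has corank one in $\Uplambda$, so $\mk{q}_j$ is a maximal proper parabolic and $F_j$ is a maximal proper extended boundary component. The dual vector $H^j$ is a nonzero element of $\mk{a}_j$ by definition, so orienting $\mk{a}_j$ in its direction canonically marks $F_j$, giving an element of $\mc{B}$. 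For the bijection, fix $\wh{F} \iin \mc{B}$ together with the intrinsic data $\wh{\mk{a}},\wh{\mk{f}},\wh{\mk{b}},\wh{\mk{g}}',\wh{G}',\wh{B},\wh{N}$ described on p.~\pageref{parabolic_abstract}. The forward map would send $(o,\mk{a},\Upsigma^+,\upalpha_j) \iin \mc{F}^{-1}(\wh{F})$ to $(B_j, K^j A^j N^j)$, where $K^j A^j N^j$ is the Iwasawa decomposition of $\wh{G}' = G'_j$ inherited from the ambient $G = KAN$.

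For the inverse direction, given a pair $(\wh{B}, \widecheck{K}\widecheck{A}\widecheck{N})$, I would reconstruct $(o,\mk{a},\Upsigma^+,\upalpha_j)$ stepwise. First, let $o$ be the unique fixed point of the maximal compact subgroup $\widecheck{K} \subseteq \wh{G}'$ on $\wh{B}$; this determines a Cartan decomposition $\mk{g} = \mk{k} \oplus \mk{p}$. Next, use the marking to identify the flat factor of $\wh{\mk{f}} \subseteq T_oM \cong \mk{p}$ with an oriented line $\wh{\mk{a}} = \R H^j$, declaring $H^j$ to be the positive unit vector. Set $\mk{a} = \wh{\mk{a}} \oplus \Lie(\widecheck{A})$; this lies in $\mk{p}$ and is abelian because $\wh{\mk{a}}$ centralizes $\wh{\mk{g}}'$. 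Finally, declare $\Upsigma^+$ to consist of the positive roots of $\Upsigma_{\Uplambda_j}$ whose root spaces sit in $\Lie(\widecheck{N})$, together with all roots outside $\Upsigma_{\Uplambda_j}$ on which $H^j$ takes a positive value; then $\upalpha_j$ is forced to be the unique simple root of $\Upsigma^+$ not lying in $\Upsigma_{\Uplambda_j}$.

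The hardest step will be verifying that $\mk{a}$ is \emph{maximal} abelian in $\mk{p}$: any $Y \iin \mk{p}$ commuting with $\mk{a}$ in particular commutes with $\wh{\mk{a}}$ and therefore lies in $\wh{\mk{l}} \cap \mk{p} = \wh{\mk{f}} = \wh{\mk{b}} \oplus \wh{\mk{a}}$; writing $Y = Y_1 + Y_2$ accordingly, one uses that $Y_1$ must centralize the maximal abelian subspace $\Lie(\widecheck{A}) \subseteq \wh{\mk{b}}$, forcing $Y \iin \mk{a}$. A secondary delicate point will be verifying that the proposed $\Upsigma^+$ is genuinely a positive system, i.e., closed under root addition; this reduces to a case split according to whether each summand lies in $\Upsigma_{\Uplambda_j}$ or outside it, using that $H^j$ vanishes on the former and takes positive values on the latter.

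Mutual invertibility is then a direct check: starting from a quadruple, the constructed $\widecheck{K}$ fixes exactly $o$ on $\wh{B}$, and $\mk{a}_j, \mk{a}^j, \Upsigma^+$, and $\upalpha_j$ are each intrinsically recoverable from $K^j A^j N^j$ and the oriented line $\R H^j$; conversely, starting from a pair, the quadruple produced above lands in $\mc{F}^{-1}(\wh{F})$ with the correct marking because $\mk{a}_j = \wh{\mk{a}}$ and $\mk{q}_j$ coincides with the parabolic attached to $\wh{F}$ via \eqref{parabolic_via_roots}.
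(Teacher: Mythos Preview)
Your proposal is correct and follows essentially the same strategy as the paper: both define the forward map $(o,\mk{a},\Upsigma^+,\upalpha_j) \mapsto (B_j, K^jA^jN^j)$ and then reconstruct the quadruple from a pair $(\wh{B},\widecheck{K}\widecheck{A}\widecheck{N})$ by recovering $o$ from $\widecheck{K}$, setting $\mk{a} = \widecheck{\mk{a}} \oplus \wh{\mk{a}}$, and extending the positive system of $\wh{\Upsigma}$ to one of $\Upsigma$ compatible with the marking.

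The one technical difference worth noting is how the extended positive system is produced. You write it down directly as $\wh{\Upsigma}^+ \sqcup \set{\upalpha \in \Upsigma \mysetminus \wh{\Upsigma} \mid \bilin{H^j}{\upalpha} > 0}$ and then verify the axioms (disjoint from its negative, closed under root addition) by a case split. The paper instead first invokes the discussion around \eqref{parabolic_via_roots} to get \emph{some} simple system $\Uplambda$ for $\Upsigma$ with $\wh{\Upsigma} = \Upsigma_j$, then uses Weyl-group transitivity on chambers of $\wh{\Upsigma}$ to find $w \in W(\wh{\Upsigma})$ carrying $\Uplambda_j$ to the given $\wh{\Uplambda}$, and observes that the lifted element $\wt{w} \in W(\Upsigma)$ fixes $\wh{\mk{a}}$ pointwise, so $\wt{w}(\Uplambda)$ is the desired extension. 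Your route is more self-contained; the paper's buys uniqueness for free and avoids re-checking the positive-system axioms. One small notational slip: you write $\Upsigma_{\Uplambda_j}$ before $\Uplambda$ exists; you mean $\wh{\Upsigma}$, the subsystem of $\Upsigma$ vanishing on $\wh{\mk{a}}$.
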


\begin{proof}
    The first part means that $F_j$ is indeed of the form $\wh{F}$ for some $\wh{F}$ constructed as above. To see this, note that $\mk{q}_j = \mk{g}_0 \oplus \bigoplus_{\upalpha(H^j) \ge 0} \mk{g}_\upalpha$. In view of \eqref{parabolic_via_roots}, this coincides with the Lie algebra $\wh{\mk{q}}$ of $G_x$, where $x = \upgamma_{H^j}(\infty)$ and we think of $H^j$ as a tangent vector at $o$. But now both $\mk{f}_j$ and $\wh{\mk{f}}$ can be described as the kernel of $R(-,H^j)$ in $T_oM$, which implies $F_j = \wh{F}$. 
    
    Now we prove the second assertion. Given $(o,\mk{a}, \Upsigma^+, \upalpha_j) \in \mc{F}^{-1}(\upchi)$, we simply assign to it the pair $(B_j, K^j A^j N^j)$. Conversely, suppose we have fixed $\wh{B} \subset \wh{F}$ and an Iwasawa decomposition $\wh{G}' = \widecheck{K}\widecheck{A}\widecheck{N}$. The latter is equivalent to choosing a point $o \in \wh{B}$ (this gives rise to $\mk{g} = \mk{k} \oplus \mk{p}$), a maximal abelian subspace $\widecheck{\mk{a}}$ in $\wh{\mk{b}} \subset \mk{p}$, and a set of simple roots $\wh{\Uplambda}$ in the resulting restricted root system $\wh{\Upsigma} \subseteq \widecheck{\mk{a}}^*$. Observe that $\mk{a} = \widecheck{\mk{a}} \oplus \wh{\mk{a}}$ is a maximal abelian subspace of $\mk{p}$ and denote its corresponding root system by $\Upsigma \subset \mk{a}^*$. Since $\wh{B}$ is a boundary component, we know that $\wh{\Upsigma} = \Upsigma \cap \widecheck{\mk{a}}^*$. Recall that $\wh{F}$ is marked, so we can pick a positively oriented vector $X \in \wh{\mk{a}}$. We claim that there is a unique completion of $\wh{\Uplambda}$ to a set of simple roots for $\Upsigma$ all of whom are nonnegative on $X$. Indeed, it follows from our discussion after \eqref{parabolic_via_roots} that there exists a choice of simple roots $\Uplambda$ for $\Upsigma$ and $\upalpha_j \in \Uplambda$ such that every element of $\Uplambda$ is nonnegative on $X$ and $\wh{\Upsigma} = \Upsigma_j$. Since the Weyl group acts transitively on the set of Weyl chambers, there exists $w \in W(\wh{\Upsigma})$ mapping $\Uplambda_j$ to $\wh{\Uplambda}$. Being a composition of root hyperplane reflections with respect to some roots from $\wh{\Uplambda}$, $w$ extends naturally to an element $\wt{w} \in W(\Upsigma)$. But $\wh{\mk{a}}$ is orthogonal to $\widecheck{\mk{a}}$, which implies that the dual operator of $\wt{w}$ acts as the identity on $\wh{\mk{a}}$. Consequently, the system $\wt{w}(\Uplambda) = \wh{\Uplambda} \sqcup \wt{w}(\upalpha_j)$ of simple roots for $\Upsigma$ is nonnegative on $(\wt{w}^*)^{-1}(X) = X$, so we can take it to be our desired completion of $\wh{\Uplambda}$. The uniqueness of such a completion is straightforward. By invoking the discussion after \eqref{parabolic_via_roots} one more time, we see that $(o, \mk{a}, \wt{w}(\Uplambda), \wt{w}(\upalpha_j)) \in \mc{F}^{-1}(\wh{F})$, hence we can assign this tuple to the pair $(\wh{B}, \widecheck{K}\widecheck{A}\widecheck{N})$. It it easy to see that the constructed maps between $\mc{F}^{-1}(\wh{F})$ and the set of such pairs are the inverses of each other. This completes the proof.
\end{proof}

We are now ready to formulate the nilpotent construction in this new setting. 

\begin{defn}\label{defn:NC_new}
    Let $\wh{F} \in \mc{B}$. A subspace $\mk{w} \subseteq \wh{\mk{n}}^1$ is called

    \begin{enumerate}[(a)]
    \item \textsc{admissible} if $N_{\wh{L}}(\mk{w})$ acts transitively on the extended boundary component $\wh{F}$,
    \item \textsc{protohomogeneous} if some compact subgroup of $N_{\wh{L}}(\mk{w})$ acts transitively on the set of directions in $\wh{\mk{n}}^1/\mk{w}$ (that is, on $\Gr^+(1, \wh{\mk{n}}^1/\mk{w})$).\footnote{Notice how we had to resort to the quotient representation to avoid talking about orthogonal complements.}
\end{enumerate}
\end{defn}

If we are also given a tuple $(o,\mk{a}, \Upsigma^+, \upalpha_j) \in \mc{F}^{-1}(\wh{F})$, we have $\wh{\mk{n}}^1 = \mk{n}_j^1$, so there could potentially be a conflict between this definition and the original one.

\begin{prop}\label{prop:NC_original_new_equivalence}
    Let $\wh{F} \in \mc{B}, \, (o,\mk{a}, \Upsigma^+, \upalpha_j) \in \mc{F}^{-1}(\wh{F}),$ and $\mk{w} \subseteq \wh{\mk{n}}^1 = \mk{n}_j^1$.

    \begin{enumerate}[\normalfont (a)]
        \item\customlabel{prop:NC_original_new_equivalence:a}{a} $\mk{w}$ is admissible with respect to Definition {\normalfont \ref{defn:NC_original}} if and only if it is such with respect to Definition {\normalfont \ref{defn:NC_new}}.
        \item\customlabel{prop:NC_original_new_equivalence:b}{b} If $\mk{w}$ is protohomogeneous with respect to Definition {\normalfont \ref{defn:NC_original}}, it is also such with respect to Definition {\normalfont \ref{defn:NC_new}}. Conversely, if $\mk{w}$ is protohomogeneous with respect to Definition {\normalfont \ref{defn:NC_new}}, there exists $(\overline{o}, \overline{\mk{a}}, \overline{\Upsigma}^+, \overline{\upalpha}_l) \in \mc{F}^{-1}(\wh{F})$ such that $\mk{w}$ is protohomogeneous with respect to Definition {\normalfont \ref{defn:NC_original}} and that tuple.
        \item\customlabel{prop:NC_original_new_equivalence:c}{c} $\mk{w}$ is both admissible and protohomogeneous with respect to Definition {\normalfont \ref{defn:NC_original}} if and only if it is such with respect to Definition {\normalfont \ref{defn:NC_new}}.
    \end{enumerate}
\end{prop}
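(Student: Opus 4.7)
The plan is to handle the three parts in sequence, leveraging the identifications $\wh{L} = L_j$, $\wh{F} = F_j$, and $\wh{\mk{n}}^1 = \mk{n}_j^1$ that hold for every tuple in $\mc{F}^{-1}(\wh{F})$.

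Part \eqref{prop:NC_original_new_equivalence:a} should be essentially tautological: both conditions unravel to $N_{\wh{L}}(\mk{w}) = N_{L_j}(\mk{w})$ acting transitively on $\wh{F} = F_j$. For the forward direction of part \eqref{prop:NC_original_new_equivalence:b}, I would take $C = N_{K_j}(\mk{w})$, which is compact as a closed subgroup of the compact group $K_j$ and is visibly contained in $N_{\wh{L}}(\mk{w})$. The orthogonal projection restricts to a $C$-equivariant linear isomorphism $\mk{v} = \mk{n}_j^1 \ominus \mk{w} \isoto \mk{n}_j^1/\mk{w}$ that sends the unit sphere in $\mk{v}$ onto a cross-section of directions in the quotient, so transitivity carries over.

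The main obstacle will be the converse of part \eqref{prop:NC_original_new_equivalence:b}: producing an explicit tuple $(\overline{o}, \overline{\mk{a}}, \overline{\Upsigma}^+, \overline{\upalpha}_l) \in \mc{F}^{-1}(\wh{F})$ starting from an abstract compact subgroup $C \subseteq N_{\wh{L}}(\mk{w})$ acting transitively on directions in $\wh{\mk{n}}^1/\mk{w}$. The key maneuver is a fixed point argument. Since $\wh{L} = \wh{M} \times \wh{A}$ is a direct product and $\wh{A}$ is a simply connected vector group admitting no nontrivial compact subgroups, $C$ must lie in $\wh{M}$. Recall that $\wh{M}$ preserves the horospherical decomposition of $\wh{F}$ and acts trivially on its flat factor (see \eqref{horospherical_decomposition_action}); hence, picking any $p \in \wh{F}$, the group $C$ acts isometrically on the symmetric space $\wh{M} \cdot p$, which is a Hadamard manifold. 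By Cartan's fixed point theorem, $C$ admits a fixed point $\overline{o} \in \wh{F}$. Setting $\wh{B} := \wh{M} \cdot \overline{o}$ and choosing any maximal abelian subspace in $T_{\overline{o}}\wh{B}$ together with any simple system for the resulting restricted root system yields, via Proposition \ref{prop:NC_fiber_Iwasawa}, a tuple in $\mc{F}^{-1}(\wh{F})$ with base point $\overline{o}$. For this tuple, $C \subseteq \wh{L}$ fixes $\overline{o}$, so $C \subseteq G_{\overline{o}} \cap \wh{L} = \overline{K}_l$; combined with the hypothesis that $C$ normalizes $\mk{w}$, we obtain $C \subseteq N_{\overline{K}_l}(\mk{w})$. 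The transitivity of $C$ on directions in $\wh{\mk{n}}^1/\mk{w}$ then translates, via the $C$-equivariant linear isomorphism with $\overline{\mk{v}} = \wh{\mk{n}}^1 \ominus \mk{w}$, to transitivity on the unit sphere of $\overline{\mk{v}}$---precisely the original formulation of protohomogeneity.

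Part \eqref{prop:NC_original_new_equivalence:c} should then reduce to assembling the previous parts: the forward implication follows directly from \eqref{prop:NC_original_new_equivalence:a} and the forward direction of \eqref{prop:NC_original_new_equivalence:b}, while for the converse, the converse of \eqref{prop:NC_original_new_equivalence:b} produces a tuple realizing protohomogeneity in the original sense, and the same tuple automatically witnesses admissibility in the original sense thanks to the tuple-independence established in \eqref{prop:NC_original_new_equivalence:a}.
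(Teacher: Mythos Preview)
Your arguments for parts \eqref{prop:NC_original_new_equivalence:a} and \eqref{prop:NC_original_new_equivalence:b} are essentially the same as the paper's (the paper enlarges $C$ to a maximal compact subgroup of $\wh{L}$ and applies Cartan's fixed point theorem to $\wh{L} \curvearrowright \wh{F}$ directly, rather than first arguing $C \subseteq \wh{M}$, but the content is identical).

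However, your treatment of part \eqref{prop:NC_original_new_equivalence:c} has a genuine gap. The statement of \eqref{prop:NC_original_new_equivalence:c} concerns the \emph{fixed} tuple $(o,\mk{a}, \Upsigma^+, \upalpha_j)$ given in the hypothesis of the proposition; it asserts that $\mk{w}$ is admissible and protohomogeneous in the sense of Definition~\ref{defn:NC_original} \emph{for this particular tuple} if and only if it is so in the sense of Definition~\ref{defn:NC_new}. Your argument for the converse direction only produces \emph{some} tuple $(\overline{o}, \overline{\mk{a}}, \overline{\Upsigma}^+, \overline{\upalpha}_l)$---namely, the one supplied by the converse of \eqref{prop:NC_original_new_equivalence:b}---for which both conditions hold. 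That is strictly weaker than what is claimed, and were \eqref{prop:NC_original_new_equivalence:c} merely the existential statement, it would follow trivially from \eqref{prop:NC_original_new_equivalence:a} and \eqref{prop:NC_original_new_equivalence:b} and the paper would not need a separate argument.

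The missing idea is the following. Assuming $\mk{w}$ is admissible, the quotient $N_{\wh{L}}(\mk{w})/N_{\wh{K}}(\mk{w})$ is diffeomorphic to $\wh{F}$ and hence contractible; this forces $N_{\wh{K}}(\mk{w}) = N_{K_j}(\mk{w})$ to be a \emph{maximal} compact subgroup of $N_{\wh{L}}(\mk{w})$. Enlarging the given compact $C$ to another maximal compact subgroup of $N_{\wh{L}}(\mk{w})$ and using that any two such are conjugate, one concludes that $N_{K_j}(\mk{w})$ itself acts transitively on directions in $\wh{\mk{n}}^1/\mk{w}$, hence on the unit sphere in $\mk{v}$. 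This is precisely how admissibility is exploited to upgrade the conclusion of \eqref{prop:NC_original_new_equivalence:b} from ``some tuple'' to ``the given tuple'', and without it the argument for \eqref{prop:NC_original_new_equivalence:c} does not close.
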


The asymmetry in part \eqref{prop:NC_original_new_equivalence:b} of the proposition is only a technicality and will not cause any problems because we do not care about the protohomogeneity condition on its own---we always consider it only together with admissibility.

\begin{proof}[Proof of Proposition {\normalfont \ref{prop:NC_original_new_equivalence}}]
    The first assertion follows from the fact that $\wh{L} = L_j$ and $\wh{F} = F_j$. To prove \eqref{prop:NC_original_new_equivalence:b}, notice that the quotient projection $\mk{v} \isoto \mk{n}_j^1 /\mk{w}$ is an isomorphism of $N_{K_j}(\mk{w})$-representations. If $N_{K_j}(\mk{w})$ acts transitively on the unit sphere in $\mk{v}$, it also does so on the set of directions in $\mk{v}$ and thus in $\mk{n}_j^1/\mk{w} = \wh{\mk{n}}^1/\mk{w}$. This proves the first assertion, as $N_{K_j}(\mk{w})$ is a compact subgroup of $N_{L_j}(\mk{w}) = N_{\wh{L}}(\mk{w})$. For the converse, let $K_1$ be a compact subgroup of $N_{\wh{L}}(\mk{w})$ that acts transitively on $\Gr^+(1, \wh{\mk{n}}^1/\mk{w})$. Pick a maximal compact subgroup $\overbar{\wh{K}}$ of $\wh{L}$ containing $K_1$. By the Cartan fixed point theorem applied to $\wh{L} \curvearrowright \wh{F}$, $\overbar{\wh{K}}$ must be the isotropy of some point $\overline{o} \in \wh{F}$. Complete $\overline{o}$ to a tuple $(\overline{o}, \overline{\mk{a}}, \overline{\Upsigma}^+, \overline{\upalpha}_l) \in \mc{F}^{-1}(\wh{F})$. By construction, $N_{\overbar{K}_l}(\mk{w}) = N_{\overbar{\wh{K}}}(\mk{w})$ acts transitively on the set of directions in $\wh{\mk{n}}^1/\mk{w} = \overline{\mk{n}}^1_l/\mk{w}$ and thus in $\overline{\mk{v}} = \overline{\mk{n}}^1_l \ominus \mk{w}$, where the orthogonal complement is taken with respect to the new inner product $B_{\overline{\uptheta}}$ associated with $\overline{o}$. Since $\overbar{K}_l$ preserves this inner product, we deduce that it acts transitively on the unit sphere in $\overline{\mk{v}}$. This completes the proof of \eqref{prop:NC_original_new_equivalence:b} and leaves us only with \eqref{prop:NC_original_new_equivalence:c}. In light of \eqref{prop:NC_original_new_equivalence:a} and \eqref{prop:NC_original_new_equivalence:b}, we need only show the following: if $\mk{w}$ is admissible and protohomogeneous with respect to Definition \ref{defn:NC_new}, then it is also protohomogeneous with respect to Definition \ref{defn:NC_original} (and the same tuple $(o,\mk{a}, \Upsigma^+, \upalpha_j)$). Once again, let $K_1$ be a compact subgroup of $N_{\wh{L}}(\mk{w})$ acting transitively on $\Gr^+(1, \wh{\mk{n}}^1/\mk{w})$. We may assume that $K_1$ is a maximal compact subgroup of $N_{\wh{L}}(\mk{w})$. But so is $N_{K_j}(\mk{w}) = N_{\wh{K}}(\mk{w})$ because the quotient $N_{\wh{L}}(\mk{w})/N_{\wh{K}}(\mk{w})$, which is diffeomorphic to $\wh{F}$ by the admissibility, is contractible (see \cite{maximal_compact_subgroup}). Since every two maximal compact subgroups are conjugate, we deduce that $N_{K_j}(\mk{w})$ acts transitively on the set of directions in $\mk{n}_j^1/\mk{w}$. By the same argument as above, it also acts transitively on the unit sphere in $\mk{v}$. This concludes the proof.
\end{proof}

Fix some $\wh{F} \in \mc{B}$. Given $\mk{w} \subseteq \wh{\mk{n}}^1$, we have a nilpotent subalgebra $\wh{\mk{n}}_\mk{w} = \mk{w} \oplus \bigoplus_{\upnu \ge 2} \wh{\mk{n}}^{\hspace{0.5pt}\upnu}$ and its corresponding connected Lie subgroup $\wh{N}_\mk{w}$. We can also form a semidirect sum $\wh{\mk{h}}_\mk{w} = N_{\wh{\mk{l}}}(\mk{w}) \loplus \wh{\mk{n}}_\mk{w}$ and its corresponding connected Lie subgroup $\wh{H}_\mk{w} = N^0_{\wh{L}}(\mk{w}) \ltimes \wh{N}_\mk{w}$. Combining Theorem \ref{thm:berndt_tamaru_NC} with Proposition \ref{prop:NC_original_new_equivalence} leads to the following

\begin{cor}\label{cor:NC_new}
    Let $M = G/K$ be a symmetric space of noncompact type with a fixed choice of $\wh{F} \in \mc{B}$. Suppose $\mk{w} \subset \wh{\mk{n}}^1$ is an admissible and protohomogeneous subspace of $\codim_{\wh{\mk{n}}^1}(\mk{w}) \ge 2$. Then the subgroup $\wh{H}_\mk{w}$ acts on $M$ with cohomogeneity one and has $\wh{F} \times \wh{N}_\mk{w} \subset \wh{F} \times \wh{N} \cong M$ as its orbit through $o$. That orbit has codimension equal to $\codim_{\wh{\mk{n}}^1}(\mk{w})$ and is thus singular.
\end{cor}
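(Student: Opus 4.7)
The plan is to deduce this corollary directly from Theorem~\ref{thm:berndt_tamaru_NC}\eqref{thm:berndt_tamaru_NC:a} by descending to the standard framework. By Proposition~\ref{prop:NC_fiber_Iwasawa}, the map $\mc{F}$ is surjective, so I would first pick any tuple $(o,\mk{a},\Upsigma^+,\upalpha_j) \in \mc{F}^{-1}(\wh{F})$. This choice singles out the base point $o \in \wh{F}$ through which the orbit will be taken and identifies the marked extended boundary component $\wh{F}$ with the standard $F_j$.

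The next step is to verify that every object built abstractly from $\wh{F}$ coincides with its counterpart built from the tuple. Since $\wh{F} = F_j$, one immediately gets $\wh{L} = L_j$ and $\wh{\mk{l}} = \mk{l}_j$, whence $N_{\wh{\mk{l}}}(\mk{w}) = N_{\mk{l}_j}(\mk{w})$. For the grading on $\wh{\mk{n}}$, the vector $X$ marking $\wh{F}$ is a positive multiple of $H^j$, as noted in the discussion preceding Proposition~\ref{prop:NC_fiber_Iwasawa}; hence $\ad(X)$ and $\ad(H^j)$ share the same eigenspace decomposition on $\wh{\mk{n}} = \mk{n}_j$, yielding $\wh{\mk{n}}^{\hspace{0.5pt}\upnu} = \mk{n}_j^{\upnu}$ for every $\upnu \ge 1$. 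Consequently $\wh{\mk{n}}^{\hspace{0.5pt}1} = \mk{n}_j^1$, $\wh{\mk{n}}_{\mk{w}} = \mk{n}_{j,\mk{w}}$, $\wh{\mk{h}}_{\mk{w}} = \mk{h}_{j,\mk{w}}$, and $\wh{H}_{\mk{w}} = H_{j,\mk{w}}$.

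Finally, I would transfer the hypothesis and apply the classical statement. By Proposition~\ref{prop:NC_original_new_equivalence}\eqref{prop:NC_original_new_equivalence:c}, the assumption that $\mk{w}$ is admissible and protohomogeneous in the sense of Definition~\ref{defn:NC_new} is equivalent to the same two conditions holding in the sense of Definition~\ref{defn:NC_original} for the fixed tuple. Since $\codim_{\mk{n}_j^1}(\mk{w}) = \codim_{\wh{\mk{n}}^{\hspace{0.5pt}1}}(\mk{w}) \ge 2$, Theorem~\ref{thm:berndt_tamaru_NC}\eqref{thm:berndt_tamaru_NC:a} applies, giving that $H_{j,\mk{w}} = \wh{H}_{\mk{w}}$ acts on $M$ with cohomogeneity one and that its orbit through $o$ is $F_j \times N_{j,\mk{w}} = \wh{F} \times \wh{N}_{\mk{w}}$, of codimension $\codim_{\mk{n}_j^1}(\mk{w}) = \codim_{\wh{\mk{n}}^{\hspace{0.5pt}1}}(\mk{w})$, hence singular. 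No serious obstacle arises; the whole argument is a bookkeeping translation of Theorem~\ref{thm:berndt_tamaru_NC}\eqref{thm:berndt_tamaru_NC:a} into the intrinsic language of Definition~\ref{defn:NC_new}, made possible by the surjectivity in Proposition~\ref{prop:NC_fiber_Iwasawa} and the two-way equivalence in Proposition~\ref{prop:NC_original_new_equivalence}\eqref{prop:NC_original_new_equivalence:c}.
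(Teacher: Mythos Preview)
Your proposal is correct and follows exactly the approach the paper indicates: the corollary is stated as an immediate consequence of combining Theorem~\ref{thm:berndt_tamaru_NC} with Proposition~\ref{prop:NC_original_new_equivalence}, and you have simply spelled out the bookkeeping (via the surjectivity in Proposition~\ref{prop:NC_fiber_Iwasawa}) that makes that combination work.
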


Let us now introduce the following sets of \textit{nilpotent construction data}:

\vspace{-3ex}
\begin{align*}
    \wt{\mc{N}} = \{ (o,\mk{a}, \Upsigma^+, \upalpha_j, \mk{w}) \mid \, &(o,\mk{a}, \Upsigma^+, \upalpha_j) \in \wt{\mc{B}}, \\ 
    &\mk{w} \subset \mk{n}_j^1 \hspace{4.5pt} \text{an admissible protohomogeneous subspace of $\codim \ge 2$} \}, \\[1ex]
    \mc{N} = \{ (\wh{F}, \mk{w}) \mid \, &\wh{F} \in \mc{B}, \\ 
    &\mk{w} \subset \wh{\mk{n}}^1 \hspace{4.5pt} \text{an admissible protohomogeneous subspace of $\codim \ge 2$} \}.
\end{align*}
\vspace{-1em}

Thanks to Propositions \ref{prop:NC_fiber_Iwasawa} and \ref{prop:NC_original_new_equivalence}, we have a map $\mc{G} \colon \wt{\mc{N}} \twoheadrightarrow \mc{N}$ that sends $(o,\mk{a}, \Upsigma^+, \upalpha_j, \mk{w})$ to $(F_j, \mk{w})$, where $F_j$ is marked by orienting $\mk{a}_j$ in the direction of $H^j$. This map is surjective and its fibers admit a simple description thanks to Proposition \ref{prop:NC_fiber_Iwasawa}. Let $\mc{C}$ stand for the set of closed connected subgroups of $G$ acting on $M$ with cohomogeneity one. We can then define another map $\mc{H} \colon \mc{N} \to \mc{C}$ that sends $(\wh{F}, \mk{w})$ to $\wh{H}_\mk{w}$. The original nilpotent construction is then simply the composition $\mc{H} \circ \mc{G}$. The group $G$ (or even $I(M)$) acts naturally on each of these three sets. For instance, given $g \in G$, we have
$$
g \cdot (o,\mk{a}, \Upsigma^+, \upalpha_j, \mk{w}) = (g \cdot o, \Ad(g)(\mk{a}), \Ad(g)_\mk{a}(\Upsigma^+), \Ad(g)_\mk{a}(\upalpha_j), \Ad(g)(\mk{w})),
$$
where $\Ad(g)_\mk{a} = (\restr{\Ad(g)}{\mk{a}}^*)^{-1} \colon \mk{a}^* \isoto \Ad(g)(\mk{a})^*$. It is a matter of routine checks that this map is well-defined and that $\Ad(g)(\mk{w})$ is admissible and protohomogeneous. Essentially, this boils down to the fact that all the relevant subalgebras and representations arising from the 5-tuple on the right are identified with those arising on the left via $\Ad(g)$. Similarly, $G$ acts on $\mc{N}$ by $g \cdot (\wh{F}, \mk{w}) = (g(\wh{F}), \Ad(g)(\mk{w}))$, where the orientation on the flat factor of $g(\wh{F})$ is carried over from $\wh{F}$ via $g$. Finally, $G$ acts on $\mc{C}$ via subgroup conjugation. One can readily see that the maps $\mc{G}$ and $\mc{H}$ are $G$-equivariant (see \cite[Lem.\hspace{2pt}4.1.5]{solonenko_thesis} for a similar statement and its proof). 

\begin{rem}
    If we already have some nilpotent construction data $(\wh{F}, \mk{w}) \in \mc{N}$ fixed, it is useful to consider the action of $\wh{L}$ on $\mc{N}$ and its orbit through $(\wh{F}, \mk{w})$: since $\wh{L}$ preserves $\wh{F}$ (as a marked boundary component) and $\wh{\mk{n}}^1$, this action reduces to moving subspaces of $\wh{\mk{n}}^1$ around by means of the representation $\wh{L} \curvearrowright \wh{\mk{n}}^1$. By the equivariance of $\mc{H}$, for every $g \in \wh{L}$, the action of $\mc{H}(\wh{F}, \Ad(g)(\mk{w}))$ is orbit-equivalent to that of $\mc{H}(\wh{F}, \mk{w})$. This allows us to strengthen part \eqref{thm:berndt_tamaru_NC:b} of Theorem \ref{thm:berndt_tamaru_NC}, where only the congruence with respect to $K_j = \wh{K}$ was considered. We will make use of this in Corollary \ref{cor:moving_w_instead} below.
\end{rem}

\begin{obs}\label{obs:NC_acting_in_the_fiber}
    Let us consider all possible C1-actions that can be obtained by performing the nilpotent construction over a given $\wh{F} \in \mc{B}$. Fix some $(o,\mk{a}, \Upsigma^+, \upalpha_j) \in \mc{F}^{-1}(\wh{F})$. We know three things: $\wh{L} = \wh{M} \times \wh{A}$, $\wh{A}$ acts transitively on the set of noncompact parts $\wh{B} \subset \wh{F}$ (this action looks like shifting the noncompact parts of $\wh{F}$ by transvections along an orthonormal geodesic), and $\wh{G}' \subseteq \wh{M}$ acts transitively on the set of its own Iwasawa decompositions. In view of Proposition \ref{prop:NC_fiber_Iwasawa}, we see that every element $(\overline{o},\overline{\mk{a}}, \overline{\Upsigma}^+, \overline{\upalpha}_l, \overline{\mk{w}}) \in \wt{\mc{N}}$ with $(\overline{o},\overline{\mk{a}}, \overline{\Upsigma}^+, \overline{\upalpha}_l) \in \mc{F}^{-1}(\wh{F})$ can be moved by $\wh{L}$ to a tuple of the form $(o,\mk{a}, \Upsigma^+, \upalpha_j, \mk{w})$. Essentially, this means that by fixing $o, \mk{a}, \Upsigma^+,$ and $\upalpha_j$ and going through all possible admissible and protohomogeneous subspaces $\mk{w} \subset \mk{n}_j^1$ of $\codim \ge 2$, we exhaust all C1-actions that can be obtained from $\wh{F}$ via nilpotent construction.
\end{obs}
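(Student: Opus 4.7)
My plan is to reformulate the claim via the bijection from Proposition \ref{prop:NC_fiber_Iwasawa} and then exploit the three listed facts to prove transitivity of the $\wh{L}$-action on the fiber $\mc{F}^{-1}(\wh{F})$. Under that bijection, the elements of $\mc{F}^{-1}(\wh{F})$ correspond to pairs $(\wh{B}, \widecheck{K}\widecheck{A}\widecheck{N})$ consisting of a noncompact part $\wh{B} \subseteq \wh{F}$ and an Iwasawa decomposition of $\wh{G}'$. I first need to verify that the natural $\wh{L}$-action on the fiber (inherited from $\wh{L}$ preserving the marked extended boundary component $\wh{F}$, established earlier) is compatible with this bijection, in the sense that $g \in \wh{L}$ carries $(\wh{B}, \widecheck{K}\widecheck{A}\widecheck{N})$ to $(g(\wh{B}), g\widecheck{K}g^{-1}\, g\widecheck{A}g^{-1}\, g\widecheck{N}g^{-1})$. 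The marking of $\wh{F}$ is preserved because $\wh{L}$ is connected and acts on the one-dimensional flat factor of $\wh{F}$ through orientation-preserving translations.

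Next, I will use the direct product decomposition $\wh{L} = \wh{M} \times \wh{A}$ together with the two transitivity facts to establish transitivity on pairs. Given two pairs $(\wh{B}_1, I_1)$ and $(\wh{B}_2, I_2)$, I first choose $a \in \wh{A}$ sending $\wh{B}_1$ to $\wh{B}_2$, which is possible by the second listed fact since $\wh{A}$ acts by transvections along the flat factor. This element transports $I_1$ to some Iwasawa decomposition $I_1'$ of $\wh{G}'$ still based in $\wh{B}_2$. Then, by the third listed fact, I apply a suitable element of $\wh{G}' \subseteq \wh{M}$ sending $I_1'$ to $I_2$; since $\wh{G}'$ acts transitively on (and hence preserves) $\wh{B}_2$, the first component is untouched. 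The composition lies in $\wh{L}$ and effects the desired identification, yielding transitivity of $\wh{L}$ on $\mc{F}^{-1}(\wh{F})$.

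Finally, I combine this with the $G$-equivariance of the maps $\mc{G}$ and $\mc{H}$ to conclude. Given any $(\overline{o}, \overline{\mk{a}}, \overline{\Upsigma}^+, \overline{\upalpha}_l, \overline{\mk{w}}) \in \wt{\mc{N}}$ whose first four entries lie in $\mc{F}^{-1}(\wh{F})$, the transitivity established above supplies $g \in \wh{L}$ sending them to $(o, \mk{a}, \Upsigma^+, \upalpha_j)$; correspondingly, by the $G$-equivariance of $\mc{G}$, the fifth entry is carried to $\mk{w} = \Ad(g)(\overline{\mk{w}}) \subseteq \wh{\mk{n}}^1 = \mk{n}_j^1$, noting that $\wh{\mk{n}}^1$ depends only on the marked $\wh{F}$, not on the tuple. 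Admissibility and protohomogeneity of $\mk{w}$ follow from those of $\overline{\mk{w}}$ because $\Ad(g)$ is an isomorphism of all the relevant ambient structures. The $G$-equivariance of $\mc{H}$ then ensures that the resulting C1-actions are conjugate, hence orbit-equivalent. Thus, ranging $\mk{w}$ over all admissible and protohomogeneous subspaces of $\mk{n}_j^1$ of codimension at least two exhausts every C1-action obtainable via the nilpotent construction over $\wh{F}$, up to orbit-equivalence.

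I expect the main subtlety to lie in the bookkeeping of step one---verifying that the natural $\wh{L}$-action on $\mc{F}^{-1}(\wh{F})$ corresponds componentwise to the action on pairs under the bijection of Proposition \ref{prop:NC_fiber_Iwasawa}, and that markings and orientations are respected throughout. Once this is in place, the rest is a two-step application of the listed transitivity facts and the general equivariance machinery developed earlier in the subsection.
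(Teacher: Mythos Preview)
Your proposal is correct and follows precisely the reasoning the paper sketches within the observation itself: translate the fiber via Proposition~\ref{prop:NC_fiber_Iwasawa} into pairs $(\wh{B}, \widecheck{K}\widecheck{A}\widecheck{N})$, then use the three listed facts to obtain transitivity of $\wh{L}$ on these pairs, and finally invoke $G$-equivariance of $\mc{G}$ and $\mc{H}$. The paper treats this as essentially self-evident and does not supply a separate proof, so your write-up simply fills in the expected details.
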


Having all these preparations finally ready, we can prove the main result of this subsection:

\begin{prop}\label{prop:adjusting_the_normalizer}
    Let $M = G/K$ be a symmetric space of noncompact type, and let $\upchi = (\wh{F}, \mk{w}) \in \mc{N}$ be some nilpotent construction data. Then there exist $\wt{\upchi} = (o,\mk{a}, \Upsigma^+, \upalpha_j, \mk{w}) \in \wt{\mc{N}}$ with $\mc{G}(\wt{\upchi}) = \upchi$ and a maximal abelian subspace $\mk{t}_0 \subseteq \mk{k}_0$ such that the normalizer $N_{\mk{m}_j}(\mk{w})$ contains a solvable subalgebra $V \oplus \mk{n}^j$, where $V \subseteq \mk{t}_0 \oplus \mk{a}^j$ projects surjectively onto $\mk{a}^j$.
\end{prop}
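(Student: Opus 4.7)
The plan is to reduce this to Proposition \ref{prop:transitive_noncompact} applied to a suitable transitive action on the noncompact part $\wh{B}$ of $\wh{F}$. Since $\mk{w}$ is admissible, $N_{\wh{L}}(\mk{w})$ acts transitively on $\wh{F}$; via the equivalence between conditions (ii) and (iv) of admissibility, which applies in our case $\Upphi = \Uplambda_j$, this is the same as saying that $N_{\wh{M}}(\mk{w})$ acts transitively on a chosen noncompact part $\wh{B} \subset \wh{F}$. Fixing such a $\wh{B}$, the identity component $N_{\wh{M}}^0(\mk{w}) \subseteq \wh{M}^0$ also acts transitively on $\wh{B}$ by the standard ``connected space cannot be partitioned into open orbits'' argument. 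I would then apply Proposition \ref{prop:transitive_noncompact} to the symmetric space $\wh{B}$ represented by the Riemannian symmetric pair $(\wh{M}^0, \wh{K}^0)$: recall from the preliminaries that the ineffectiveness kernel of this pair is $\mk{i} = Z_{\mk{k}_0}(\wh{\mk{g}}')$, which is compact, and that its complementary ideal is $\wh{\mk{g}}'$.

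Applying Proposition \ref{prop:transitive_noncompact} to $N_{\wh{M}}^0(\mk{w})$ yields an Iwasawa decomposition $\wh{\mk{g}}' = \mk{k}^j \oplus \mk{a}^j \oplus \mk{n}^j$, a maximal abelian subspace $\mk{t}_0^j$ of $\mk{k}_0^j := Z_{\mk{k}^j}(\mk{a}^j)$, and a maximal abelian subspace $\mk{t} \subseteq \mk{i}$, such that $N_{\wh{\mk{m}}}(\mk{w})$ contains a subalgebra $V \oplus \mk{n}^j$ with $V \subseteq \mk{t} \oplus \mk{t}_0^j \oplus \mk{a}^j$ projecting surjectively onto $\mk{a}^j$. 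The central auxiliary claim is then that $\mk{t}_0 := \mk{t} \oplus \mk{t}_0^j$ is maximal abelian in the ambient $\mk{k}_0$. This relies on the decomposition $\mk{k}_0 = Z_{\mk{k}_0}(\wh{\mk{g}}') \oplus \mk{k}_0^j$ noted on page~\pageref{k_0_decomposition}: the two summands of $\mk{t}_0$ lie in mutually commuting ideals of $\mk{k}_0$, so they commute; conversely, any $X \in \mk{k}_0$ centralizing $\mk{t}_0$ decomposes as $X_1 + X_2$ along that splitting, each $X_i$ automatically centralizes both $\mk{t}$ and $\mk{t}_0^j$ (again because the two ideals commute), and maximality of $\mk{t}$ in the compact Lie algebra $Z_{\mk{k}_0}(\wh{\mk{g}}')$ and of $\mk{t}_0^j$ in the compact Lie algebra $\mk{k}_0^j$ forces $X_1 \in \mk{t}$ and $X_2 \in \mk{t}_0^j$.

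Finally, I would translate everything back to ambient data via Proposition \ref{prop:NC_fiber_Iwasawa}: the fixed noncompact part $\wh{B}$ together with the Iwasawa decomposition $\wh{\mk{g}}' = \mk{k}^j \oplus \mk{a}^j \oplus \mk{n}^j$ (integrated to $\wh{G}' = K^j A^j N^j$) corresponds to a unique tuple $(o, \mk{a}, \Upsigma^+, \upalpha_j) \in \mc{F}^{-1}(\wh{F})$. Under this identification, $\mk{m}_j = \wh{\mk{m}}$ and the root-space objects $\mk{a}^j, \mk{n}^j$ of the ambient setup match those produced by the proposition, so setting $\wt{\upchi} = (o, \mk{a}, \Upsigma^+, \upalpha_j, \mk{w})$ gives an element of $\wt{\mc{N}}$ with $\mc{G}(\wt{\upchi}) = \upchi$ and the required inclusion $V \oplus \mk{n}^j \subseteq N_{\wh{\mk{m}}}(\mk{w}) = N_{\mk{m}_j}(\mk{w})$. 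The main obstacle is just the bookkeeping of matching the two parametrizations of nilpotent construction data; the nontrivial mathematics has already been absorbed into Proposition \ref{prop:transitive_noncompact}, and the only genuinely new content here is the maximality verification for $\mk{t}_0$.
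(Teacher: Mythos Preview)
Your proposal is correct and follows essentially the same approach as the paper: reduce to the transitive action of $N_{\wh{M}}^0(\mk{w})$ on $\wh{B}$, apply Proposition~\ref{prop:transitive_noncompact} to the pair $(\wh{M}^0,\wh{K}^0)$ with compact ineffectiveness kernel $Z_{\mk{k}_0}(\wh{\mk{g}}')$, then use Proposition~\ref{prop:NC_fiber_Iwasawa} to translate the resulting Iwasawa decomposition of $\wh{\mk{g}}'$ into a tuple in $\mc{F}^{-1}(\wh{F})$. Your explicit verification that $\mk{t}_0 = \mk{t} \oplus \mk{t}_0^j$ is maximal abelian in $\mk{k}_0$ via the ideal decomposition $\mk{k}_0 = Z_{\mk{k}_0}(\wh{\mk{g}}') \oplus \mk{k}_0^j$ is more detailed than what the paper provides (where it is simply asserted), but the argument is the same in spirit.
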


In a nutshell, this proposition says that in the nilpotent construction, we can always assume that $N_{\mk{m}_j}(\mk{w})$ contains the whole nilpotent part $\mk{n}^j$ of the Iwasawa decomposition of $\mk{g}'_j$ and ``almost contains" its abelian part $\mk{a}^j$. For the sake of brevity, if $(o,\mk{a}, \Upsigma^+, \upalpha_j, \mk{w}) \in \wt{\mc{N}}$ satisfies this property---that is, if $N_{\mk{m}_j}(\mk{w})$ has a subalgebra of the form $V \oplus \mk{n}^j$ as above---we will call it \textbf{positive}.

\begin{proof}[Proof of Proposition {\normalfont \ref{prop:adjusting_the_normalizer}}]
    Let $(\wh{F}, \mk{w})$ be as in the proposition. By admissibility, the subgroup $N_{\wh{L}}(\mk{w}) \subseteq \wh{L}$ acts transitively on the extended boundary component $\wh{F}$. Pick a noncompact part $\wh{B}$ of $\wh{F}$. Since we are dealing with maximal boundary components, we can use reformulation (iv) of the admissibility condition (see p.\ \pageref{admissibility_M}), which in our redundancy-free setting reads: the subgroup $N_{\wh{M}}(\mk{w}) \subseteq \wh{M}$ acts transitively on the boundary component $\wh{B}$. Recall that $\wh{B}$ can be represented by the Riemannian symmetric pair $(\wh{M}^0, \wh{K}^0)$. The ineffectiveness kernel $\mk{i}$ of $\wh{\mk{m}}$ is the sum of its center $\wh{\mk{z}}$ and the compact semisimple part of $\wh{\mk{g}}$. Consequently, $\mk{i}$ is compact, so we can apply Proposition \ref{prop:transitive_noncompact} to the normalizer $N^0_{\wh{M}}(\mk{w})$. For starters, it provides us with an Iwasawa decomposition of $\wh{\mk{g}}'$. Together with $\wh{B}$, it specifies a tuple $\wt{\upchi} = (o,\mk{a}, \Upsigma^+, \upalpha_j, \mk{w}) \in \wt{\mc{N}}$ with $\mc{G}(\wt{\upchi}) = \upchi$---as follows from Proposition \ref{prop:NC_fiber_Iwasawa}. By applying Proposition \ref{prop:transitive_noncompact} further, we also get a maximal abelian subspace $\mk{t}_0^j$ in the $\mk{k}_0$-part $\mk{k}_0^j$ of $\wh{\mk{g}}' = \mk{g}'_j$ and a maximal abelian subspace $\mk{t}$ in $\mk{i} = Z_{\mk{k}_0}(\mk{g}'_j)$. In the end, $N_{\wh{\mk{m}}}(\mk{w}) = N_{\mk{m}_j}(\mk{w})$ contains a solvable subalgebra of the form $V \oplus \mk{n}^j$, where 
    $$
    V \subseteq \mk{t} \oplus \mk{t}_0^j \oplus \mk{a}^j \subseteq Z_{\mk{k}_0}(\mk{g}'_j) \oplus \mk{k}_0^j \oplus \mk{a}^j = \mk{k}_0 \oplus \mk{a}^j
    $$
    projects surjectively onto $\mk{a}^j$ along $\mk{t} \oplus \mk{t}_0^j$. It simply remains to notice that $\mk{t}_0 = \mk{t} \oplus \mk{t}_0^j$ is a maximal abelian subspace in $\mk{k}_0$. This concludes the proof.
\end{proof}

We can also look at Proposition \ref{prop:adjusting_the_normalizer} from a slightly different angle. Suppose we start with a tuple $\wt{\upchi} = (o,\mk{a}, \Upsigma^+, \upalpha_j, \mk{w}) \in \wt{\mc{N}}$, and write $\upchi = \mc{G}(\wt{\upchi}) = (F_j, \mk{w})$. Proposition \ref{prop:adjusting_the_normalizer} asserts that there exists a \textit{positive} tuple $\overline{\wt{\upchi}} = (\overline{o},\overline{\mk{a}}, \overline{\Upsigma}^+, \overline{\upalpha}_l, \overline{\mk{w}})$ in the fiber of $\mc{G}$ over $\upchi$. But thanks to Observation \ref{obs:NC_acting_in_the_fiber}, we can move $\overline{\wt{\upchi}}$ to a tuple of the form $\wt{\upchi}' = (o,\mk{a}, \Upsigma^+, \upalpha_j, \mk{w}')$ by an element of $\wh{L} = L_j$ (or in our case even of $\wh{G}' = G'_j$). Now, whether or not an element of $\wt{\mc{N}}$ is positive is clearly preserved by the action of $G$ on $\wt{\mc{N}}$. Since both $\mc{G}$ and $\mc{H}$ are $G$-equivariant, we arrive at the following alternative formulation of Proposition \ref{prop:adjusting_the_normalizer}:

\begin{cor}\label{cor:moving_w_instead}
    Let $M = G/K$ be a symmetric space of noncompact type and $\wt{\upchi} = (o,\mk{a}, \Upsigma^+, \upalpha_j, \mk{w}) \in \wt{\mc{N}}$ some nilpotent construction data. Then there exists a subspace $\mk{w}' \subset \mk{n}_j^1$ congruent to $\mk{w}$ via $G'_j$ {\normalfont(}and thus also admissible and protohomogeneous{\normalfont)} such that $N_{\mk{m}_j}(\mk{w}')$ contains a solvable subalgebra of the form $V \oplus \mk{n}^j$, where $V \subseteq \mk{t}_0 \oplus \mk{a}^j$ projects surjectively onto $\mk{a}^j$, and $\mk{t}_0$ is a maximal abelian subspace of $\mk{k}_0$. The C1-actions arising from $\mk{w}$ and $\mk{w}'$ are orbit-equivalent.
\end{cor}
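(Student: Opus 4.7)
The plan is to reinterpret Proposition \ref{prop:adjusting_the_normalizer} as an $\wt{\mc{N}}$-level statement and then transport it back down to a statement about subspaces of $\mk{n}_j^1$ by means of the $G$-action on $\wt{\mc{N}}$ together with the equivariance of $\mc{G}$ and $\mc{H}$. The skeleton of the argument is already indicated in the paragraph immediately preceding the corollary; I would formalize it as follows.

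First, set $\upchi = \mc{G}(\wt{\upchi}) = (F_j, \mk{w}) \in \mc{N}$. Applying Proposition \ref{prop:adjusting_the_normalizer} to $\upchi$ produces a \emph{positive} tuple $\overline{\wt{\upchi}} = (\overline{o}, \overline{\mk{a}}, \overline{\Upsigma}^+, \overline{\upalpha}_l, \mk{w}) \in \mc{G}^{-1}(\upchi)$ (the subspace entry is necessarily $\mk{w}$, since the $\mk{n}_j^1$-component $\wh{\mk{n}}^1$ depends only on $\wh{F} = F_j$). In particular, with respect to the decomposition data encoded in $\overline{\wt{\upchi}}$, the normalizer of $\mk{w}$ in the associated subalgebra already has the required structure.

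Next, invoke Observation \ref{obs:NC_acting_in_the_fiber}: since $\wh{L} = L_j$ acts transitively on $\mc{F}^{-1}(F_j)$ via its decomposition $L_j = M_j \times A_j$ combined with the transitivity of $G'_j \subseteq M_j$ on its own Iwasawa decompositions, there exists $g \in G'_j \subseteq L_j$ with
\[
g \cdot \overline{\wt{\upchi}} \;=\; (o, \mk{a}, \Upsigma^+, \upalpha_j, \Ad(g)(\mk{w})) \;=\vcentcolon\; \wt{\upchi}'.
\]
Set $\mk{w}' = \Ad(g)(\mk{w})$. Because the property of being positive is manifestly $G$-invariant on $\wt{\mc{N}}$ (all the relevant subalgebras and inner products are carried across by $\Ad(g)$), the tuple $\wt{\upchi}'$ is positive as well, which is precisely the structural statement on $N_{\mk{m}_j}(\mk{w}')$ claimed in the corollary. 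Admissibility and protohomogeneity of $\mk{w}'$ follow from those of $\mk{w}$ by the same $G$-invariance (alternatively, by Theorem \ref{thm:berndt_tamaru_NC}\eqref{thm:berndt_tamaru_NC:b} applied to the image of $g$ in an appropriate maximal compact subgroup after a further congruence, though $G'_j$-congruence of $\mk{w}$ and $\mk{w}'$ suffices for the present statement).

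Finally, orbit-equivalence of the resulting C1-actions follows from the $G$-equivariance of $\mc{H} \circ \mc{G}$: one has $\mc{H}(\mc{G}(\wt{\upchi}')) = g \cdot \mc{H}(\mc{G}(\overline{\wt{\upchi}})) \cdot g^{-1}$, and $\mc{H}(\mc{G}(\overline{\wt{\upchi}})) = \mc{H}(\upchi) = \mc{H}(\mc{G}(\wt{\upchi}))$, so the subgroups $H_{j, \mk{w}'}$ and $H_{j, \mk{w}}$ are conjugate in $G$ and thus act orbit-equivalently on $M$. The main work is encapsulated in Proposition \ref{prop:adjusting_the_normalizer}; the only remaining subtlety is making sure that the fiber translation in Observation \ref{obs:NC_acting_in_the_fiber} can be implemented by an element of $G'_j$ (rather than just $L_j$), which is what makes $\mk{w}$ and $\mk{w}'$ congruent via $G'_j$ specifically---this is exactly what the transitivity of $G'_j$ on its Iwasawa decompositions, combined with the fact that $\overline{\wt{\upchi}}$ and $\wt{\upchi}'$ already share the same noncompact boundary component $B_j$, provides.
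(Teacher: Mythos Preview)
Your proposal is correct and follows essentially the same approach as the paper: apply Proposition~\ref{prop:adjusting_the_normalizer} to $\upchi = \mc{G}(\wt{\upchi})$ to obtain a positive tuple in the fiber, transport it back to the original first four coordinates via an element of $G'_j$ using Observation~\ref{obs:NC_acting_in_the_fiber}, and invoke $G$-invariance of positivity together with $G$-equivariance of $\mc{G}$ and $\mc{H}$. Your treatment of the final subtlety (why $G'_j$ rather than all of $L_j$ suffices) is slightly more explicit than the paper's parenthetical remark; note also that even if one only arranged $g \in G'_j \times A_j$, the $A_j$-factor acts on $\mk{n}_j^1$ by scalars and hence does not affect the $G'_j$-congruence class of $\mk{w}$.
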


\subsection{Root vector elimination}\label{sec:admissibility:root_vector_elimination}

In this final part of the section, we will see how the condition of positivity on nilpotent construction data $(o,\mk{a}, \Upsigma^+, \upalpha_j, \mk{w})$ puts strong restrictions on the subspace $\mk{w}$ and forces it to be positioned well with respect to the restricted root space decomposition. Given a subspace $\mk{w} \subseteq \mk{n}_j^1$ and $\uplambda \in \Updelta_j^1$, we denote $\boldsymbol{\mk{w}_\uplambda} = \mk{w} \cap \mk{g}_\uplambda$.

\begin{lem}\label{lem:NC_containing_a_vs_RRSD_adjusted}
    Let $M = G/K$ be a symmetric space of noncompact type and consider some nilpotent construction data $(o,\mk{a}, \Upsigma^+, \upalpha_j, \mk{w}) \in \wt{\mc{N}}$. The following conditions are equivalent:
\begin{enumerate}[\normalfont (i)]
        \item\customlabel{lem:NC_containing_a_vs_RRSD_adjusted:a}{i} $\mk{w} = \bigoplus_{\uplambda \in \Updelta_j^1} \mk{w}_\uplambda$.
        \item\customlabel{lem:NC_containing_a_vs_RRSD_adjusted:b}{ii} $\mk{a}^j \subseteq N_{\mk{m}_j}(\mk{w})$.
    \end{enumerate}
\end{lem}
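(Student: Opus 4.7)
The plan is to recognize both conditions as statements about the interaction of $\mk{w}$ with the action of $\ad(\mk{a}^j)$ on $\mk{n}_j^1$. Condition \eqref{lem:NC_containing_a_vs_RRSD_adjusted:a} asserts that $\mk{w}$ is the direct sum of its intersections with the joint eigenspaces of $\ad(\mk{a})$---namely, the root spaces $\mk{g}_\uplambda$ with $\uplambda \in \Updelta_j^1$---whereas \eqref{lem:NC_containing_a_vs_RRSD_adjusted:b} simply asserts $\ad(\mk{a}^j)$-stability. The equivalence therefore reduces to the claim that the $\ad(\mk{a}^j)$-eigenspace decomposition of $\mk{n}_j^1$ already coincides with its root-space decomposition, i.e., that the restrictions $\restr{\uplambda}{\mk{a}^j}$ for $\uplambda \in \Updelta_j^1$ are pairwise distinct.

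The direction \eqref{lem:NC_containing_a_vs_RRSD_adjusted:a}$\Rightarrow$\eqref{lem:NC_containing_a_vs_RRSD_adjusted:b} is immediate: since each $\mk{g}_\uplambda$ is $\ad(\mk{a})$-invariant---and in particular $\ad(\mk{a}^j)$-invariant---so is any subspace of the form $\bigoplus_\uplambda \mk{w}_\uplambda$ with $\mk{w}_\uplambda \subseteq \mk{g}_\uplambda$. Hence $[\mk{a}^j, \mk{w}] \subseteq \mk{w}$, as required.

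For \eqref{lem:NC_containing_a_vs_RRSD_adjusted:b}$\Rightarrow$\eqref{lem:NC_containing_a_vs_RRSD_adjusted:a}, the key step is the distinctness of the restricted roots. Given $\uplambda = \upalpha_j + \sum_{i \ne j} n_i \upalpha_i$ and $\uplambda' = \upalpha_j + \sum_{i \ne j} n'_i \upalpha_i$ in $\Updelta_j^1$, their difference lies in $\vspan_\R(\Uplambda_j)$ and already vanishes on $\mk{a}_j = \R H^j$ because $\upalpha_i(H^j) = \updelta_{ij}$. If this difference also vanished on $\mk{a}^j$, it would vanish on all of $\mk{a} = \mk{a}^j \oplus \mk{a}_j$ and hence be zero, forcing $n_i = n'_i$ and thus $\uplambda = \uplambda'$. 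Once distinctness is established, I would pick a generic $H \in \mk{a}^j$ for which the eigenvalues $\uplambda(H), \, \uplambda \in \Updelta_j^1,$ are pairwise distinct; then $\ad(H)$ diagonalizes $\mk{n}_j^1$ with the root spaces $\mk{g}_\uplambda$ as eigenspaces, and hypothesis \eqref{lem:NC_containing_a_vs_RRSD_adjusted:b} guarantees that the $\ad(H)$-invariant subspace $\mk{w}$ decomposes as $\bigoplus_\uplambda (\mk{w} \cap \mk{g}_\uplambda) = \bigoplus_\uplambda \mk{w}_\uplambda$.

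I do not anticipate any substantive obstacle: the entire argument is a routine simultaneous-diagonalization calculation, and the root-restriction distinctness follows directly from linear independence of the simple system $\Uplambda$.
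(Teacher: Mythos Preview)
Your proposal is correct and follows essentially the same approach as the paper. The only minor execution difference is that you pick a single generic $H \in \mk{a}^j$ separating all the roots simultaneously, whereas the paper first observes that $H^j$ acts trivially on $\mk{n}_j^1$ (so all of $\mk{a}$ normalizes $\mk{w}$) and then eliminates the unwanted root components one at a time; both arguments rest on the same distinctness of the roots in $\Updelta_j^1$.
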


\begin{proof}
    One direction is obvious: for every $H \in \mk{a}^j$ and $\uplambda \in \Upsigma$, $\ad(H)$ acts on $\mk{g}_\uplambda$ as the multiplication by $\bilin{\uplambda}{H}$, hence $\ad(H)$ preserves each $\mk{w}_\uplambda$. If $\mk{w}$ decomposes as the sum of $\mk{w}_\uplambda$'s, then $\ad(H)$ preserves it and so $H$ lies in the normalizer. Conversely, assume $\mk{a}^j \subseteq N_{\mk{m}_j}(\mk{w})$. Since $H^j$ always acts on $\mk{n}_j^1$ as the identity, we have that the whole $\mk{a}$ normalizes $\mk{w}$. Take any $X \in \mk{w}$ and write it as $X = \sum_{\uplambda \in \Updelta_j^1} X_\uplambda$, where $X_\uplambda \in \mk{g}_\uplambda$. It suffices to show that $X_\uplambda \in \mk{w}$ for an arbitrary $\uplambda \in \Updelta_j^1$. Take any other $\upgamma \in \Updelta_j^1$. Since $\uplambda$ and $\upgamma$ cannot be proportional, there exists a vector $H \in \mk{a}$ such that $\bilin{\upgamma}{H} = 0$ but $\bilin{\uplambda}{H} = 1$. (Note that we might have $2\uplambda \in \Upsigma$ if the root system is not reduced, but that root would then lie in $\Updelta_j^2$, not $\Updelta_j^1$, so we could not take it as $\upgamma$.) We can replace $X$ with $[H,X] \in \mk{w}$: its $\upgamma$-component is now zero but its $\uplambda$-component is still $X_\uplambda$. By repeating this trick with the other roots in $\Updelta_j^1 \mysetminus \set{\uplambda}$, we obtain $X_\uplambda \in \mk{w}$. This completes the proof.
\end{proof}

Note that we did not actually use the fact that $\mk{w}$ is admissible or protohomogeneous. We will now prove the main result of this subsection.

\begin{prop}\label{prop:w_vs_RRSD}
    Let $M = G/K$ be a symmetric space of noncompact type and $\wt{\upchi} = (o,\mk{a}, \Upsigma^+, \upalpha_j, \mk{w}) \in \wt{\mc{N}}$ some nilpotent construction data. If $\wt{\upchi}$ is positive, then it satisfies the conditions of Lemma {\normalfont \ref{lem:NC_containing_a_vs_RRSD_adjusted}}.
\end{prop}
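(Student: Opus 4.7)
The plan is to reduce the statement to condition~\eqref{lem:NC_containing_a_vs_RRSD_adjusted:b} of Lemma~\ref{lem:NC_containing_a_vs_RRSD_adjusted}, i.e., to showing that $\mk{a}^j \subseteq N_{\mk{m}_j}(\mk{w})$. Positivity gives $V \oplus \mk{n}^j \subseteq N_{\mk{m}_j}(\mk{w})$ with $V \subseteq \mk{t}_0 \oplus \mk{a}^j$ projecting surjectively onto $\mk{a}^j$, so for each $H \in \mk{a}^j$ there exists $T = T_H \in \mk{t}_0$ with $T + H \in V$; the plan is to eliminate the torus part $T$ using a spectral argument.

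First I would choose $H \in \mk{a}^j$ \emph{generically}, so that the values $\bilin{\uplambda}{H}$ are pairwise distinct as $\uplambda$ ranges over the finite set $\Updelta_j^1$. Such an $H$ exists because the restrictions $\restr{\upalpha_i}{\mk{a}^j}$ for $i \ne j$ constitute the simple root system of $\Upsigma_j$ on $\mk{a}^j$ and are therefore linearly independent in $(\mk{a}^j)^*$, so distinct elements of $\Updelta_j^1$ restrict to distinct linear functionals on $\mk{a}^j$; $H$ is then any vector outside the finite union of hyperplanes $\set{\bilin{\uplambda - \uplambda'}{\cdot} = 0}$.

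Next, consider the operator $A = \ad(T + H)$ restricted to $\mk{n}_j^1$. Since $T \in \mk{k}_0 = Z_\mk{k}(\mk{a})$, $\ad(T)$ preserves each root space, and the same is evidently true of $\ad(H)$; hence $A$ is block-diagonal with respect to the decomposition $\mk{n}_j^1 = \bigoplus_{\uplambda \in \Updelta_j^1} \mk{g}_\uplambda$, and on $\mk{g}_\uplambda$ it acts as $\bilin{\uplambda}{H} \cdot \Id + \restr{\ad(T)}{\mk{g}_\uplambda}$. Because $T \in \mk{k}$ satisfies $\uptheta T = T$, \eqref{equation:inner:b:theta} shows that $\ad(T)$ is skew-symmetric with respect to $B_\uptheta$, so $\restr{\ad(T)}{\mk{g}_\uplambda}$ has purely imaginary eigenvalues. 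Therefore the complex spectrum of $\restr{A}{\mk{g}_\uplambda}$ lies in the vertical line $\bilin{\uplambda}{H} + i\R$, and by the genericity of $H$, the spectra of $\restr{A}{\mk{g}_\uplambda}$ and $\restr{A}{\mk{g}_{\uplambda'}}$ are disjoint for $\uplambda \ne \uplambda'$.

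Finally I would apply Lagrange interpolation: for each $\uplambda$, the disjointness of spectra lets us produce a polynomial $\upchi_\uplambda \in \C[z]$ taking the value $1$ on the spectrum of $\restr{A}{\mk{g}_\uplambda}$ and $0$ on the spectra of the other blocks, so that $\upchi_\uplambda(A)$ is the projection onto $\mk{g}_\uplambda$ along $\bigoplus_{\uplambda' \ne \uplambda} \mk{g}_{\uplambda'}$. Since $A$ is a real operator, its spectrum is conjugation-invariant, and the target values $\set{0,1}$ are real; by averaging with the conjugate polynomial (or equivalently by interpolating once over conjugate pairs), $\upchi_\uplambda$ may be taken in $\R[z]$. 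As $T + H \in N_{\mk{m}_j}(\mk{w})$, the operator $A$ preserves $\mk{w}$, and hence so does the real polynomial $\upchi_\uplambda(A)$. Thus the projection of $\mk{w}$ onto each $\mk{g}_\uplambda$ lies in $\mk{w}$, giving $\mk{w} = \bigoplus_{\uplambda \in \Updelta_j^1} \mk{w}_\uplambda$, which is condition~\eqref{lem:NC_containing_a_vs_RRSD_adjusted:a} of Lemma~\ref{lem:NC_containing_a_vs_RRSD_adjusted}. The only delicate step is verifying that the spectral projector is realized by a \emph{real} polynomial in $A$, which hinges on the conjugation invariance of the spectrum coming from $A$ being a real operator.
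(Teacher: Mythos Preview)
Your argument is correct and genuinely different from the paper's. The paper establishes condition~\eqref{lem:NC_containing_a_vs_RRSD_adjusted:a} by a hands-on elimination: it decomposes each root space $\mk{g}_\uplambda$ into the $\ad(\mk{t}_0)$-weight pieces $\mk{g}_\uplambda^0 \oplus \mk{g}_\uplambda^1 \oplus \cdots \oplus \mk{g}_\uplambda^{k_\uplambda}$ (with $\ad(T)$ acting on each $2$-dimensional piece as a multiple of a fixed complex structure), and then, starting from a hypothetical $X \in \mk{w}$ with $X_\uplambda \notin \mk{w}_\uplambda$, iteratively brackets with suitable $T+H$ to kill the components $X_\upgamma^l$ in the other root spaces one at a time while keeping the $\uplambda$-component outside $\mk{w}_\uplambda$. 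This requires a somewhat delicate two-case analysis. Your spectral route bypasses all of that: by choosing $H$ so that the real parts $\bilin{\uplambda}{H}$ are pairwise distinct, the block spectra of $A = \ad(T+H)$ are separated, and the projector onto each $\mk{g}_\uplambda$ is a real polynomial in $A$, hence preserves $\mk{w}$. The only point worth making explicit is that $A$ is diagonalizable over $\C$ (since on each block it is $\bilin{\uplambda}{H}\,\Id$ plus the skew-symmetric $\ad(T)$, and $[T,H]=0$), which justifies Lagrange interpolation on eigenvalues rather than full Hermite interpolation; your averaging-with-the-conjugate step then produces a real interpolant. Your approach is shorter and more conceptual, while the paper's argument is more constructive and avoids any appeal to spectral theory.
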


\begin{proof}
    We will show that $\wt{\upchi}$ satisfies condition \eqref{lem:NC_containing_a_vs_RRSD_adjusted:a} of Lemma \ref{lem:NC_containing_a_vs_RRSD_adjusted}. According to the positivity assumption on $\wt{\upchi}$, there exist a maximal abelian subspace $\mk{t}_0 \subseteq \mk{k}_0$ and a subspace $V \subseteq \mk{t}_0 \oplus \mk{a}^j$ projecting surjectively onto $\mk{a}^j$ such that $V \oplus \mk{n}^j \subseteq N_{\mk{m}_j}(\mk{w})$. Recall that the adjoint representation of $\mk{k}_0$ on $\mk{g}$ preserves each root space and is orthogonal with respect to the inner product $B_\uptheta$. Since $\mk{t}_0$ is abelian, each root space $\mk{g}_\uplambda$ can be decomposed orthogonally as $\mk{g}_\uplambda = \mk{g}_\uplambda^0 \oplus \mk{g}_\uplambda^1 \oplus \cdots \oplus \mk{g}_\uplambda^{k_\uplambda}$ so that the following holds:

    \begin{enumerate}[\normalfont (a)]
        \item For each $1 \le i \le k_\uplambda$, $\mk{g}_\uplambda^i$ is 2-dimensional\footnote{One should be able to show that $\mk{g}_\uplambda^0$ has dimension 1 if the multiplicity of $\uplambda$ is odd and 0 otherwise, but we do not need that here.}.
        \item For every $T \in \mk{t}_0$, $\ad(T)$ preserves each summand $\mk{g}_\uplambda^i$ and is zero on $\mk{g}_\uplambda^0$.
        \item Fix an orthogonal complex structure $J_\uplambda$ on $\mk{g}_\uplambda^1 \oplus \cdots \oplus \mk{g}_\uplambda^{k_\uplambda}$ such that each $\mk{g}_\uplambda^i$ becomes a complex subspace, and write $J_\uplambda^i = \restr{J_\uplambda}{\mk{g}_\uplambda^i}$. Then for every $T \in \mk{t}_0$, $\restr{\ad(T)}{\mk{g}_\uplambda} = c_1 J_\uplambda^1 + \cdots + c_{k_\uplambda} J_\uplambda^{k_\uplambda}$ for some $c_i \in \R$.
    \end{enumerate}

    Given a vector $X \in \mk{g}$, we write $X_\uplambda$ and $X_\uplambda^i$ for its orthogonal projections in $\mk{g}_\uplambda$ and $\mk{g}_\uplambda^i$, respectively. Observe that for every $A \in \mk{t}_0 \oplus \mk{a}$, $\ad(A)$ preserves not only each root space $\mk{g}_\uplambda$ but in fact each subspace $\mk{g}_\uplambda^i$. In particular, if $X \in \mk{g}$ has $X_\uplambda^i = 0$, then so does $[A,X]$.
    
    Given $\uplambda \in \Updelta_j^1$, we write $W_\uplambda$ for the orthogonal projection of $\mk{w}$ in $\mk{g}_\uplambda$. We have $\mk{w}_\uplambda \subseteq W_\uplambda$, and our goal is to show that $\mk{w}_\uplambda = W_\uplambda$ for an arbitrary $\uplambda \in \Updelta_j^1$ (which we fix for the rest of the proof). Assume the converse and pick a vector $X \in \mk{w}$ with $X_\uplambda \in W_\uplambda \mysetminus \mk{w}_\uplambda$. By subtracting a suitable element of $\mk{w}_\uplambda$, we may assume that $X_\uplambda \perp \mk{w}_\uplambda$ (and $X_\uplambda \ne 0$). We will construct a vector in $W_\uplambda \mysetminus \mk{w}_\uplambda$ that also lies in $\mk{w}$, thus arriving at a contradiction. Take a root $\upgamma \in \Updelta_j^1$ different from $\uplambda$ such that $X_\upgamma \ne 0$. Since we can proceed inductively, it suffices to produce another vector $Y \in \mk{w}$ satisfying the following properties:

    \begin{enumerate}[\normalfont (a)]
        \item\customlabel{w_vs_RRSD:Y:a}{a} $Y_\uplambda \in W_\uplambda \mysetminus \mk{w}_\uplambda$.
        \item\customlabel{w_vs_RRSD:Y:b}{b} $Y_\upgamma = 0$.
        \item\customlabel{w_vs_RRSD:Y:c}{c} For every $\updelta \in \Updelta_j^1 \mysetminus \set{\uplambda}$ and every $0 \le i \le k_\updelta$, if $X_\updelta^i = 0$, then $Y_\updelta^i = 0$.
    \end{enumerate}

    To begin with, we introduce an auxiliary subspace $U_\uplambda = W_\uplambda \ominus (\mk{w}_\uplambda \oplus \R X_\uplambda)$; we thus have an orthogonal decomposition $W_\uplambda = \mk{w}_\uplambda \oplus U_\uplambda \oplus \R X_\uplambda$. There exists a vector $H \in \mk{a}$ such that $\bilin{\upgamma}{H} = 0$ but $\bilin{\uplambda}{H} = 1$. We have the subspace $V \oplus \R H^j \subseteq N_{\mk{l}_j}(\mk{w})$ that projects surjectively onto $\mk{a}$. Consequently, there exists $T \in \mk{t}_0$ such that $A = T + H \in N_{\mk{l}_j}(\mk{w})$. We are going to use the operator $\ad(A)$ to eliminate undesired components of $X$. By construction, $\restr{\ad(A)}{\mk{g}_\upgamma} = \restr{\ad(T)}{\mk{g}_\upgamma} = \sum_{i=1}^{k_\upgamma} c_i J_\upgamma^i$ for some $c_i \in \R$. Consider the vector $[A,X] \in \mk{w}$. We have $[A,X]_\upgamma^0 = 0$. On the other hand, since $\ad(T)$ is a skew-symmetric operator, the orthogonal projection of $[A,X]$ in $\R X_\uplambda$ is given by $\pr_{\R X_\uplambda}([H,X_\uplambda]) = \pr_{\R X_\uplambda}(X_\uplambda) = X_\uplambda$. This means that $[A,X]_\uplambda \in W_\uplambda \mysetminus \mk{w}_\uplambda$. Therefore, we may simply assume from the outset that $X_\upgamma^0 = 0$. Suppose that $X_\upgamma^l \ne 0$ for some $1 \le l \le k_\upgamma$. We make yet another reduction: arguing inductively, it suffices to find $Z \in \mk{w}$ that satisfies conditions \eqref{w_vs_RRSD:Y:a} and \eqref{w_vs_RRSD:Y:c} on $Y$, but instead of \eqref{w_vs_RRSD:Y:b} we just require $Z_\upgamma^l = 0$.

    We again consider the vector $[A,X] \in \mk{w}$. We have:
    $$
    \begin{cases}
        [A,X]_\uplambda = [T,X_\uplambda] + X_\uplambda, \\
        [A,X]_\upgamma = \sum_{i=1}^{k_\upgamma} c_i J_\upgamma^i X_\upgamma^i.
    \end{cases}
    $$
    As we already noted earlier, $[T,X_\uplambda] \perp X_\uplambda$ and hence $[A,X]_\uplambda \in W_\uplambda \mysetminus \mk{w}_\uplambda$. If $c_l = 0$, we simply let $Z = [A,X]$, which does the trick. Otherwise, assuming $c_l \ne 0$, we introduce $X' = [A,X] - \pr_{\mk{w}_\uplambda}([A,X]_\uplambda) = [A,X] - \pr_{\mk{w}_\uplambda}([T,X_\uplambda]) \in \mk{w}$. This way, we have $X'_\uplambda \perp \mk{w}_\uplambda$ and thus $X'_\uplambda = \pr_{U_\uplambda}(X'_\uplambda) + X_\uplambda$. We split the argument into two cases.

    {\scshape Case 1:} $[T,X_\uplambda] \in \mk{w}_\uplambda \Leftrightarrow \pr_{U_\uplambda}(X'_\uplambda) = 0 \Leftrightarrow X'_\uplambda = X_\uplambda$. Consider $[A,X'] \in \mk{w}$:
    $$
    \begin{cases}
        [A,X']_\uplambda = [T,X_\uplambda] + X_\uplambda, \\
        [A,X']_\upgamma = -\sum_{i=1}^{k_\upgamma} c_i^2 X_\upgamma^i.
    \end{cases}
    $$
    We let $Z = [A,X'] + c_l^2 X$. By construction, $Z_\upgamma^l = 0$ but the orthogonal projection of $Z$ in $\R X_\uplambda$ equals $(1+c_l^2)X_\uplambda \ne 0$. This means $Z_\uplambda \in W_\uplambda \mysetminus \mk{w}_\uplambda$ and so $Z$ satisfies the required conditions.
    
    {\scshape Case 2:} $[T,X_\uplambda] \not\in \mk{w}_\uplambda \Leftrightarrow \pr_{U_\uplambda}(X'_\uplambda) \ne 0 \Leftrightarrow X'_\uplambda \ne X_\uplambda$. Let us then denote $V = \pr_{U_\uplambda}(X'_\uplambda)$. In this case, we also start by considering the vector $[A,X'] \in \mk{w}$:
    $$
    \begin{cases}
        [A,X']_\uplambda = [T,V] + [T,X_\uplambda] + V + X_\uplambda, \\
        [A,X']_\upgamma = -\sum_{i=1}^{k_\upgamma} c_i^2 X_\upgamma^i.
    \end{cases}
    $$
    Let us adjust $[A,X']$ to make it orthogonal to $\mk{w}_\uplambda$. Consider the vector $X'' = [A,X'] - \pr_{\mk{w}_\uplambda}([A,X']) \in \mk{w}$. We have:
    \begin{align*}
    X''_\uplambda &= [A,X']_\uplambda - \pr_{\mk{w}_\uplambda}([A,X']) \\
    &= ([T,V] - \pr_{\mk{w}_\uplambda}([T,V])) + ([T,X_\uplambda] - \pr_{\mk{w}_\uplambda}([T,X_\uplambda])) + V + X_\uplambda \\
    &= ([T,V] - \pr_{\mk{w}_\uplambda}([T,V])) + 2V + X_\uplambda.
    \end{align*}
    In the last line, the summand in the parentheses is orthogonal to $V$, and so is the last summand $X_\uplambda$. Consequently, the orthogonal projection of $X''$ in $\R V$ equals $2V \ne 0$. We then let $Z = X'' + c_l^2 X$; this vector satisfies $Z_\upgamma^l = 0$ but has the same nonzero projection in $\R V$. The latter means that $Z_\uplambda \in W_\uplambda \mysetminus \mk{w}_\uplambda$, so $Z$ meets the desired requirements. This completes the proof.
\end{proof}

Combining Corollary \ref{cor:moving_w_instead}, Proposition \ref{prop:w_vs_RRSD} and Lemma \ref{lem:NC_containing_a_vs_RRSD_adjusted}, we obtain the main result of this section:

\begin{thm}\label{thm:NC_simplification}
    Let $M = G/K$ be a symmetric space of noncompact type and $\wt{\upchi} = (o,\mk{a}, \Upsigma^+, \upalpha_j, \mk{w}) \in \wt{\mc{N}}$ some nilpotent construction data. Then there exists a subspace $\mk{w}' \subset \mk{n}_j^1$ congruent to $\mk{w}$ via $G'_j$ {\normalfont(}and thus also admissible and protohomogeneous{\normalfont)} such that $N_{\mk{m}_j}(\mk{w}')$ contains $\mk{a}^j \oplus \mk{n}^j$. The subspace $\mk{w}'$ splits as $\mk{w}' = \bigoplus_{\uplambda \in \Updelta_j^1} \mk{w}'_\uplambda$. The C1-actions arising from $\mk{w}$ and $\mk{w}'$ are orbit-equivalent.
\end{thm}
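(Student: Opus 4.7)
The plan is to simply chain together the three results mentioned just before the theorem statement: Corollary \ref{cor:moving_w_instead}, Proposition \ref{prop:w_vs_RRSD}, and Lemma \ref{lem:NC_containing_a_vs_RRSD_adjusted}. The heavy lifting has already been done; this theorem is a clean synthesis.

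First, I would invoke Corollary \ref{cor:moving_w_instead} on the given data $\wt{\upchi}$. This produces a subspace $\mk{w}' \subset \mk{n}_j^1$ congruent to $\mk{w}$ under some element of $G'_j$ such that the resulting tuple $\wt{\upchi}' = (o,\mk{a},\Upsigma^+,\upalpha_j,\mk{w}')$ is positive; admissibility and protohomogeneity are inherited from $\mk{w}$ because they are $G'_j$-invariant properties (they are formulated purely in terms of subgroups of $L_j = L_j'$ and representations thereof), and the C1-actions coming from $\mk{w}$ and $\mk{w}'$ are orbit-equivalent by the same corollary. Positivity tells us explicitly that $N_{\mk{m}_j}(\mk{w}')$ contains a subalgebra $V \oplus \mk{n}^j$ with $V \subseteq \mk{t}_0 \oplus \mk{a}^j$; in particular, $\mk{n}^j \subseteq N_{\mk{m}_j}(\mk{w}')$.

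Next, because $\wt{\upchi}'$ is positive, Proposition \ref{prop:w_vs_RRSD} applies and yields condition \eqref{lem:NC_containing_a_vs_RRSD_adjusted:a} of Lemma \ref{lem:NC_containing_a_vs_RRSD_adjusted}, namely the decomposition $\mk{w}' = \bigoplus_{\uplambda \in \Updelta_j^1} \mk{w}'_\uplambda$. This is the second half of the conclusion of the theorem. By the equivalence (i) $\Leftrightarrow$ (ii) in Lemma \ref{lem:NC_containing_a_vs_RRSD_adjusted}, the same condition is equivalent to $\mk{a}^j \subseteq N_{\mk{m}_j}(\mk{w}')$. Combined with the inclusion $\mk{n}^j \subseteq N_{\mk{m}_j}(\mk{w}')$ from the previous paragraph, we obtain $\mk{a}^j \oplus \mk{n}^j \subseteq N_{\mk{m}_j}(\mk{w}')$, which completes the proof.

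Since every ingredient has already been established, there is no substantive obstacle at this stage; the only thing to be careful about is verifying that admissibility and protohomogeneity transfer to $\mk{w}'$ (which is built into Corollary \ref{cor:moving_w_instead}) and that positivity of $\wt{\upchi}'$ is exactly the hypothesis needed to invoke Proposition \ref{prop:w_vs_RRSD}. The real work lay in Proposition \ref{prop:transitive_noncompact} (the transitivity/Borel-subalgebra structure theorem) and in the root-vector elimination argument of Proposition \ref{prop:w_vs_RRSD}; Theorem \ref{thm:NC_simplification} is simply the crystallization of those results into the form that will be used repeatedly in Section \ref{sec:solving_the_NC}.
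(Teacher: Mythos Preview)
Your proposal is correct and matches the paper's approach exactly: the paper states the theorem as the result of ``combining Corollary \ref{cor:moving_w_instead}, Proposition \ref{prop:w_vs_RRSD} and Lemma \ref{lem:NC_containing_a_vs_RRSD_adjusted}'' without further detail, and your write-up simply unpacks that chain in the natural order.
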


\section{Totally geodesic submanifolds and the singular orbit}\label{sec:t.g._and_singular_orbit}
In this section, we make a slight detour and prove Theorem \ref{thm:C}---a structural result that relates singular orbits of cohomogeneity-one actions on a symmetric space $M = G/K$ of noncompact type, as well as totally geodesic submanifolds therein, to an Iwasawa decomposition of $G$ and allows to represent these types of submanifolds neatly via the solvable model of $M$. We begin with totally geodesic submanifolds.

Let $M = G/K$ be a symmetric space of noncompact type and $S \subseteq M$ a complete connected totally geodesic submanifold. Fix a point $o \in S$ and write $S_{nc}$ and $S_0$ for the noncompact and flat parts of $S$ passing through $o$, respectively. Having fixed the base point, we have the induced Cartan decomposition $\mk{g} = \mk{k} \oplus \mk{p}$ and an identification $\mk{p} \cong T_oM$, so we can introduce the Lie triple systems $\wt{\mk{p}}_{nc} = T_oS_{nc}, \, \wt{\mk{p}}_0 = T_o S_0,$ and $\wt{\mk{p}} = T_oS = \wt{\mk{p}}_{nc} \oplus \wt{\mk{p}}_0$ in $\mk{p}$. It follows from the standard theory of symmetric spaces that $\wt{\mk{g}}_{nc} = [\wt{\mk{p}}_{nc},\wt{\mk{p}}_{nc}] \oplus \wt{\mk{p}}_{nc}$ is a semisimple subalgebra of $\mk{g}$, $\wt{\mk{g}}_0 = \wt{\mk{p}}_0$ is an abelian subalgebra commuting with $\wt{\mk{g}}_{nc}$, and thus $\wt{\mk{g}} = \wt{\mk{g}}_{nc} \oplus \wt{\mk{g}}_0 = [\wt{\mk{p}},\wt{\mk{p}}] \oplus \wt{\mk{p}}$ is a reductive subalgebra. The corresponding connected Lie subgroups $\wt{G}_{nc}, \wt{G}_0,$ and $\wt{G} = \wt{G}_{nc} \times \wt{G}_0$ of $G$ have $S_{nc}, S_0,$ and $S$ as their $o$-orbits, respectively. One can show that these subgroups are closed. (For $\wt{G}_{nc}$, see \cite[Sect.\hspace{2pt}6]{mostow_semisimple_subgroups}, the rest then follows from the global Cartan decomposition, see \cite[Th.\hspace{2pt}6.31(c)]{knapp}.) Pick a maximal abelian subspace $\wt{\mk{a}}$ in $\wt{\mk{p}}$ and then enlarge it to a maximal abelian subspace $\mk{a}$ in $\mk{p}$. Note that we necessarily have $\wt{\mk{a}} = \wt{\mk{a}}_{nc} \oplus \wt{\mk{a}}_0$, where $\wt{\mk{a}}_{nc}$ is maximal abelian in $\wt{\mk{p}}_{nc}$ and $\wt{\mk{a}}_0 = \wt{\mk{p}}_0 = \wt{\mk{g}}_0$. We have an orthogonal decomposition $\mk{a} = \wt{\mk{a}}_{nc} \oplus \wt{\mk{a}}_0 \oplus \wt{\mk{a}}^\perp = \wt{\mk{a}}_{nc} \oplus \wt{\mk{a}}_{nc}^\perp$. (As always, we fix $B_\uptheta$ as a default inner product on $\mk{g}$.) If we identify $\mk{a}$ and $\mk{a}^*$ by means of $B_\uptheta$, we have an induced orthogonal decomposition $\mk{a}^* = \wt{\mk{a}}_{nc}^* \oplus (\wt{\mk{a}}_{nc}^\perp)^*$. Let us denote the restricted root systems of $\wt{\mk{g}}_{nc}$ and $\mk{g}$ by $\wt{\Upsigma} \subseteq \wt{\mk{a}}_{nc}^*$ and $\Upsigma \subset \mk{a}^*$, respectively.

Our first step is to obtain a relation between the restricted root space decompositions of $\wt{\mk{g}}_{nc}$ and $\mk{g}$. Pick a root $\wt{\upalpha} \in \wt{\Upsigma}$. Take any vectors $H \in \wt{\mk{a}}_{nc}$ and $X \in (\wt{\mk{g}}_{nc})_{\wt{\upalpha}}$. By decomposing $X = \sum_{\upalpha \in \Upsigma_0} X_\upalpha$ with respect to the restricted root space decomposition of $\mk{g}$, we can compute:
$$
\bilin{\wt{\upalpha}}{H}\sum_{\upalpha \in \Upsigma_0} X_\upalpha = [H,X] = \sum_{\upalpha \in \Upsigma_0} \bilin{\upalpha}{H} X_\upalpha.
$$
Since this holds for every $H \in \wt{\mk{a}}_{nc}$, we deduce that $X_\upalpha = 0$ whenever the restriction of $\upalpha$ to $\wt{\mk{a}}_{nc}$ differs from $\wt{\upalpha}$. In particular, $X_0 = 0$. If we introduce $\Upsigma(\wt{\upalpha}) = \big\{\upalpha \in \Upsigma \mid \restr{\upalpha}{\wt{\mk{a}}_{nc}} = \wt{\upalpha}\big\}$, then our finding can be reformulated as
\begin{equation}\label{RRSD_relation_tg}
(\wt{\mk{g}}_{nc})_{\wt{\upalpha}} \subseteq \bigoplus_{\upalpha \in \Upsigma(\wt{\upalpha})} \mk{g}_\upalpha.
\end{equation}
For the sake of convenience, we introduce another piece of notation:
$$
\restr{\Upsigma}{\wt{\mk{a}}_{nc}^*} = \big\{\restr{\upalpha}{\wt{\mk{a}}_{nc}^*} \mid \upalpha \in \Upsigma \big\} \mysetminus \set{0}.
$$
This can also be thought of as the orthogonal projection of $\Upsigma$ in $\wt{\mk{a}}_{nc}^*$ (with zero removed if necessary). The inclusion \eqref{RRSD_relation_tg} then implies $\wt{\Upsigma} \subseteq \restr{\Upsigma}{\wt{\mk{a}}_{nc}^*}$. Next, we are going to show that one can make a choice of positive roots for these two root systems in a compatible manner. We can choose a vector $H' \in \wt{\mk{a}}_{nc}$ such that its annulator in $\wt{\mk{a}}_{nc}^*$ does not intersect $\restr{\Upsigma}{\wt{\mk{a}}_{nc}^*}$. Unfortunately, $H'$ can still vanish on some roots in $\Upsigma$, namely on $\Upsigma \cap (\wt{\mk{a}}_{nc}^\perp)^*$. To remedy this, we pick another vector $H'' \in \wt{\mk{a}}_{nc}^\perp$ satisfying the following two properties:

\begin{enumerate}[(a)]
    \item\customlabel{t.g._compatible_choices_of_positivity:a}{a} The annulator of $H''$ in $(\wt{\mk{a}}_{nc}^\perp)^*$ does not intersect $\Upsigma \cap (\wt{\mk{a}}_{nc}^\perp)^*$.
    \item\customlabel{t.g._compatible_choices_of_positivity:b}{b} For every $\upalpha \in \Upsigma \mysetminus (\wt{\mk{a}}_{nc}^\perp)^*, |\bilin{H''}{\upalpha}| < |\bilin{H'}{\upalpha}|$.
\end{enumerate}

Finally, we let $H = H' + H''$. We can now make the following choices of positive roots:
\begin{align*}
    \wt{\Upsigma}^+ &\defeq \{\wt{\upalpha} \in \wt{\Upsigma} \mid \bilin{H'}{\wt{\upalpha}} > 0\}, \\
    \Upsigma^+ &\defeq \set{\upalpha \in \Upsigma \mid \bilin{H}{\upalpha} > 0}.
\end{align*}
By construction, $H'$ does not vanish on any roots in $\wt{\Upsigma}$, and the same is true for $H$ and $\Upsigma$, so these are indeed valid choices of positive roots for the two root systems. Given $\wt{\upalpha} \in \wt{\Upsigma}^+$ and any $\upalpha \in \Upsigma(\wt{\upalpha})$, we have $\bilin{H}{\upalpha} = \bilin{H'}{\upalpha} + \bilin{H''}{\upalpha}$, which has the same sign as $\bilin{H'}{\upalpha} = \bilin{H'}{\wt{\upalpha}} > 0$ thanks to \eqref{t.g._compatible_choices_of_positivity:b} above. This implies $\upalpha \in \Upsigma^+$ and thus $\Upsigma(\wt{\upalpha}) \subseteq \Upsigma^+$. In view of \eqref{RRSD_relation_tg}, we arrive at:
\begin{equation}\label{t.g._nilpotent_parts}
\wt{\mk{n}} = \bigoplus_{\wt{\upalpha} \in \wt{\Upsigma}^+} (\wt{\mk{g}}_{nc})_{\wt{\upalpha}} \subseteq \bigoplus_{\upalpha \in \Upsigma^+} \mk{g}_\upalpha = \mk{n}.
\end{equation}
Having made all these choices, we have the induced Iwasawa decompositions $G = KAN$ and $\wt{G}_{nc} = \wt{K}\wt{A}_{nc}\wt{N}$. By virtue of \eqref{t.g._nilpotent_parts}, we have an inclusion between the solvable parts $\wt{A}_{nc}\wt{N} \subseteq AN$, and the orbit of the former through $o$ is still the whole $S_{nc}$.  We can also consider the subgroup $\wt{A}\wt{N} = \wt{A}_{nc}\wt{A}_0\wt{N}$ of $AN$, whose orbit through $o$ is the original submanifold $S$. We arrive at the following

\begin{prop}\label{prop:t.g._from_AN}
    Let $M = G/K$ be a symmetric space of noncompact type and $S \subseteq M$ a complete connected totally geodesic submanifold. Then there exist an Iwasawa decomposition $G = KAN$ and a connected Lie subgroup $H \subseteq AN$, $H = (H \cap A) \ltimes (H \cap N)$, such that $S = H \cdot o$.
\end{prop}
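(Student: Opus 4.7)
The plan is to directly leverage the construction built up before the statement of the proposition: set $H \defeq \wt{G}_0 \cdot \wt{A}_{nc}\wt{N} = \wt{A}\wt{N}$, where $\wt{A} = \wt{A}_{nc}\wt{A}_0$ and $\wt{A}_0 = \wt{G}_0$. This is a closed connected Lie subgroup of $G$, as $\wt{A}$ normalizes $\wt{N}$: indeed, $\wt{\mk{a}}_{nc}$ normalizes $\wt{\mk{n}}$ (acting diagonally on the positive root spaces of $\wt{\mk{g}}_{nc}$), while $\wt{\mk{a}}_0 = \wt{\mk{g}}_0$ commutes with all of $\wt{\mk{g}}_{nc}$, and in particular with $\wt{\mk{n}}$.

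The first verification is that $H \subseteq AN$. This is immediate from what has already been established: $\wt{\mk{a}} \subseteq \mk{a}$ gives $\wt{A} \subseteq A$, and the inclusion \eqref{t.g._nilpotent_parts} gives $\wt{N} \subseteq N$. The second verification is that $H \cdot o = S$. Since $\wt{G} = \wt{G}_{nc} \times \wt{G}_0$ acts transitively on $S = S_{nc} \times S_0$ through $o$, and since the solvable part $\wt{A}_{nc}\wt{N}$ of the Iwasawa decomposition $\wt{G}_{nc} = \wt{K}\wt{A}_{nc}\wt{N}$ already acts transitively on $S_{nc}$ through $o$ (this is the solvable model for the symmetric space $S_{nc}$ of noncompact type), while $\wt{G}_0 = \wt{A}_0$ acts transitively on the Euclidean factor $S_0$ through $o$, we conclude that $H = \wt{A}_{nc}\wt{A}_0\wt{N}$ acts transitively on $S$ through $o$.

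Finally, I would establish the semidirect decomposition $H = (H \cap A) \ltimes (H \cap N)$. For this, I would use the fact that the global Iwasawa decomposition $K \times A \times N \isoto G$ is a diffeomorphism, so every $g \in G$ has unique Iwasawa coordinates. Any $h \in H$ can be written as $h = an$ with $a \in \wt{A} \subseteq A$ and $n \in \wt{N} \subseteq N$; by uniqueness, if $h \in A$ then $n = e$ and thus $h \in \wt{A}$, giving $H \cap A = \wt{A}$, and symmetrically $H \cap N = \wt{N}$. Hence $H = \wt{A}\wt{N} = (H \cap A)(H \cap N)$ with trivial intersection, and since $H \cap A = \wt{A}$ normalizes $H \cap N = \wt{N}$ as noted above, we obtain the desired semidirect product structure. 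The genuinely nontrivial content of the proposition, namely the compatible choice of positivity ensuring $\wt{\mk{n}} \subseteq \mk{n}$, was already the main obstacle and has been overcome in the preceding discussion; the remainder is a routine assembly of the pieces.
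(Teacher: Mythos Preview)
Your proposal is correct and follows essentially the same approach as the paper: the paper's proof \emph{is} the discussion preceding the proposition, culminating in the subgroup $\wt{A}\wt{N} = \wt{A}_{nc}\wt{A}_0\wt{N} \subseteq AN$ whose $o$-orbit is $S$, and you have assembled those pieces just as intended. Your explicit verification of the semidirect decomposition $H = (H \cap A) \ltimes (H \cap N)$ via uniqueness of Iwasawa coordinates is a nice addition that the paper leaves implicit.
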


We are now in a position to prove a result similar to Proposition \ref{prop:t.g._from_AN} but pertaining to singular orbits of C1-actions. We will formulate it in a slightly more general setting to include actions without singular orbits.

\begin{prop}\label{prop:singular_orbit_from_AN}
    Let $M = G/K$ be a symmetric space of noncompact type and $H$ a connected Lie group acting on $M$ properly, isometrically, and with cohomogeneity one. Then there exist an Iwasawa decomposition $G = KAN$ and a connected Lie subgroup $H' \subset AN$ of the form $H' = (H' \cap A) \ltimes (H' \cap N)$ such that the following is true:
    \begin{enumerate}[\normalfont (a)]
        \item\customlabel{prop:singular_orbit_from_AN:a}{a} If $H$ has no singular orbits, then its orbit foliation coincides with that of $H'$.
        \item\customlabel{prop:singular_orbit_from_AN:b}{b} If $H$ has a singular orbit, then that orbit coincides with the orbit of $H'$ through the base point $o$.
    \end{enumerate}
\end{prop}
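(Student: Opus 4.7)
I would prove this by invoking the classification in Theorem~\ref{thm:classification_of_c1_actions} and treating its five types of C1-actions in turn. If $H$ is orbit-equivalent via some $\upvarphi \in I(M)$ to a model action $H_0$, then $H$ has the same orbits as $\upvarphi H_0 \upvarphi^{-1} \subseteq G$, and $\upvarphi$ carries any Iwasawa decomposition of $G$ to another one; hence it suffices to produce $H'$ and the Iwasawa decomposition separately for each of the five models.

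For types~\eqref{thm:classification_of_c1_actions:a} and~\eqref{thm:classification_of_c1_actions:b}---the foliation case~\eqref{prop:singular_orbit_from_AN:a}---the choices $H' = H_\ell$ or $H' = H_{\upalpha_i}$ already work: $\mk{h}_\ell = (\mk{a} \ominus \ell) \oplus \mk{n}$ has the stated form directly, and $\mk{n} \ominus \ell_{\upalpha_i}$ is a subalgebra of $\mk{n}$---since $[\mk{n},\mk{n}]$ is supported on roots of height $\ge 2$ and hence misses $\mk{g}_{\upalpha_i}$---that is normalized by $\mk{a}$, so that $\mk{h}_{\upalpha_i} = \mk{a} \ltimes (\mk{n} \ominus \ell_{\upalpha_i})$.

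For types~\eqref{thm:classification_of_c1_actions:c} and~\eqref{thm:classification_of_c1_actions:d}, the action is the canonical extension of an action on a boundary component $B_\Upphi$ with totally geodesic singular orbit $S_\Upphi$, and the singular orbit in $M$ is $S_\Upphi \times A_\Upphi \times N_\Upphi$ in horospherical coordinates. I would apply Proposition~\ref{prop:t.g._from_AN} to $S_\Upphi$ inside the symmetric space $B_\Upphi$ to obtain an Iwasawa decomposition $\mk{g}'_\Upphi = \tilde{\mk{k}}^\Upphi \oplus \tilde{\mk{a}}^\Upphi \oplus \tilde{\mk{n}}^\Upphi$ and a subgroup $\tilde{H} \subseteq \tilde{A}^\Upphi \tilde{N}^\Upphi$ of the required semidirect form with $\tilde{H} \cdot o = S_\Upphi$. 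Since $\tilde{\mk{a}}^\Upphi \subseteq \mk{b}_\Upphi$ is orthogonal to $\mk{a}_\Upphi$, $\tilde{\mk{a}} = \tilde{\mk{a}}^\Upphi \oplus \mk{a}_\Upphi$ is maximal abelian in $\mk{p}$; a two-stage defining vector analogous to the one used in the proof of Proposition~\ref{prop:t.g._from_AN}---namely $H = \upvarepsilon H'' + H^\Upphi$ with $H'' \in \tilde{\mk{a}}^\Upphi$ generic, $H^\Upphi \in \mk{a}_\Upphi$ picking out $\mk{n}_\Upphi$, and $\upvarepsilon > 0$ small---yields an Iwasawa decomposition of $G$ with $\tilde{\mk{n}} = \tilde{\mk{n}}^\Upphi \oplus \mk{n}_\Upphi$. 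Then $H' = \tilde{H} \cdot A_\Upphi \cdot N_\Upphi$ is a closed subgroup of $\tilde{A}\tilde{N}$ of the required semidirect form (using that $A_\Upphi \subseteq Z(L_\Upphi)$ and $L_\Upphi$ normalizes $N_\Upphi$), and a direct computation with~\eqref{horospherical_decomposition_action} verifies $H' \cdot o = S_\Upphi \times A_\Upphi \times N_\Upphi$.

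For type~\eqref{thm:classification_of_c1_actions:e}, the singular orbit is $F_j \times N_{j, \mk{w}}$ by Theorem~\ref{thm:berndt_tamaru_NC}. By Theorem~\ref{thm:NC_simplification}, after replacing $\mk{w}$ with a $G'_j$-conjugate I may assume $\mk{w} = \bigoplus_{\uplambda \in \Updelta_j^1} \mk{w}_\uplambda$ and that $\mk{a}^j \oplus \mk{n}^j$ normalizes $\mk{w}$ and hence $\mk{n}_{j, \mk{w}}$; consequently $H' = A \ltimes (N^j \cdot N_{j, \mk{w}})$ is a closed subgroup of $AN$ of the required form. Using $B_j = A^j N^j \cdot o$, the centrality of $A_j$ in $L_j$, and the $A_j$-invariance of $\mk{n}_{j, \mk{w}}$ (inherited from the root-space splitting of $\mk{w}$), a short calculation with~\eqref{horospherical_decomposition_action} shows $H' \cdot o = F_j \times N_{j, \mk{w}}$. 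The main obstacle of the whole argument lies in cases~\eqref{thm:classification_of_c1_actions:c}--\eqref{thm:classification_of_c1_actions:d}, where one must carefully align the Iwasawa decomposition of $G'_\Upphi$ provided by Proposition~\ref{prop:t.g._from_AN} with the parabolic data $(\mk{a}_\Upphi, \mk{n}_\Upphi)$ of $M$ so as to produce a single ambient Iwasawa decomposition encompassing both.
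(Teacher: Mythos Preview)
Your proposal is correct and follows essentially the same strategy as the paper: invoke the classification Theorem~\ref{thm:classification_of_c1_actions}, handle the foliation cases directly, the nilpotent construction via Theorem~\ref{thm:NC_simplification}, and the canonical-extension cases by applying Proposition~\ref{prop:t.g._from_AN} to the totally geodesic inner orbit and then extending the resulting Iwasawa decomposition of $\mk{g}'_\Upphi$ to one of $\mk{g}$. The only substantive organizational difference is in that last extension step: you build the ambient positive system via a two-stage defining vector $H = \upvarepsilon H'' + H^\Upphi$, whereas the paper observes that any two Iwasawa decompositions of $G'_\Upphi$ are inner-conjugate, and that conjugation by elements of $G'_\Upphi$ (which centralize $A_\Upphi$ and normalize $N_\Upphi$) carries the standard ambient decomposition $G = KAN$ to one with solvable part $\widecheck{A}A_\Upphi \cdot \widecheck{N}N_\Upphi$. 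Both arguments are valid.

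There is one small gap you should make explicit. When you write ``$\tilde{\mk{a}}^\Upphi \subseteq \mk{b}_\Upphi$'', you are assuming that the base point coming out of Proposition~\ref{prop:t.g._from_AN} (applied inside $B_\Upphi$) coincides with the original $o$; that proposition only guarantees the new base point lies on $S_\Upphi$. So you need $o \in S_\Upphi$, which is \emph{not} automatic from the canonical-extension model---the inner singular orbit need not pass through $o$. This is easily arranged by first conjugating $H_\Upphi$ by an element of $G'_\Upphi$ carrying some point of $S_\Upphi$ to $o$; the paper makes precisely this adjustment explicitly before aligning the decompositions. Once $o \in S_\Upphi$ is secured, your defining-vector argument goes through without further issue.
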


\begin{proof}
    Part \eqref{prop:singular_orbit_from_AN:a} follows directly from \cite{berndt_tamaru_foliations}. Suppose $H$ has a singular orbit $S$. (As we know from Subsection \ref{sec:c1-actions:types}, it must then be unique.) If $S$ is totally geodesic, the assertion follows immediately from Proposition \ref{prop:t.g._from_AN}. Otherwise, in view of Theorem \ref{thm:classification_of_c1_actions}, the action of $H$ has to come from either the canonical extension or the nilpotent construction up to orbit-equivalence. First, consider the latter. It follows from Theorem \ref{thm:NC_simplification} that there exist nilpotent construction data $(o,\mk{a}, \Upsigma^+, \upalpha_j, \mk{w})$ with $A^j N^j \subseteq N^0_{M_j}(\mk{w})$ such that the orbits of $H$ coincide with those of $H_{j,\mk{w}} = (N^0_{M_j}(\mk{w}) \times A_j) \ltimes N_{j,\mk{w}}$. Note that the orbit of $A^j N^j$ through $o$ is the same as that of $N^0_{M_j}(\mk{w})$---i.e., the whole boundary component $B_j$. But then we can consider the Lie subgroup $H' = (A^j N^j \times A_j) \ltimes N_{j,\mk{w}} = A \ltimes (N^j \ltimes N_{j,\mk{w}}) \subseteq H_{j,\mk{w}}$. With respect to the horospherical decomposition, the orbit of $H'$ through $o$ is given by $B_j \times A_j \times N_{j,\mk{w}}$, which is nothing but the $o$-orbit of $H_{j,\mk{w}}$ and thus the singular orbit of $H$. Plainly, $H'$ satisfies the desired requirements.

    Finally, assume that the action of $H$ arises from the canonical extension. This means that there exist a choice of $o \in M, \, \mk{a} \subset \mk{p}, \, \Upsigma^+ \subset \Upsigma$, and $\Upphi \subset \Uplambda$ and a closed connected subgroup $L \subset G'_\Upphi$ such that the canonical extension of $L \curvearrowright B_\Upphi$ has the same orbits in $M$ as $H$. (We are assuming that the singular orbit of $H$ passes through $o$.) The action of $L$ on $B_\Upphi$ also has cohomogeneity one, and with respect to the horospherical decomposition, the singular orbit of $H$ is given by $(L \cdot o) \times A_\Upphi \times N_\Upphi$. Since the canonical extension procedure is transitive, we may assume the action $L \curvearrowright B_\Upphi$ does not come from any further canonical extension from a proper boundary component of $B_\Upphi$. Thanks to Theorem \ref{thm:classification_of_c1_actions}, this means that this action must either come from the nilpotent construction or have a totally geodesic singular orbit. Either way, we already know by now that it must then satisfy the assertion of the proposition: there exists an Iwasawa decomposition $G'_\Upphi = \widecheck{K} \widecheck{A} \widecheck{N}$ (this entails a new base point $p \in B_\Upphi$) and a connected Lie subgroup $L' \subset \widecheck{A} \widecheck{N}$ of the form $L' = (L' \cap \widecheck{A}) \ltimes (L' \cap \widecheck{N})$ such that the singular orbit $L \cdot o$ of $L$ in $B_\Upphi$ coincides with $L' \cdot p$. This last condition implies that $p$ lies on $L \cdot o$, hence there exists an element $\upvarphi \in G'_\Upphi$ that preserves $L \cdot o$ and maps $o$ to $p$ ($\upvarphi$ can be taken to lie in $L$ or $L'$). By conjugating the new Iwasawa decomposition of $G'_\Upphi$ by $\upvarphi$, we can additionally assume that $p = o$. (We keep the rest of the notation the same.) In particular, we have $\widecheck{K} = K^\Upphi$, as both are now the stabilizers of $o$ in $G'_\Upphi$.
    
    We claim that $G = K(\widecheck{A}A_\Upphi)(\widecheck{N}N_\Upphi)$ is also an Iwasawa decomposition. Indeed, since every two Iwasawa decompositions of $G'_\Upphi$ are inner-conjugate, there exists an element $\uppsi \in G'_\Upphi$ that conjugates $K^\Upphi, A^\Upphi,$ and $N^\Upphi$ to $\widecheck{K}, \widecheck{A},$ and $ \widecheck{N}$, respectively. But that element also conjugates the ambient decomposition $G = KAN$ to some new Iwasawa decomposition of $G$. Since $G'_\Upphi$ centralizes $A_\Upphi$ and normalizes $N_\Upphi$, we easily compute:
    $$
    \uppsi A \uppsi^{-1} =  \uppsi A^\Upphi A_\Upphi \uppsi^{-1} =  \uppsi A^\Upphi \uppsi^{-1} A_\Upphi = \widecheck{A} A_\Upphi,
    $$
    and the same for $N = N^\Upphi N_\Upphi$. Finally, note that $\uppsi$ conjugates $K^\Upphi = \widecheck{K}$ to itself, which means that it preserves $o$ and thus $\uppsi K \uppsi^{-1} = K$. This proves our claim. Now, $G = K(\widecheck{A}A_\Upphi)(\widecheck{N}N_\Upphi)$ is going to be our desired Iwasawa decomposition. As for the group $H'$, we take it to be
    $$
    H' = [(L' \cap \widecheck{A}) \times A_\Upphi] \ltimes [(L' \cap \widecheck{N}) \ltimes N_\Upphi].
    $$
    We need to show that $H' \cdot o = H \cdot o = (L \cdot o) \times A_\Upphi \times N_\Upphi$, where the latter equality is taken with respect to the horospherical decomposition $M \cong B_\Upphi \times A_\Upphi \times N_\Upphi$. Note that with respect to the Langlands decomposition $Q_\Upphi = M_\Upphi \times A_\Upphi \ltimes N_\Upphi$, we can rewrite $H'$ simply as $L' \times A_\Upphi \ltimes N_\Upphi$. But thanks to \eqref{horospherical_decomposition_action}, the orbit of this through $o$ is $(L' \cdot o) \times A_\Upphi \times N_\Upphi$. By construction, $L' \cdot o = L \cdot o$. This completes our proof.
\end{proof}

When combined, Propositions \ref{prop:t.g._from_AN} and \ref{prop:singular_orbit_from_AN} imply Theorem \ref{thm:C}. We close this section with an open question. Note that submanifolds of both types \eqref{thm:C:a} and \eqref{thm:C:b} in Theorem \ref{thm:C} are examples of homogeneous CPC submanifolds. (Here by a \textit{homogeneous submanifold} we mean an orbit of an isometric Lie group action.) This class is known to contain many other types of submanifolds (see \cite{berndt_SL_CPC}), but all of them share one thing in common: they can be realized by a Lie subgroup in the solvable model of $M$. This leads to a natural question:

\begin{openq}
    Let $M = G/K$ be a symmetric space of noncompact type. Can every connected homogeneous CPC submanifold $S \subset M$ be realized as $H \cdot o$ for some Iwasawa decomposition $G = KAN$ and connected Lie subgroup $H \subseteq AN$?
\end{openq}

To our knowledge, there are even no known examples of homogeneous \textit{minimal} submanifolds that cannot be realized in this way.

\section{Solving the nilpotent construction problem}\label{sec:solving_the_NC}
In this final section, we fully resolve the nilpotent construction problem for all symmetric spaces of noncompact type and rank greater than one. In Section \ref{sec:admissibility}, we squeezed everything we could out of the admissibility condition---which culminated in Theorem \ref{thm:NC_simplification}. So in this section, our focus will be on the remaining protohomogeneity condition. Using the conclusions of Theorem \ref{thm:NC_simplification}, we are going to investigate this condition closely within the framework of the restricted root space decomposition and obtain strong restrictions on the subspace $\mk{w}$, the root system $\Upsigma$, and the root multiplicities. Ultimately, this will allow us to determine all possible choices of $\mk{w}$ for all the aforementioned spaces. We will conclude the section by proving the \hyperlink{thm:main}{Main theorem}.

From now on and until the end of the section, we fix a symmetric space $M = G/K$ of noncompact type and let $(o,\mk{a}, \Upsigma^+, \upalpha_j, \mk{w})$ be some \textit{positive} nilpotent construction data (unless otherwise mentioned). We stress that most of the results of this section do not apply to nonpositive nilpotent construction data. We assume that the metric on $M$ is Killing (Remark \ref{rem:choice_of_metric}). Since the nilpotent construction problem has been fully resolved in rank one, we also assume that $\rk(M) > 1$. Finally, we assume that $M$ is irreducible, which is justified by the comment right after Theorem \ref{thm:classification_of_c1_actions}. By virtue of Theorem \ref{thm:NC_simplification}, the subspace $\mk{w}$ splits as $\mk{w} = \bigoplus_{\uplambda \in \Updelta_j^1} \mk{w}_\uplambda$, and hence so does the normal space $\mk{v} = \mk{n}_j^1 \ominus \mk{w} = \bigoplus_{\uplambda \in \Updelta_j^1} \mk{v}_\uplambda$. (Recall that we denote $\mk{w}_\uplambda = \mk{w} \cap \mk{g}_\uplambda$ and $\mk{v}_\uplambda = \mk{v} \cap \mk{g}_\uplambda$.) Moreover, the normalizer $N_{\mk{m}_j}(\mk{w})$ contains $\mk{a}^j \oplus \mk{n}^j$. To start off, we show that this normalizer also decomposes nicely with respect to the restricted root space decomposition.

\begin{lem}\label{lem:normalizer_RRSD}
    We have:\vspace{-2.5ex}
    
    \begin{equation*}
        N_{\mk{m}_j}(\mk{w}) = N_{\mk{k}_0}(\mk{w}) \oplus \mk{a}^j \oplus \mk{n}^j \oplus \bigoplus_{\upalpha \in \Upsigma_j^-} N_{\mk{g}_\upalpha}(\mk{w}) \quad \text{and} \quad N_{\mk{k}_0}(\mk{w}) = \bigcap_{\uplambda \in \Updelta_j^1} N_{\mk{k}_0}(\mk{w}_\uplambda).
    \end{equation*}
\end{lem}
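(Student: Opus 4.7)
The plan is to exploit the fact that $N_{\mk{m}_j}(\mk{w})$ is a Lie subalgebra of $\mk{m}_j$ containing $\mk{a}^j$---the inclusion comes from Theorem \ref{thm:NC_simplification} and the positivity assumption---so that it is automatically stable under $\ad(\mk{a}^j)$. The key observation is that $\mk{a}^j$ acts semisimply on $\mk{m}_j$, with weight-space decomposition
\[
\mk{m}_j = (\mk{k}_0 \oplus \mk{a}^j) \oplus \bigoplus_{\upalpha \in \Upsigma_j} \mk{g}_\upalpha,
\]
since $\mk{k}_0 = Z_\mk{k}(\mk{a})$ commutes with $\mk{a}^j$, and the restrictions of the roots $\upalpha \in \Upsigma_j$ to $\mk{a}^j$ are pairwise distinct and nonzero---indeed, $\Upsigma_j$ is nothing but the root system of $\mk{g}'_j$ with respect to the maximal abelian subspace $\mk{a}^j$ of its noncompact part.

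From this, every $\ad(\mk{a}^j)$-invariant subspace of $\mk{m}_j$ would split as the direct sum of its intersections with each weight space. Applied to $N_{\mk{m}_j}(\mk{w})$, this gives
\[
N_{\mk{m}_j}(\mk{w}) = N_{\mk{k}_0 \oplus \mk{a}^j}(\mk{w}) \oplus \bigoplus_{\upalpha \in \Upsigma_j} N_{\mk{g}_\upalpha}(\mk{w}).
\]
Since $\mk{a}^j \subseteq N_{\mk{m}_j}(\mk{w})$, the zero-weight summand simplifies to $N_{\mk{k}_0}(\mk{w}) \oplus \mk{a}^j$; and the inclusion $\mk{n}^j = \bigoplus_{\upalpha \in \Upsigma_j^+} \mk{g}_\upalpha \subseteq N_{\mk{m}_j}(\mk{w})$ forces $N_{\mk{g}_\upalpha}(\mk{w}) = \mk{g}_\upalpha$ for every $\upalpha \in \Upsigma_j^+$, collapsing the positive part of the sum to $\mk{n}^j$. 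The summands with $\upalpha \in \Upsigma_j^-$ remain as they are, which yields the first claimed identity.

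For the second identity, I would use that every $T \in \mk{k}_0$ commutes with $\mk{a}$, so $\ad(T)$ preserves each root space $\mk{g}_\uplambda$. Combined with the decomposition $\mk{w} = \bigoplus_{\uplambda \in \Updelta_j^1} \mk{w}_\uplambda$ from Theorem \ref{thm:NC_simplification} and the tautology $\mk{w}_\uplambda = \mk{w} \cap \mk{g}_\uplambda$, the condition $[T,\mk{w}] \subseteq \mk{w}$ reduces to $[T,\mk{w}_\uplambda] \subseteq \mk{w} \cap \mk{g}_\uplambda = \mk{w}_\uplambda$ for every $\uplambda \in \Updelta_j^1$, which is exactly the desired intersection formula.

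Neither step presents a serious obstacle; the one point requiring a brief verification is that the $\ad(\mk{a}^j)$-weights occurring in $\mk{m}_j$ are pairwise distinct (so the decomposition of an invariant subspace into weight-space intersections really is an internal direct sum), which amounts to recalling that $\Upsigma_j \subset (\mk{a}^j)^*$ is a genuine root system.
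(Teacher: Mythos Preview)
Your proof is correct. The paper actually acknowledges that the lemma ``can be proven in a fashion similar to how we proved Lemma~\ref{lem:NC_containing_a_vs_RRSD_adjusted}''---which is precisely your weight-space argument---but then opts for a slightly different hands-on computation: it takes $X = X_0 + H + \sum_{\upalpha \in \Upsigma_j} X_\upalpha \in N_{\mk{m}_j}(\mk{w})$, subtracts off the $\mk{a}^j \oplus \mk{n}^j$-part, and for each remaining $X_\upalpha$ with $\upalpha \in \Upsigma_j^-$ checks directly that $[X_\upalpha, \mk{w}_\uplambda] \subseteq \mk{w}_{\uplambda+\upalpha}$ by reading off the $\mk{g}_{\uplambda+\upalpha}$-component of $[X,Y]$ for $Y \in \mk{w}_\uplambda$. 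Your route is cleaner in that it packages this root-by-root bookkeeping into the single observation that a subalgebra containing $\mk{a}^j$ is $\ad(\mk{a}^j)$-stable and hence splits along the weight spaces; the paper's route has the minor advantage of not needing to verify that the $\ad(\mk{a}^j)$-weights on $\mk{m}_j$ are pairwise distinct (your remark that $\Upsigma_j \subset (\mk{a}^j)^*$ handles this, since every $\upalpha \in \Upsigma_j$ vanishes on $\mk{a}_j$). For the second identity both arguments are identical.
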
\vspace{-2.5ex}

\begin{proof}
    This lemma can be proven in a fashion similar to how we proved Lemma \ref{lem:NC_containing_a_vs_RRSD_adjusted}. However, we will take a slightly different approach. Take any vector $X = X_0 + H + \sum_{\upalpha \in \Upsigma_j} X_\upalpha$ in $N_{\mk{m}_j}(\mk{w})$, where $X_0 \in \mk{k}_0, \, H \in \mk{a}^j,$ and $X_\upalpha \in \mk{g}_\upalpha$. We want to show that each of these summands lies in the normalizer by itself. Since we already know that $\mk{a}^j \oplus \mk{n}^j \subseteq N_{\mk{m}_j}(\mk{w})$, we may assume that $H = 0$ and $X_\upalpha = 0$ whenever $\upalpha$ is positive. Now take $\upalpha \in \Upsigma_j^-$. In order to normalize $\mk{w}$, $X_\upalpha$ should send $\mk{w}_\uplambda$ to $\mk{w}$ for each $\uplambda \in \Updelta_j^1$. Take any such $\uplambda$ and fix an arbitrary $Y \in \mk{w}_\uplambda$. We have $[X,Y] = [X_0,Y] + \sum_{\upalpha \in \Upsigma_j^-} [X_\upalpha,Y]$. Each summand on the right-hand side lies in a single root space. What is more, the only summand that lies in $\mk{g}_{\uplambda+\upalpha}$ and is potentially nonzero is $[X_\upalpha,Y]$. Since $[X,Y] \in \mk{w}$, we see that $[X_\upalpha,Y]$ lies in $\mk{w}_{\uplambda+\upalpha}$. This shows that $X_\upalpha$ sends $\mk{w}_\uplambda$ to $\mk{w}$ and hence $X_\upalpha \in N_{\mk{g}_\upalpha}(\mk{w}) = N_{\mk{m}_j}(\mk{w}) \cap \mk{g}_\upalpha$. As this applies to every $\upalpha \in \Upsigma_j^-$, we are left with $X_0 \in N_{\mk{k}_0}(\mk{w})$. The second equality in the lemma is straightforward, as $\mk{k}_0$ preserves every root space.
\end{proof}

There are three normalizers at play in the nilpotent construction: $N_{\mk{m}_j}(\mk{w}), N_{\mk{m}_j}(\mk{v}),$ and $N_{\mk{k}_j}(\mk{v})$. Our next goal is to relate them with each other.

\begin{lem}\label{lem:three_normalizers}
    The following relations hold:
    \begin{enumerate}[\normalfont (a)]
        \item\customlabel{lem:three_normalizers:a}{a} $N_{\mk{m}_j}(\mk{v}) = \uptheta N_{\mk{m}_j}(\mk{w})$.
        \item\customlabel{lem:three_normalizers:b}{b} For any $\upalpha \in \Upsigma_j^+$, $N_{\mk{g}_\upalpha}(\mk{v}) = \uptheta N_{\mk{g}_{-\upalpha}}(\mk{w})$. We denote this subspace of $\mk{g}_\upalpha$ by $\boldsymbol{\mk{g}'_\upalpha}$.
        \item\customlabel{lem:three_normalizers:c}{c} For any $\upbeta \in \Upsigma_j^-$, $N_{\mk{g}_\upbeta}(\mk{w}) = \uptheta N_{\mk{g}_{-\upbeta}}(\mk{v})$. We denote this subspace of $\mk{g}_\upbeta$ by $\boldsymbol{\mk{g}'_\upbeta}$.
        \item\customlabel{lem:three_normalizers:d}{d} For any $\upalpha \in \Upsigma_j$, $N_{\mk{k}_\upalpha}(\mk{v}) = N_{\mk{k}_\upalpha}(\mk{w}) = \pr_{\mk{k}_\upalpha}(\mk{g}'_\upalpha) = \pr_{\mk{k}_\upalpha}(\mk{g}'_{-\upalpha})$. We denote this subspace of $\mk{k}_\upalpha$ by $\boldsymbol{\mk{k}'_\upalpha}$.
    \end{enumerate}
\end{lem}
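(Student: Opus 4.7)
The plan is to deduce all four parts from \cref{lem:NC_normalizers_vs_Uptheta,lem:normalizer_RRSD}, with the key structural input being that positivity of the nilpotent construction data forces $N_{\mk{g}_\upalpha}(\mk{w}) = \mk{g}_\upalpha$ for every $\upalpha \in \Upsigma_j^+$, since $\mk{g}_\upalpha \subseteq \mk{n}^j \subseteq N_{\mk{m}_j}(\mk{w})$ outright. With this in hand, part \eqref{lem:three_normalizers:a} is just \cref{lem:NC_normalizers_vs_Uptheta}\eqref{lem:NC_normalizers_vs_Uptheta:b} specialized to $\Upphi = \Uplambda_j$, so nothing new is required there.

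For \eqref{lem:three_normalizers:b}, I would intersect both sides of \eqref{lem:three_normalizers:a} with $\mk{g}_\upalpha$: for any $X \in \mk{g}_\upalpha$,
$$
X \in N_{\mk{g}_\upalpha}(\mk{v}) \iff X \in \uptheta N_{\mk{m}_j}(\mk{w}) \iff \uptheta X \in N_{\mk{g}_{-\upalpha}}(\mk{w}) \iff X \in \uptheta N_{\mk{g}_{-\upalpha}}(\mk{w}),
$$
using that $\uptheta$ swaps $\mk{g}_\upalpha \leftrightarrow \mk{g}_{-\upalpha}$. Part \eqref{lem:three_normalizers:c} follows by applying $\uptheta$ to \eqref{lem:three_normalizers:b}.

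For \eqref{lem:three_normalizers:d}, the equality $N_{\mk{k}_\upalpha}(\mk{v}) = N_{\mk{k}_\upalpha}(\mk{w})$ follows from $\mk{k}_\upalpha \subseteq \mk{k}_j$ together with \cref{lem:NC_normalizers_vs_Uptheta}\eqref{lem:NC_normalizers_vs_Uptheta:c}. To match the latter with $\pr_{\mk{k}_\upalpha}(\mk{g}'_\upalpha)$, I parametrize $\mk{k}_\upalpha = \{X + \uptheta X : X \in \mk{g}_\upalpha\}$, so that $\pr_{\mk{k}_\upalpha}(X) = \tfrac{1}{2}(X + \uptheta X)$. An element $X + \uptheta X$ lies in $N_{\mk{m}_j}(\mk{w})$ iff, by \cref{lem:normalizer_RRSD}, both $X \in N_{\mk{g}_\upalpha}(\mk{w})$ and $\uptheta X \in N_{\mk{g}_{-\upalpha}}(\mk{w}) = \mk{g}'_{-\upalpha}$: the first condition is automatic by positivity, and the second is equivalent, via \eqref{lem:three_normalizers:b}, to $X \in \mk{g}'_\upalpha$. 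The identity $\pr_{\mk{k}_\upalpha}(\mk{g}'_\upalpha) = \pr_{\mk{k}_\upalpha}(\mk{g}'_{-\upalpha})$ then follows from $\mk{g}'_{-\upalpha} = \uptheta \mk{g}'_\upalpha$ together with $\pr_{\mk{k}_\upalpha} \circ \uptheta = \pr_{\mk{k}_\upalpha}$ --- the projection is $\uptheta$-invariant because $\mk{k}_\upalpha$ is pointwise fixed by $\uptheta$ while $\uptheta$ preserves $\mk{g}_\upalpha \oplus \mk{g}_{-\upalpha}$.

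I do not anticipate a serious obstacle: the entire lemma is essentially bookkeeping pivoting on the single observation that positivity collapses $N_{\mk{g}_\upalpha}(\mk{w})$ to $\mk{g}_\upalpha$ for $\upalpha \in \Upsigma_j^+$, so that the $\uptheta$-dual description $\mk{g}'_\upalpha = \uptheta N_{\mk{g}_{-\upalpha}}(\mk{w})$ carries all the nontrivial content. The most delicate point is recognizing the symmetric projection descriptions in \eqref{lem:three_normalizers:d}, but this is immediate once one notices the $\uptheta$-invariance of $\pr_{\mk{k}_\upalpha}$.
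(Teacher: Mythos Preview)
Your proof is correct and follows essentially the same approach as the paper: both derive \eqref{lem:three_normalizers:a}--\eqref{lem:three_normalizers:c} from \cref{lem:NC_normalizers_vs_Uptheta} by intersecting with root spaces, and for \eqref{lem:three_normalizers:d} both use \cref{lem:normalizer_RRSD} together with the positivity observation $N_{\mk{g}_\upalpha}(\mk{w}) = \mk{g}_\upalpha$ for $\upalpha \in \Upsigma_j^+$ to identify $N_{\mk{k}_\upalpha}(\mk{w})$ with the $\mk{k}_\upalpha$-projection of $\mk{g}'_\upalpha$. The only cosmetic difference is that the paper writes the middle equality of \eqref{lem:three_normalizers:d} as the chain $(N_{\mk{g}_\upalpha}(\mk{w}) \oplus N_{\mk{g}_{-\upalpha}}(\mk{w})) \cap \mk{k} = (\mk{g}_\upalpha \oplus \mk{g}'_{-\upalpha}) \cap \mk{k} = (\mk{g}'_\upalpha \oplus \mk{g}'_{-\upalpha}) \cap \mk{k}$, whereas you parametrize $\mk{k}_\upalpha$ by $X + \uptheta X$ and argue componentwise.
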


The projections in part \eqref{lem:three_normalizers:d} are assumed to be---as always---orthogonal with respect to $B_\uptheta$.

\textit{Proof of Lemma {\normalfont \ref{lem:three_normalizers}}.}
    Part \eqref{lem:three_normalizers:a} is a special case of Lemma \ref{lem:NC_normalizers_vs_Uptheta}\eqref{lem:NC_normalizers_vs_Uptheta:b}. Part \eqref{lem:three_normalizers:b} follows from \eqref{lem:three_normalizers:a} because $N_{\mk{g}_\upalpha}(\mk{v}) = N_{\mk{m}_j}(\mk{v}) \hspace{1pt} \cap \hspace{1pt} \mk{g}_\upalpha$ and $\uptheta$ interchanges $\mk{g}_\upalpha$ and $\mk{g}_{-\upalpha}$. Part \eqref{lem:three_normalizers:c} follows from \eqref{lem:three_normalizers:b}, since $\uptheta$ is an involution. Note that, by definition, $\uptheta$ interchanges $\mk{g}'_\upalpha$ and $\mk{g}'_{-\upalpha}$ for any $\upalpha \in \Upsigma_j$. This immediately implies the last equality in \eqref{lem:three_normalizers:d}. The first equality therein follows from Lemma~\ref{lem:NC_normalizers_vs_Uptheta}\eqref{lem:NC_normalizers_vs_Uptheta:c}. Finally, assuming $\upalpha \in \Upsigma_j^+$ without loss of generality, we calculate:
    \begin{align*}\pushQED{\qed}
    N_{\mk{k}_\upalpha}(\mk{w}) &= (N_{\mk{g}_\upalpha}(\mk{w}) \oplus N_{\mk{g}_{-\upalpha}}(\mk{w})) \cap \mk{k} \\
    &= (\mk{g}_\upalpha \oplus \mk{g}'_{-\upalpha}) \cap \mk{k} && (\text{Lemma \ref{lem:normalizer_RRSD}}) \\
    &= (\mk{g}'_\upalpha \oplus \mk{g}'_{-\upalpha}) \cap \mk{k} = \pr_{\mk{k}_\upalpha}(\mk{g}'_\upalpha) && (\uptheta \mk{g}'_\upalpha = \mk{g}'_{-\upalpha}). \\[-0.82\baselineskip] & && \qedhere \popQED
    \end{align*}

Combining Lemmas \ref{lem:normalizer_RRSD} and \ref{lem:three_normalizers} yields the following neat description of the three normalizers:

\begin{cor}\label{cor:three_normalizers_RRSD}
    We have:
    \begin{enumerate}[\normalfont (a)]
        \item\customlabel{cor:three_normalizers_RRSD:a}{a} $N_{\mk{m}_j}(\mk{w}) = N_{\mk{k}_0}(\mk{w}) \oplus \mk{a}^j \oplus \mk{n}^j \oplus \bigoplus_{\upalpha \in \Upsigma_j^-} \mk{g}'_\upalpha$.
        \item\customlabel{cor:three_normalizers_RRSD:b}{b} $N_{\mk{m}_j}(\mk{v}) = N_{\mk{k}_0}(\mk{v}) \oplus \mk{a}^j \oplus \bigoplus_{\upalpha \in \Upsigma_j^+} \mk{g}'_\upalpha \oplus \uptheta\mk{n}^j$.
        \item\customlabel{cor:three_normalizers_RRSD:c}{c} $N_{\mk{k}_j}(\mk{v}) = N_{\mk{k}_j}(\mk{w}) = N_{\mk{k}_0}(\mk{v}) \oplus \bigoplus_{\upalpha \in \Upsigma_j^+} \mk{k}'_\upalpha$.
        \item\customlabel{cor:three_normalizers_RRSD:d}{d} $N_{\mk{k}_0}(\mk{v}) = N_{\mk{k}_0}(\mk{w}) = \bigcap_{\hspace{1pt} \uplambda \in \Updelta_j^1} N_{\mk{k}_0}(\mk{v}_\uplambda) = \bigcap_{\hspace{1pt} \uplambda \in \Updelta_j^1} N_{\mk{k}_0}(\mk{w}_\uplambda)$.
        \end{enumerate}
\end{cor}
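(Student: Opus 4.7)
The plan is to derive all four parts directly from the two preceding lemmas, with part (d) serving as a small auxiliary fact needed for (b) and (c). Since each statement just repackages information already present in Lemmas \ref{lem:normalizer_RRSD} and \ref{lem:three_normalizers}, no new idea is required---the whole task is bookkeeping of direct sums and careful tracking of signs and indices under $\uptheta$.

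First I would handle (d). By Theorem \ref{thm:NC_simplification}, $\mk{w} = \bigoplus_{\uplambda \in \Updelta_j^1} \mk{w}_\uplambda$, and correspondingly $\mk{v} = \bigoplus_{\uplambda \in \Updelta_j^1} \mk{v}_\uplambda$ with $\mk{v}_\uplambda = \mk{g}_\uplambda \ominus \mk{w}_\uplambda$. Since $\ad(\mk{k}_0)$ preserves each root space $\mk{g}_\uplambda$ and is skew-adjoint with respect to $B_\uptheta$ (because $\mk{k}_0 \subset \mk{k}$ and $B_\uptheta$ is $\Ad(K)$-invariant), an element of $\mk{k}_0$ normalizes $\mk{w}$ iff it normalizes every $\mk{w}_\uplambda$ iff it normalizes every $\mk{v}_\uplambda$ iff it normalizes $\mk{v}$. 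Next, (a) is simply a rewriting of Lemma \ref{lem:normalizer_RRSD}: for $\upalpha \in \Upsigma_j^-$, putting $\upbeta = -\upalpha \in \Upsigma_j^+$ gives $N_{\mk{g}_\upalpha}(\mk{w}) = N_{\mk{g}_{-\upbeta}}(\mk{w}) = \mk{g}'_{-\upbeta} = \mk{g}'_\upalpha$ by Lemma \ref{lem:three_normalizers}(c).

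For (b), I would apply $\uptheta$ to both sides of (a) and invoke Lemma \ref{lem:three_normalizers}(a). Term by term, $\uptheta N_{\mk{k}_0}(\mk{w}) = N_{\mk{k}_0}(\mk{w}) = N_{\mk{k}_0}(\mk{v})$ since $\uptheta$ fixes $\mk{k}_0$ pointwise and by (d); $\uptheta \mk{a}^j = \mk{a}^j$ as a subspace (as $\mk{a}^j \subset \mk{p}$); and $\uptheta \mk{g}'_\upalpha = \mk{g}'_{-\upalpha}$ by Lemma \ref{lem:three_normalizers}(b). Reindexing the last sum from $\upalpha \in \Upsigma_j^-$ via $\upbeta = -\upalpha \in \Upsigma_j^+$ yields the stated decomposition.

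Finally, for (c), the equality $N_{\mk{k}_j}(\mk{v}) = N_{\mk{k}_j}(\mk{w})$ is Lemma \ref{lem:NC_normalizers_vs_Uptheta}(c). The structural description is obtained by intersecting (b) with $\mk{k}_j = \mk{m}_j \cap \mk{k}$: the $\mk{a}^j$-summand drops out, and for each $\upbeta \in \Upsigma_j^+$, I must extract the $\mk{k}$-part of $\mk{g}'_\upbeta \oplus \uptheta \mk{g}'_\upbeta \subseteq \mk{g}_\upbeta \oplus \mk{g}_{-\upbeta}$. Since every element of $\mk{k}_\upbeta$ has the unique form $Y + \uptheta Y$ for some $Y \in \mk{g}_\upbeta$, and the $B_\uptheta$-orthogonal projection onto $\mk{k}_\upbeta$ along $\mk{p}_\upbeta$ sends $Y \in \mk{g}_\upbeta$ to $\tfrac{1}{2}(Y + \uptheta Y)$, this $\mk{k}$-part is exactly $\pr_{\mk{k}_\upbeta}(\mk{g}'_\upbeta) = \mk{k}'_\upbeta$. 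No single step is a genuine obstacle; the only care needed is to verify that the resulting sums remain direct, which follows from the fact that the summands sit in distinct root spaces (or in $\mk{k}_0$, or in $\mk{a}^j$).
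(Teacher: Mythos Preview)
Your proof is correct and matches the paper's approach, which simply states that the corollary follows by combining Lemmas \ref{lem:normalizer_RRSD} and \ref{lem:three_normalizers}. One minor slip in part (c): the summand of $N_{\mk{m}_j}(\mk{v})$ inside $\mk{g}_\upbeta \oplus \mk{g}_{-\upbeta}$ is $\mk{g}'_\upbeta \oplus \mk{g}_{-\upbeta}$ (since $\uptheta\mk{n}^j$ contributes all of $\mk{g}_{-\upbeta}$), not $\mk{g}'_\upbeta \oplus \uptheta\mk{g}'_\upbeta$---but intersecting with $\mk{k}$ forces $\uptheta$-symmetry anyway, so your conclusion $\mk{k}'_\upbeta$ is unaffected.
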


Now we come to the crux of our argument, in which we will render the protohomogeneity condition considerably more accessible and relate it to the roots. Recall that this condition asks for the representation $N_{K_j}^0(\mk{v}) \curvearrowright \mk{v}$ to be of cohomogeneity one, or equivalently, to be transitive on the spheres centered at the origin. In general, the problem of determining whether an orthogonal representation is transitive on spheres can be easily restated on the level of Lie algebras. Given a Lie algebra representation $\uprho \colon \mk{g} \to \mk{gl}(V)$ and a vector $v \in V$, we write $\mk{g} \cdot v = \set{\uprho(X)v \mid X \in \mk{g}}$.

\begin{lem}\label{lem:C1_representation}
    Let $G$ be a compact Lie group equipped with a finite-dimensional representation $\uprho \colon G \to \GL(V)$, and consider the induced representation $\uprho_*$ of $\mk{g} = \Lie(G)$ on $V$. The following conditions are equivalent:
    \begin{enumerate}[\normalfont (i)]
        \item\customlabel{lem:C1_representation:i}{i} The action $G \curvearrowright V$ is of cohomogeneity one.
        \item\customlabel{lem:C1_representation:ii}{ii} For some {\normalfont(}hence any{\normalfont)} nonzero vector $v \in V$, $\mk{g} \cdot v$ is a hyperplane in $V$.
    \end{enumerate}
    If these conditions are satisfied and we have a fixed $G$-invariant inner product on $V$, then $\mk{g} \cdot v = v^\perp$.
\end{lem}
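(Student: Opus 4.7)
Here is how I would approach the proof.

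\textbf{Setup.} Since $G$ is compact, there exists a $G$-invariant inner product on $V$; I would fix one (the one singled out in the last sentence of the lemma if provided, or any other). This makes the representation $\uprho$ orthogonal, so every orbit $G \cdot v$ is contained in the sphere $S_{\|v\|} \subset V$ of radius $\|v\|$ centered at the origin. The orbit $G \cdot v$ is an immersed (in fact, embedded, since $G$ is compact) submanifold of $V$, and the differential at the identity of the orbit map $G \to V, \, g \mapsto \uprho(g)v$, has image $\uprho_*(\mk{g})v = \mk{g} \cdot v$. Thus $T_v(G \cdot v) = \mk{g} \cdot v$, and since $G \cdot v \subseteq S_{\|v\|}$, we always have the inclusion $\mk{g} \cdot v \subseteq T_v S_{\|v\|} = v^\perp$.

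\textbf{(i) $\Rightarrow$ (ii) and the last assertion.} Assume the action is of cohomogeneity one, and let $v \in V \mysetminus \set{0}$. Pick a principal orbit $G \cdot v_0$; it has codimension one in $V$, hence dimension $\dim(V) - 1 = \dim S_{\|v_0\|}$. Being compact and open in $S_{\|v_0\|}$, it must be a connected component, hence (by connectedness of the sphere, noting $\dim V \ge 2$) the entire sphere $S_{\|v_0\|}$. Rescaling by $t = \|v\|/\|v_0\|$ and using that $\uprho(G)$ commutes with scalar multiplication, I obtain $G \cdot v = G \cdot (tv_0) = t(G \cdot v_0) = S_{\|v\|}$. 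Consequently $\mk{g} \cdot v = T_v(G \cdot v) = T_v S_{\|v\|} = v^\perp$, which is a hyperplane. This simultaneously establishes the final sentence of the lemma.

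\textbf{(ii) $\Rightarrow$ (i).} Suppose $\mk{g} \cdot v$ is a hyperplane in $V$ for some nonzero $v \in V$. Then $\dim(G \cdot v) = \dim(\mk{g} \cdot v) = \dim(V) - 1$, so the cohomogeneity of $G \curvearrowright V$ is at most one. It cannot be zero: $G$ fixes the origin, so $\set{0}$ is an orbit, while $v \ne 0$ lies in another orbit. Hence the cohomogeneity is exactly one. Finally, since (i) is a property of the action independent of the choice of $v$, its equivalence with (ii) for \emph{some} nonzero $v$ immediately implies the equivalence with (ii) for \emph{any} nonzero $v$; this settles the parenthetical ``some (hence any)''.

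The argument is essentially soft; the only subtlety is verifying that a principal orbit of an orthogonal cohomogeneity-one representation fills an entire sphere, but this follows from the compactness of $G$ together with the connectedness of the sphere. No step looks problematic.
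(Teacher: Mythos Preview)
Your proof is correct and follows essentially the same approach as the paper: both identify $T_v(G\cdot v)=\mk{g}\cdot v$, use a $G$-invariant inner product to see that a codimension-one orbit must be an entire sphere (and hence so is every nonzero orbit), and deduce $\mk{g}\cdot v=v^\perp$. The only cosmetic differences are that you spell out the ``open, compact, connected'' argument and the rescaling step where the paper is terse, and you obtain $\mk{g}\cdot v\subseteq v^\perp$ from the inclusion of the orbit in the sphere whereas the paper invokes skew-symmetry of $\uprho_*$ directly.
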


\begin{proof}
    First, we claim that the orbit of a nonzero vector $v \in V$ is a hypersurface if and only if $\mk{g} \cdot v$ is a hyperplane. Indeed, that orbit is a hypersurface precisely when $T_v(G \cdot v)$ is a hyperplane, so it suffices to show that, in general, $T_v(G \cdot v) = \mk{g} \cdot v$. Given $X \in \mk{g}$, let us write $\wh{X}$ for its corresponding fundamental vector field on $V$. The tangent space in question can be described as
    $$
    T_v(G \cdot v) = \bigl\{\wh{X}(v) \mid X \in \mk{g}\bigr\}.
    $$
    So the claim will follow if we show that, under the identification $T_v V \cong V$, one has $\wh{X}(v) = \uprho_*(X)v$. This is a matter of a standard computation:
    \begin{align*}
        \wh{X}(v) & = \restr{\frac{d}{dt}}{t=0} \uprho(\exp(tX)) v = \restr{\frac{d}{dt}}{t=0} e^{t\uprho_*(X)} v \\[2pt]
        &= \restr{\frac{d}{dt}}{t=0} \left( v + t\uprho_*(X)v + \frac{t^2\uprho_*(X)^2}{2!} v + \cdots \right) = \uprho_*(X)v.
    \end{align*}
    The action $G \curvearrowright V$ has cohomogeneity one precisely when it has at least one codimension-one orbit, so the above claim implies that \eqref{lem:C1_representation:i} is equivalent to \eqref{lem:C1_representation:ii} being true for some $v$. To show that it is then true for every nonzero vector, equip $V$ with a $G$-invariant inner product. The orbit of $v$ is bound to be the sphere of radius $||v||$ centered at the origin. But then every nonzero orbit of $G$ is also a sphere, so condition \eqref{lem:C1_representation:ii} holds for every nonzero vector. Note that, with respect to an invariant inner product, the representation $\uprho_*$ is by skew-symmetric operators, hence $\mk{g} \cdot v \perp v$, which implies the last assertion.
\end{proof}

This lemma yields the following reformulation of the protohomogeneity condition:

\begin{cor}\label{cor:protohomogeneity_simplified}
    Let $M = G/K$ be a symmetric space of noncompact type with a fixed choice of $o \in M, \, \mk{a} \subset \mk{p}, \, \Upsigma^+ \subset \Upsigma \subset \mk{a}^*,$ and $\upalpha_j \in \Uplambda$. Then a subspace $\mk{w} \subset \mk{n}_j^1$ is protohomogeneous if and only if for some {\normalfont(}and hence any{\normalfont)} nonzero $v \in \mk{v} = \mk{n}_j^1 \ominus \mk{w}$, $[N_{\mk{k}_j}(\mk{v}),v] = \mk{v} \ominus \R v$.
\end{cor}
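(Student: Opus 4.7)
The plan is to derive this corollary as a direct application of Lemma \ref{lem:C1_representation} to the orthogonal representation of the compact Lie group $N_{K_j}(\mk{v})$ on $\mk{v}$. First I would set up the framework: by Lemma \ref{lem:NC_normalizers_vs_Uptheta}\eqref{lem:NC_normalizers_vs_Uptheta:c}, we have $N_{K_j}(\mk{w}) = N_{K_j}(\mk{v})$, so protohomogeneity of $\mk{w}$ amounts to transitivity of $N_{K_j}(\mk{v})$ on the unit sphere of $\mk{v}$. This group is compact as a closed subgroup of $K_j \subseteq K$, and its action on $\mk{v} \subseteq \mk{n}_j^1 \subseteq \mk{g}$, obtained by restriction of the adjoint representation, is orthogonal with respect to $B_\uptheta$. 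Its Lie algebra is $N_{\mk{k}_j}(\mk{v})$, and the differential of the action sends $X \in N_{\mk{k}_j}(\mk{v})$ to $\ad(X)|_{\mk{v}} \colon v \mapsto [X,v]$.

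Next, I would reduce transitivity on the unit sphere to cohomogeneity one, so that Lemma \ref{lem:C1_representation} becomes applicable. Since the action is orthogonal and linear, every orbit lies inside a sphere around the origin and scales equivariantly in the radius. If the action has cohomogeneity one, then a principal orbit has dimension $\dim \mk{v} - 1$; being both open and closed inside the connected sphere containing it (for $\dim \mk{v} \ge 2$, which is the only case relevant to cohomogeneity-one actions since $\codim_{\mk{n}_j^1}(\mk{w}) \ge 2$), it fills that sphere entirely, and linearity then propagates transitivity to every sphere, in particular to the unit one. The converse direction is immediate, since a transitive action on a codimension-one submanifold is of cohomogeneity one.

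Finally I would invoke Lemma \ref{lem:C1_representation}: cohomogeneity one of $N_{K_j}(\mk{v}) \curvearrowright \mk{v}$ is equivalent to the Lie-algebraic condition that $N_{\mk{k}_j}(\mk{v}) \cdot v$ be a hyperplane for some (hence any) nonzero $v \in \mk{v}$, and the last sentence of that lemma identifies this hyperplane with $v^\perp = \mk{v} \ominus \R v$ using the invariance of the inner product. Unfolding the action into a bracket yields precisely $[N_{\mk{k}_j}(\mk{v}),v] = \mk{v} \ominus \R v$, which is the statement of the corollary. There is no substantial obstacle here, as all the structural content has already been packaged into Lemma \ref{lem:C1_representation} and Lemma \ref{lem:NC_normalizers_vs_Uptheta}\eqref{lem:NC_normalizers_vs_Uptheta:c}; the proof is little more than a translation between group-theoretic and Lie-algebraic formulations in the setting of the nilpotent construction.
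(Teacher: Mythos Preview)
Your argument is correct and coincides with the paper's approach: the corollary is stated immediately after Lemma~\ref{lem:C1_representation} as a direct consequence, with no separate proof given. Your write-up simply spells out the translation explicitly, invoking Lemma~\ref{lem:NC_normalizers_vs_Uptheta}\eqref{lem:NC_normalizers_vs_Uptheta:c} to pass from $N_{K_j}(\mk{w})$ to $N_{K_j}(\mk{v})$ and then applying Lemma~\ref{lem:C1_representation} to the orthogonal representation on $\mk{v}$.
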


Note that in this corollary, we do not require $\mk{w}$ to be admissible, let alone the 5-tuple $(o,\mk{a}, \Upsigma^+, \upalpha_j, \mk{w})$ to be positive. Let us now come back to our problem. In general, Corollary~\ref{cor:protohomogeneity_simplified} is just a linearized reformulation of the protohomogeneity condition and is not of much use. Everything changes when we restrict to positive nilpotent construction data and put it in the context of the results we have accumulated so far.

\begin{prop}\label{prop:protohomogeneity_RRSD_crux}
    The set $\Updelta_j^1$ and the subspace $\mk{v}$ are related via the following:
    \begin{enumerate}[\normalfont (a)]
        \item\customlabel{prop:protohomogeneity_RRSD_crux:1}{a} Whenever $\uplambda, \hspace{1pt} \upgamma \in \Updelta_j^1 \,$ {\normalfont ($\uplambda \ne \upgamma$)} are such that both $\mk{v}_\uplambda$ and $\mk{v}_\upgamma$ are nonzero, we must have $\upgamma - \uplambda \in \Upsigma_j$ and $\mk{g}'_{\upgamma-\uplambda} \ne \set{0}$. Moreover, for every nonzero $v \in \mk{v}_\uplambda$, we have $[\mk{g}'_{\upgamma - \uplambda}, v] = \mk{v}_\upgamma$.
        \item\customlabel{prop:protohomogeneity_RRSD_crux:2}{b} For each $k \in \Z_{> 0}$, there is at most one root $\uplambda \in \Updelta_j^1$ of height $k$ such that $\mk{v}_\uplambda \ne \set{0}$.
    \end{enumerate}
\end{prop}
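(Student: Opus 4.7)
The plan is to linearize the protohomogeneity condition via Corollary~\ref{cor:protohomogeneity_simplified}, which, for any nonzero $v \in \mk{v}$, yields $[N_{\mk{k}_j}(\mk{v}), v] = \mk{v} \ominus \R v$, and then to decompose both sides along the restricted root space decomposition. Using Corollary~\ref{cor:three_normalizers_RRSD}\eqref{cor:three_normalizers_RRSD:c},
\[
N_{\mk{k}_j}(\mk{v}) = N_{\mk{k}_0}(\mk{v}) \oplus \bigoplus_{\upbeta \in \Upsigma_j^+} \mk{k}'_\upbeta,
\]
where each element of $\mk{k}'_\upbeta$ has the form $X_+ + \uptheta X_+$ with $X_+ \in \mk{g}'_\upbeta \subseteq \mk{g}_\upbeta$. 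The strategy is to pick $v \in \mk{v}_\uplambda$ and track which root spaces are reached by the adjoint action of each summand.

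For \eqref{prop:protohomogeneity_RRSD_crux:1}, fix a nonzero $v \in \mk{v}_\uplambda$. Since $\mk{k}_0$ preserves every root space, $[N_{\mk{k}_0}(\mk{v}), v] \subseteq \mk{g}_\uplambda$, while for $X_+ \in \mk{g}'_\upbeta$, the bracket $[X_+ + \uptheta X_+, v]$ lies in $\mk{g}_{\uplambda + \upbeta} \oplus \mk{g}_{\uplambda - \upbeta}$ with $\mk{g}_{\uplambda + \upbeta}$-component exactly $[X_+, v]$. Hence, for any $\upgamma \in \Updelta_j^1 \mysetminus \set{\uplambda}$ with $\mk{v}_\upgamma \ne \set{0}$, the nonvanishing $\mk{g}_\upgamma$-projection of $\mk{v} \ominus \R v$ forces $\upgamma - \uplambda \in \Upsigma_j$. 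Under the hypothesis $\hgt(\upgamma) \ge \hgt(\uplambda)$, set $\upbeta = \upgamma - \uplambda \in \Upsigma_j^+$. Now $\upbeta$ is the only element of $\Upsigma_j^+$ that can contribute to $\mk{g}_\upgamma$: any alternative $\upbeta' \in \Upsigma_j^+$ with $\uplambda - \upbeta' = \upgamma$ would satisfy $\hgt(\upbeta') = \hgt(\uplambda) - \hgt(\upgamma) \le 0$, which is impossible. Therefore the $\mk{g}_\upgamma$-component of $[N_{\mk{k}_j}(\mk{v}), v]$ equals $[\mk{g}'_\upbeta, v]$, and this lies in $\mk{v}_\upgamma$ since $[X_+ + \uptheta X_+, v] \in \mk{v}$ for every $X_+ \in \mk{g}'_\upbeta$. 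Matching the two $\mk{g}_\upgamma$-projections gives $[\mk{g}'_\upbeta, v] = \mk{v}_\upgamma$, and in particular $\mk{g}'_\upbeta \ne \set{0}$.

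Part \eqref{prop:protohomogeneity_RRSD_crux:2} then follows directly from the first assertion of \eqref{prop:protohomogeneity_RRSD_crux:1}: if distinct $\uplambda, \upgamma \in \Updelta_j^1$ of equal height both had nonzero $\mk{v}$-components, then $\upgamma - \uplambda$ would be a nonzero vector in the $\mathbb{Z}$-span of $\Uplambda \mysetminus \set{\upalpha_j}$ of height $0$, contradicting the requirement $\upgamma - \uplambda \in \Upsigma_j$. The main point of care is isolating a unique $\upbeta \in \Upsigma_j^+$ governing each target root $\upgamma$ in the root-shift analysis; this is precisely where the height hypothesis rules out the ``wrong-sign'' shift $\uplambda - \upbeta'$ spuriously contributing to $\mk{g}_\upgamma$ and lets us extract the clean equality $[\mk{g}'_\upbeta, v] = \mk{v}_\upgamma$.
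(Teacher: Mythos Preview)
Your proof is correct and follows essentially the same approach as the paper's: linearize protohomogeneity via Corollary~\ref{cor:protohomogeneity_simplified}, decompose $N_{\mk{k}_j}(\mk{v})$ using Corollary~\ref{cor:three_normalizers_RRSD}\eqref{cor:three_normalizers_RRSD:c}, and isolate the unique $\upbeta \in \Upsigma_j^+$ contributing to $\mk{g}_\upgamma$. Your projection-matching argument for $[\mk{g}'_\upbeta,v]=\mk{v}_\upgamma$ is in fact slightly cleaner than the paper's element-chasing version, and your derivation of \eqref{prop:protohomogeneity_RRSD_crux:2} from the height-zero obstruction is the same as the paper's.
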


\begin{proof}
    We begin with part \eqref{prop:protohomogeneity_RRSD_crux:1}. Take any nonzero vector $v \in \mk{v}_\uplambda$. Thanks to Corollaries \ref{cor:protohomogeneity_simplified} and \ref{cor:three_normalizers_RRSD}\eqref{cor:three_normalizers_RRSD:c}, we have
    \begin{equation}\label{k_stabilizer_bracket}
    (\mk{v}_\uplambda \ominus \R v) \oplus \smashoperator{\bigoplus_{\updelta \hspace{0.5pt} \in \hspace{0.5pt} \Updelta_j^1 \mysetminus \{\uplambda\}}} \hspace{1.5pt} \mk{v}_\updelta = [N_{\mk{k}_j}(\mk{v}), v] = [N_{\mk{k}_0}(\mk{v}),v] \oplus \bigoplus_{\upalpha \in \Upsigma_j^+} [\mk{k}'_\upalpha,v].
    \end{equation}
    Assume for a moment that $\hgt(\upgamma) \ge \hgt(\uplambda)$. The only summand on the right-hand side of \eqref{k_stabilizer_bracket} that can have a nonzero projection in $\mk{v}_\upgamma \ne \{0\}$ is $[\mk{k}'_{\upgamma-\uplambda},v]$---but this is only possible when $\upgamma-\uplambda \in \Upsigma_j^+$ and $\mk{k}'_{\upgamma-\uplambda} \ne \{0\}$. If $\hgt(\upgamma) < \hgt(\uplambda)$, the same is true with $\upgamma-\uplambda$ replaced by $\uplambda-\upgamma$. (Note that $\mk{k}'_{\upgamma-\uplambda} = \mk{k}'_{\uplambda-\upgamma}$.) In any case, we see that $\upgamma-\uplambda$ must be a root. This root is either positive or negative, which means that $\uplambda$ and $\upgamma$ cannot have the same height. This proves part \eqref{prop:protohomogeneity_RRSD_crux:2}, while we continue with part \eqref{prop:protohomogeneity_RRSD_crux:1}. Since $\mk{k}'_{\upgamma-\uplambda} \ne \set{0}$, we also have $\mk{g}'_{\upgamma-\uplambda} \ne \set{0}$ (and thus $\mk{g}'_{\uplambda-\upgamma} = \uptheta \mk{g}'_{\upgamma-\uplambda} \ne \set{0}$), which follows from Lemma \ref{lem:three_normalizers}\eqref{lem:three_normalizers:d}. We are left to show that $[\mk{g}'_{\upgamma - \uplambda}, v] = \mk{v}_\upgamma$. The left-to-right inclusion follows from the definition of $\mk{g}'_{\upgamma - \uplambda}$ (Lemma \ref{lem:three_normalizers}\eqref{lem:three_normalizers:b}). For the other direction, take any vector $w \in \mk{v}_\upgamma$. By \eqref{k_stabilizer_bracket}, there must exist an element $X+\uptheta X \in \mk{k}'_{\upgamma-\uplambda}$ (here $X \in \mk{g}'_{\upgamma-\uplambda}$ in accordance with Lemma \ref{lem:three_normalizers}\eqref{lem:three_normalizers:d}) such that $[X + \uptheta X, v] = w$. The fact that this lies in $\mk{g}_\upgamma$ forces $[\uptheta X, v]$ to be zero, which leaves us with $[X,v] = w$. This completes the proof.
\end{proof}

The upshot of the above proposition is that, whenever the normal space $\mk{v}$ has nonzero summands in different root spaces, that has direct implications for the root system $\Upsigma_j$ and how it interacts with $\Updelta_j^1$. As explained at the end of Subsection \ref{sec:preliminaries:parabolic}, $\Updelta_j^1$ can be regarded as the $\Uplambda_j$-string of $\upalpha_j$. Note that $\upalpha_j$ is a root of minimum height in this string, and any other root therein has height at least two. By invoking \cite[Prop.\hspace{2pt}3.3]{sanmartin-strings}, we obtain:

\begin{cor}\label{cor:root_lemma_applied}
For every $\uplambda \in \Updelta_j^1$ of height $\ge 2$, there exists a simple root $\upalpha_i \in \Uplambda_j$ such that $\uplambda-\upalpha_i \in \Updelta_j^1$.
\end{cor}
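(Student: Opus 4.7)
The plan is to derive this corollary by applying Lemma~\ref{lem:root_lemma} with $\upalpha = \upalpha_j$ and $\upbeta = \uplambda$, after separating out the low-height case by hand.

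First I would dispose of the case $\hgt(\uplambda) = 2$ directly. Such a $\uplambda$ must be of the form $\upalpha_j + \upalpha_i$ for some (uniquely determined) simple root $\upalpha_i \in \Uplambda$. Since $\hgt(\uplambda) = 2 \ne \hgt(2\upalpha_j)$, we have $i \ne j$, so $\upalpha_i \in \Uplambda_j$, and $\uplambda - \upalpha_i = \upalpha_j$ trivially lies in $\Updelta_j^1$.

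Next, I would handle $\hgt(\uplambda) \ge 3$. Here $\upalpha_j \preccurlyeq \uplambda$ by the description of $\Updelta_j^1$ in \eqref{equation:levelone:string}, and $\hgt(\uplambda) - \hgt(\upalpha_j) = \hgt(\uplambda) - 1 \ge 2 > 1$, so the hypothesis of Lemma~\ref{lem:root_lemma} is met. Applying the lemma with the prescribed height $\hgt(\uplambda) - 1$ for $\upgamma$ gives a root $\upgamma \in \Upsigma^+$ with $\upalpha_j \preccurlyeq \upgamma \preccurlyeq \uplambda$ and, by the last assertion of the lemma, $\upgamma = \uplambda - \upalpha_k$ for some $\upalpha_k \in \Uplambda$. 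The only step requiring care is to verify that $\upalpha_k$ in fact belongs to $\Uplambda_j$: write $\uplambda = \upalpha_j + \sum_{i \ne j} n_i \upalpha_i$, so that the $\upalpha_j$-coefficient of $\upgamma = \uplambda - \upalpha_k$ equals $1 - \updelta_{kj}$. Since $\upalpha_j \preccurlyeq \upgamma$ forces this coefficient to be at least $1$, we conclude $k \ne j$, hence $\upalpha_k \in \Uplambda_j$ and $\upgamma \in \Updelta_j^1$.

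There is no real obstacle here beyond checking this coefficient bookkeeping; the substantive content is packaged into Lemma~\ref{lem:root_lemma}. Altogether, in both the $\hgt(\uplambda) = 2$ and $\hgt(\uplambda) \ge 3$ regimes, we have produced a simple root $\upalpha_i \in \Uplambda_j$ with $\uplambda - \upalpha_i \in \Updelta_j^1$, which proves the corollary.
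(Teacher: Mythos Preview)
Your argument is essentially the same as the paper's: handle $\hgt(\uplambda)=2$ by inspection and then invoke Lemma~\ref{lem:root_lemma} with $\upalpha=\upalpha_j$, $\upbeta=\uplambda$ for $\hgt(\uplambda)\ge 3$, using $\upalpha_j \preccurlyeq \upgamma$ to force $k\ne j$. One small slip to fix: in the height-$2$ case you write ``$\hgt(\uplambda)=2\ne\hgt(2\upalpha_j)$'', but $\hgt(2\upalpha_j)=2$ as well, so this inequality is false; the correct reason that $i\ne j$ is simply that $\uplambda\in\Updelta_j^1$ has $\upalpha_j$-coefficient equal to $1$, exactly the bookkeeping you carry out in the $\hgt\ge 3$ case.
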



Equipped with this corollary---which will prove extremely useful---we can establish the following important result:

\begin{prop}\label{prop:snake_continuous}
    If there exists $k \in \Z_{> 0}$ such that, for every root $\uplambda \in \Updelta_j^1$ of height $k$, we have $\mk{v}_\uplambda = \set{0}$, then the same is true for every height $m > k$.
\end{prop}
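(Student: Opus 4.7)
The plan is to proceed by strong induction on $m > k$. Fix $m > k$ and assume the statement has been established for every height in $(k, m)$; combined with the original hypothesis at height $k$, this yields $\mk{v}_\uplambda = \set{0}$ for every $\uplambda \in \Updelta_j^1$ with $k \le \hgt(\uplambda) \le m-1$. Arguing by contradiction, suppose that $\mk{v}_\upmu \ne \set{0}$ for some $\upmu \in \Updelta_j^1$ of height $m$, and pick a nonzero $v \in \mk{v}_\upmu$; the aim is to deduce $v = 0$.

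Since $m \ge k+1 \ge 2$, Corollary \ref{cor:root_lemma_applied} provides a simple root $\upalpha_i \in \Uplambda_j$ with $\upmu - \upalpha_i \in \Updelta_j^1$. The positivity of our nilpotent construction data gives $\mk{n}^j \subseteq N_{\mk{m}_j}(\mk{w})$, so Lemma \ref{lem:three_normalizers}\eqref{lem:three_normalizers:a} yields $\uptheta \mk{n}^j \subseteq N_{\mk{m}_j}(\mk{v})$; in particular, the whole root space $\mk{g}_{-\upalpha_i}$ normalizes $\mk{v}$. For any $X \in \mk{g}_{-\upalpha_i}$, the bracket $[X, v]$ then lies in $\mk{v} \cap \mk{g}_{\upmu - \upalpha_i} = \mk{v}_{\upmu - \upalpha_i}$, and this is zero because $\hgt(\upmu - \upalpha_i) = m-1$ falls in $\set{k, \ldots, m-1}$. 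Thus $\ad(X)(v) = 0$ for every $X \in \mk{g}_{-\upalpha_i}$. Applying Lemma \ref{lem:string_injective} to the $-\upalpha_i$-string of $\upmu$---which has length at least two because $\upmu - \upalpha_i \in \Upsigma$---gives that $\ad(X) \colon \mk{g}_\upmu \to \mk{g}_{\upmu - \upalpha_i}$ is injective for nonzero $X$, forcing $v = 0$ and producing the desired contradiction.

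The main technical obstacle is the invocation of Lemma \ref{lem:string_injective}: as stated, it presumes that $\upmu$ is the initial element of the $-\upalpha_i$-string, i.e., $\upmu + \upalpha_i \notin \Upsigma$. When $\upmu + \upalpha_i \in \Upsigma$ (which can already occur for $\upmu = \upalpha_j + \upalpha_i$ in $\mm{B}$-, $\mm{C}$-, $\mm{BC}$-, $\mm{F}$-, or $\mm{G}$-type systems), the injectivity on $\mk{g}_\upmu$ must instead be read off from the $\mk{sl}_2$-representation-theoretic fact that the lowering operator of a root-string module annihilates only the bottom weight space of each irreducible summand. Verifying that this strengthening applies uniformly across all restricted root systems---in particular the non-reduced $\mm{BC}$-types, where multiplicities along a string may vary and irreducible summands can terminate prematurely---is the step requiring the most care in writing the argument out in full.
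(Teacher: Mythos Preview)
Your overall architecture---induction on height together with Corollary~\ref{cor:root_lemma_applied}---matches the paper's. The difference is that you work on the $\mk{v}$-side, using that $\mk{g}_{-\upalpha_i} \subseteq \uptheta\mk{n}^j$ normalizes $\mk{v}$, whereas the paper works on the $\mk{w}$-side, using that $\mk{g}_{\upalpha_i} \subseteq \mk{n}^j$ normalizes $\mk{w}$. These are $\uptheta$-dual to each other, so morally the arguments are the same.

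The gap is in the last step. You correctly establish $[\mk{g}_{-\upalpha_i}, v] = 0$, but your route to $v = 0$ does not work. Lemma~\ref{lem:string_injective} indeed requires $\upmu$ to start the string, and your proposed $\mk{sl}_2$-fix---that the lowering operator annihilates only the bottom of each irreducible summand---does not rescue injectivity of $\ad(X)\colon \mk{g}_\upmu \to \mk{g}_{\upmu - \upalpha_i}$ for a single nonzero $X$. Take $M = \SO_0(2,n)/\SO(2)\times\SO(n)$ with $n \ge 4$: the root system is $\mm{B}_2$ with $\dim\mk{g}_{\upalpha_1} = \dim\mk{g}_{\upalpha_1+2\upalpha_2} = 1$ and $\dim\mk{g}_{\upalpha_2} = \dim\mk{g}_{\upalpha_1+\upalpha_2} = n-2$. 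For $\upmu = \upalpha_1 + \upalpha_2$ and $\upalpha_i = \upalpha_2$, the string module has weights $-2,0,2$ with multiplicities $1, n-2, 1$; under any fixed $\mk{sl}_2$-triple it decomposes as one $3$-dimensional irreducible plus $n-3$ trivial summands, all of whose lowest weight vectors sit in $\mk{g}_\upmu$. So $\ad(X)$ has an $(n-3)$-dimensional kernel in $\mk{g}_\upmu$ for every nonzero $X \in \mk{g}_{-\upalpha_i}$, and the ``careful verification'' you allude to cannot succeed.

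What \emph{does} kill $v$ is the bracket surjectivity $[\mk{g}_{\upalpha_i}, \mk{g}_{\upmu-\upalpha_i}] = \mk{g}_\upmu$ stated in Subsection~\ref{sec:preliminaries:symmetric_spaces_of_nc}. From $[\mk{g}_{-\upalpha_i}, v] = 0$ and \eqref{equation:inner:b:theta} one gets $v \perp [\mk{g}_{\upalpha_i}, \mk{g}_{\upmu-\upalpha_i}] = \mk{g}_\upmu$, hence $v = 0$. Once you invoke that surjectivity, though, the paper's argument is more direct: since $\mk{w}_{\upmu-\upalpha_i} = \mk{g}_{\upmu-\upalpha_i}$ by the inductive hypothesis and $\mk{g}_{\upalpha_i}$ normalizes $\mk{w}$, one immediately gets $\mk{g}_\upmu = [\mk{g}_{\upalpha_i}, \mk{g}_{\upmu-\upalpha_i}] \subseteq \mk{w}_\upmu$, with no contradiction step and no injectivity needed.
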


\begin{proof}
    We can argue inductively, so it suffices to show that for every $\upgamma \in \Updelta_j^1$ of height $k+1$, $\mk{v}_\upgamma = \set{0}$. By Corollary \ref{cor:root_lemma_applied}, there exists $\upalpha_i \in \Uplambda_j$ such that $\upgamma - \upalpha_i \in \Updelta_j^1$. We have $\mk{g}_{\upalpha_i} \subset N_{\mk{m}_j}(\mk{w})$ (by Corollary \ref{cor:three_normalizers_RRSD}\eqref{cor:three_normalizers_RRSD:a}) and $\mk{w}_{\upgamma - \upalpha_i} = \mk{g}_{\upgamma - \upalpha_i}$ (by assumption). Therefore,
    $$
    \mk{w}_\upgamma \supseteq [\mk{g}_{\upalpha_i}, \mk{w}_{\upgamma - \upalpha_i}] = [\mk{g}_{\upalpha_i}, \mk{g}_{\upgamma - \upalpha_i}] = \mk{g}_\upgamma,
    $$
    which means $\mk{w}_\upgamma = \mk{g}_\upgamma$, or in other words, $\mk{v}_\upgamma = \set{0}$.
\end{proof}

Since $\mk{v} \ne \set{0}$ and the only root of height 1 in $\Updelta_j^1$ is $\upalpha_j$, this proposition implies in particular that $\mk{v}_{\upalpha_j} \ne \set{0}$. Combining Propositions \ref{prop:protohomogeneity_RRSD_crux}\eqref{prop:protohomogeneity_RRSD_crux:2} and \ref{prop:snake_continuous} leads to the following conclusion:

\begin{cor}\label{cor:snake}
    There exist $m \ge 1$ and roots $\uplambda_1, \ldots, \uplambda_m \in \Updelta_j^1$ with $\hgt(\uplambda_k) = k$ such that $\mk{v} = \bigoplus_{k=1}^m \mk{v}_{\uplambda_k}$ and $\mk{v}_{\uplambda_k} \ne \set{0}$ for each $k=1,\ldots,m$.
\end{cor}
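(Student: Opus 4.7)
The plan is to simply assemble the ingredients accumulated so far. By Theorem \ref{thm:NC_simplification} together with our standing assumption that our nilpotent construction data is positive, the normal subspace decomposes as $\mk{v} = \bigoplus_{\uplambda \in \Updelta_j^1} \mk{v}_\uplambda$. Set
\[
m = \max\bigl\{\hgt(\uplambda) \mid \uplambda \in \Updelta_j^1, \; \mk{v}_\uplambda \ne \{0\}\bigr\}.
\]
This maximum is attained because $\Updelta_j^1$ is finite, and it satisfies $m \ge 1$: indeed, $\upalpha_j$ is the unique root of height $1$ in $\Updelta_j^1$, so if $\mk{v}_{\upalpha_j}$ were zero, then Proposition \ref{prop:snake_continuous} (applied with $k=1$) would force $\mk{v}_\uplambda = \{0\}$ for \emph{every} $\uplambda \in \Updelta_j^1$, contradicting $\mk{v} \ne \{0\}$ (which is guaranteed by $\codim_{\mk{n}_j^1}(\mk{w}) \ge 2$).

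For each $1 \le k \le m$, the contrapositive of Proposition \ref{prop:snake_continuous} yields at least one root $\uplambda \in \Updelta_j^1$ of height $k$ with $\mk{v}_\uplambda \ne \{0\}$; otherwise all heights $> k$ would also give zero summands, contradicting the definition of $m$. Proposition \ref{prop:protohomogeneity_RRSD_crux}\eqref{prop:protohomogeneity_RRSD_crux:2} ensures that such a root is \emph{unique}, and we name it $\uplambda_k$. By the choice of $m$, for every root $\upmu \in \Updelta_j^1$ of height $> m$ we have $\mk{v}_\upmu = \{0\}$, and for every root $\upmu \in \Updelta_j^1$ of height $\le m$ distinct from $\uplambda_{\hgt(\upmu)}$ we have $\mk{v}_\upmu = \{0\}$ by uniqueness. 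Therefore the root space decomposition of $\mk{v}$ collapses to $\mk{v} = \bigoplus_{k=1}^m \mk{v}_{\uplambda_k}$, which is exactly the statement of the corollary. No genuine obstacle is expected here, as the result is essentially a repackaging of the two preceding propositions.
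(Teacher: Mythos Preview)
Your proof is correct and follows exactly the approach the paper takes: the paper states the corollary as an immediate combination of Proposition \ref{prop:protohomogeneity_RRSD_crux}\eqref{prop:protohomogeneity_RRSD_crux:2} and Proposition \ref{prop:snake_continuous}, and your argument simply spells out this combination in detail. The paper also notes right before the corollary that $\mk{v}_{\upalpha_j} \ne \{0\}$ for the same reason you give, so your reasoning mirrors the paper's precisely.
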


From here on, and for the rest of the section, we fix $m$ and $\uplambda_1, \ldots, \uplambda_m$ as in the corollary. As is becoming evident by now, applying Lemma \ref{lem:C1_representation} to our problem has led to a number of powerful conclusions. We are now going to introduce the second key ingredient of this section, which will help immensely in our arguments and yield strong restrictions on $\mk{v}$. The main idea is simple: we may assume that the C1-action resulting from the nilpotent construction cannot be obtained via canonical extension---otherwise, we could disregard it. Our goal is the following proposition, which, loosely speaking, says that $\mk{v}$ has to be large enough for the resulting action to not come from the canonical extension.

\begin{prop}\label{prop:CE_trick}
     Suppose there exists $\upalpha_i \in \Uplambda_j$ such that $\bilin{H^i}{\uplambda_k} = 0$ for each $k = 1,\ldots,m$. Then the resulting C1-action on $M$ is the canonical extension of a C1-action on the boundary component $B_i$.
\end{prop}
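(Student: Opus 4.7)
The plan is to exhibit an explicit action on $B_i$ whose canonical extension reproduces (up to orbit equivalence) the given nilpotent construction action. First, the hypothesis $\bilin{H^i}{\uplambda_k} = 0$ says each $\uplambda_k$ has zero $\upalpha_i$-coefficient, so every $\uplambda_k$ belongs to $\Upsigma_{\Uplambda_i}$; consequently $\mk{v} = \bigoplus_{k} \mk{v}_{\uplambda_k} \subseteq \mk{g}'_i$, or equivalently, $\mk{w}$ contains the whole root space $\mk{g}_\uplambda$ for every $\uplambda \in \Updelta_j^1$ with $n_i(\uplambda) := \bilin{H^i}{\uplambda} > 0$. A direct verification using Corollary \ref{cor:three_normalizers_RRSD}\eqref{cor:three_normalizers_RRSD:a} then shows $\mk{a}_{\Uplambda_i} \loplus \mk{n}_{\Uplambda_i} \subseteq \mk{h}_{j,\mk{w}}$: indeed $\mk{a}_{\Uplambda_i} = \R H^i \subseteq \mk{a}$, and any positive root $\upalpha$ with $n_i(\upalpha) > 0$ falls into one of three categories according to whether $n_j(\upalpha) \in \{0,1,\ge 2\}$, placing $\mk{g}_\upalpha$ inside $\mk{n}^j$, inside $\mk{w}$, or inside some $\mk{n}_j^\upnu$ with $\upnu \ge 2$, respectively---all three of which sit inside $\mk{h}_{j,\mk{w}}$.

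Next, I would set $\mk{w}^i := \mk{w} \cap \mk{g}'_i = \bigoplus_{\uplambda \in \Updelta_j^1 \cap \Upsigma_{\Uplambda_i}} \mk{w}_\uplambda$ and consider the nilpotent construction inside $B_i$ (with the inherited base point and Cartan subspace $\mk{a}^i \subseteq \mk{a}$, applied to the simple root $\upalpha_j \in \Uplambda_i$). The orthogonal complement of $\mk{w}^i$ inside the degree-1 nilpotent part of the $B_i$-construction coincides with $\mk{v}$, so the codimension stays $\ge 2$. Admissibility of $\mk{w}^i$ in $B_i$ follows by restricting the positivity data of $\wt{\upchi}$ to $\mk{g}'_i$: the subspace $\mk{a}^j \oplus \mk{n}^j \subseteq N_{\mk{m}_j}(\mk{w})$ already lies inside $\mk{g}'_i$ and supplies positivity data for the $B_i$-construction. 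Protohomogeneity of $\mk{w}^i$ follows from Corollary \ref{cor:protohomogeneity_simplified} combined with $\mk{v} \subseteq \mk{g}'_i$: for any nonzero $v \in \mk{v}$, the bracket $[N_{\mk{k}_j}(\mk{v}), v] = \mk{v} \ominus \R v$ lands in $\mk{g}'_i$, and the decomposition of Corollary \ref{cor:three_normalizers_RRSD}\eqref{cor:three_normalizers_RRSD:c} reveals that only the $\mk{g}'_i$-part of $N_{\mk{k}_j}(\mk{v})$ can contribute---a part which coincides with the analogous normalizer inside $B_i$. Hence $\mk{w}^i$ determines a genuine cohomogeneity-one action on $B_i$, and its canonical extension $\wt{H} := (H^{B_i}_{j,\mk{w}^i})^\Uplambda$ acts on $M$ with cohomogeneity one.

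Finally, I would verify $(\mk{h}^{B_i}_{j,\mk{w}^i})^\Uplambda = \mk{h}^{B_i}_{j,\mk{w}^i} \oplus \mk{a}_{\Uplambda_i} \loplus \mk{n}_{\Uplambda_i} \subseteq \mk{h}_{j,\mk{w}}$. The tail $\mk{a}_{\Uplambda_i} \loplus \mk{n}_{\Uplambda_i}$ was handled above; for the head, the graded nilpotent piece $\mk{n}^{B_i}_{j,\mk{w}^i} = \mk{w}^i \oplus \bigoplus_{\upnu \ge 2} (\mk{n}^{B_i}_j)^\upnu$ lies inside $\mk{w} \oplus \bigoplus_{\upnu \ge 2} \mk{n}_j^\upnu \subseteq \mk{h}_{j,\mk{w}}$, and any $X \in N_{\mk{l}^{B_i}_j}(\mk{w}^i)$ actually normalizes all of $\mk{w}$. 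The key point is that $\mk{l}^{B_i}_j = Z_{\mk{g}'_i}(\mk{a}^{B_i}_j)$ consists of elements whose root-space components lie in $\mk{g}_\upalpha$ with $n_i(\upalpha) = n_j(\upalpha) = 0$; for such $X$ and for any $\uplambda \in \Updelta_j^1$ with $n_i(\uplambda) > 0$, the bracket $[X, \mk{g}_\uplambda]$ lands in root spaces with $n_j = 1$ and $n_i > 0$, which are entirely contained in $\mk{w}$. Hence $\wt{H} \subseteq H_{j,\mk{w}}$, and since both act with cohomogeneity one on $M$---one inside the other---a standard orbit-dimension argument forces their orbit foliations to coincide. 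The main obstacle is the bookkeeping in this last step: verifying that a normalizer defined intrinsically inside $B_i$ (in terms of $\mk{w}^i$ only) stabilizes the extra summands of $\mk{w}$ in $M$ as well. This hinges crucially on the hypothesis $n_i(\uplambda_k) = 0$, which makes the splitting $\mk{w} = \mk{w}^i \oplus \bigoplus_{n_i(\uplambda) > 0} \mk{g}_\uplambda$ compatible with the $n_i$-grading preserved by $\mk{g}'_i$.
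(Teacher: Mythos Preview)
Your approach is essentially the paper's: construct the corresponding nilpotent construction data on $B_i$, verify admissibility and protohomogeneity there, and show the canonical extension embeds into $\mk{h}_{j,\mk{w}}$. Two points need tightening. First, the claim that ``$\mk{a}^j \oplus \mk{n}^j$ already lies inside $\mk{g}'_i$'' is false: for instance $H_{\upalpha_i} \in \mk{a}^j$ and $\mk{g}_{\upalpha_i} \subseteq \mk{n}^j$, yet neither lies in $\mk{g}'_i$. What you actually need (and what suffices) is that the intersection $(\mk{a}^j \cap \mk{g}'_i) \oplus (\mk{n}^j \cap \mk{g}'_i) = \mk{a}(i)^j \oplus \mk{n}(i)^j$ lies in $N_{\mk{m}(i)_j}(\mk{w}^i)$, which follows because it sits inside $\mk{a}^j \oplus \mk{n}^j \subseteq N_{\mk{m}_j}(\mk{w})$. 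Second, the orthogonal complement of $\mk{w}^i$ inside $\mk{n}(i)_j^1$ is a priori taken with respect to the inner product $B^i_\uptheta$ coming from the Killing form of $\mk{g}'_i$, not $B_\uptheta$; the paper checks that the two restrict to proportional forms on each root space (via irreducibility of the $K_0^i$-action there), so the complements agree. Your protohomogeneity sketch---``only the $\mk{g}'_i$-part of $N_{\mk{k}_j}(\mk{v})$ can contribute''---is correct in spirit but relies on the decomposition $\mk{k}_0 = Z_{\mk{k}_0}(\mk{g}'_i) \oplus \mk{k}_0^i$ (so the first summand brackets trivially with $\mk{v} \subseteq \mk{g}'_i$); the paper packages this as Lemma~\ref{lem:k_0_of_boundary_component}.
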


Before we proceed with the proof, we need the following technical

\begin{lem}\label{lem:k_0_of_boundary_component}
    Let $M = G/K$ be a symmetric space of noncompact type with $o \in M, \mk{a} \subset \mk{p},$ and $\Upsigma^+ \subset \Upsigma \subset \mk{a}^*$ fixed. Pick a subset $\Upphi \subseteq \Uplambda$ and consider the subalgebra $\mk{g}'_\Upphi$. The adjoint representation of $\mk{k}_0$ on $\mk{g}$ preserves $\mk{g}'_\Upphi$ and factors through the adjoint representation $\mk{k}^\Upphi_0 \to \mk{gl}(\mk{g}'_\Upphi)$. To be more precise, there exists a surjective Lie algebra homomorphism $\mk{k}_0 \twoheadrightarrow \mk{k}^\Upphi_0$ that makes the following diagram commutative:
    \begin{center}
    \begin{tikzcd}[every label/.append style = {font = \small}]
    \mk{k}_0 \ar[dr, "\ad"'] \ar[rr, dashed, two heads] && \mk{k}^\Upphi_0 \ar[dl, "\ad"] \\
    & \mk{gl}(\mk{g}'_\Upphi)
    \end{tikzcd}
    \end{center}
\end{lem}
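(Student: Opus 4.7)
The plan is to leverage the two structural facts already established in Subsection~\ref{sec:preliminaries:parabolic}: namely, that $\mk{k}_0$ preserves each restricted root space under the adjoint action, and that $\mk{k}_0$ decomposes as a direct sum of Lie algebras
$$\mk{k}_0 = Z_{\mk{k}_0}(\mk{g}'_\Upphi) \oplus \mk{k}^\Upphi_0.$$
Once these two ingredients are in place, the proof essentially writes itself.

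First I would verify that $\ad(\mk{k}_0)$ preserves $\mk{g}'_\Upphi$. Recall that $\mk{g}'_\Upphi$ is, by definition, the Lie subalgebra of $\mk{g}$ generated by the root spaces $\mk{g}_\upalpha$ with $\upalpha \in \Upsigma_\Upphi$. Since $\mk{k}_0 \subseteq \mk{g}_0$ and $[\mk{g}_0, \mk{g}_\upalpha] \subseteq \mk{g}_\upalpha$ for every $\upalpha \in \Upsigma_0$, the operator $\ad(X)$ for $X \in \mk{k}_0$ preserves each $\mk{g}_\upalpha$ and is a derivation of $\mk{g}$. A derivation preserves any Lie subalgebra generated by a set of its invariant subspaces, so $\ad(X)$ preserves $\mk{g}'_\Upphi$.

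Next, I would use the decomposition above to produce the desired homomorphism. Let $\uppi \colon \mk{k}_0 \twoheadrightarrow \mk{k}^\Upphi_0$ be the projection along $Z_{\mk{k}_0}(\mk{g}'_\Upphi)$. Because the splitting is a direct sum of Lie algebras, we have $[Z_{\mk{k}_0}(\mk{g}'_\Upphi), \mk{k}^\Upphi_0] = 0$, which immediately forces $\uppi$ to be a Lie algebra homomorphism: for $X = X_1 + X_2$ and $Y = Y_1 + Y_2$ with $X_1, Y_1 \in Z_{\mk{k}_0}(\mk{g}'_\Upphi)$ and $X_2, Y_2 \in \mk{k}^\Upphi_0$, the bracket $[X, Y]$ equals $[X_1, Y_1] + [X_2, Y_2]$, whose projection is $[X_2, Y_2] = [\uppi(X), \uppi(Y)]$. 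Surjectivity is automatic.

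Finally, I would check that the triangle commutes. For $X = X_1 + X_2 \in \mk{k}_0$ as above and any $Y \in \mk{g}'_\Upphi$, we have $[X_1, Y] = 0$ since $X_1 \in Z_{\mk{k}_0}(\mk{g}'_\Upphi)$, hence
$$\ad(X)|_{\mk{g}'_\Upphi}(Y) = [X_2, Y] = \ad(\uppi(X))|_{\mk{g}'_\Upphi}(Y).$$
This is exactly the required commutativity. I do not foresee any genuine obstacle here: the whole statement is essentially a bookkeeping consequence of the direct-sum-of-Lie-algebras decomposition of $\mk{k}_0$ recorded in Subsection~\ref{sec:preliminaries:parabolic}.
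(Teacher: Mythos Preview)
Your proof is correct and takes a more direct route than the paper's. The paper argues intrinsically: since $\mk{g}'_\Upphi$ is semisimple, every derivation is inner, so the derivation action $\mk{k}_0 \to \Der(\mk{g}'_\Upphi)$ lifts uniquely to a homomorphism $\upkappa \colon \mk{k}_0 \to \mk{g}'_\Upphi$; the authors then check that the image lands in $\mk{k}^\Upphi_0$ and that $\upkappa$ restricts to the identity on $\mk{k}^\Upphi_0$, hence is surjective. Only at the very end do they remark (without proof, as unnecessary) that $\upkappa$ coincides with the projection along $Z_{\mk{k}_0}(\mk{g}'_\Upphi)$. You instead take that projection as the \emph{definition} of the map and read off all its properties from the direct-sum-of-Lie-algebras decomposition already recorded in the preliminaries. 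Your argument is shorter and more elementary; the paper's has the mild advantage of not presupposing the decomposition $\mk{k}_0 = Z_{\mk{k}_0}(\mk{g}'_\Upphi) \oplus \mk{k}^\Upphi_0$ (it could in principle be used to \emph{derive} that decomposition), but since that decomposition is stated earlier in the paper anyway, your approach is perfectly legitimate here.
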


Roughly speaking, this lemma says the following. We have the isometry Lie algebra $\mk{g}'_\Upphi$ of the boundary component $B_\Upphi$, as well as its $\mk{k}_0$-part $\mk{k}^\Upphi_0$. The representation of $\mk{k}_0$ on $\mk{g}$ (and of $\mk{k}^\Upphi_0$ on $\mk{g}'_\Upphi$) is an important part of the whole adjoint representation, as it preserves the restricted root space decomposition and acts on each root space individually. We can restrict the representation of $\mk{k}_0$ to $\mk{g}'_\Upphi$ and see if it brings any new operators. Lemma \ref{lem:k_0_of_boundary_component} states that the answer is ``no": all such operators already come from $\mk{k}^\Upphi_0$.

\begin{proof}[Proof of Lemma {\normalfont \ref{lem:k_0_of_boundary_component}}]
    Recall that the adjoint representation of $\mk{k}_0$ on $\mk{g}$ preserves each root space. Since $\mk{g}'_\Upphi$ is generated by $\mk{g}_\upalpha, \, \upalpha \in \Upsigma_\Upphi$, it is also preserved by $\mk{k}_0$. The representation of $\mk{k}_0$ on $\mk{g}'_\Upphi$ is by derivations. But $\mk{g}'_\Upphi$ is semisimple, so we have an isomorphism $\ad \colon \mk{g}'_\Upphi \isoto \Der(\mk{g}'_\Upphi)$. Therefore, the representation $\mk{k}_0 \to \Der(\mk{g}'_\Upphi)$ lifts to a Lie algebra homomorphism $\upkappa \colon \mk{k}_0 \to \mk{g}'_\Upphi$. Now, $\mk{k}_0$ preserves the Cartan decomposition of $\mk{g}$ and hence of $\mk{g}'_\Upphi$, so $\upkappa$ takes values in $\mk{k}^\Upphi$. What is more, $\mk{k}_0$ commutes with $\mk{a}$ and thus with $\mk{a}^\Upphi$. Consequently, the image of $\upkappa$ lies in $(\mk{g}'_\Upphi)_0$. We conclude that $\upkappa(\mk{k}_0) \subseteq \mk{k}^\Upphi \cap (\mk{g}'_\Upphi)_0 = \mk{k}^\Upphi_0$. On the other hand, $\mk{k}^\Upphi_0$ is a subalgebra of $\mk{k}_0$, and $\upkappa$ is the identity map on $\mk{k}^\Upphi_0$ by construction. Consequently, $\upkappa \colon \mk{k}_0 \twoheadrightarrow \mk{k}^\Upphi_0$ is surjective. Recall from Subsection \ref{sec:preliminaries:parabolic} that we have a decomposition $\mk{k}_0 = Z_{\mk{k}_0}(\mk{g}'_\Upphi) \oplus \mk{k}^\Upphi_0$. One can check that $\upkappa \colon \mk{k}_0 \to \mk{k}^\Upphi_0$ is actually the projection along the ideal $Z_{\mk{k}_0}(\mk{g}'_\Upphi)$---although we do not need that, as the proof is already complete.
\end{proof}

We are now ready to prove Proposition \ref{prop:CE_trick}. The assumption in the proposition ensures that the normal space $\mk{v}$ of the singular orbit of our C1-action is tangent to the boundary component $B_i$. Note that any C1-action canonically extended from $B_i$ also shares this property. So it is indeed reasonable to expect that our C1-action arises via canonical extension from $B_i$.

\begin{proof}[Proof of Proposition {\normalfont \ref{prop:CE_trick}}]
    Our goal is to provide a C1-action on $B_i$ whose canonical extension has the same orbits in $M$ as $H_{j,\mk{w}}$. We are going to construct the desired action via the nilpotent construction\footnote{This is a special case of the following general fact: the canonical extension of an action arising via the nilpotent construction itself comes from the nilpotent construction performed on the ambient space. See \cite[Lemma\hspace{2pt}4.3]{DR_DV_Otero_C1}.}. To that end, we need to provide nilpotent construction data for $B_i$. We choose the base point $o \in B_i$, the maximal abelian subspace $\mk{a}^i \subset \mk{b}_i$, the set of positive roots $\Upsigma_i^+ \subset \Upsigma_i$, and the simple root $\upalpha_j \in \Uplambda_i$. In order to avoid notational confusion, we are going to denote the corresponding maximal parabolic subalgebra of $\mk{g}'_i$ and its various related subalgebras by $\mk{q}(i)_j, \mk{m}(i)_j, \mk{a}(i)^j, \mk{n}(i)^j,\mk{n}(i)_j$, and so on. These can also be expressed in terms of the subalgebras $\mk{q}_j, \mk{m}_j$, etc.: $\mk{q}(i)_j = \mk{g}'_i \cap \mk{q}_j, \mk{m}(i)_j = \mk{g}'_i \cap \mk{m}_j$, and so on. We will be particularly interested in $\mk{m}(i)_j$ and how it compares to $\mk{m}_j$. We have:
    \begin{equation}\label{CE_trick_eq1}
    \mk{m}_j = \mk{k}_0 \oplus \mk{a}^j \oplus \bigoplus_{\upalpha \in \Upsigma_j} \mk{g}_\upalpha, \quad \mk{m}(i)_j = \mk{k}^i_0 \oplus \mk{a}(i)^j \oplus \bigoplus_{\upalpha \in \Upsigma_\Upphi} \mk{g}_\upalpha,
    \end{equation}
    where $\Upphi = \Uplambda \mysetminus \set{\upalpha_i,\upalpha_j}$. We can also use $\Upphi$ to give an alternative description of $\mk{a}(i)^j$ and $\mk{n}(i)^j$:
    $$
    \mk{a}(i)^j = \bigoplus_{\upalpha_l \in \Upphi} \R H_{\upalpha_l}, \quad \mk{n}(i)^j = \bigoplus_{\upalpha \in \Upsigma_\Upphi^+} \mk{g}_\upalpha.
    $$
    In compliance with our new notation, we write $\Updelta(i)_j = \Upsigma_i^+ \mysetminus \Upsigma_\Upphi^+ = \Upsigma_i \cap \Updelta_j$. We can decompose $\Updelta_j^1$ as $\Updelta(i)_j^1 \sqcup (\Updelta_j^1 \mysetminus \Updelta(i)_j^1)$. The merit of this decomposition is that $\Upsigma_\Upphi$ ``preserves" each of its two parts: by this we mean that adding a root from $\Upsigma_\Upphi$ to one in $\Updelta(i)_j^1$ results in either another root in $\Updelta(i)_j^1$ or not a root at all, and the same goes for $\Updelta_j^1 \mysetminus \Updelta(i)_j^1$. The corresponding decomposition of $\mk{n}_j^1$ is 
    \begin{equation}\label{CE_trick_eq2}
    \mk{n}_j^1 = \mk{n}(i)_j^1 \oplus (\mk{n}_j^1 \ominus \mk{n}(i)_j^1),
    \end{equation}
    and $\mk{m}(i)_j$ preserves each of the two summands. By assumption, for every $k=1,\ldots,m$, the root $\uplambda_k$ does not have $\upalpha_i$ in its expression via the simple roots, hence $\uplambda_k = \upalpha_j + \sum_{\upalpha_l \in \Upphi} n_l \upalpha_l$. This means that it lies in $\Updelta(i)_j^1$. In view of Corollary \ref{cor:snake}, we have $\mk{v} \subseteq \mk{n}(i)_j^1$, which means that we can write $\mk{w}$ as 
    \begin{equation}\label{CE_trick_eq3}
    \mk{w} = \mk{w}' \oplus (\mk{n}_j^1 \ominus \mk{n}(i)_j^1),
    \end{equation}
    where $\mk{w}' \subset \mk{n}(i)_j^1$. We consider the nilpotent construction on $B_i$ with respect to $\mk{w}'$. First, we would like to show that the normal space $\mk{v}' = \mk{n}(i)_j^1 \ominus \mk{w}'$, taken with respect to the inner product $B^i_\uptheta$ coming from the Killing form $B^i$ of $\mk{g}'_i$, coincides with $\mk{v}$. To see this, first note that $\mk{w} = \bigoplus_{\uplambda \in \Updelta_j^1} \mk{w}_\uplambda$ implies $\mk{w}' = \bigoplus_{\uplambda \in \Updelta(i)_j^1} \mk{w}_\uplambda$. This latter space has two a priori different orthogonal complements in $\mk{n}(i)_j^1$, namely $\mk{v}$ and $\mk{v}'$ (with respect to $B_\uptheta$ and $B^i_\uptheta$, respectively), and the former decomposes as $\mk{v} = \bigoplus_{k=1}^m \mk{v}_{\uplambda_k}$ by Corollary \ref{cor:snake}. But since the root spaces of $\mk{g}'_i$ are mutually orthogonal with respect to both $B_\uptheta$ and $B^i_\uptheta$, we have $\mk{v}' = \bigoplus_{k=1}^m \mk{v}'_{\uplambda_k}$, so it suffices to show that $\mk{v}_{\uplambda_k} = \mk{v}'_{\uplambda_k}$ for each $k=1,\ldots,m$. Consider the compact connected subgroup $(K_0^i)^0$ of $G'_i$ corresponding to $\mk{k}_0^i$. Its representation on $\mk{g}'_i$ has each root space of $\mk{g}'_i$ as an irreducible subrepresentation (see, e.g., \cite[Lem.\hspace{2pt}4.2.5]{solonenko_thesis}). The restrictions of $B_\uptheta$ and $B^i_\uptheta$ to each such root space are $(K_0^i)^0$-invariant, so they have to be proportional (see \cite[Lem.\hspace{2pt}6.15]{ziller_notes} or \cite[Cor.\hspace{2pt}2.1.108]{solonenko_thesis}). Since $\mk{v}_{\uplambda_k}$ and $\mk{v}'_{\uplambda_k}$ are the orthogonal complements of $\mk{w}_{\uplambda_k} = \mk{w}'_{\uplambda_k}$ in $\mk{g}_{\uplambda_k}$ with respect to $B_\uptheta$ and $B^i_\uptheta$, respectively, they have to coincide. Altogether, we deduce $\mk{v} = \mk{v}'$.
    
    In order to carry out the nilpotent construction on $B_i$, we need to show that $\mk{w}'$ is admissible and protohomogeneous. We start by deriving an explicit description of the normalizer $N_{\mk{m}(i)_j}(\mk{w}')$. Due to \eqref{CE_trick_eq3} and the fact that $\mk{m}(i)_j$ preserves each summand in \eqref{CE_trick_eq2}, an element of $\mk{m}(i)_j$ preserves $\mk{w}'$ if and only if it preserves $\mk{w}$. Consequently, $N_{\mk{m}(i)_j}(\mk{w}') = N_{\mk{m}(i)_j}(\mk{w}) = N_{\mk{m}_j}(\mk{w}) \cap \mk{m}(i)_j$. By combining this with Corollary \ref{cor:three_normalizers_RRSD}\eqref{cor:three_normalizers_RRSD:a} and \eqref{CE_trick_eq1}, we get:
    \begin{equation}\label{CE_trick_eq4}
    N_{\mk{m}(i)_j}(\mk{w}') = N_{\mk{k}^i_0}(\mk{w}') \oplus \mk{a}(i)^j \oplus \mk{n}(i)^j \oplus \bigoplus_{\upalpha \in \Upsigma_\Upphi^-} \mk{g}'_\upalpha.
    \end{equation}
    In particular, we see that $\mk{a}(i)^j \oplus \mk{n}(i)^j \subseteq N_{\mk{m}(i)_j}(\mk{w}')$, which implies that $\mk{w}'$ is admissible. We are left to prove that it is also protohomogeneous. We would like to use \eqref{CE_trick_eq4} to describe $N_{\mk{k}(i)_j}(\mk{v})$. Thanks to Lemma \ref{lem:NC_normalizers_vs_Uptheta}\eqref{lem:NC_normalizers_vs_Uptheta:c}, we have $N_{\mk{k}(i)_j}(\mk{v}) = N_{\mk{k}(i)_j}(\mk{w}')$, and the latter can be written as $N_{\mk{k}(i)_j}(\mk{w}') = N_{\mk{m}(i)_j}(\mk{w}') \cap \mk{k}(i)_j$. By plugging \eqref{CE_trick_eq4} into this and making use of Lemma \ref{lem:three_normalizers}\eqref{lem:three_normalizers:d}, we arrive at:
    \begin{equation}\label{CE_trick_eq5}
        N_{\mk{k}(i)_j}(\mk{v}) = N_{\mk{k}^i_0}(\mk{v}) \oplus \bigoplus_{\upalpha \in \Upsigma_\Upphi^+} \mk{k}'_\upalpha.
    \end{equation} 
    Take any nonzero vector $v \in \mk{v}_{\uplambda_1}$. Thanks to Corollary \ref{cor:protohomogeneity_simplified}, we know that $[N_{\mk{k}_j}(\mk{v}), v] = \mk{v} \ominus \R v = (\mk{v}_{\uplambda_1} \ominus \R v) \oplus \bigoplus_{k=2}^m \mk{v}_{\uplambda_k}$. Let us look at this through the lens of the decomposition of $N_{\mk{k}_j}(\mk{v})$ in Corollary \ref{cor:three_normalizers_RRSD}\eqref{cor:three_normalizers_RRSD:c}. First of all, we must have $[N_{\mk{k}_0}(\mk{v}),v] = \mk{v}_{\uplambda_1} \ominus \R v$. By virtue of Lemma \ref{lem:k_0_of_boundary_component}, the representation of $\mk{k}_0$ on $\mk{g}'_i$ has the same image in $\mk{gl}(\mk{g}'_i)$ as $\mk{k}^i_0$. This implies that $[N_{\mk{k}^i_0}(\mk{v}),v] = \mk{v}_{\uplambda_1} \ominus \R v$. Next, we also must have $[\mk{k}'_{\uplambda_k - \uplambda_1},v] = \mk{v}_{\uplambda_k}$ for every $k=2,\ldots,m$. Note that the root $\uplambda_k - \uplambda_1$ lies in $\Upsigma_\Upphi^+$. Finally, if $\upalpha \in \Upsigma_j^+$ differs from $\uplambda_k - \uplambda_1$ for every $k=2,\ldots,m$, then $[\mk{k}'_\upalpha,v] = \set{0}$. Let us now consider $[N_{\mk{k}(i)_j}(\mk{v}),v]$. Putting all of the above together with \eqref{CE_trick_eq5}, we obtain:
    \begin{align*}
        [N_{\mk{k}(i)_j}(\mk{v}),v] &= \bigl[ N_{\mk{k}^i_0}(\mk{v}) \oplus \bigoplus_{\upalpha \in \Upsigma_\Upphi^+} \mk{k}'_\upalpha, v \bigr] = \bigl[ N_{\mk{k}^i_0}(\mk{v}),v \bigr] \oplus \bigoplus_{k=2}^m \bigl[ \mk{k}'_{\uplambda_k - \uplambda_1}, v \bigr] \\
        &= (\mk{v}_{\uplambda_1} \ominus \R v) \oplus \bigoplus_{k=2}^m \mk{v}_{\uplambda_k} = \mk{v} \ominus \R v.
    \end{align*}
    This shows that the representation of $N_{\mk{k}(i)_j}(\mk{v})$ on $\mk{v}$ satisfies condition \eqref{lem:C1_representation:ii} of Lemma \ref{lem:C1_representation} and thus $\mk{w}'$ is protohomogeneous. Altogether, we have shown that $(o, \mk{a}^i, \Upsigma_i^+, \upalpha_j, \mk{w}')$ is nilpotent construction data for $B_i$, hence the subalgebra $\mk{h}(i)_{j,\mk{w}'}$ of $\mk{g}'_i$ induces a C1-action on $B_i$. Its canonical extension is given by
    $$
    (\mk{h}(i)_{j,\mk{w}'})^\Uplambda = \mk{h}(i)_{j,\mk{w}'} \oplus \mk{a}_i \loplus \mk{n}_i = (N_{\mk{m}(i)_j}(\mk{w}') \oplus \mk{a}(i)_j \loplus \mk{n}(i)_{j,\mk{w}'}) \oplus \mk{a}_i \loplus \mk{n}_i.
    $$
    In view of \eqref{CE_trick_eq4}, this can be simplified to
    $$
    (\mk{h}(i)_{j,\mk{w}'})^\Uplambda = N_{\mk{k}^i_0}(\mk{w}') \oplus \mk{a} \oplus (\mk{n} \ominus \mk{v}) \oplus \bigoplus_{\upalpha \in \Upsigma_\Upphi^-} \mk{g}'_\upalpha.
    $$
    If we compare this to $\mk{h}_{j,\mk{w}}$ and take into account Corollary \ref{cor:three_normalizers_RRSD}\eqref{cor:three_normalizers_RRSD:a} and the fact that $N_{\mk{k}^i_0}(\mk{w}') = N_{\mk{k}^i_0}(\mk{w})$, we see that $(\mk{h}(i)_{j,\mk{w}'})^\Uplambda \subseteq \mk{h}_{j,\mk{w}}$. As either subalgebra induces a C1-action on $M$, these actions must have the same orbits. This completes the proof.
\end{proof}

From now on, \textit{we assume that the C1-action coming from $\mk{w}$ does not arise via nontrivial canonical extension}. In our further arguments, Proposition \ref{prop:CE_trick} will manifest itself in the form of the following

\begin{cor}\label{cor:simple_g'_nonzero}
    For each $\upalpha_i \in \Uplambda_j$, the subspace $\mk{g}'_{\upalpha_i}$ is nonzero.
\end{cor}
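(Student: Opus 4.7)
The plan is to combine three ingredients that are all in hand: the standing assumption that the $\mk{w}$-action does not arise via nontrivial canonical extension, the CE-trick Proposition~\ref{prop:CE_trick}, and the protohomogeneity-root analysis Proposition~\ref{prop:protohomogeneity_RRSD_crux}(a) applied along the ``snake'' $\uplambda_1, \ldots, \uplambda_m$ provided by Corollary~\ref{cor:snake}. First, I would read Proposition~\ref{prop:CE_trick} in contrapositive form: for every $\upalpha_i \in \Uplambda_j$, there must exist some $k \in \{1, \ldots, m\}$ with $\bilin{H^i}{\uplambda_k} \ne 0$. Equivalently (since $i \ne j$), the simple root $\upalpha_i$ appears with positive coefficient in the expression of at least one $\uplambda_k$ as a sum of simple roots.

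Next, I would apply Proposition~\ref{prop:protohomogeneity_RRSD_crux}(a) to each consecutive pair $(\uplambda_k, \uplambda_{k+1})$ for $k = 1, \ldots, m-1$. Both $\mk{v}_{\uplambda_k}$ and $\mk{v}_{\uplambda_{k+1}}$ are nonzero by construction, so the proposition forces $\upbeta_k \defeq \uplambda_{k+1} - \uplambda_k$ to lie in $\Upsigma_j$ and yields $\mk{g}'_{\upbeta_k} \ne \{0\}$. Since $\hgt(\uplambda_{k+1}) - \hgt(\uplambda_k) = 1$, this $\upbeta_k$ is a positive root of height one, hence a simple root in $\Uplambda$; and because $\uplambda_k$ and $\uplambda_{k+1}$ both have $\upalpha_j$-coefficient equal to one, $\upbeta_k$ has vanishing $\upalpha_j$-coefficient, so in fact $\upbeta_k \in \Uplambda_j$.

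Telescoping the equalities $\uplambda_{k+1} = \uplambda_k + \upbeta_k$ (starting from $\uplambda_1 = \upalpha_j$) gives $\uplambda_k = \upalpha_j + \sum_{l=1}^{k-1} \upbeta_l$ for every $k$. Thus the only simple roots in $\Uplambda_j$ that can appear with positive coefficient in any $\uplambda_k$ are those among $\{\upbeta_1, \ldots, \upbeta_{m-1}\}$. Combining with the first step: every $\upalpha_i \in \Uplambda_j$ must coincide with some $\upbeta_l$, whence $\mk{g}'_{\upalpha_i} = \mk{g}'_{\upbeta_l} \ne \{0\}$, as required.

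There is no real obstacle here; all three ingredients are already proved, and the argument is essentially bookkeeping. The only minor point to check carefully is that the height-one difference $\upbeta_k$ is simple \emph{in $\Uplambda_j$} rather than merely in $\Uplambda$, which is immediate from the fact that the $\upalpha_j$-component is preserved along the snake.
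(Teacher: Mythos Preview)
Your proof is correct and follows essentially the same approach as the paper. The paper's argument is only cosmetically different: rather than precomputing all the consecutive differences $\upbeta_k = \uplambda_{k+1} - \uplambda_k$ and then telescoping, it fixes $\upalpha_i$, locates the smallest index $l$ where the $\upalpha_i$-coefficient of $\uplambda_l$ becomes nonzero, and observes directly that $\uplambda_l - \uplambda_{l-1} = \upalpha_i$ with $\mk{g}'_{\upalpha_i} \ne \{0\}$---which is precisely your $\upbeta_{l-1}$.
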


\begin{proof}
    Fix a simple root $\upalpha_i \in \Uplambda_j$. As $\bilin{H^i}{\upalpha_j} = 0$, Proposition \ref{prop:CE_trick} guarantees that there exists some $2 \le l \le m$ such that $\bilin{H^i}{\uplambda_l} \ne 0$. By reducing $l$ if necessary, we may additionally assume that $\bilin{H^i}{\uplambda_{l-1}} = 0$ (as $\uplambda_1 = \upalpha_j$). Recall that $\mk{v}_{\uplambda_k}$ is nonzero for every $k = 1, \ldots, m$ by Corollary \ref{cor:snake}. By virtue of Proposition \ref{prop:protohomogeneity_RRSD_crux}\eqref{prop:protohomogeneity_RRSD_crux:1}, the difference $\uplambda_l - \uplambda_{l-1}$ must lie in $\Upsigma_j^+$. Since this is a height-1 root, it has to be $\upalpha_i$. Invoking Proposition \ref{prop:protohomogeneity_RRSD_crux}\eqref{prop:protohomogeneity_RRSD_crux:1} one more time, we get $\mk{g}'_{\upalpha_i} \ne \set{0}$.
\end{proof}

We are now in a position to prove the following crucial

\begin{prop}\label{proposition:v:neq:0}
For every root $\uplambda \in \Updelta_j^1$, we have $\mk{v}_\uplambda \neq \{ 0 \}$.
\end{prop}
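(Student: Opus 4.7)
The plan is to argue by contradiction: suppose some $\uplambda \in \Updelta_j^1$ has $\mk{v}_\uplambda = \{0\}$, and pick such a $\uplambda$ of minimum height $k$. Since $\uplambda_1 = \upalpha_j$ is the only height-$1$ element of $\Updelta_j^1$ and already satisfies $\mk{v}_{\upalpha_j} \ne \{0\}$ by Corollary~\ref{cor:snake}, we have $k \geq 2$. Applying Corollary~\ref{cor:root_lemma_applied} furnishes some $\upalpha_i \in \Uplambda_j$ with $\uplambda - \upalpha_i \in \Updelta_j^1$; minimality of $k$ forces $\mk{v}_{\uplambda - \upalpha_i} \ne \{0\}$ and, in view of Proposition~\ref{prop:protohomogeneity_RRSD_crux}\eqref{prop:protohomogeneity_RRSD_crux:2}, this root must in fact be $\uplambda_{k-1}$.

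Next, I would examine the $\upalpha_i$-string through $\uplambda$. Every element of this string has the same $\upalpha_j$-coefficient as $\uplambda$, namely $1$, and is therefore in $\Updelta_j^1$. Let $\upmu = \uplambda - p\upalpha_i$ denote the bottom of the string ($p \geq 1$). Repeating the minimality-plus-uniqueness argument one root at a time shows $\uplambda - q\upalpha_i = \uplambda_{k-q}$ for every $q = 1, \ldots, p$, so $\upmu = \uplambda_{k-p}$ and $\uplambda_{k-p} - \upalpha_i \notin \Upsigma$. Note that $p \leq k-1$, because $\upalpha_j - \upalpha_i$ cannot be a root (its $\upalpha_j$- and $\upalpha_i$-coefficients have opposite signs), so the string cannot descend past $\uplambda_1 = \upalpha_j$; in particular $\uplambda_{k-p}$ is well-defined.

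The endgame combines two ingredients. On one hand, since $\uplambda_{k-p}$ is the bottom of the $\upalpha_i$-string containing $\uplambda$, Lemma~\ref{lem:string_injective} yields that $\ad(X)^p \colon \mk{g}_{\uplambda_{k-p}} \to \mk{g}_\uplambda$ is injective for every nonzero $X \in \mk{g}_{\upalpha_i}$. On the other hand, Corollary~\ref{cor:simple_g'_nonzero} provides a nonzero $Y \in \mk{g}'_{\upalpha_i}$; since $Y$ normalizes $\mk{v}$ by Lemma~\ref{lem:three_normalizers}\eqref{lem:three_normalizers:b}, the operator $\ad(Y)$ takes each $\mk{v}_{\uplambda_{k-p+s}}$ into $\mk{v}_{\uplambda_{k-p+s+1}}$ for $s = 0, 1, \ldots, p-1$, landing eventually in $\mk{v}_\uplambda = \{0\}$. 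Picking a nonzero $v \in \mk{v}_{\uplambda_{k-p}}$ (which exists by Corollary~\ref{cor:snake}, as $k - p \leq k - 1 \leq m$) therefore gives $\ad(Y)^p(v) = 0$, contradicting the injectivity above.

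The only real subtlety is justifying that every intermediate root in the $\upalpha_i$-string is one of the $\uplambda_r$'s; this is exactly where I combine the minimality of $k$ with the one-root-per-height statement of Proposition~\ref{prop:protohomogeneity_RRSD_crux}\eqref{prop:protohomogeneity_RRSD_crux:2}. The standard fact that real root strings have length at most four enters only to guarantee $p$ is finite, and is otherwise invisible. The crucial use of the ``no non-trivial canonical extension'' hypothesis is hidden inside Corollary~\ref{cor:simple_g'_nonzero}, which is precisely what produces the nonzero $Y \in \mk{g}'_{\upalpha_i}$ needed to trigger the contradiction.
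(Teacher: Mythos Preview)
Your proof is correct and follows essentially the same approach as the paper: both go to the bottom $\uplambda - p\upalpha_i$ of the $\upalpha_i$-string, invoke Corollary~\ref{cor:simple_g'_nonzero} to obtain a nonzero $Y \in \mk{g}'_{\upalpha_i}$, and use Lemma~\ref{lem:string_injective} to push a nonzero element of $\mk{v}_{\uplambda - p\upalpha_i}$ injectively into $\mk{v}_\uplambda$. The only differences are cosmetic---you phrase it as a minimal counterexample rather than an induction, and you take an unnecessary detour identifying each intermediate $\uplambda - q\upalpha_i$ with $\uplambda_{k-q}$ (the argument only needs $\mk{v}_{\uplambda - p\upalpha_i} \ne \{0\}$, which follows directly from minimality of $k$).
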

\begin{proof}
We will proceed by induction on the height of $\uplambda$. If $\hgt(\uplambda) = 1$, then $\uplambda = \upalpha_j = \uplambda_1$ and $\mk{v}_\uplambda \ne \{0\}$ by Corollary \ref{cor:snake}. Assume that $\mk{v}_{\upgamma} \neq \{0\}$ for every $\upgamma \in \Updelta_j^1$ of height $\le k$ and take $\uplambda \in \Updelta_j^1$ of height $k+1$. From Corollary \ref{cor:root_lemma_applied} we get the existence of a root $\upalpha_i \in \Uplambda_j$ such that $\uplambda - \upalpha_i \in \Updelta_j^1$. Put $\upvarepsilon$ to be the largest positive integer such that $\uplambda - \upvarepsilon \upalpha_i$ is a root. By the induction hypothesis, we have $\mk{v}_{\uplambda - \upvarepsilon \upalpha_i} \neq \{0\}$. Now, take a nonzero vector $X \in \mk{g}'_{\upalpha_i}$---this is possible thanks to Corollary \ref{cor:simple_g'_nonzero}. Lemma \ref{lem:string_injective} ensures that $\ad(X)^\upvarepsilon (\mk{v}_{\uplambda - \upvarepsilon \upalpha_i})$ is a nonzero subspace of $\mk{g}_\uplambda$. As $\mk{g}'_{\upalpha_i}$ normalizes $\mk{v}$, this subspace actually lies in $\mk{g}_{\uplambda} \cap \mk{v} = \mk{v}_\uplambda$. This concludes the induction and the proof.
\end{proof}

When combined with Corollary \ref{cor:snake} and Proposition \ref{prop:protohomogeneity_RRSD_crux}\eqref{prop:protohomogeneity_RRSD_crux:1}, this proposition yields very strong restrictions on the root system $\Upsigma$ and the choice of $\upalpha_j$:

\begin{cor}\label{corollary:bifurcation}
We have $\Updelta_j^1 = \set{\uplambda_1, \ldots, \uplambda_m}$ with $\hgt(\uplambda_k) = k$ and $\uplambda_1 = \upalpha_j$. Moreover, $\mk{v} = \bigoplus_{k=1}^m \mk{v}_{\uplambda_k}$, and $\mk{v}_{\uplambda_k}$ is nonzero for each $k = 1,\ldots,m$. Finally, for every $1 \le k, l \le m$ with $k \ne l$, we have $\uplambda_k - \uplambda_l \in \Upsigma_j$.
\end{cor}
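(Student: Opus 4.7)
The corollary is essentially a clean summary of what has already been proved, so my plan is to read off each clause as a direct combination of the preceding results. The skeleton is straightforward: Proposition \ref{proposition:v:neq:0} forces every root in $\Updelta_j^1$ to contribute nontrivially to $\mk{v}$, while Proposition \ref{prop:protohomogeneity_RRSD_crux}\eqref{prop:protohomogeneity_RRSD_crux:2} bounds the number of contributing roots at each height by one. Together with Corollary \ref{cor:snake}, this pins down $\Updelta_j^1$ exactly.

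The first step would be to show $\Updelta_j^1 = \{\uplambda_1,\ldots,\uplambda_m\}$. Suppose $\upgamma \in \Updelta_j^1$. By Proposition \ref{proposition:v:neq:0}, $\mk{v}_\upgamma \neq \{0\}$. On one hand, $\hgt(\upgamma) \le m$, because Corollary \ref{cor:snake} gives $\mk{v} = \bigoplus_{k=1}^m \mk{v}_{\uplambda_k}$ (which is supported only in heights $1,\ldots,m$). On the other hand, since $\mk{v}_{\uplambda_k} \neq \{0\}$ and $\mk{v}_\upgamma \neq \{0\}$, Proposition \ref{prop:protohomogeneity_RRSD_crux}\eqref{prop:protohomogeneity_RRSD_crux:2} forces $\upgamma = \uplambda_{\hgt(\upgamma)}$. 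Hence every element of $\Updelta_j^1$ lies among the $\uplambda_k$, and conversely all $\uplambda_k$ belong to $\Updelta_j^1$ by definition. The height statement $\hgt(\uplambda_k)=k$ is part of Corollary \ref{cor:snake}, and since $\upalpha_j$ is the unique root of height $1$ in $\Updelta_j^1$ (see the discussion right before Corollary \ref{cor:root_lemma_applied}), we get $\uplambda_1 = \upalpha_j$.

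The decomposition $\mk{v} = \bigoplus_{k=1}^m \mk{v}_{\uplambda_k}$ with each summand nonzero is restated verbatim from Corollary \ref{cor:snake}. Finally, for the last clause, take any $k \neq l$ in $\{1,\ldots,m\}$. Then $\mk{v}_{\uplambda_k}$ and $\mk{v}_{\uplambda_l}$ are both nonzero, so Proposition \ref{prop:protohomogeneity_RRSD_crux}\eqref{prop:protohomogeneity_RRSD_crux:1} immediately yields $\uplambda_k - \uplambda_l \in \Upsigma_j$. (Technically, the cited statement is phrased under the assumption $\hgt(\uplambda_k) \ge \hgt(\uplambda_l)$, but the conclusion is symmetric in $k,l$ up to sign, and $\Upsigma_j$ is closed under negation.)

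There is no genuine obstacle here: every ingredient is already in place and the corollary functions as a consolidation statement. If anything requires care, it is only the bookkeeping needed to rule out roots of $\Updelta_j^1$ of height greater than $m$, which is handled by combining Proposition \ref{proposition:v:neq:0} with the fact that $\mk{v}$ is supported in heights $\le m$.
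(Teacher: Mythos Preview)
Your proposal is correct and follows the same route as the paper: the corollary is presented there as the immediate combination of Proposition~\ref{proposition:v:neq:0}, Corollary~\ref{cor:snake}, and Proposition~\ref{prop:protohomogeneity_RRSD_crux}\eqref{prop:protohomogeneity_RRSD_crux:1}. Your explicit invocation of Proposition~\ref{prop:protohomogeneity_RRSD_crux}\eqref{prop:protohomogeneity_RRSD_crux:2} is slightly redundant, since Corollary~\ref{cor:snake} already records that $\mk{v}$ is supported exactly on $\{\uplambda_1,\ldots,\uplambda_m\}$, but this does no harm.
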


This immediately leads to the following powerful

\begin{cor}\label{corollary:alphaj:corner}
The root $\upalpha_j$ must be connected to precisely one simple root in the Dynkin diagram of $\Upsigma$.
\end{cor}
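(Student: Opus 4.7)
The plan is to derive this from Corollary \ref{corollary:bifurcation} by a clean root-theoretic argument. Corollary \ref{corollary:bifurcation} tells us that $\Updelta_j^1$ contains at most one root of each height, so in particular at most one root of height $2$. The height-$2$ roots in $\Updelta_j^1$ are precisely those of the form $\upalpha_j + \upalpha_i$ with $\upalpha_i \in \Uplambda_j$ such that $\upalpha_j + \upalpha_i \in \Upsigma$. Since two simple roots $\upalpha_i$ and $\upalpha_j$ are connected in $\DD$ exactly when $\langle \upalpha_i,\upalpha_j\rangle \ne 0$ (equivalently $< 0$), and in that case $\upalpha_i + \upalpha_j \in \Upsigma^+$ by \cite[Prop.\hspace{2pt}2.48(e)]{knapp}, the height-$2$ roots in $\Updelta_j^1$ are in bijection with the simple roots adjacent to $\upalpha_j$ in $\DD$. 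Thus $\upalpha_j$ can be adjacent to at most one simple root.

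For the lower bound, I would argue that $\upalpha_j$ is adjacent to at least one simple root using the hypothesis that $M$ is irreducible and $\rk(M) > 1$. In this situation the Dynkin diagram $\DD$ of $\Upsigma$ is connected and has at least two nodes (a disconnected $\DD$ would correspond to $M$ being reducible), so every node of $\DD$—in particular $\upalpha_j$—has at least one neighbour. Combining the two bounds gives exactly one neighbour.

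The main step, carried out in detail, would look essentially like this. Assume for contradiction that $\upalpha_j$ is connected to two distinct simple roots $\upalpha_i,\upalpha_l \in \Uplambda_j$ with $i \ne l$. Then both $\upalpha_i + \upalpha_j$ and $\upalpha_l + \upalpha_j$ belong to $\Upsigma^+$, both contain $\upalpha_j$ with coefficient $1$ in their expansion in simple roots, and hence both lie in $\Updelta_j^1$; moreover each has height $2$. By Corollary \ref{corollary:bifurcation}, $\Updelta_j^1 = \{\uplambda_1,\dots,\uplambda_m\}$ with $\hgt(\uplambda_k) = k$, so only one root of height $2$ can appear in $\Updelta_j^1$—contradiction. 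Hence $\upalpha_j$ has at most one neighbour, and combined with the lower bound above, exactly one.

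There is no serious obstacle here: the work has already been done in the preceding propositions, and the argument is essentially a one-line application of Corollary \ref{corollary:bifurcation} together with the standard fact that adjacent simple roots sum to a root. The only thing to be slightly careful about is justifying that $\upalpha_j$ is not an isolated node, which is handled by irreducibility of $M$ and $\rk(M)>1$.
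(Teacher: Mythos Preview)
Your proof is correct and essentially identical to the paper's: both use irreducibility and $\rk(M)>1$ to get at least one neighbour, and Corollary~\ref{corollary:bifurcation} together with \cite[Prop.\hspace{2pt}2.48(e)]{knapp} to bound the number of height-$2$ roots in $\Updelta_j^1$ by one.
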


\begin{proof}
Since $M$ is irreducible and of rank at least two, the Dynkin diagram $\DD$ of $\Upsigma$ is connected and contains at least two vertices. Therefore, $\upalpha_j$ must be connected to at least one other simple root in $\DD$---say, to $\upalpha_i \in \Uplambda_j$. Then, the Cartan integer $n_{\upalpha_j \upalpha_i}$ is negative and thus $\upalpha_j + \upalpha_i$ is a root, as follows from~\cite[Prop.\hspace{2pt}2.48(e)]{knapp}. This makes $\upalpha_j + \upalpha_i$ a root of height 2 in $\Updelta_j^1$---and there can be at most one such root according to Corollary \ref{corollary:bifurcation}. Consequently, there can be only one such $\upalpha_i$ in $\Uplambda_j$.
\end{proof}

This corollary whittles down the list of potential candidates for $\upalpha_j$ to only a few options. But for many spaces, we can actually carry this further and rule out all choices of $\upalpha_j$ altogether. Namely, we will now use Corollary \ref{corollary:bifurcation} to rule out the spaces whose root systems are of type $\mm{E}_6$, $\mm{E}_7$, $\mm{E}_8$, $\mm{F}_4$, or $\mm{D}_r$, for any $r \geq 4$. We start with the types $\mm{D}$ and $\mm{E}$. According to Corollary \ref{corollary:alphaj:corner}, we may assume that $\upalpha_j$ is connected to only one simple root in the Dynkin diagram $\DD$ of $\Upsigma$. But for the types $\mm{D}$ and $\mm{E}$, this means that $\DD$ has a connected subgraph of the form\vspace{-0.5ex}
\begin{center}
\resizebox{0.27\columnwidth}{!}{
\begin{tikzpicture}[scale=1.3]
\draw (0, 0) circle (0.15);
\draw (2, 0) circle (0.15);
\draw (3, 0) circle (0.15);
\draw (2, 1) circle (0.15);
\draw (0.15, 0.) -- (0.5, 0.);
\draw (0.7, 0.) -- (0.82, 0.);
\draw (0.94, 0.) -- (1.06, 0.);
\draw (1.18, 0.) -- (1.3, 0.);
\draw (1.5, 0) -- (1.85, 0);
\draw (2., -0.45) node {$\upbeta_1$};
\draw (2.15, 0.) -- (2.85, 0.);
\draw (3., -0.45) node {$\upbeta_3$};
\draw (2., 0.15) -- (2., 0.85);
\draw (2., 1.45) node {$\upbeta_2$};
\draw (0, -0.45) node {$\upalpha_j$};
\end{tikzpicture}
}
\end{center}
(here $\upalpha_j$ may be connected directly to $\upbeta_1$). Since the sum of the simple roots in any connected subgraph of a Dynkin diagram is still a root (\cite[Lem.\hspace{2pt}2.5]{sanmartin-strings}), we see that
\begin{equation}\label{equation:roots:bifurcation:e:d}
\upalpha_j + \dots + \upbeta_1 +\upbeta_2 \quad \text{and} \quad  \upalpha_j + \dots + \upbeta_1 +\upbeta_3
\end{equation}
are both roots in $\Updelta_j^1$. What is more, they have the same height, which contradicts Corollary \ref{corollary:bifurcation}. Let us now focus on the spaces with root systems of type $\mm{F}_4$. Consider the corresponding Dynkin diagram:
\begin{center}
\begin{dynkinDiagram}[arrow shape/.style={-{Bourbaki[length=8pt]}}, labels={\upalpha_1,\upalpha_2,\upalpha_3,\upalpha_4}, text style/.style={scale=0.9}, scale=3, root radius=.06cm]F4
\end{dynkinDiagram}
\end{center}
First, put $j = 1$. We know from \cite[Prop.\hspace{2pt}5.6]{sanmartin-strings} that
\begin{equation*}
\upalpha_1 + \upalpha_2 + 2\upalpha_3 \quad \text{and} \quad  \upalpha_1 + \upalpha_2 + \upalpha_3 + \upalpha_4
\end{equation*}
are both roots, and they are then clearly elements of $\Updelta_1^1$ of the same height, which again contradicts Corollary \ref{corollary:bifurcation}. For the sake of completeness, we include the diagram of $\Updelta_1^1$ in Figure \ref{figure:f4:j:1} with these two roots corresponding to the blue nodes.
\begin{figure}[h]
\setlength{\abovecaptionskip}{5pt plus 3pt minus 2pt}
\begin{tikzpicture}[scale=1.8]
\draw (0, 0) circle (0.1);
\draw (1, 0) circle (0.1);
\draw (2, 0) circle (0.1);
\filldraw[color = blue] (3, 0) circle (0.1);
\draw (4, 0) circle (0.1);
\filldraw[color = blue] (2, 1) circle (0.1);
\draw (3, 1) circle (0.1);
\draw (4, 1) circle (0.1);
\draw (5, 1) circle (0.1);
\draw (3, 2) circle (0.1);
\draw (4, 2) circle (0.1);
\draw (5, 2) circle (0.1);
\draw (6, 2) circle (0.1);
\draw (7, 2) circle (0.1);
\draw[->] (0.1, 0.) -- (0.5, 0.);
\draw (0.5, 0.) -- (0.9, 0.);
\draw[->] (0.1, 0.) -- (0.5, 0.);
\draw (0.5, 0.) -- (0.9, 0.);
\draw (0.5, -0.15) node {$\upalpha _ 2$};
\draw[->] (1.1, 0.) -- (1.5, 0.);
\draw (1.5, 0.) -- (1.9, 0.);
\draw (1.5, -0.15) node {$\upalpha _ 3$};
\draw[->] (2.1, 0.) -- (2.5, 0.);
\draw (2.5, 0.) -- (2.9, 0.);
\draw (2.5, -0.15) node {$\upalpha _ 3$};
\draw[->] (3.1, 0.) -- (3.5, 0.);
\draw (3.5, 0.) -- (3.9, 0.);
\draw (3.5, -0.15) node {$\upalpha _ 2$};
\draw[->] (3.1, 2.) -- (3.5, 2.);
\draw (3.5, 2.) -- (3.9, 2.);
\draw (3.5, 1.85) node {$\upalpha _ 2$};
\draw[->] (4.1, 2.) -- (4.5, 2.);
\draw (4.5, 2.) -- (4.9, 2.);
\draw (4.5, 1.85) node {$\upalpha _ 3$};
\draw[->] (5.1, 2.) -- (5.5, 2.);
\draw (5.5, 2.) -- (5.9, 2.);
\draw (5.5, 1.85) node {$\upalpha _ 3$};
\draw[->] (6.1, 2.) -- (6.5, 2.);
\draw (6.5, 2.) -- (6.9, 2.);
\draw (6.5, 1.85) node {$\upalpha _ 2$};
\draw[->] (2.1, 1.) -- (2.5, 1.);
\draw (2.5, 1.) -- (2.9, 1.);
\draw (2.5, 0.85) node {$\upalpha _ 3$};
\draw[->] (3.1, 1.) -- (3.5, 1.);
\draw (3.5, 1.) -- (3.9, 1.);
\draw (3.5, 0.85) node {$\upalpha _ 2$};
\draw[->] (4.1, 1.) -- (4.5, 1.);
\draw (4.5, 1.) -- (4.9, 1.);
\draw (4.5, 0.85) node {$\upalpha _ 3$};
\draw[->] (2., 0.1) -- (2., 0.5);
\draw (2., 0.5) -- (2., 0.9);
\draw (1.85, 0.5) node {$\upalpha _ 4$};
\draw[->] (3., 0.1) -- (3., 0.5);
\draw (3., 0.5) -- (3., 0.9);
\draw (2.85, 0.5) node {$\upalpha _ 4$};
\draw[->] (4., 0.1) -- (4., 0.5);
\draw (4., 0.5) -- (4., 0.9);
\draw (3.85, 0.5) node {$\upalpha _ 4$};
\draw[->] (3., 1.1) -- (3., 1.5);
\draw (3., 1.5) -- (3., 1.9);
\draw (2.85, 1.5) node {$\upalpha _ 4$};
\draw[->] (4., 1.1) -- (4., 1.5);
\draw (4., 1.5) -- (4., 1.9);
\draw (3.85, 1.5) node {$\upalpha _ 4$};
\draw[->] (5., 1.1) -- (5., 1.5);
\draw (5., 1.5) -- (5., 1.9);
\draw (4.85, 1.5) node {$\upalpha _ 4$};
\draw (0, -0.25) node {$\upalpha _ 1$};
\end{tikzpicture}
\caption{$\Updelta_j^1$ for $\Upsigma \cong \mm{F}_4$ and $j  = 1$.}
\label{figure:f4:j:1}
\end{figure}
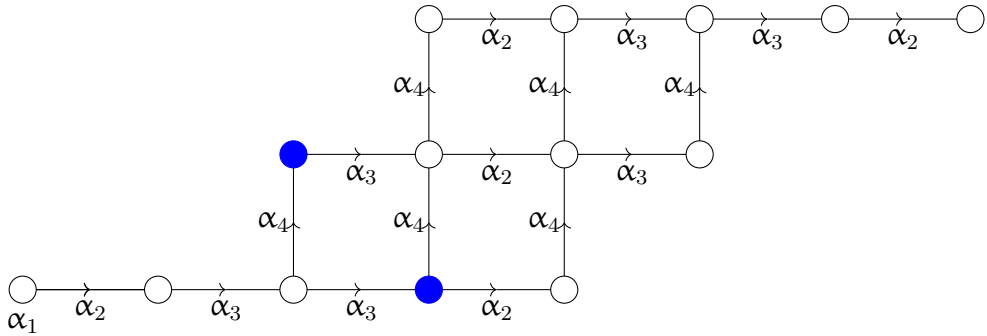

Now suppose that $j = 4$. According to \cite[Prop.\hspace{2pt}5.5]{sanmartin-strings}, both combinations
\begin{equation*}
\upalpha_2 + 2\upalpha_3 + \upalpha_4 \quad \text{and} \quad \upalpha_1 + \upalpha_2 + \upalpha_3 + \upalpha_4
\end{equation*}
are roots, hence they are elements of $\Updelta_4^1$ of the same height, and we once again arrive at a contradiction. Again, for the sake of completeness, we include the diagram of $\Updelta_4^1$ in Figure \ref{figure:f4:j:r}, where these roots correspond to the blue nodes.
\begin{figure}[h]
\begin{tikzpicture}[scale=1.8]
\draw (0, 0) circle (0.1);
\draw (1, 0) circle (0.1);
\draw (2, 0) circle (0.1);
\draw (3, 0) circle (0.1);
\filldraw[color = blue] (2.5, 0.5) circle (0.1);
\filldraw[color = blue] (2.5, -0.5) circle (0.1);
\draw (4, 0) circle (0.1);
\draw (4, 0) circle (0.1);
\draw (5, 0) circle (0.1);
\draw[->] (0.1, 0.) -- (0.5, 0.);
\draw (0.5, 0.) -- (0.9, 0.);
\draw (0.5, -0.15) node {$\upalpha _3$};
\draw[->] (1.1, 0.) -- (1.5, 0.);
\draw (1.5, 0.) -- (1.9, 0.);
\draw (1.5, -0.15) node {$\upalpha _2$};
\draw[->] (2.57071, -0.429289) -- (2.75, -0.25);
\draw (2.75, -0.25) -- (2.92929, -0.0707107);
\draw (2.9, -0.33) node {$\upalpha _3$};
\draw[->] (2.07071, 0.0707107) -- (2.25, 0.25);
\draw (2.25, 0.25) -- (2.42929, 0.429289);
\draw (2.1, 0.35) node {$\upalpha _3$};
\draw[->] (3.1, 0.) -- (3.5, 0.);
\draw (3.5, 0.) -- (3.9, 0.);
\draw (3.5, -0.15) node {$\upalpha _2$};
\draw[->] (4.1, 0.) -- (4.5, 0.);
\draw (4.5, 0.) -- (4.9, 0.);
\draw (4.5, -0.15) node {$\upalpha _3$};
\draw[->] (2.07071, -0.0707107) -- (2.25, -0.25);
\draw (2.25, -0.25) -- (2.42929, -0.429289);
\draw (2.1, -0.33) node {$\upalpha _1$};
\draw[->] (2.57071, 0.429289) -- (2.75, 0.25);
\draw (2.75, 0.25) -- (2.92929, 0.0707107);
\draw (2.9, 0.35) node {$\upalpha _1$};
\draw (0, -0.25) node {$\upalpha _4$};
\end{tikzpicture}
\caption{$\Updelta_j^1$ for $\Upsigma \cong \mm{F}_4$ and $j  = 4$.}
\label{figure:f4:j:r}
\end{figure}
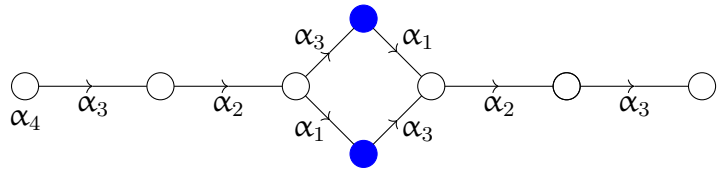

The above discussion reveals that, for the spaces of types $\mm{D}, \mm{E},$ and $\mm{F}$, the assumption that the action of $H_{j,\mk{w}}$ does not arise via nontrivial canonical extension is too restrictive and it rules out all choices of $\mk{w}$. We conclude:

\begin{prop}\label{prop:NC_DEF}
Any nilpotent construction action on a symmetric space $M$ of noncompact type with root system of type $\mm{D}_r \, (r \geq 4), \mm{E}_6, \mm{E}_7, \mm{E}_8,$ or $\mm{F}_4$ can be obtained via canonical extension from a proper boundary component of $M$.
\end{prop}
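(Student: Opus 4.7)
The plan is to argue by contradiction, leveraging the structural constraints derived earlier in this section. Suppose $(o,\mk{a},\Upsigma^+,\upalpha_j,\mk{w}) \in \wt{\mc{N}}$ is positive nilpotent construction data on one of the listed spaces and that the resulting C1-action does not come from a nontrivial canonical extension. Then Corollaries \ref{corollary:alphaj:corner} and \ref{corollary:bifurcation} both apply: $\upalpha_j$ must be a pendant vertex of the Dynkin diagram $\DD$ of $\Upsigma$, and $\Updelta_j^1$ contains at most one root of each given height. For each root-system type I will exhibit two distinct roots of the same height lying in $\Updelta_j^1$, which contradicts Corollary \ref{corollary:bifurcation}.

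For $\DD$ of type $\mm{D}_r$ ($r\ge 4$) or $\mm{E}_n$ ($n=6,7,8$), the existence of a unique trivalent node, together with the pendant-vertex hypothesis on $\upalpha_j$, forces $\DD$ to contain the subgraph obtained from the unique path joining $\upalpha_j$ to that trivalent vertex $\upbeta_1$ by attaching the two remaining neighbors $\upbeta_2,\upbeta_3$ of $\upbeta_1$. Since the sum of the simple roots over any connected subdiagram of $\DD$ is itself a root (\cite[Lem.\hspace{2pt}2.5]{sanmartin-strings}), the two sums obtained by adjoining either $\upbeta_2$ or $\upbeta_3$ to the path-plus-$\upbeta_1$ subgraph are distinct roots of the same height, each containing $\upalpha_j$ with coefficient $1$, and hence both lying in $\Updelta_j^1$. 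This is the desired contradiction.

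For $\DD$ of type $\mm{F}_4$, Corollary \ref{corollary:alphaj:corner} narrows $j$ down to the two endpoints $1$ and $4$. In each case, direct inspection of the $\Uplambda_j$-string $\Updelta_j^1$ (computed in \cite{sanmartin-strings}) produces two roots of equal height: for $j=1$, the roots $\upalpha_1+\upalpha_2+2\upalpha_3$ and $\upalpha_1+\upalpha_2+\upalpha_3+\upalpha_4$; for $j=4$, the roots $\upalpha_2+2\upalpha_3+\upalpha_4$ and $\upalpha_1+\upalpha_2+\upalpha_3+\upalpha_4$. Once again Corollary \ref{corollary:bifurcation} is violated.

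The main analytical work has already been carried out in Section \ref{sec:admissibility} and in the preceding part of this section, which reduce the statement to a purely combinatorial question about the Dynkin diagram and the set $\Updelta_j^1$. No serious obstacle remains for this proposition: the only point deserving a moment of care is verifying that the coefficient of $\upalpha_j$ in each exhibited root equals $1$, which is immediate from the construction of those roots as sums of distinct simple roots in which $\upalpha_j$ appears exactly once.
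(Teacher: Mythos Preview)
Your proposal is correct and follows essentially the same approach as the paper: reduce to pendant vertices via Corollary \ref{corollary:alphaj:corner}, then in types $\mm{D}$ and $\mm{E}$ use the trivalent node to produce two equal-height roots in $\Updelta_j^1$ via \cite[Lem.\hspace{2pt}2.5]{sanmartin-strings}, and in type $\mm{F}_4$ exhibit the same explicit pairs of roots for $j=1,4$, contradicting Corollary \ref{corollary:bifurcation} in every case.
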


This leaves us with the spaces whose root systems are of type $\mm{A}_r$, $\mm{B}_r$, $\mm{C}_{r+1}$, or $\mm{BC}_r$, with $r \geq 2$. For such spaces, arguing on the level of roots is not going to be sufficient, and we have to ascend to the level of root spaces. We are going to investigate the shape operators of the singular orbit $S = H_{j,\mk{w}} \cdot o$. By virtue of Corollary \ref{cor:three_normalizers_RRSD}\eqref{cor:three_normalizers_RRSD:a}, the Lie algebra $\mk{h}_{j,\mk{w}} = N_{\mk{m}_j}(\mk{w}) \oplus \mk{a}_j \oplus \mk{n}_{j,\mk{w}}$ of our action has a subalgebra $\mk{h}$ of the form
\begin{equation}\label{equation:tangent:orbit:an}
\mk{h} = \mk{a} \oplus \mk{n}^j \oplus \mk{w} \oplus \Bigl( \bigoplus_{\upnu \geq 2} \mk{n}_j^\upnu \Bigr) = \mk{a} \oplus \Bigl( \bigoplus_{\upalpha \in \Upsigma_j^+} \mk{g}_\upalpha \Bigr) \oplus \Bigl(  \bigoplus_{k=1}^m \mk{w}_{\uplambda_k} \Bigr)\oplus \Bigl(  \bigoplus_{\upgamma \in \bigcup_{\upnu\geq 2} \Updelta_j^\upnu} \mk{g}_\upgamma \Bigr).
\end{equation}
This subalgebra has the same projection in $\mk{p}$ as $\mk{h}_{j,\mk{w}}$, which means that its corresponding connected Lie subgroup $H$ also has $S$ as its orbit through $o$. But $H$ is a subgroup of $AN$, which gives us access to the tools introduced in Subsection \ref{sec:preliminaries:extrinsic_geometry_cpc} to study the geometry of~$S$. In what follows, we will embrace the notation established in Subsection \ref{sec:preliminaries:extrinsic_geometry_cpc}. We also extend the operator $\top$ to the whole $\mk{g}$ by defining it to be the orthogonal projection onto $\mk{h}$ with respect to $B_\uptheta$. Thanks to \eqref{equation:inner:relation:b:theta:an} and \eqref{equation:tangent:orbit:an}, this definition agrees with the old one (which was the orthogonal projection onto $\mk{h}$ with respect to $\cross{-}{-}_{AN}$) on $\mk{a} \oplus \mk{n}$. Our first goal is to show that the general formula \eqref{AN_shape_operators_general} for the shape operators becomes much simpler in the context of the nilpotent construction.

\begin{prop}\label{proposition:shape:orbit:an}
Take $\uplambda \in \Updelta_j^1$ and $\upalpha \in \Upsigma^+$ and pick any vectors $\upxi \in \mk{v}_\uplambda$ and $X \in \mk{g}_\upalpha \cap \mk{h}$. Then the shape operator $\mc{A}_\upxi$ satisfies the following:
\begin{enumerate}[\normalfont (a)]
\item\customlabel{proposition:shape:orbit:an:a}{a} $\mathcal{A}_\upxi H = 0$ for any $H \in \mk{a}$.
\item\customlabel{proposition:shape:orbit:an:b}{b} $\mathcal{A}_\upxi X = \dfrac{1}{2} ([\upxi, X] - [\uptheta \upxi, X])^\top$.
\item\customlabel{proposition:shape:orbit:an:c}{c} $\mathcal{A}_\upxi X = \dfrac{1}{2} [\upxi, X]^\top$ if $\upalpha \in \Upsigma_j^+$.
\item\customlabel{proposition:shape:orbit:an:d}{d} $\mathcal{A}_\upxi X = 0$ if $\upalpha \in \Upsigma_j^+$ and $\uplambda = \uplambda_m$.
\end{enumerate}
\end{prop}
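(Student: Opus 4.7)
My plan is to establish part~\eqref{proposition:shape:orbit:an:b} as the master formula from which the remaining parts follow. Starting from Lemma~\ref{lem:AN_shape_operators_general}, which gives $\cross{\mc{A}_\upxi X}{Y}_{AN} = \tfrac{1}{4}\cross{Z}{Y}_{B_\uptheta}$ with $Z = [\upxi, X] - [\uptheta\upxi, X]$ for every $Y \in \mk{h}$, I will argue that the unique element of $\mk{h}$ realising this functional is $\tfrac{1}{2}Z^\top$. Decomposing $Z^\top = P_\mk{a} + P_\mk{n}$ and $Y = Y_\mk{a} + Y_\mk{n}$ and using the metric relation~\eqref{equation:inner:relation:b:theta:an} together with the $B_\uptheta$-orthogonality of $Z - Z^\top$ to $\mk{h}$, the desired identity reduces to $P_\mk{a} = 0$, i.e., to $Z$ itself having no $\mk{a}$-component. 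To verify this I will observe that $[\upxi, X] \in \mk{g}_{\uplambda + \upalpha}$ cannot sit in $\mk{g}_0$ since $\upalpha, \uplambda \in \Upsigma^+$, while $[\uptheta\upxi, X] \in \mk{g}_{\upalpha - \uplambda}$ can do so only when $\upalpha = \uplambda$; in that case $X \in \mk{g}_\uplambda \cap \mk{h} = \mk{w}_\uplambda$ is orthogonal to $\upxi \in \mk{v}_\uplambda$, so Lemma~\ref{lemma:a:k0}\eqref{lemma:a:k0:2} forces $[\uptheta\upxi, X] \in \mk{k}_0$, killing any $\mk{a}$-component.

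Part~\eqref{proposition:shape:orbit:an:a} will follow from an analogous computation applied to $H \in \mk{a}$. Here $Z = -\uplambda(H)(\upxi + \uptheta\upxi)$ lies in $\mk{g}_\uplambda \oplus \mk{g}_{-\uplambda}$, and since $\mk{h} \cap \mk{g}_\uplambda = \mk{w}_\uplambda$ is $B_\uptheta$-orthogonal to $\upxi \in \mk{v}_\uplambda$ while $\mk{h}$ has no $\mk{g}_{-\uplambda}$-component at all, we get $Z \perp_{B_\uptheta} \mk{h}$, hence $\cross{\mc{A}_\upxi H}{Y}_{AN} = 0$ for every $Y \in \mk{h}$.

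Parts~\eqref{proposition:shape:orbit:an:c} and~\eqref{proposition:shape:orbit:an:d} will then be pure root-system bookkeeping applied to the formula from~\eqref{proposition:shape:orbit:an:b}. For~\eqref{proposition:shape:orbit:an:c}, the hypothesis $\upalpha \in \Upsigma_j^+$ means $\upalpha$ has $\upalpha_j$-coefficient $0$, so $\upalpha - \uplambda$ has $\upalpha_j$-coefficient $-1$; thus $[\uptheta\upxi, X] \in \mk{g}_{\upalpha - \uplambda}$ is either zero or lies in a negative root space, which is $B_\uptheta$-orthogonal to $\mk{h}$, giving $[\uptheta\upxi, X]^\top = 0$. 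For~\eqref{proposition:shape:orbit:an:d} I will additionally invoke Corollary~\ref{corollary:bifurcation}, which pins down $\Updelta_j^1 = \set{\uplambda_1, \ldots, \uplambda_m}$ with $\hgt(\uplambda_k) = k$: the weight $\upalpha + \uplambda_m$ would have $\upalpha_j$-coefficient $1$ and height strictly greater than $m$, so it cannot lie in $\Updelta_j^1$ and therefore cannot be a root at all, forcing $[\upxi, X] = 0$. The subtle point of the whole argument is really in~\eqref{proposition:shape:orbit:an:b}: the factor $\tfrac{1}{2}$ (rather than $\tfrac{1}{4}$) hinges on the mismatch between $\cross{-}{-}_{AN}$ and $B_\uptheta$ on $\mk{n}$ versus $\mk{a}$, so the vanishing of the $\mk{a}$-component of $Z$---for which Lemma~\ref{lemma:a:k0}\eqref{lemma:a:k0:2} and the orthogonality $\mk{w}_\uplambda \perp \mk{v}_\uplambda$ are both genuinely indispensable---is the main obstacle.
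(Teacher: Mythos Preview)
Your proposal is correct and follows essentially the same route as the paper: the key step in \eqref{proposition:shape:orbit:an:b}---showing that $Z=[\upxi,X]-[\uptheta\upxi,X]$ has no $\mk{a}$-component by invoking Lemma~\ref{lemma:a:k0}\eqref{lemma:a:k0:2} when $\upalpha=\uplambda$---is exactly the paper's argument, and your treatment of \eqref{proposition:shape:orbit:an:a}, \eqref{proposition:shape:orbit:an:c}, \eqref{proposition:shape:orbit:an:d} via root-coefficient bookkeeping and Corollary~\ref{corollary:bifurcation} matches the paper as well. The only difference is organizational: you derive \eqref{proposition:shape:orbit:an:b} first and deduce \eqref{proposition:shape:orbit:an:a} from the same template, whereas the paper does \eqref{proposition:shape:orbit:an:a} directly; your explicit decomposition $Z^\top=P_{\mk{a}}+P_{\mk{n}}$ is arguably a cleaner way to handle the $\tfrac{1}{4}$-versus-$\tfrac{1}{2}$ passage between $B_\uptheta$ and $\cross{-}{-}_{AN}$ than the paper's somewhat informal ``either $[\uptheta\upxi,X]\in\mk{n}$ or it is orthogonal to $Y$''.
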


\begin{proof}
We begin with \eqref{proposition:shape:orbit:an:a}. From \eqref{AN_shape_operators_general} and the fact that $\uptheta(\mk{g}_\uplambda) = \mk{g}_{-\uplambda}$, we have, for any $Y \in \mk{h}$:
\begin{equation}\label{shape_operator_a}
\cross{\mc{A}_\upxi H}{Y}_{AN} = \frac{1}{4}\cross{[\upxi,H] - [\uptheta\upxi,H]}{Y}_{B_\uptheta} = -\frac{1}{4}\uplambda(H)\cross{\upxi}{Y}_{B_\uptheta} - \frac{1}{4}\uplambda(H)\cross{\uptheta \upxi}{Y}_{B_\uptheta}.
\end{equation}
The second term on the right vanishes because $\uptheta \upxi$ lies in $\mk{g}_{-\uplambda}$, which is orthogonal to $\mk{a} \oplus \mk{n}$ and thus to $Y$. Since $\upxi \in \mk{n}$, we can use the comment after \eqref{equation:inner:relation:b:theta:an} to rewrite the first term on the right-hand side of \eqref{shape_operator_a} as $-\frac{1}{2}\uplambda(H)\cross{\upxi}{Y}_{AN}$, which vanishes because $\mk{v}$ and $\mk{h}$ are orthogonal with respect to $\cross{-}{-}_{AN}$. All in all, this shows that $\mc{A}_\upxi H$ must be zero. To show \eqref{proposition:shape:orbit:an:b}, we first claim that either $[\uptheta \upxi, X]$ lies in $\mk{n}$, or else it is orthogonal to $Y$ with respect to $B_\uptheta$. Since $[\uptheta \upxi, X] \in \mk{g}_{\upalpha-\uplambda}$, we need only show that $[\uptheta \upxi, X] \perp \mk{a}$ whenever $\upalpha = \uplambda$. This follows directly from Lemma \ref{lemma:a:k0}. Since $[\upxi,X] \in \mk{n}$, we can use the same discussion after \eqref{equation:inner:relation:b:theta:an} to rewrite \eqref{AN_shape_operators_general} in our case as:
$$
\cross{\mc{A}_\upxi X}{Y}_{AN} = \frac{1}{2}\cross{[\upxi,X] - [\uptheta\upxi,X]}{Y}_{AN}
$$
As $Y$ is arbitrary, this implies \eqref{proposition:shape:orbit:an:b}. For \eqref{proposition:shape:orbit:an:c}, just note that $\upalpha-\uplambda = -\upalpha_j + \sum_{i \ne j} n_i \upalpha_i$ with $n_i \in \Z$---this means that $\upalpha-\uplambda$ is either a negative root or not a root at all. In either case, $[\uptheta \upxi, X]$ has trivial projection in $\mk{h}$. The rest follows from \eqref{proposition:shape:orbit:an:b}. Finally, we prove \eqref{proposition:shape:orbit:an:d}. If $\uplambda_m + \upalpha$ were a root, it would be an element of $\Updelta_j^1$ of height greater than $m$, which would contradict Corollary \ref{corollary:bifurcation}. The assertion then follows from \eqref{proposition:shape:orbit:an:c}.
\end{proof}

Our next step is to utilize the protohomogeneity condition to show that part \eqref{proposition:shape:orbit:an:d} of the above proposition must in fact hold for any normal vector.

\begin{prop}\label{proposition:all:shapes:nj}
For any $\upxi \in \mk{v}$ and $X \in \mk{n}^j$, we have $\mathcal{A}_\xi X = 0$.
\end{prop}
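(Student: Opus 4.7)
The plan is to combine Propositions \ref{proposition:shape:orbit:an} and \ref{prop:protohomogeneity_RRSD_crux} with the CPC property of the singular orbit $S$ (which, recall, asserts that the spectrum of $\mc{A}_\upxi$ with multiplicities does not depend on the unit normal $\upxi \in \mk{v}$). By linearity in $\upxi$ and decomposing $\upxi = \sum_{k=1}^m \upxi_k$ with $\upxi_k \in \mk{v}_{\uplambda_k}$, and by linearity in $X$ and decomposing $X = \sum_{\upalpha \in \Upsigma_j^+} X_\upalpha$ with $X_\upalpha \in \mk{g}_\upalpha$, Proposition \ref{proposition:shape:orbit:an}(c) reduces the claim to showing that $[\upxi_k, X_\upalpha]^\top = 0$ for each pair $(k,\upalpha)$. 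Since $[\upxi_k, X_\upalpha] \in \mk{g}_{\uplambda_k + \upalpha}$, this is automatic unless $\uplambda_k + \upalpha = \uplambda_l$ for some $l > k$; in that case, by Lemma \ref{lem:three_normalizers}(b), the statement $[\upxi_k, X_\upalpha] \in \mk{v}_{\uplambda_l}$ (equivalently, $[\upxi_k, X_\upalpha] \perp \mk{w}_{\uplambda_l}$) for every $X_\upalpha \in \mk{g}_\upalpha$ is equivalent to $\mk{g}'_\upalpha = \mk{g}_\upalpha$. Thus the proposition reduces to the claim that $\mk{g}'_\upalpha = \mk{g}_\upalpha$ for every $\upalpha \in \Upsigma_j^+$.

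To establish this, the first observation is that $\bigoplus_{\upalpha \in \Upsigma_j^+} \mk{g}'_\upalpha$ is a Lie subalgebra of $\mk{n}^j$ (since $N_{\mk{m}_j}(\mk{v})$ is a Lie subalgebra of $\mk{m}_j$ and it intersects $\mk{g}_\upalpha$ exactly in $\mk{g}'_\upalpha$). Combined with the bracket relation $[\mk{g}_{\upalpha_i}, \mk{g}_\upbeta] = \mk{g}_{\upalpha_i + \upbeta}$ for $\upalpha_i \in \Uplambda_j$, $\upbeta \in \Upsigma_j^+$, whenever $\upalpha_i + \upbeta$ is a root, a straightforward induction on $\height(\upalpha)$ reduces the problem to the simple-root case $\mk{g}'_{\upalpha_i} = \mk{g}_{\upalpha_i}$ for every $\upalpha_i \in \Uplambda_j$. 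Corollary \ref{cor:simple_g'_nonzero} already gives $\mk{g}'_{\upalpha_i} \neq \{0\}$; what is needed is to promote this to the full equality.

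For the simple-root step, the plan is to use the CPC property. Take a unit $\upxi_0 \in \mk{v}_{\uplambda_m}$. By Proposition \ref{proposition:shape:orbit:an}(a) and (d), $\ker \mc{A}_{\upxi_0}$ contains $\mk{a} \oplus \mk{n}^j$, so
\[
\dim \ker \mc{A}_{\upxi_0} \ \geq \ r + \dim \mk{n}^j.
\]
By CPC, the same dimensional bound holds for $\mc{A}_\upxi$ for every unit $\upxi \in \mk{v}$. On the other hand, Proposition \ref{prop:protohomogeneity_RRSD_crux}(a) together with Proposition \ref{proposition:shape:orbit:an}(a), (c) shows that $\mk{a} \oplus \bigoplus_{\upalpha \in \Upsigma_j^+} \mk{g}'_\upalpha \subseteq \ker \mc{A}_\upxi$ for every $\upxi \in \mk{v}$. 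Choosing $\upxi \in \mk{v}$ in sufficiently general position and carefully analyzing the action of $\mc{A}_\upxi$ on the orthogonal complement of $\mk{a} \oplus \bigoplus_\upalpha \mk{g}'_\upalpha$ in $\mk{h}$, one concludes that the dimension bound can only be saturated when $\sum_\upalpha \dim \mk{g}'_\upalpha = \dim \mk{n}^j$, i.e., $\mk{g}'_\upalpha = \mk{g}_\upalpha$ for every $\upalpha \in \Upsigma_j^+$.

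\textbf{The main obstacle} will be the last step: extracting from the CPC dimension bound the specific equality $\mk{g}'_\upalpha = \mk{g}_\upalpha$ rather than a merely numerical estimate. A priori, the kernel of $\mc{A}_\upxi$ for a generic $\upxi \in \mk{v}$ could accommodate the CPC-imposed $\dim \mk{n}^j$ extra dimensions inside $\mk{w} \oplus \bigoplus_{\upnu \geq 2} \mk{n}_j^\upnu$ rather than inside $\mk{n}^j$ itself. Ruling this out will require a finer spectral analysis of $\mc{A}_{\upxi_0}$, using the explicit bracket structure of root spaces via Lemmas \ref{lemma:a:k0} and \ref{lem:string_injective}, to pin down the exact dimension of $\ker \mc{A}_{\upxi_0}$ and then to identify, via self-adjointness and the CPC-induced conjugacy of $\mc{A}_\upxi$ and $\mc{A}_{\upxi_0}$ under the isotropy $N_{K_j}^0(\mk{v})$, the additional kernel of $\mc{A}_\upxi$ with $\bigoplus_\upalpha \mk{g}'_\upalpha$.
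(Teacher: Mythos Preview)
Your reduction in steps 1--3 is valid: the proposition is indeed equivalent to $\mk{g}'_\upalpha = \mk{g}_\upalpha$ for all $\upalpha \in \Upsigma_j^+$, and that equality does propagate from simple roots by the subalgebra property. But step 4 is not a proof. The CPC property only tells you that $\dim\ker\mc{A}_\upxi$ is constant; it gives no control over \emph{where} the kernel sits inside $\mk{h}$, and you yourself identify exactly this gap. The ``finer spectral analysis'' you allude to is not carried out, and there is no obvious way to carry it out along these lines: for a generic $\upxi$, nothing prevents $\ker\mc{A}_\upxi$ from picking up compensating dimensions inside $\mk{w}$ or $\mk{n}_j^{\ge 2}$, so the dimension count cannot on its own force $\sum_\upalpha\dim\mk{g}'_\upalpha = \dim\mk{n}^j$.

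The paper's argument sidesteps your detour entirely and is much shorter. It does not try to prove $\mk{g}'_\upalpha = \mk{g}_\upalpha$ first (that is deduced \emph{afterwards}, as Corollary \ref{corollary:new:normalizers}). Instead, it passes to $\mk{p} \cong T_oM$ and observes that the subspace $(\mk{a}\oplus\mk{n}^j)_\mk{p} = \mk{f}_j = T_oF_j$ is invariant under the full isotropy $K_j$ (since $K_j$ fixes $o$ and preserves the extended boundary component $F_j$). Protohomogeneity then gives, for any unit $\upxi\in\mk{v}$, an element $k\in N_{K_j}(\mk{v})$ with $\Ad(k)\upxi_\mk{p}\in\mk{p}_{\uplambda_m}$; since $k$ intertwines the shape operators and preserves $\mk{f}_j$, the vanishing of $\mc{A}^\mk{p}_{\Ad(k)\upxi_\mk{p}}$ on $\mk{f}_j$ (which is Proposition \ref{proposition:shape:orbit:an}\eqref{proposition:shape:orbit:an:a},\eqref{proposition:shape:orbit:an:d}) transports directly to vanishing of $\mc{A}^\mk{p}_{\upxi_\mk{p}}$ on $\mk{f}_j$. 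The ingredient you were missing is precisely this $K_j$-invariance of $\mk{f}_j$: it upgrades the spectral conjugacy you mention in your last sentence to a conjugacy that respects the relevant subspace.
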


\begin{proof}
This proposition is one of the few places where it is actually more convenient to bring our perspective from $\mk{a} \oplus \mk{n}$ back to $\mk{p} \cong T_oM$. Recall that the identification $\upvarphi \colon AN \isoto M$ is by definition an isometry with respect to the metric $\cross{-}{-}_{AN}$, and its differential at the identity can be identified with the projection of $\mk{a} \oplus \mk{n}$ onto $\mk{p}$ along $\mk{k}$, i.e., $d\upvarphi_e = \restr{(\Id_\mk{g}\hspace{-1pt} -\hspace{1.5pt} \uptheta)/2}{\mk{a} \oplus \mk{n}}$. Given a vector $X \in \mk{g}$ (resp., a subspace $V \subseteq \mk{g}$), we are going to write $X_\mk{p}$ (resp., $V_\mk{p}$) for its projection in $\mk{p}$ along $\mk{k}$. Since $\upvarphi(H) = S$, we have $\mk{h}_\mk{p} = T_oS$ and $\mk{v}_\mk{p} = N_oS$. Note that $\mk{v}_\mk{p}$ splits as $\bigoplus_{k=1}^m \mk{p}_{\uplambda_k}$. We claim that the sum $\mk{a} \oplus \mk{n}^j_\mk{p}$ is invariant under the representation of $K_j$ on $\mk{p}$. Indeed, note that $\mk{a} \oplus \mk{n}^j_\mk{p} = (\mk{a}^j \oplus \mk{n}^j_\mk{p}) \oplus \mk{a}_j = \mk{b}_j \oplus \mk{a}_j = \mk{f}_j$. The group $K_j$ is the isotropy of the point $o$ under the action of $L_j$, hence $K_j$ preserves $T_o(L_j \cdot o) = T_o F_j \cong \mk{f}_j$. Given a normal vector $v \in \mk{v}_\mk{p}$, let us denote the corresponding shape operator of $S$ at $o$ by $\mc{A}^\mk{p}_v$. Since $\upvarphi$ is an isometry that takes $H$ to $S$, it identifies the shape operators of $H$ at $e$ and of $S$ at $o$: for any $\upxi \in \mk{v}$ and $X \in \mk{h}$, we have $(\mc{A}_\upxi X)_\mk{p} = \mc{A}^\mk{p}_{\upxi_\mk{p}} X_\mk{p}$. Finally, note that the representation of $N_{K_j}(\mk{v})$ on $\mk{v}_\mk{p}$ is the slice representation of $H_{j,\mk{w}}$ at $o$, hence it is transitive on the unit sphere in $\mk{v}_\mk{p}$. This representation is isomorphic to $N_{K_j}(\mk{v}) \curvearrowright \mk{v}$ by means of the projection $\mk{v} \isoto \mk{v}_\mk{p}$. Indeed, the projection $(\Id_\mk{g}\hspace{-1pt} -\hspace{1.5pt} \uptheta)/2 \colon \mk{g} \twoheadrightarrow \mk{p}$ is $K$-equivariant, hence $\mk{v} \isoto \mk{v}_\mk{p}$ is $N_{K_j}(\mk{v})$-equivariant.

Now we just put everything together. Take $\upxi \in \mk{v}$ and $X \in \mk{n}^j$. We need to show that $\mc{A}_\upxi X = 0$, or equivalently, that $\mc{A}^\mk{p}_{\upxi_\mk{p}} X_\mk{p} = 0$. According to the above discussion, there exists $k \in N_{K_j}(\mk{v})$ such that $\Ad(k)\upxi_\mk{p} = (\Ad(k)\upxi)_\mk{p} \in \mk{p}_{\uplambda_m}$. The vector $\Ad(k)\upxi$ then lies in $\mk{g}_{\uplambda_m}$, hence the shape operator $\mc{A}_{\Ad(k)\upxi}$ vanishes on the whole $\mk{a} \oplus \mk{n}^j$---thanks to parts \eqref{proposition:shape:orbit:an:a} and \eqref{proposition:shape:orbit:an:d} of Proposition \ref{proposition:shape:orbit:an}. In other words, the operator $\mc{A}^\mk{p}_{(\Ad(k)\upxi)_\mk{p}} = \mc{A}^\mk{p}_{\Ad(k)\upxi_\mk{p}}$ vanishes on the whole $\mk{a} \oplus \mk{n}^j_\mk{p}$. In particular, it vanishes on $\Ad(k)X_\mk{p}$, which belongs to $\mk{a} \oplus \mk{n}^j_\mk{p}$ because $X_\mk{p} \in \mk{n}^j_\mk{p}$ and $\Ad(k)$ preserves $\mk{a} \oplus \mk{n}^j_\mk{p}$. But now simply notice that $0 = \mc{A}^\mk{p}_{\Ad(k)\upxi_\mk{p}} (\Ad(k)X_\mk{p}) = \mc{A}^\mk{p}_{\upxi_\mk{p}} X_\mk{p}$, as $\Ad(k)$ is an element of the isotropy of the action at $o$ and thus preserves the shape operators of $S$ at $o$. This shows that $\mc{A}_\upxi X = 0$, which completes the proof.
\end{proof}

This result allows us to improve drastically on Corollary \ref{cor:three_normalizers_RRSD}:

\begin{cor}\label{corollary:new:normalizers}
For every $\upalpha \in \Upsigma_j$, we have $\mk{g}'_\upalpha = \mk{g}_\upalpha$. Moreover,
$$
N_{\mk{m}_j}(\mk{w}) = N_{\mk{m}_j}(\mk{v}) = N_{\mk{k}_0}(\mk{w}) \oplus \mk{a}^j \oplus \bigoplus_{\upalpha \in \Upsigma_j} \mk{g}_{\upalpha}.
$$
In particular, this normalizer contains the isometry Lie algebra $\mk{g}'_j$ of $B_j$.
\end{cor}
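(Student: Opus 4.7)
The plan is to bootstrap the shape-operator vanishing of Proposition~\ref{proposition:all:shapes:nj} into the statement that every root space $\mk{g}_\upalpha$ with $\upalpha \in \Upsigma_j^+$ already normalizes $\mk{v}$; once this is in hand, everything else is mechanical bookkeeping via the decompositions already recorded in Corollary~\ref{cor:three_normalizers_RRSD} and Lemma~\ref{lem:three_normalizers}. For the key step, I fix $\upalpha \in \Upsigma_j^+$ and $X \in \mk{g}_\upalpha$. Because $\mk{g}_\upalpha \subseteq \mk{n}^j \subseteq \mk{h}$, Proposition~\ref{proposition:shape:orbit:an}\eqref{proposition:shape:orbit:an:c} applies and gives $\mc{A}_\upxi X = \tfrac{1}{2}[\upxi, X]^\top$ for every $\upxi \in \mk{v}_\uplambda$, $\uplambda \in \Updelta_j^1$. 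Proposition~\ref{proposition:all:shapes:nj} forces the left-hand side to vanish, hence $[\upxi, X]^\top = 0$. Now $[\upxi, X] \in \mk{g}_{\uplambda+\upalpha}$; if $\uplambda + \upalpha$ is a root it lies in $\Updelta_j^1$, and by Theorem~\ref{thm:NC_simplification} it splits as $\mk{w}_{\uplambda+\upalpha} \oplus \mk{v}_{\uplambda+\upalpha}$. Reading off the description~\eqref{equation:tangent:orbit:an}, the projection $\top$ to $\mk{h}$ extracts precisely the $\mk{w}$-component, so $[\upxi, X] \in \mk{v}$. Extending by linearity in $\upxi$ yields $X \in N_\mk{g}(\mk{v}) \cap \mk{g}_\upalpha = \mk{g}'_\upalpha$ and hence $\mk{g}'_\upalpha = \mk{g}_\upalpha$. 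Applying $\uptheta$ and invoking Lemma~\ref{lem:three_normalizers}\eqref{lem:three_normalizers:c} immediately gives $\mk{g}'_{-\upalpha} = \mk{g}_{-\upalpha}$ as well, settling the first assertion.

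Substituting these equalities into the formula of Corollary~\ref{cor:three_normalizers_RRSD}\eqref{cor:three_normalizers_RRSD:a} and merging $\mk{n}^j = \bigoplus_{\upalpha \in \Upsigma_j^+} \mk{g}_\upalpha$ with $\bigoplus_{\upalpha \in \Upsigma_j^-} \mk{g}'_\upalpha$ produces the stated expression for $N_{\mk{m}_j}(\mk{w})$. To obtain the same description of $N_{\mk{m}_j}(\mk{v})$, I apply $\uptheta$ via Lemma~\ref{lem:three_normalizers}\eqref{lem:three_normalizers:a}: each of the three summands $N_{\mk{k}_0}(\mk{w})$, $\mk{a}^j$ and $\bigoplus_{\upalpha \in \Upsigma_j} \mk{g}_\upalpha$ is $\uptheta$-stable as a subspace, the first because $N_{\mk{k}_0}(\mk{v}) = N_{\mk{k}_0}(\mk{w})$ by Corollary~\ref{cor:three_normalizers_RRSD}\eqref{cor:three_normalizers_RRSD:d}, the second because $\uptheta H = -H$ on $\mk{a}^j$, and the third because $\uptheta$ swaps $\mk{g}_\upalpha$ and $\mk{g}_{-\upalpha}$ within $\Upsigma_j$. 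Therefore $N_{\mk{m}_j}(\mk{v}) = N_{\mk{m}_j}(\mk{w})$.

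For the final containment, recall that $\mk{g}'_j$ is generated as a Lie subalgebra of $\mk{m}_j$ by the family $\{\mk{g}_\upalpha \mid \upalpha \in \Upsigma_j\}$. Every such root space is contained in $N_{\mk{m}_j}(\mk{w})$: for $\upalpha \in \Upsigma_j^+$ directly because $\mk{g}_\upalpha \subseteq \mk{n}^j \subseteq N_{\mk{m}_j}(\mk{w})$, and for $\upalpha \in \Upsigma_j^-$ because $\mk{g}_\upalpha = \mk{g}'_\upalpha$ appears as a summand in the explicit formula just derived. Since $N_{\mk{m}_j}(\mk{w})$ is a subalgebra, it must contain the entire $\mk{g}'_j$, finishing the proof. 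The only conceptually nontrivial ingredient is the translation of the vanishing $[\upxi, X]^\top = 0$ into the condition $[\upxi, X] \in \mk{v}$; this hinges on the root-space adaptedness of $\mk{w}$ supplied by Theorem~\ref{thm:NC_simplification} and on the explicit form of $\mk{h}$ in~\eqref{equation:tangent:orbit:an}, and everything else is a routine unpacking of previously established identities.
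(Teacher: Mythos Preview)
Your proof is correct and follows essentially the same approach as the paper: both use Propositions~\ref{proposition:shape:orbit:an}\eqref{proposition:shape:orbit:an:c} and~\ref{proposition:all:shapes:nj} to deduce that $[\upxi,X]^\top = 0$ for $X \in \mk{g}_\upalpha$ with $\upalpha \in \Upsigma_j^+$, conclude $[\upxi,X] \in \mk{v}$ and hence $\mk{g}'_\upalpha = \mk{g}_\upalpha$, then handle negative roots via $\uptheta$ and read off the normalizer formula from Corollary~\ref{cor:three_normalizers_RRSD}. Your version is somewhat more detailed in spelling out why the $\top$-projection picks off the $\mk{w}$-component and why each summand is $\uptheta$-stable, but the underlying argument is identical.
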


\begin{proof}
Pick some $\upalpha \in \Upsigma_j^+$ and take any $X \in \mk{g}_\upalpha$ and $\upxi \in \mk{v}$. We know from Propositions \ref{proposition:shape:orbit:an}\eqref{proposition:shape:orbit:an:c} and \ref{proposition:all:shapes:nj} that $[X,\upxi]$ has trivial projection in $\mk{h}$, hence it belongs to $\mk{v}$. This shows that $X$ normalizes $\mk{v}$ and thus lies in $\mk{g}'_\upalpha$. If $\upbeta \in \Upsigma_j^-$, we have $\mk{g}'_\upbeta = \uptheta(\mk{g}'_{-\upbeta}) = \uptheta(\mk{g}_{-\upbeta}) = \mk{g}_\upbeta$. The formula for the normalizers then follows directly from Corollary \ref{cor:three_normalizers_RRSD}. The last assertion follows from the fact that $\mk{g}'_j$ is generated by $\mk{g}_{\upalpha}$ as $\upalpha$ runs through $\Upsigma_j$.
\end{proof}

For our next step, we deal with the extreme case of $\mk{w}$ being zero. As it will turn out, this is actually the only case leading to C1-actions that cannot be obtained via other methods.

\begin{prop}\label{proposition:whole:n1:totally:gedesic}
Suppose that for some $1 \le k \le m$, we have $\mk{w}_{\uplambda_k} = \set{0}$. Then $\mk{w} = \set{0}$. Moreover, in this situation, the singular orbit of $H_{j,\mk{w}}$ is totally geodesic, except for the case when $\Upsigma \cong \mm{G}_2$ and $\upalpha_j$ is the short simple root.
\end{prop}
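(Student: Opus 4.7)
The plan is to establish the two assertions separately. For the first statement, the key observation is that, by Corollary \ref{corollary:new:normalizers}, the whole subalgebra $\mk{g}'_j$---in particular each root space $\mk{g}_{\upalpha}$ for $\upalpha \in \Upsigma_j$---normalizes both $\mk{w}$ and $\mk{v}$. This lets us propagate the vanishing of a single summand $\mk{w}_{\uplambda_k}$ to the others via the standard bracket formula $[\mk{g}_\upalpha, \mk{g}_\upbeta] = \mk{g}_{\upalpha + \upbeta}$ from the preliminaries together with Corollary \ref{corollary:bifurcation}, which ensures that each $\uplambda_l - \upalpha_j \in \Upsigma_j^+$.

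The argument then splits into two directions. For the ``upward'' step, assume $\mk{w}_{\upalpha_j} = \set{0}$, so that $\mk{v}_{\upalpha_j} = \mk{g}_{\upalpha_j}$. Since $\mk{g}_{\uplambda_l - \upalpha_j}$ normalizes $\mk{v}$, we get
\[
\mk{g}_{\uplambda_l} = [\mk{g}_{\uplambda_l - \upalpha_j}, \mk{g}_{\upalpha_j}] = [\mk{g}_{\uplambda_l - \upalpha_j}, \mk{v}_{\upalpha_j}] \subseteq \mk{v}_{\uplambda_l},
\]
forcing $\mk{w}_{\uplambda_l} = \set{0}$ for every $l$. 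For the ``downward'' step, it suffices to reduce the hypothesis $\mk{w}_{\uplambda_k} = \set{0}$ (for any $k \ge 2$) to the case $\mk{w}_{\upalpha_j} = \set{0}$. Since $\mk{g}_{\uplambda_k - \upalpha_j}$ normalizes $\mk{w}$, one has $[\mk{g}_{\uplambda_k - \upalpha_j}, \mk{w}_{\upalpha_j}] \subseteq \mk{w}_{\uplambda_k} = \set{0}$. The $(\uplambda_k - \upalpha_j)$-string of $\upalpha_j$ starts at $\upalpha_j$ itself, for the element $2\upalpha_j - \uplambda_k$ has $\upalpha_j$-coefficient $1$ and total height $2 - k \le 0$, which precludes it from being a root. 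Lemma \ref{lem:string_injective} then gives that $\ad(X) \colon \mk{g}_{\upalpha_j} \to \mk{g}_{\uplambda_k}$ is injective for any nonzero $X \in \mk{g}_{\uplambda_k - \upalpha_j}$, so $\mk{w}_{\upalpha_j} = \set{0}$, and the upward argument completes the proof of the first part.

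For the second assertion, set $\mk{w} = \set{0}$, so that $\mk{v} = \mk{n}_j^1$ and $\mk{h} = \mk{a} \oplus \mk{n}^j \oplus \bigoplus_{\upnu \ge 2} \mk{n}_j^\upnu$. Propositions \ref{proposition:shape:orbit:an}\eqref{proposition:shape:orbit:an:a} and \ref{proposition:all:shapes:nj} already give $\mc{A}_\upxi \equiv 0$ on $\mk{a} \oplus \mk{n}^j$ for every $\upxi \in \mk{v}$, leaving only the pieces $\mk{n}_j^\upnu$ with $\upnu \ge 2$ to analyze. Fix $\upxi \in \mk{g}_\uplambda$ with $\uplambda \in \Updelta_j^1$ and $X \in \mk{g}_\upgamma$ with $\upgamma \in \Updelta_j^\upnu$, $\upnu \ge 2$. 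From Proposition \ref{proposition:shape:orbit:an}\eqref{proposition:shape:orbit:an:b} I observe that $[\uptheta \upxi, X] \in \mk{g}_{\upgamma - \uplambda}$ has $\upalpha_j$-coefficient $\upnu - 1$: when $\upnu = 2$ it lies inside $\mk{v}$ and is killed by the projection $\top$; in all cases $[\upxi, X]$ either vanishes or lies in $\mk{g}_{\uplambda + \upgamma} \subseteq \mk{n}_j^{\upnu + 1} \subseteq \mk{h}$, so for $\upnu = 2$ one obtains $\mc{A}_\upxi X = \tfrac{1}{2}[\upxi, X]^\top$.

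To conclude, I will inspect the remaining root systems (types $\mm{A}_r$, $\mm{B}_r$, $\mm{C}_{r+1}$, $\mm{BC}_r$ and $\mm{G}_2$, with $\upalpha_j$ an end of the Dynkin diagram by Corollary \ref{corollary:alphaj:corner}) and read off the $\upalpha_j$-coefficient of the highest root in each case; this coefficient is at most $2$ in every configuration save $\Upsigma \cong \mm{G}_2$ with $\upalpha_j$ short, where the highest root $2\upalpha_1 + 3\upalpha_2$ gives $\upnu_{\max} = 3$. When $\upnu_{\max} \le 2$, only $\upnu = 2$ contributes, and the vector $\uplambda + \upgamma$, having $\upalpha_j$-coefficient $3$, cannot be a root, so $[\upxi, X] = 0$ and $\mc{A}_\upxi X = 0$; the orbit is thus totally geodesic. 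In the exceptional $\mm{G}_2$-short case, picking $\upxi \in \mk{g}_{\upalpha_2}$ and $X \in \mk{g}_{\upalpha_1 + 2\upalpha_2}$ yields $[\upxi, X] \in \mk{g}_{\upalpha_1 + 3\upalpha_2} \subseteq \mk{h}$, which is nonzero by the bracket formula, so $\mc{A}_\upxi \ne 0$ and the orbit fails to be totally geodesic, matching the stated exception. The main technical point is the string-injectivity step in the first part, while the second part is a finite combinatorial check once Corollary \ref{corollary:alphaj:corner} restricts $\upalpha_j$ to a Dynkin-diagram endpoint.
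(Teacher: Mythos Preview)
Your proof is correct, and the second part matches the paper's argument almost exactly (including the case analysis on the highest-root coefficient and the explicit verification of the $\mm{G}_2$ exception). One minor point: when you restrict to the ``remaining root systems'' of types $\mm{A}, \mm{B}, \mm{C}, \mm{BC}, \mm{G}_2$, you are implicitly invoking Proposition~\ref{prop:NC_DEF} to discard the $\mm{DEF}$ types; the paper instead checks \emph{all} root systems with $\upalpha_j$ a Dynkin endpoint, observes that the only cases with $n_j = 3$ are $\mm{G}_2$ (short root) and one endpoint of $\mm{E}_8$, and then dismisses $\mm{E}_8$ by noting that the proof of Proposition~\ref{prop:NC_DEF} already showed $\Updelta_j^1$ violates Corollary~\ref{corollary:bifurcation} there. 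Either route is fine under the standing assumptions.

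For the first assertion, however, your two-step ``downward/upward'' argument through $\upalpha_j$ is more work than necessary. The paper observes that Corollary~\ref{corollary:bifurcation} gives $\uplambda_l - \uplambda_k \in \Upsigma_j$ for \emph{every} pair $k \ne l$, not just when one of them equals $1$. Combined with Corollary~\ref{corollary:new:normalizers} and the bracket identity $[\mk{g}_\upalpha, \mk{g}_\upbeta] = \mk{g}_{\upalpha+\upbeta}$, this immediately yields
\[
\mk{g}_{\uplambda_l} = [\mk{g}_{\uplambda_l - \uplambda_k}, \mk{g}_{\uplambda_k}] = [\mk{g}_{\uplambda_l - \uplambda_k}, \mk{v}_{\uplambda_k}] \subseteq \mk{v}_{\uplambda_l}
\]
in one line, with no need for the string-injectivity Lemma~\ref{lem:string_injective} or any reduction to the base case $k=1$. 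Your detour via $\upalpha_j$ works, but it introduces an unnecessary asymmetry between the upward and downward directions and an extra technical check on the $(\uplambda_k - \upalpha_j)$-string of $\upalpha_j$.
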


Note that we are not saying that $\mk{w} = \set{0}$ always leads to a C1-action: it is always trivially admissible but may not be protohomogeneous, as we saw in the case of $\mm{DEF}$-type spaces. What we are saying is, when it is protohomogeneous, the singular orbit of the corresponding C1-action must be totally geodesic, barring one exception. 

\begin{proof}[Proof of Proposition {\normalfont \ref{proposition:whole:n1:totally:gedesic}}]
We begin with the first assertion. Suppose that $\mk{w}_{\uplambda_k} = \set{0}$ for some $k$ as above. This means that $\mk{v}_{\uplambda_k} = \mk{g}_{\uplambda_k}$. Pick any $1 \le l \le m$ other than $k$. We must have $\uplambda_l - \uplambda_k \in \Upsigma_j$ by virtue of Corollary \ref{corollary:bifurcation}. But now Corollary \ref{corollary:new:normalizers} tells us that $\mk{g}_{\uplambda_l - \uplambda_k}$ normalizes $\mk{v}$. Altogether, we have
$$
\mk{g}_{\uplambda_l} = [\mk{g}_{\uplambda_l - \uplambda_k},\mk{g}_{\uplambda_k}] = [\mk{g}_{\uplambda_l - \uplambda_k},\mk{v}_{\uplambda_k}] \subseteq \mk{v}_{\uplambda_l},
$$
hence $\mk{v}_{\uplambda_l} = \mk{g}_{\uplambda_l}$. As $l$ was arbitrary, we obtain $\mk{v} = \mk{n}_j^1, \, \mk{w} = \set{0}$. Now we come to the second part. Let $\updelta = \sum_{i=1}^r n_i \upalpha_i \in \Upsigma^+$ be the highest root. By looking at \cite[pp.\hspace{2pt}336--340]{submanifolds_&_holonomy}, we see that $n_i \le 3$ whenever $\upalpha_i$ has only one neighbor in the Dynkin diagram (which $\upalpha_j$ does by Corollary \ref{corollary:alphaj:corner}), and the equality is only attained when $\Upsigma \simeq \mm{G}_2$ and $\upalpha_i$ is the short simple root, or when $\Upsigma \simeq \mm{E}_8$ and $\upalpha_i$ is one particular boundary root (among the three). In regard to $\upalpha_j$, assume first that $n_j \le 2$. Then $\Updelta_j^\upnu$ is empty whenever $\upnu \ge 3$ and thus $\mk{n}_j = \mk{n}_j^1 \oplus \mk{n}_j^2$. Since $\mk{w} = \set{0}$, decomposition \eqref{equation:tangent:orbit:an} now reads as $\mk{h} = \mk{a} \oplus \mk{n}^j \oplus \mk{n}_j^2$. Take a normal vector $\upxi \in \mk{v}_{\uplambda_k} = \mk{g}_{\uplambda_k}$ for some $1 \le k \le m$. We want to show that the shape operator $\mc{A}_\upxi$ is identically zero. We already know that it vanishes on $\mk{a} \oplus \mk{n}^j$---thanks to Propositions \ref{proposition:shape:orbit:an}\eqref{proposition:shape:orbit:an:a} and \ref{proposition:all:shapes:nj}---so we only need to show that it vanishes on $\mk{n}_j^2$ as well. Pick any $\upgamma \in \Updelta_j^2$ and take some $X \in \mk{g}_\upgamma$. By definition, $\uplambda_k = \upalpha_j + \sum_{i \ne j} n_i \upalpha_i$ and $\upgamma = 2\upalpha_j + \sum_{i \ne j} m_i \upalpha_i$. Therefore, $\upgamma + \uplambda_k$ is not a root, for otherwise $\Updelta_j^3$ would be nonempty. What is more, $\upgamma - \uplambda_k$ either lies in $\Updelta_j^1$ or is not a root at all. We deduce that $[\upxi, X] = 0$ and $[\uptheta \upxi, X] \in \mk{n}_j^1 = \mk{v}$, and the latter implies $[\uptheta \upxi, X]^\top = 0$. By plugging this into Proposition \ref{proposition:shape:orbit:an}\eqref{proposition:shape:orbit:an:b}, we get $\mathcal{A}_\upxi X = 0$. We conclude that $\mc{A}_\upxi$ vanishes on $\mk{n}_j^2$ and thus on the whole $\mk{h}$. According to our discussion at the end of Subsection \ref{sec:preliminaries:extrinsic_geometry_cpc}, singular orbits of C1-actions are CPC, which implies that all of the shape operators of $H_{j,\mk{w}} \cdot o$ are zero and thus this orbit is totally geodesic.

Finally, let us look at the case when $n_j = 3$. If $\Upsigma \simeq E_8$, then $\mk{w} = \{0\}$ is not protohomogeneous, as we witnessed in the proof of Proposition \ref{prop:NC_DEF}. So we may assume $\Upsigma \simeq \mm{G}_2$ and $\upalpha_j = \upalpha_2$ is the short simple root. There are two spaces with this root system: $\mm{G}_2^2/\SO(4)$ and $\mm{G}_2(\C)/\mm{G}_2$. As was shown in \cite{berndt_tamaru_cohomogeneity_one}, for either of them, the subspace $\mk{w} = \set{0}$ of $\mk{n}_2^1$ is admissible and protohomogeneous, and the corresponding C1-action of $H_{2,0}$ has a non-totally-geodesic singular orbit---because it does not figure in the classification in \cite{berndt_tamaru_totally_geodesic_singular_orbit}. One can also verify that this orbit is not totally geodesic directly. Indeed, we have $\upalpha_1 + 2\upalpha_2 \in \Updelta_2^2$ and $\upalpha_1 + 3\upalpha_2 \in \Updelta_2^3$. For any nonzero $\upxi \in \mk{v}_{\upalpha_2} = \mk{g}_{\upalpha_2}$, the restriction of $\mc{A}_\upxi$ to $\mk{g}_{\upalpha_1 + 3\upalpha_2}$ is given simply by $-\frac{1}{2}\ad(\uptheta \upxi) \colon \mk{g}_{\upalpha_1 + 3\upalpha_2} \to \mk{g}_{\upalpha_1 + 2\upalpha_2}$, as follows from Proposition \ref{proposition:shape:orbit:an}\eqref{proposition:shape:orbit:an:b}. Since $\upalpha_1 + 3\upalpha_2$ is the first root in its $(-\upalpha_2)$-string, this map in nonzero by virtue of Lemma \ref{lem:string_injective}.
\end{proof}

Since the classification of C1-actions with a totally geodesic singular orbit on irreducible symmetric spaces of noncompact type is complete, we can disregard the case $\mk{w} = \set{0}$. Thanks to the above proposition, from now on, \textit{we can and will assume that $\mk{w}_{\uplambda_k} \ne \set{0}$ for every $k=1,\ldots,m$}. This enables us to obtain the following result:

\begin{lem}\label{lemma:dimension:root:Lambda:Delta1}
For any $\upalpha_i \in \Uplambda_j$, there exists a root $\uplambda_k \in \Updelta_j^1$ such that $\dim \mk{g}_{\uplambda_k} \ge 2 \dim \mk{g}_{\upalpha_i}$.
\end{lem}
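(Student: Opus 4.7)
The plan is to isolate, for the given $\upalpha_i \in \Uplambda_j$, a specific $\uplambda_l \in \Updelta_j^1$ such that $\mk{v}_{\uplambda_l}$ and $\mk{w}_{\uplambda_l}$ each contain an injective homomorphic image of $\mk{g}_{\upalpha_i}$. First I would invoke Proposition \ref{prop:CE_trick}, which guarantees the existence of some index $k$ with $\bilin{H^i}{\uplambda_k} \ne 0$ (otherwise the action would be a nontrivial canonical extension). Letting $l$ be the minimal such index, one automatically has $l \ge 2$ since $\uplambda_1 = \upalpha_j$ and $i \ne j$. Combining Corollary \ref{corollary:bifurcation} (consecutive $\uplambda_k - \uplambda_{k-1}$ lie in $\Upsigma_j^+$ and have height $1$) with the height constraint on the difference singles out $\uplambda_l - \uplambda_{l-1}$ as a simple root in $\Uplambda_j$; the $H^i$-test then forces $\uplambda_l = \uplambda_{l-1} + \upalpha_i$. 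Minimality of $l$ also shows that $\uplambda_{l-1} - \upalpha_i$ has coefficient $+1$ on $\upalpha_j$ and $-1$ on $\upalpha_i$, so it is not a root.

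Next I would run a Jacobi-identity computation on $[\uptheta X, [X, v]]$ for $X \in \mk{g}_{\upalpha_i}$ and $v \in \mk{g}_{\uplambda_{l-1}}$. The term $[X, [\uptheta X, v]]$ vanishes because $\uplambda_{l-1} - \upalpha_i$ is not a root, while Lemma \ref{lemma:a:k0}\eqref{lemma:a:k0:1} evaluates $[[\uptheta X, X], v]$ to $\bilin{\uplambda_{l-1}}{\upalpha_i}\,\|X\|_{B_\uptheta}^2\, v$. The sign of the scalar $\bilin{\uplambda_{l-1}}{\upalpha_i}$ is pinned down by the fact that $\uplambda_{l-1} + \upalpha_i$ is a root but $\uplambda_{l-1} - \upalpha_i$ is not: the Cartan integer $n_{\uplambda_{l-1},\upalpha_i} = p - q$ has $p = 0$ and $q \ge 1$, so $\bilin{\uplambda_{l-1}}{\upalpha_i} < 0$. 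Therefore $[\uptheta X, [X,v]]$ is a nonzero multiple of $v$ whenever $X$ and $v$ are nonzero, which forces $[X,v] \ne 0$, and hence the linear map $\Phi_v \colon \mk{g}_{\upalpha_i} \to \mk{g}_{\uplambda_l}$, $X \mapsto [X,v]$, is injective for every nonzero $v \in \mk{g}_{\uplambda_{l-1}}$.

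Finally I would use Corollary \ref{corollary:new:normalizers}, which tells us that $\mk{g}_{\upalpha_i}$ normalizes both $\mk{w}$ and $\mk{v}$, so $\Phi_v$ takes values in $\mk{v}_{\uplambda_l}$ when $v \in \mk{v}_{\uplambda_{l-1}}$ and in $\mk{w}_{\uplambda_l}$ when $v \in \mk{w}_{\uplambda_{l-1}}$. Nonzero choices exist in both subspaces (by Corollary \ref{corollary:bifurcation} and the standing assumption inherited from Proposition \ref{proposition:whole:n1:totally:gedesic}), so
\begin{equation*}
\dim \mk{g}_{\uplambda_l} \;=\; \dim \mk{v}_{\uplambda_l} + \dim \mk{w}_{\uplambda_l} \;\ge\; \dim \mk{g}_{\upalpha_i} + \dim \mk{g}_{\upalpha_i} \;=\; 2\dim \mk{g}_{\upalpha_i},
\end{equation*}
and we may take $k = l$. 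The subtlety I expect to be most delicate is the sign analysis of $\bilin{\uplambda_{l-1}}{\upalpha_i}$: one has to combine \emph{both} the positive statement (existence of the root $\uplambda_l$, guaranteed by the chain structure of Corollary \ref{corollary:bifurcation}) \emph{and} the negative statement (non-existence of $\uplambda_{l-1} - \upalpha_i$, guaranteed by the minimality of $l$) to rule out the degenerate case $\bilin{\uplambda_{l-1}}{\upalpha_i} = 0$, which would otherwise collapse the double-bracket argument.
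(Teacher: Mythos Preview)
Your proof is correct and follows the paper's approach almost verbatim: locate the first index $l$ at which $\upalpha_i$ enters the chain $(\uplambda_k)$, observe that $\uplambda_{l-1}-\upalpha_i$ is not a root, use the injectivity of the bracket $\mk{g}_{\upalpha_i}\times\mk{g}_{\uplambda_{l-1}}\to\mk{g}_{\uplambda_l}$, and then split the image into $\mk{w}$- and $\mk{v}$-parts via Corollary~\ref{corollary:new:normalizers}. The only differences are cosmetic: the paper takes the \emph{largest} index with vanishing $H^i$-coefficient (your $l-1$) and cites Lemma~\ref{lem:string_injective} for the bracket injectivity rather than reproving it through your Jacobi/double-bracket computation.
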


\begin{proof}
Take any $\upalpha = \upalpha_i \in \Uplambda_j$. Put $l$ to be the largest number among $\{1, \dots, m\}$ such that $\bilin{H^i}{\uplambda_l} = 0$, and write $\uplambda = \uplambda_l$ for simplicity. As $\uplambda_1 = \upalpha_j$ and $\bilin{H^i}{\upalpha_j} = 0$, $l$ is well-defined. Moreover, according to Proposition \ref{prop:CE_trick}, $l$ cannot be equal to $m$. We must have $\bilin{H^i}{\uplambda_{l+1}} = 1$, which means $\uplambda_{l+1} = \uplambda + \upalpha$. At the same time, $\uplambda - \upalpha$ is of the form $\upalpha_j - \upalpha_i + \sum_{k \ne i,j} n_k \upalpha_k$, hence it cannot be a root. But then $\upalpha - \uplambda$ cannot be a root either. Therefore, $\upalpha$ is the root of minimum height in its $\uplambda$-string, and that string also contains $\upalpha + \uplambda$. According to Lemma \ref{lem:string_injective}, for any nonzero $X \in \mk{g}_{\uplambda}$, the map $\restr{\ad(X)}{\mk{g}_{\upalpha}} \colon \mk{g}_{\upalpha} \to \mk{g}_{\upalpha + \uplambda}$ is injective. It follows from Corollary \ref{corollary:bifurcation} and our assumption above that each of the two summands in $\mk{g}_{\uplambda} = \mk{w}_{\uplambda} \oplus \mk{v}_{\uplambda}$ is nontrivial. Take any nonzero vectors $X \in \mk{w}_{\uplambda}$ and $\upxi \in \mk{v}_{\uplambda}$. In view of Corollary \ref{corollary:new:normalizers}, $\mk{g}_{\upalpha}$ normalizes both $\mk{w}$ and $\mk{v}$, which leads to
$$
\ad(X) \mk{g}_{\upalpha} \subseteq \mk{w}_{\upalpha + \uplambda} \quad \text{and} \quad  \ad(\upxi) \mk{g}_{\upalpha} \subseteq \mk{v}_{\upalpha + \uplambda}.
$$
We deduce that $\dim \mk{w}_{\upalpha + \uplambda} \geq \dim \mk{g}_{\upalpha}$ and $\dim \mk{v}_{\upalpha + \uplambda} \geq \dim \mk{g}_{\upalpha}$, which implies $\dim \mk{g}_{\upalpha + \uplambda} \ge 2 \dim \mk{g}_{\upalpha}$, so we can take $\upalpha + \uplambda = \uplambda_{l+1}$ as our desired root.
\end{proof}

The power of this result lies in the fact that it will allow us to exclude many candidates for $\upalpha_j$ based solely on root multiplicities. To begin with:

\begin{prop}\label{prop:NC_A}
Any nilpotent construction action on a symmetric space $M$ of noncompact type with root system of type $\mm{A}_r, \, r \ge 2,$ either has a totally geodesic singular orbit or else can be obtained via canonical extension from a proper boundary component of $M$.
\end{prop}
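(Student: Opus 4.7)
The main idea is that for $\mm{A}_r$, all restricted root spaces share the same dimension, which is instantly incompatible with the dimension inequality provided by Lemma~\ref{lemma:dimension:root:Lambda:Delta1}. The argument therefore reduces to unpacking the standing assumptions of the section.

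My plan is as follows. By Theorem~\ref{thm:NC_simplification}, any nilpotent construction action on $M$ is orbit-equivalent to one arising from positive data $(o,\mk{a},\Upsigma^+,\upalpha_j,\mk{w})$, so I may work under that hypothesis. If this action can be realized via a nontrivial canonical extension from a proper boundary component of $M$, the proposition already holds, so I would assume the contrary---which is exactly the standing hypothesis made just before Corollary~\ref{cor:simple_g'_nonzero}. Similarly, if $\mk{w}=\set{0}$, then Proposition~\ref{proposition:whole:n1:totally:gedesic} produces a totally geodesic singular orbit (its $\mm{G}_2$ exception does not arise since $\Upsigma\cong\mm{A}_r$ with $r\ge 2$), and again we are done. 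Thus it suffices to derive a contradiction in the remaining case, where the entire machinery of the section, including Lemma~\ref{lemma:dimension:root:Lambda:Delta1}, is at our disposal.

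Next, I would exploit the uniformity of $\mm{A}_r$. The Weyl group $W(\Upsigma)\cong S_{r+1}$ acts transitively on $\Upsigma$, and since it is induced by $N_K(\mk{a})/Z_K(\mk{a})$ acting on $\mk{g}$ via the adjoint representation, it preserves the dimension of each restricted root space. Hence $\dim\mk{g}_\upalpha$ takes a single value $d\ge 1$ as $\upalpha$ ranges over $\Upsigma$.

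To conclude, I would pick any $\upalpha_i\in\Uplambda_j$, which is possible because $r\ge 2$. By Lemma~\ref{lemma:dimension:root:Lambda:Delta1} there exists $\uplambda_k\in\Updelta_j^1\subset\Upsigma$ with $\dim\mk{g}_{\uplambda_k}\ge 2\dim\mk{g}_{\upalpha_i}=2d$, forcing $d\ge 2d$ and thus $d=0$, which is absurd. I do not anticipate a serious obstacle here: once Lemma~\ref{lemma:dimension:root:Lambda:Delta1} is in hand, the proposition reduces to a one-line dimension count, with all the real work having already been invested into the preceding development that established positivity, the normalizer description, and the consequences of protohomogeneity.
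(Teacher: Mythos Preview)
Your proposal is correct and follows essentially the same route as the paper: both arguments reduce to applying Lemma~\ref{lemma:dimension:root:Lambda:Delta1} and noting that in type $\mm{A}_r$ all restricted roots have the same multiplicity, yielding an immediate contradiction. The paper additionally invokes Corollary~\ref{corollary:alphaj:corner} to restrict to $j=1$ or $j=r$, but as your argument shows this step is not actually needed---picking any $\upalpha_i\in\Uplambda_j$ suffices.
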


Before we proceed, let us make a quick overview. In the proof of this proposition (as well as of Propositions \ref{prop:NC_C} and \ref{prop:NC_B_BC}), we will assume that $\mk{w}$ is nonzero (justified by Proposition~\ref{proposition:whole:n1:totally:gedesic}) and the action of $H_{j,\mk{w}}$ does not come from nontrivial canonical extension. These assumptions (either one of them or both) are required by some of the results we have established in this section (notably, by Corollaries \ref{corollary:bifurcation} and \ref{corollary:alphaj:corner} and Lemma \ref{lemma:dimension:root:Lambda:Delta1}). We will then use those results to show that either no such $\mk{w}$ exists or the singular orbit of $H_{j,\mk{w}}$ must be totally geodesic.

\begin{proof}[Proof of Proposition {\normalfont \ref{prop:NC_A}}]
Consider the Dynkin diagram of $\Upsigma \simeq \mm{A}_r$:
\begin{center}
\begin{dynkinDiagram}[arrow shape/.style={-{Bourbaki[length=8pt]}}, labels={\upalpha_1,\upalpha_2,\upalpha_{r-1},\upalpha_r}, text style/.style={scale=0.9}, scale=3, root radius=.06cm]A{}
\end{dynkinDiagram}
\end{center}
Thanks to Corollary \ref{corollary:alphaj:corner}, we need only deal with $j = 1$ or $r$. The two cases are essentially identical due to the symmetry of the Dynkin diagram, so we assume $j = 1$. According to Lemma \ref{lemma:dimension:root:Lambda:Delta1}, there must exist a root in $\Updelta_1^1$ whose multiplicity is at least twice that of $\upalpha_2$. But this is impossible since all roots in $\Upsigma$ have the same length and thus multiplicity (see, e.g., \cite[Cor.\hspace{2pt}6.3.2]{solonenko_thesis}).
\end{proof}

\begin{rem}
    Even though we do not need this, a similar argument can be applied to any space of rank $\ge 2$ all of whose restricted roots have the same multiplicity (and any choices of $\upalpha_j$ and nonzero $\mk{w}$ therein). This includes all spaces of types $\mm{A}, \mm{D}, \mm{E},$ and $\mm{G}_2$, all spaces whose isometry Lie algebra is a split real form, as well as the noncompact duals of all compact simple Lie groups.
\end{rem}

Using similar arguments and a little more effort, we can also dispatch the case of $\mm{C}_r$:

\begin{prop}\label{prop:NC_C}
Any nilpotent construction action on a symmetric space $M$ of noncompact type with root system of type $\mm{C}_r, \, r \ge 3,$ either has a totally geodesic singular orbit or else can be obtained via canonical extension from a proper boundary component of $M$.
\end{prop}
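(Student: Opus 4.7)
The strategy is to mimic the proof of Proposition \ref{prop:NC_A}, combining the corner restriction of Corollary \ref{corollary:alphaj:corner} with either the height-uniqueness guaranteed by Corollary \ref{corollary:bifurcation} and Proposition \ref{proposition:v:neq:0}, or with the multiplicity lower bound of Lemma \ref{lemma:dimension:root:Lambda:Delta1}. By Corollary \ref{corollary:alphaj:corner}, $\upalpha_j$ must be one of the two boundary nodes in the Dynkin diagram of $\mm{C}_r$, which in the standard labeling (with $\upalpha_r$ the long simple root) means $\upalpha_j \in \set{\upalpha_1, \upalpha_r}$. Under our standing assumptions that $\mk{w} \ne \set{0}$ and the resulting C1-action is not a nontrivial canonical extension, I will show that each of these two options leads to a contradiction.

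First I would dispatch the case $j = r$. Using the orthonormal realization $\upalpha_i = e_i - e_{i+1}$ ($i < r$) and $\upalpha_r = 2e_r$, the set $\Updelta_r^1$ consists of those positive roots whose $\upalpha_r$-coefficient equals $1$; one immediately checks that it contains the two distinct roots $\upalpha_{r-2} + \upalpha_{r-1} + \upalpha_r = e_{r-2} + e_r$ and $2\upalpha_{r-1} + \upalpha_r = 2e_{r-1}$, both of height $3$. This directly contradicts Corollary \ref{corollary:bifurcation} together with Proposition \ref{proposition:v:neq:0}, which jointly force $\Updelta_j^1$ to contain at most one root of each height.

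For the remaining case $j = 1$, one computes $\Updelta_1^1 = \set{e_1 \pm e_i : 2 \le i \le r}$, and every element therein is a short root of $\Upsigma$. Since the restricted Weyl group of $\mm{C}_r$ acts transitively on short roots, and each Weyl element is realized by an element of $N_K(\mk{a})$ whose adjoint action permutes root spaces, any two short restricted roots have the same root-space dimension; in particular $\dim \mk{g}_\uplambda = \dim \mk{g}_{\upalpha_2}$ for every $\uplambda \in \Updelta_1^1$. As $\upalpha_2 \in \Uplambda_1$, Lemma \ref{lemma:dimension:root:Lambda:Delta1} provides some $\uplambda_k \in \Updelta_1^1$ with $\dim \mk{g}_{\uplambda_k} \ge 2\dim \mk{g}_{\upalpha_2}$, which is impossible.

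The main obstacle is purely root-theoretic bookkeeping: correctly enumerating $\Updelta_j^1$ for the two admissible choices of $j$ and, in the case $j = r$, verifying that the pair of height-$3$ roots persists already in the borderline rank $r = 3$ (where they read $\upalpha_1 + \upalpha_2 + \upalpha_3$ and $2\upalpha_2 + \upalpha_3$). Once these elementary computations are in place, the contradictions are immediate from results already proved in this section, so the only remaining nilpotent construction data are $\mk{w} = \set{0}$, which produces a totally geodesic singular orbit by Proposition \ref{proposition:whole:n1:totally:gedesic}, and those coming from canonical extension from a proper boundary component.
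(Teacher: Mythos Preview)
Your proof is correct and follows essentially the same approach as the paper: reduce to $j\in\{1,r\}$ via Corollary~\ref{corollary:alphaj:corner}, rule out $j=r$ by exhibiting two roots of equal height in $\Updelta_r^1$ (the paper uses the very same pair $\upalpha_r+2\upalpha_{r-1}$ and $\upalpha_r+\upalpha_{r-1}+\upalpha_{r-2}$), and rule out $j=1$ by observing that every root in $\Updelta_1^1$ is short and hence has the same multiplicity as $\upalpha_2$, contradicting Lemma~\ref{lemma:dimension:root:Lambda:Delta1}. The only cosmetic difference is that you compute $\Updelta_j^1$ via the orthonormal realization whereas the paper cites \cite{sanmartin-strings}; also note that Corollary~\ref{corollary:bifurcation} already encodes the height-uniqueness on its own, so the separate appeal to Proposition~\ref{proposition:v:neq:0} is redundant.
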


\textit{Proof.} 
Consider the Dynkin diagram of $\Upsigma \simeq \mm{C}_r$:
\begin{center}
\begin{dynkinDiagram}[arrow shape/.style={-{Bourbaki[length=8pt]}}, labels={\upalpha_1,\upalpha_2,\upalpha_{r-2},\upalpha_{r-1},\upalpha_r}, text style/.style={scale=0.9}, scale=3, root radius=.06cm]C{}
\end{dynkinDiagram}
\end{center}
Once again, according to Corollary \ref{corollary:alphaj:corner}, we only need to analyze the cases $j = 1$ and $j = r$, except this time we have to handle them separately as the Dynkin diagram has no nontrivial symmetries. First, put $j = 1$. According to \cite[Prop\hspace{2pt}4.6]{sanmartin-strings}, we have 
$$
\Updelta_1^1 = \{ \upalpha_1\} \cup \Bigl\{ \upalpha_1 + \sum_{i=2}^k \upalpha_i, \, \upalpha_1 + \sum_{i=2}^r \upalpha_i + \sum_{j=1}^l \upalpha_{r-j} \, \mid \,  \substack{2 \leq k \leq r, \\[1pt] 1 \leq l \leq  r-2} \Bigr\}.
$$
Moreover, all roots in $\Updelta_1^1$ have the same length and thus multiplicity. In Figure \ref{figure:string:c:c} we include the diagram of $\Updelta_1^1$ with $r = 5$ for illustrative purposes.\vspace{2pt}

\begin{figure}[h!]
\setlength{\abovecaptionskip}{5pt plus 0pt minus 2pt}
\begin{tikzpicture}[scale=1.8]
\draw (0, 0) circle (0.1);
\draw (1, 0) circle (0.1);
\draw (2, 0) circle (0.1);
\draw (3, 0) circle (0.1);
\draw (4, 0) circle (0.1);
\draw (5, 0) circle (0.1);
\draw (6, 0) circle (0.1);
\draw (7, 0) circle (0.1);
\draw[->] (0.1, 0.) -- (0.5, 0.);
\draw (0.5, 0.) -- (0.9, 0.);
\draw (0.5, -0.15) node {$\upalpha _2$};
\draw[->] (1.1, 0.) -- (1.5, 0.);
\draw (1.5, 0.) -- (1.9, 0.);
\draw (1.5, -0.15) node {$\upalpha _3$};
\draw[->] (2.1, 0.) -- (2.5, 0.);
\draw (2.5, 0.) -- (2.9, 0.);
\draw (2.5, -0.15) node {$\upalpha _4$};
\draw[->] (3.1, 0.) -- (3.5, 0.);
\draw (3.5, 0.) -- (3.9, 0.);
\draw (3.5, -0.15) node {$\upalpha _5$};
\draw[->] (4.1, 0.) -- (4.5, 0.);
\draw (4.5, 0.) -- (4.9, 0.);
\draw (4.5, -0.15) node {$\upalpha _4$};
\draw[->] (5.1, 0.) -- (5.5, 0.);
\draw (5.5, 0.) -- (5.9, 0.);
\draw (5.5, -0.15) node {$\upalpha _3$};
\draw[->] (6.1, 0.) -- (6.5, 0.);
\draw (6.5, 0.) -- (6.9, 0.);
\draw (6.5, -0.15) node {$\upalpha _2$};
\draw (0, -0.25) node {$\upalpha _1$};
\end{tikzpicture}
\caption{$\Updelta_j^1$ for $\Upsigma \cong \mm{C}_5$ and $j  = 1$.}
\label{figure:string:c:c}
\end{figure}

As $r \ge 3$, every $\uplambda \in \Updelta_1^1$ has the same length as $\upalpha_2$---and hence the same multiplicity, which contradicts Lemma \ref{lemma:dimension:root:Lambda:Delta1}. Now let us consider the case $j = r$. Using \cite[Prop.\hspace{2pt}4.2]{sanmartin-strings}, we have 
\begin{equation}
\Updelta_r^1 = \{ \upalpha_r \} \cup \Bigl\{ \upalpha_r + \sum_{i = 1}^k \upalpha_{r-i}, \, \upalpha_r + \sum_{i = 1}^k \upalpha_{r-i}  + \sum_{j = 1}^l \upalpha_{r-j} \, \mid \,  \substack{  1 \leq k \leq r-1, \\[1pt] \, 1 \leq l \leq k} \Bigr\}.
\end{equation}
Since $r \geq 3$, we see that
\begin{equation*}
\upalpha_r + 2 \upalpha_{r-1} \quad \text{and} \quad \upalpha_r +  \upalpha_{r-1} +\upalpha_{r-2}
\end{equation*}
are both roots in $\Updelta_r^1$. Moreover, they have the same height, which violates Corollary \ref{corollary:bifurcation}. In Figure \ref{figure:string:a:c}, we draw the diagram of $\Updelta_r^1$ for $r = 5$ with these two roots corresponding to the blue nodes. \qed

\begin{figure}[h]
\setlength{\abovecaptionskip}{5pt plus 3pt minus 2pt}
\begin{tikzpicture}[scale=1.8]
\draw (0, 0) circle (0.1);
\draw (1, 0) circle (0.1);
\filldraw[color = blue] (2, 0) circle (0.1);
\draw (3, 0) circle (0.1);
\filldraw[color = blue] (1, 1) circle (0.1);
\draw (2, 1) circle (0.1);
\draw (3, 1) circle (0.1);
\draw (2, 2) circle (0.1);
\draw (3, 2) circle (0.1);
\draw (3, 3) circle (0.1);
\draw (4, 0) circle (0.1);
\draw (4, 1) circle (0.1);
\draw (4, 2) circle (0.1);
\draw (4, 3) circle (0.1);
\draw (4, 4) circle (0.1);
\draw[->] (0.1, 0.) -- (0.5, 0.);
\draw (0.5, 0.) -- (0.9, 0.);
\draw (0.5, -0.15) node {$\upalpha _4$};
\draw[->] (1.1, 0.) -- (1.5, 0.);
\draw (1.5, 0.) -- (1.9, 0.);
\draw (1.5, -0.15) node {$\upalpha _3$};
\draw[->] (2.1, 0.) -- (2.5, 0.);
\draw (2.5, 0.) -- (2.9, 0.);
\draw (2.5, -0.15) node {$\upalpha _2$};
\draw[->] (3.1, 0.) -- (3.5, 0.);
\draw (3.5, 0.) -- (3.9, 0.);
\draw (3.5, -0.15) node {$\upalpha _1$};
\draw[->] (1., 0.1) -- (1., 0.5);
\draw (1., 0.5) -- (1., 0.9);
\draw (0.85, 0.5) node {$\upalpha _4$};
\draw[->] (2., 0.1) -- (2., 0.5);
\draw (2., 0.5) -- (2., 0.9);
\draw (1.85, 0.5) node {$\upalpha _4$};
\draw[->] (3., 0.1) -- (3., 0.5);
\draw (3., 0.5) -- (3., 0.9);
\draw (2.85, 0.5) node {$\upalpha _4$};
\draw[->] (4., 0.1) -- (4., 0.5);
\draw (4., 0.5) -- (4., 0.9);
\draw (3.85, 0.5) node {$\upalpha _4$};
\draw[->] (1.1, 1.) -- (1.5, 1.);
\draw (1.5, 1.) -- (1.9, 1.);
\draw (1.5, 0.85) node {$\upalpha _3$};
\draw[->] (2.1, 1.) -- (2.5, 1.);
\draw (2.5, 1.) -- (2.9, 1.);
\draw (2.5, 0.85) node {$\upalpha _2$};
\draw[->] (3.1, 1.) -- (3.5, 1.);
\draw (3.5, 1.) -- (3.9, 1.);
\draw (3.5, 0.85) node {$\upalpha _1$};
\draw[->] (2., 1.1) -- (2., 1.5);
\draw (2., 1.5) -- (2., 1.9);
\draw (1.85, 1.5) node {$\upalpha _3$};
\draw[->] (3., 1.1) -- (3., 1.5);
\draw (3., 1.5) -- (3., 1.9);
\draw (2.85, 1.5) node {$\upalpha _3$};
\draw[->] (2.1, 2.) -- (2.5, 2.);
\draw (2.5, 2.) -- (2.9, 2.);
\draw (2.5, 1.85) node {$\upalpha _2$};
\draw[->] (3., 2.1) -- (3., 2.5);
\draw (3., 2.5) -- (3., 2.9);
\draw (2.85, 2.5) node {$\upalpha _2$};
\draw[->] (3.1, 2.) -- (3.5, 2.);
\draw (3.5, 2.) -- (3.9, 2.);
\draw (3.5, 1.85) node {$\upalpha _1$};
\draw[->] (4., 1.1) -- (4., 1.5);
\draw (4., 1.5) -- (4., 1.9);
\draw (3.85, 1.5) node {$\upalpha _3$};
\draw[->] (4., 2.1) -- (4., 2.5);
\draw (4., 2.5) -- (4., 2.9);
\draw (3.85, 2.5) node {$\upalpha _2$};
\draw[->] (4., 3.1) -- (4., 3.5);
\draw (4., 3.5) -- (4., 3.9);
\draw (3.85, 3.5) node {$\upalpha _1$};
\draw[->] (3.1, 3.) -- (3.5, 3.);
\draw (3.5, 3.) -- (3.9, 3.);
\draw (3.5, 2.85) node {$\upalpha _1$};
\draw (0, -0.25) node {$\upalpha _5$};
\end{tikzpicture}
\caption{$\Updelta_j^1$ for $\Upsigma \cong \mm{C}_5$ and $j  = 5$.}
\label{figure:string:a:c}
\end{figure}
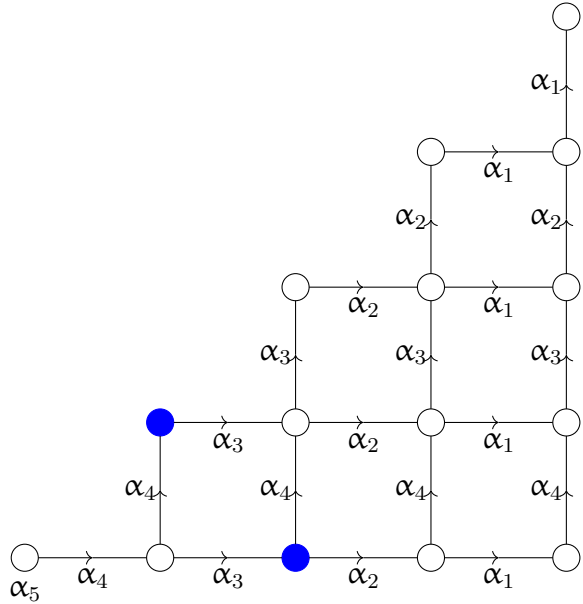

We are left to deal with the spaces of types $\mm{B}_r$ and $\mm{BC}_r$ with $r \ge 2$. As we are about to see, these cases are harder, as arguing merely by looking at the roots and their multiplicities is not going to cut it.

\begin{prop}\label{prop:NC_B_BC}
Any nilpotent construction action on a symmetric space $M$ of noncompact type with root system of type $\mm{B}_r$ or $\mm{BC}_r, \, r \ge 2$, either has a totally geodesic singular orbit or can be obtained via canonical extension from a proper boundary component of~$M$.
\end{prop}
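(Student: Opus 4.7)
The plan is to reduce the $\mm{B}_r$/$\mm{BC}_r$ cases to a shape-operator analysis showing that the singular orbit is totally geodesic whenever the action is not itself a canonical extension. First, by Corollary \ref{corollary:alphaj:corner} we may assume $j \in \{1,r\}$, and by Proposition \ref{proposition:whole:n1:totally:gedesic} each $\mk{w}_{\uplambda_k}$ is a proper nontrivial subspace of $\mk{g}_{\uplambda_k}$. Using the explicit $\Uplambda_j$-string descriptions from \cite{sanmartin-strings}, I would write down $\Updelta_j^1$ in each of the four sub-cases. For example, for $\Upsigma \cong \mm{B}_r$ and $j=r$ one gets $\Updelta_r^1=\{e_1,\ldots,e_r\}$ (all short and of the same multiplicity), while for $j=1$ one gets a single chain of length $2r-1$ made up of $r-1$ long roots, a unique short root at height $r$, and $r-1$ more long roots. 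The $\mm{BC}_r$ cases are analogous but with $2\upalpha_r$ present, contributing additional levels.

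Next, I would invoke Lemma \ref{lemma:dimension:root:Lambda:Delta1} and the standard multiplicity tables for symmetric spaces of noncompact type with root system $\mm{B}_r$ or $\mm{BC}_r$. The required inequality $\dim\mk{g}_{\uplambda_k}\ge 2\dim\mk{g}_{\upalpha_i}$ for every $\upalpha_i\in\Uplambda_j$, combined with the fact that $\Updelta_j^1$ contains only roots of at most two length classes (plus possibly $2\upalpha_r$), eliminates most classical families and leaves only a short finite list of (space, $j$) pairs to analyse.

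For each surviving configuration, the strategy is to prove directly that every shape operator $\mc{A}_\upxi$ of the singular orbit $S=H_{j,\mk{w}}\cdot o$ vanishes, which in view of Propositions \ref{proposition:shape:orbit:an}(a) and \ref{proposition:all:shapes:nj} reduces to verifying vanishing on $\mk{h}\cap\mk{n}_j=\mk{w}\oplus\bigoplus_{\upnu\ge 2}\mk{n}_j^\upnu$. For $X\in\mk{w}_{\uplambda'}$ and $\upxi\in\mk{v}_\uplambda$, Proposition \ref{proposition:shape:orbit:an}(b) expresses $\mc{A}_\upxi X$ in terms of $[\upxi,X]\in\mk{g}_{\uplambda+\uplambda'}$ (which lies in $\mk{n}_j^{\ge 2}$, so outside $\mk{h}$ after projection unless a very specific bracket survives) and $[\uptheta\upxi,X]\in\mk{g}_{\uplambda'-\uplambda}$ with $\uplambda'-\uplambda\in\Upsigma_j\cup\{0\}$; a similar analysis applies when $X\in\mk{n}_j^\upnu$, $\upnu\ge 2$. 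The key leverage comes from Corollary \ref{corollary:new:normalizers}, which says that for every $\upalpha\in\Upsigma_j$ the whole $\mk{g}_\upalpha$ normalizes both $\mk{w}$ and $\mk{v}$, together with the CPC property of $S$: if I can exhibit a single normal unit direction whose shape operator is nilpotent with only zero eigenvalues (and symmetric, hence zero), then CPC forces every shape operator to vanish.

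The main obstacle I expect is the $\mm{BC}_r$ case with $j=r$ (and to a lesser extent $\mm{B}_r$ with $j=1$), where $\Updelta_r^1$ is rich in roots of mixed height and multiplicity and where $\mk{n}_j^2$ contains the additional pieces coming from $2\upalpha_r$. Here one cannot avoid a careful combinatorial argument combining the root-string injectivity of $\ad(X)^k$ (Lemma \ref{lem:string_injective}), the $\mk{g}'_j$-invariance of the splitting $\mk{g}_{\uplambda_k}=\mk{w}_{\uplambda_k}\oplus\mk{v}_{\uplambda_k}$, and the explicit bracket relations $[\mk{g}_\upalpha,\mk{g}_\upbeta]=\mk{g}_{\upalpha+\upbeta}$ between nonproportional roots, to propagate the vanishing along $\Updelta_j$. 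Once all shape operators are shown to vanish, $S$ is totally geodesic and the proposition follows, completing the last remaining case of the nilpotent construction problem and hence the \hyperlink{thm:main}{Main theorem}.
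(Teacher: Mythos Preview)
Your outline follows the paper's approach, but there are two concrete missteps worth flagging.

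First, the case $j=1$ is not ``to a lesser extent'' hard: it is eliminated in one line. The short simple root $\upalpha_r\in\Uplambda_1$ has the \emph{largest} multiplicity among all restricted roots of a $\mm{B}_r$- or $\mm{BC}_r$-type space (this comes from the classification tables). Hence no $\uplambda\in\Updelta_1^1$ can satisfy $\dim\mk{g}_\uplambda\ge 2\dim\mk{g}_{\upalpha_r}$, and Lemma~\ref{lemma:dimension:root:Lambda:Delta1} applied to $\upalpha_i=\upalpha_r$ rules out $j=1$ for every such space, not just ``most classical families''. All of the actual work is in $j=r$, for both $\mm{B}_r$ and $\mm{BC}_r$.

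Second, your shape-operator plan contains an error and misses the organizing idea. You write that for $X\in\mk{w}_{\uplambda'}$ and $\upxi\in\mk{v}_\uplambda$ the bracket $[\upxi,X]\in\mk{g}_{\uplambda+\uplambda'}$ lies ``outside $\mk{h}$ after projection''; in fact $\mk{g}_{\uplambda+\uplambda'}\subset\mk{n}_j^2\subset\mk{h}$, so this term does \emph{not} project away and must be controlled directly. The paper's key move is not to take an arbitrary $\upxi$ but to choose $\upxi\in\mk{v}_{\upalpha_r}$ specifically. Then every nontrivial piece of the computation is governed by the $\upalpha_r$-strings of the roots in $\Upsigma^+$, and for $j=r$ these strings have length at most $3$ (or $2$ through $\upalpha_r$ itself, plus $2\upalpha_r$ in type $\mm{BC}_r$). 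One verifies $\mc{A}_\upxi=0$ string by string, using Lemma~\ref{lem:string_injective}, the surjectivity statements of \cite[Prop.\,4.2]{berndt_SL_CPC}, and Corollary~\ref{corollary:new:normalizers} to show that the relevant $\ad(\upxi)$- and $\ad(\uptheta\upxi)$-images either vanish or land in $\mk{v}$ (hence project to zero in $\mk{h}$). The CPC property then transports this vanishing to all normal directions. Your general CPC/nilpotency idea is right, but without the specific choice $\upxi\in\mk{v}_{\upalpha_r}$ and the string-by-string reduction the computation does not close.
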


\begin{proof}
We will analyze both cases simultaneously. Consider the Dynkin diagram of $\Upsigma$: 
\begin{center}
\begin{dynkinDiagram}[arrow shape/.style={-{Bourbaki[length=8pt]}}, labels={\upalpha_1,\upalpha_2,\upalpha_{r-2},\upalpha_{r-1},\upalpha_r}, text style/.style={scale=0.9}, scale=3, root radius=.06cm]B{}
\end{dynkinDiagram}
\end{center}
Note that if $\Upsigma$ is of type $\mm{BC}_r$, then $2 \upalpha_r$ is also a root\footnote{For this reason, to distinguish the Dynkin diagrams of $\mm{B}_r$ and $\mm{BC}_r$, the latter is sometimes drawn with a double circle for $\upalpha_r$: \begin{dynkinDiagram}[edge length=0.77cm,arrow shape/.style={-{Bourbaki[length=4pt]}}, root radius=.06cm]B{oo..ooo}\dynkinRootMark{O}5\end{dynkinDiagram}\,. We are not using this notation here.}. Thanks to Corollary \ref{corollary:alphaj:corner}, we need only deal with $j = 1$ or $r$. We start with the case $j = 1$. From \cite[Prop.\hspace{2pt}4.5]{sanmartin-strings}, we have
$$
\Updelta_1^1 = \{ \upalpha_1 \} \cup \Bigl\{ \upalpha_1+ \sum_{i=2}^k \upalpha_i, \, \upalpha_1 +\sum_{i=2}^r \upalpha_i  + \sum_{j=0}^l \upalpha_{r-j} \, \mid \,  \substack{  2 \leq k \leq r, \\[1pt] \, 0 \leq l \leq r-2}  \Bigr\}.
$$
For illustrative purposes, we include the diagram of $\Updelta_1^1$ for $r = 5$ in Figure \ref{figure:b:b:string}.
\begin{figure}[h]
\setlength{\abovecaptionskip}{5pt plus 3pt minus 2pt}
\begin{tikzpicture}[scale=1.8]
\draw (0, 0) circle (0.1);
\draw (1, 0) circle (0.1);
\draw (2, 0) circle (0.1);
\draw (3, 0) circle (0.1);
\draw (4, 0) circle (0.1);
\draw (5, 0) circle (0.1);
\draw (6, 0) circle (0.1);
\draw (7, 0) circle (0.1);
\draw (8, 0) circle (0.1);
\draw[->] (0.1, 0.) -- (0.5, 0.);
\draw (0.5, 0.) -- (0.9, 0.);
\draw (0.5, -0.15) node {$\upalpha _2$};
\draw[->] (1.1, 0.) -- (1.5, 0.);
\draw (1.5, 0.) -- (1.9, 0.);
\draw (1.5, -0.15) node {$\upalpha _3$};
\draw[->] (2.1, 0.) -- (2.5, 0.);
\draw (2.5, 0.) -- (2.9, 0.);
\draw (2.5, -0.15) node {$\upalpha _4$};
\draw[->] (3.1, 0.) -- (3.5, 0.);
\draw (3.5, 0.) -- (3.9, 0.);
\draw (3.5, -0.15) node {$\upalpha _5$};
\draw[->] (4.1, 0.) -- (4.5, 0.);
\draw (4.5, 0.) -- (4.9, 0.);
\draw (4.5, -0.15) node {$\upalpha _5$};
\draw[->] (5.1, 0.) -- (5.5, 0.);
\draw (5.5, 0.) -- (5.9, 0.);
\draw (5.5, -0.15) node {$\upalpha _4$};
\draw[->] (6.1, 0.) -- (6.5, 0.);
\draw (6.5, 0.) -- (6.9, 0.);
\draw (6.5, -0.15) node {$\upalpha _3$};
\draw[->] (7.1, 0.) -- (7.5, 0.);
\draw (7.5, 0.) -- (7.9, 0.);
\draw (7.5, -0.15) node {$\upalpha _2$};
\draw (0, -0.25) node {$\upalpha _1$};
\end{tikzpicture}
\caption{$\Updelta_j^1$ when $\Upsigma \cong \mm{B}_5$ or $\mm{BC}_5$ and $j = 1$.}
\label{figure:b:b:string}
\end{figure}\vspace{-2ex}

Take the simple root $\upalpha_r \in \Uplambda_1$, which is the shortest in $\Uplambda$. As can be seen from \cite[pp.\hspace{2pt}336--340]{submanifolds_&_holonomy}, $\upalpha_r$ has the largest possible multiplicity among all the roots in $\Upsigma$. In particular, $\dim \mk{g}_{\upalpha_r} \geq \dim \mk{g}_{\uplambda}$ for any $\uplambda \in \Updelta_1^1$, which contradicts Lemma \ref{lemma:dimension:root:Lambda:Delta1}. 

We are left to consider the case $j = r$. In this situation, according to~\cite[Prop.\hspace{2pt}4.1]{sanmartin-strings}, we have
\begin{equation}\label{equation:b:bcr:j:r_roots}
\Updelta_r^1 = \Bigl\{ \sum_{i=0}^k \upalpha_{r-i} \, \mid \,  0 \leq k \leq r-1   \Bigl\},
\end{equation}
and all roots in $\Updelta_r^1$ have the same length and multiplicity. In Figure \ref{figure:a:b:string}, we draw the diagram of $\Updelta_r^1$ for $r = 5$.

\begin{figure}[h]
\setlength{\abovecaptionskip}{5pt plus 3pt minus 2pt}
\begin{tikzpicture}[scale=1.8]
\draw (0, 0) circle (0.1);
\draw (1, 0) circle (0.1);
\draw (2, 0) circle (0.1);
\draw (3, 0) circle (0.1);
\draw (4, 0) circle (0.1);
\draw[->] (0.1, 0.) -- (0.5, 0.);
\draw (0.5, 0.) -- (0.9, 0.);
\draw (0.5, -0.15) node {$\upalpha _4$};
\draw[->] (1.1, 0.) -- (1.5, 0.);
\draw (1.5, 0.) -- (1.9, 0.);
\draw (1.5, -0.15) node {$\upalpha _3$};
\draw[->] (2.1, 0.) -- (2.5, 0.);
\draw (2.5, 0.) -- (2.9, 0.);
\draw (2.5, -0.15) node {$\upalpha _2$};
\draw[->] (3.1, 0.) -- (3.5, 0.);
\draw (3.5, 0.) -- (3.9, 0.);
\draw (3.5, -0.15) node {$\upalpha _1$};
\draw (0, -0.25) node {$\upalpha _5$};
\end{tikzpicture}
\caption{$\Updelta_j^1$ when $\Upsigma \cong \mm{B}_5$ or $\mm{BC}_5$ and $j  = 5$.}
\label{figure:a:b:string}
\end{figure}

Recall that for every $\uplambda \in \Updelta_r^1$, both summands in the decomposition $\mk{g}_\uplambda = \mk{w}_\uplambda \oplus \mk{v}_\uplambda$ are nontrivial. In particular, we can take a normal vector $\upxi \in \mk{v}_{\upalpha_r}$ with $||\upxi||_{AN} = 1$. We will prove that the shape operator $\mc{A}_\upxi$ is zero. Since the orbit $S \cdot o$ is CPC, this will imply that it is totally geodesic.

Referring to \cite[pp.\hspace{2pt}336--340]{submanifolds_&_holonomy} one more time, we see that every $\upalpha = \sum_i n_i \upalpha_i \in \Upsigma^+$ has $n_i \le 2$ for all $i$. In particular, $n_r \le 2$, which means $\Updelta_r^\upnu = \varnothing$ for $\upnu \ge 3$ and thus $\Upsigma^+ = \Upsigma_r^+ \sqcup \Updelta_r^1 \sqcup \Updelta_r^2$. We want to show that $\mc{A}_\upxi X = 0$ for every $X \in \mk{h}$. In view of \eqref{equation:tangent:orbit:an} and Proposition \ref{proposition:shape:orbit:an}\eqref{proposition:shape:orbit:an:a}, we may assume that $X$ lies in a single root space $\mk{g}_\uplambda, \, \uplambda \in \Upsigma^+$. If the $\upalpha_r$-string of $\uplambda$ is trivial, then $\mathcal{A}_\upxi X = 0$ thanks to Proposition \ref{proposition:shape:orbit:an}\eqref{proposition:shape:orbit:an:b}. Otherwise, imagine that $\upbeta \in \Upsigma^+$ is the root of minimum height in its nontrivial $\upalpha_r$-string. Then $\upbeta+\upalpha_r$ is a root and hence either $\upbeta$ or $\upbeta+\upalpha_r$ lies $\Updelta_r^1$. Therefore, it suffices to consider only the $\upalpha_r$-strings of the roots in $\Updelta_r^1$. We will analyze the $\upalpha_r$-string of $\upalpha_r$ at the end. For now, let us consider $\uplambda \in \Updelta_r^1 \mysetminus \{ \upalpha_r \}$. Since $\upalpha_r$ is the shortest simple root, we have $n_{\upalpha_r, \upalpha_{r-1}} = -2$ and thus $n_{\upalpha_r, \uplambda} = 0$, as follows from \eqref{equation:b:bcr:j:r_roots}. We also know that $\uplambda-\upalpha_r$ belongs to $\Upsigma_r^+$, as guaranteed by Corollary \ref{corollary:bifurcation}. Using this and \cite[Prop.\hspace{2pt}2.48(g)]{knapp}, we arrive at the conclusion that the $\upalpha_r$-string of $\uplambda$ consists of the roots $\uplambda - \upalpha_r \in \Upsigma_r^+$, $\uplambda \in \Updelta_r^1$ and $\uplambda + \upalpha_r \in \Updelta_r^2$.

Thanks to Proposition \ref{proposition:all:shapes:nj}, we already know that $\mc{A}_\upxi$ vanishes on $\mk{g}_{\uplambda - \upalpha_r}$. Take an element $X_{\uplambda + \upalpha_r} \in \mk{g}_{\uplambda + \upalpha_r}$. According to \cite[Prop.\hspace{2pt}4.2(iv)]{berndt_SL_CPC}, we can write $X_{\uplambda + \upalpha_r} = \ad(\upxi)^2  X_{\uplambda -\upalpha_r}$ for some $X_{\uplambda - \upalpha_r} \in \mk{g}_{\uplambda - \upalpha_r}$. We calculate:
\begin{align*}
\mathcal{A}_\upxi X_{\uplambda + \upalpha_r} &= \mathcal{A}_\upxi (\ad(\upxi)^2 X_{\uplambda - \upalpha_r}) \\
&= \frac{1}{2} ([\upxi, \ad(\upxi)^2 X_{\uplambda - \upalpha_r}] - [\uptheta \upxi, \ad(\upxi)^2 X_{\uplambda - \upalpha_r}])^\top && \text{(Proposition \ref{proposition:shape:orbit:an}\eqref{proposition:shape:orbit:an:b})} \\
&= -\frac{1}{2} [\uptheta \upxi, [\upxi,[\upxi,X_{\uplambda - \upalpha_r}]]]^\top && \text{($\uplambda + 2\upalpha_r \not\in \Upsigma$)}\\
&= ||\upalpha_r||^2 [\upxi, X_{\uplambda - \upalpha_r}]^\top && \text{(\cite[Lem.\hspace{2pt}2.4(iii)]{berndt_SL_CPC})}\\
&= 2 ||\upalpha_r||^2 \mathcal{A}_\upxi X_{\uplambda - \upalpha_r} = 0 && \text{(Propositions \ref{proposition:shape:orbit:an}\eqref{proposition:shape:orbit:an:c} and \ref{proposition:all:shapes:nj})}.
\end{align*}
Finally, consider $X_\uplambda \in \mk{w}_\uplambda$. Corollary \ref{corollary:new:normalizers} ensures that $\mk{g}_{\uplambda- \upalpha_r}$ normalizes $\mk{v}$, which implies that $\ad(\upxi)\mk{g}_{\uplambda- \upalpha_r}$ is contained in $\mk{v}_\uplambda$ and is thus orthogonal to $X_\uplambda$. Using \cite[Prop.\hspace{2pt}4.2(v)]{berndt_SL_CPC}, we obtain $\nabla_X \upxi = 0$ and hence $\mc{A}_\upxi X_\uplambda = - (\nabla_X \upxi)^\top = 0$. In view of Proposition \ref{proposition:shape:orbit:an}\eqref{proposition:shape:orbit:an:b}, this means that both $\ad(\upxi)$ and $\ad(\uptheta\upxi)$ vanish on $\mk{w}_\uplambda$---we will need this observation below. The only root left in $\Upsigma^+$ with possibly nontrivial $\upalpha_r$-string is $\upalpha_r$. If $\Upsigma$ is of type $\mm{B}_r$, then $2\upalpha_r$ is not a root. In this case, Proposition \ref{proposition:shape:orbit:an}\eqref{proposition:shape:orbit:an:b} together with Lemma \ref{lemma:a:k0}\eqref{lemma:a:k0:2} ensure that $\mc{A}_\upxi X_{\upalpha_r} = 0$ for every $X_{\upalpha_r} \in \mk{w}_{\upalpha_r}$. This concludes the case of $\mm{B}_r$-type spaces.

Let us then assume that $\Upsigma$ is of type $\mm{BC}_r$. We have to show that the shape operator $\mc{A}_\upxi$ is zero on $\mk{w}_{\upalpha_r} \oplus \mk{g}_{2\upalpha_r}$. We begin with the first summand. Take some $X_{\upalpha_r} \in \mk{w}_{\upalpha_r}$ as well as an auxiliary nonzero vector $Z \in \mk{g}_{-\upalpha_{r-1}}$. Notice that the $(-\upalpha_{r-1})$-string of $\uplambda_2 = \upalpha_{r-1} + \upalpha_r$ consists of $\uplambda_2$ and $\upalpha_r$. By virtue of Lemma \ref{lem:string_injective}, the restriction of $\ad(Z)$ to $\mk{g}_{\uplambda_2}$ gives an injective map $\mk{g}_{\uplambda_2} \hookrightarrow \mk{g}_{\upalpha_r}$. As we mentioned earlier, all roots in $\Updelta_r^1$ have the same multiplicity, so this map is actually an isomorphism. What is more, according to Corollary \ref{corollary:new:normalizers}, it carries $\mk{w}_{\uplambda_2}$ into $\mk{w}_{\upalpha_r}$ and $\mk{v}_{\uplambda_2}$ into $\mk{v}_{\upalpha_r}$. Consequently, this map has to send $\mk{w}_{\uplambda_2}$ isomorphically onto $\mk{w}_{\upalpha_r}$. This implies that there exists a vector $Y \in \mk{w}_{\uplambda_2}$ such that $[Z,Y] = X_{\upalpha_r}$. We can now compute:
\begin{align*}
    \mathcal{A}_\upxi X_{\upalpha_r} &= \frac{1}{2}([\upxi, X_{\upalpha_r}] - [\uptheta \upxi, X_{\upalpha_r}] )^\top && \text{(Proposition \ref{proposition:shape:orbit:an}\eqref{proposition:shape:orbit:an:b})} \\
    &= \frac{1}{2}[\upxi, [Z,Y]]^\top && \text{(Lemma \ref{lemma:a:k0}\eqref{lemma:a:k0:2})} \\
    &= \frac{1}{2} ([[\upxi,Z],Y] + [Z,[\upxi,Y]])^\top = 0 && \text{($\upalpha_r - \upalpha_{r-1} \not\in \Upsigma \;$ and $\; \restr{\ad(\upxi)}{\mk{w}_{\uplambda_2}} = 0$)}.
\end{align*}
Now we can show that $\mc{A}_\upxi$ actually vanishes on the whole $\mk{w}_{\upalpha_r} \oplus \mk{g}_{2\upalpha_r}$. Proposition \ref{proposition:shape:orbit:an}\eqref{proposition:shape:orbit:an:b} together with the above calculation imply that this subspace is invariant under $\mc{A}_\upxi$. As shape operators are diagonalizable, it suffices to show that the restriction of $\mc{A}_\upxi$ to this subspace does not have nonzero eigenvalues. Suppose $X_{\upalpha_r} + X_{2\upalpha_r} \in \mk{w}_{\upalpha_r} \oplus \mk{g}_{2 \upalpha_r}$ is an eigenvector of $\mc{A}_\upxi$ with eigenvalue $c$. Note that $\mathcal{A}_\upxi X_{2\upalpha_r}$ belongs to $\mk{w}_{\upalpha_r}$---by virtue of Proposition \ref{proposition:shape:orbit:an}\eqref{proposition:shape:orbit:an:b} and the fact that $3\upalpha_r$ is not a root. Hence, we have
\[
c X_{\upalpha_r} + c X_{2\upalpha_r} = \mathcal{A}_\upxi (X_{\upalpha_r} + X_{2\upalpha_r}) = \mathcal{A}_\upxi X_{2\upalpha_r} \in \mk{w}_{\upalpha_r},
\]
which implies either $c = 0$ or $X_{2\upalpha_r} = 0$. In the latter case, we have $\mathcal{A}_\upxi (X_{\upalpha_r} + X_{2\upalpha_r}) = \mathcal{A}_\upxi X_{\upalpha_r} = 0$ and thus still $c = 0$. This completes the case of $\mm{BC}_r$. Altogether, we have shown that the shape operator $\mc{A}_\upxi$ vanishes whenever $\upxi \in \mk{v}_{\upalpha_r}$ and hence the singular orbit $S$, being CPC, is totally geodesic.
\end{proof}

Now we can finally put all pieces of the puzzle together and prove the main result of this article.

\textit{Proof of the \hyperlink{thm:main}{Main theorem}.}
Our setup is as follows: $M = G/K$ is a symmetric space of noncompact type and $H$ is a connected Lie group acting on $M$ properly and isometrically. The actions in parts \eqref{thm:main:a}-\eqref{thm:main:e} of the \hyperlink{thm:main}{Main theorem} are indeed of cohomogeneity one: for \eqref{thm:main:a}-\eqref{thm:main:d}, this follows directly from Theorem \ref{thm:classification_of_c1_actions}, whereas for \eqref{thm:main:e} from the fact that canonical extension preserves the cohomogeneity of an action. Conversely, suppose the action of $H$ is of cohomogeneity one. We already know that it must be orbit-equivalent to one of the actions in Theorem \ref{thm:classification_of_c1_actions}. Parts \eqref{thm:classification_of_c1_actions:a}-\eqref{thm:classification_of_c1_actions:d} thereof correspond to the same parts in the \hyperlink{thm:main}{Main theorem}, so we need only deal with \eqref{thm:classification_of_c1_actions:e}. In other words, we need to show that every nilpotent construction action on $M$ is accounted for in parts \eqref{thm:main:a}-\eqref{thm:main:e} of the \hyperlink{thm:main}{Main theorem}. 

To begin with, the action of $H$ may arise via canonical extension: suppose $B_\Upphi$ is a boundary component of $M$ and $H_\Upphi \curvearrowright B_\Upphi$ is a C1-action whose canonical extension is orbit-equivalent to $H \curvearrowright M$. Since canonical extension is a transitive procedure, we can repeat this process inductively until no further canonical extension is possible. For the sake of brevity, we simply assume that the action of $H_\Upphi$ on $B_\Upphi$ does not arise via canonical extension from a proper boundary component of $B_\Upphi$. Now we apply Theorem \ref{thm:classification_of_c1_actions} to $H_\Upphi \curvearrowright B_\Upphi$. This action has a singular orbit (since so did $H \curvearrowright M$), so it has to correspond to parts \eqref{thm:classification_of_c1_actions:c}-\eqref{thm:classification_of_c1_actions:e} of the theorem. First, suppose it corresponds to \eqref{thm:classification_of_c1_actions:c} or \eqref{thm:classification_of_c1_actions:d}. As we assumed that $H_\Upphi \curvearrowright B_\Upphi$ does not come from nontrivial canonical extension, we simply have that either it has a totally geodesic singular orbit and the boundary component $B_\Upphi$ is irreducible, or else it is orbit-equivalent to a diagonal action and $B_\Upphi$ is a product of two homothetic rank-one spaces. In the latter case, we can actually require strong orbit-equivalence (even by a trivial isometry), as follows from the proof of Theorem A in \cite{DR_DV_Otero_C1}. Since strong orbit-equivalence is preserved by canonical extension, we see that the original action $H \curvearrowright M$ is then orbit-equivalent to an action described in part \eqref{thm:main:c} or \eqref{thm:main:d} of the \hyperlink{thm:main}{Main theorem}, respectively.

We are left to consider the situation where the action $H_\Upphi \curvearrowright B_\Upphi$ is given by part \eqref{thm:classification_of_c1_actions:e} of Theorem \ref{thm:classification_of_c1_actions}, i.e., it is orbit-equivalent to a nilpotent construction action. Using the terminology established in Subsection \ref{sec:admissibility:redundancy}, let $\upchi \in \wt{\mc{N}}$ be nilpotent construction data on $B_\Upphi$ such that $H_\Upphi \curvearrowright B_\Upphi$ is orbit-equivalent to the action of $\mc{H}(\mc{G}(\upchi))$ by means of some isometry $g \in I(M)$, i.e., $g$ carries the orbits of $H_\Upphi$ onto those of $\mc{H}(\mc{G}(\upchi))$. Since the maps $\mc{G}$ and $\mc{H}$ are $I(M)$-equivariant, we see that the action of $H_\Upphi$ has the same orbits as the action of $g^{-1}\mc{H}(\mc{G}(\upchi))g = \mc{H}(\mc{G}(g^{-1} \cdot \upchi))$ corresponding to the nilpotent construction data $g^{-1} \cdot \upchi$. In other words, we can simply assume that the action $H_\Upphi \curvearrowright B_\Upphi$ itself is a nilpotent construction action. Going further, recall that nilpotent construction actions are decomposable (see the discussion after Theorem \ref{thm:classification_of_c1_actions}). But decomposable C1-actions on reducible symmetric spaces of noncompact type always arise via nontrivial canonical extension (see \cite[Lem.\hspace{2pt}4.1]{DR_DV_Otero_C1}). This means that $B_\Upphi$ has to be irreducible. In other words, $\Upphi$ has to be a connected subset of the Dynkin diagram $\DD$ of $M$. But then $\Upphi$ is contained in a single connected component $\DD_i$ of $\DD$, which translates to the fact that $B_\Upphi$ has to be contained in a single de Rham factor $M_i$ of $M$. We may also assume that the singular orbit of $H_\Upphi$ is not totally geodesic, otherwise the action $H \curvearrowright M$ is once again given by part \eqref{thm:main:c} of the \hyperlink{thm:main}{Main theorem}. In particular, $B_\Upphi$ cannot be isometric to $\R H^n$ (see \cite[Th.\hspace{2pt}6.1]{berndt_tamaru_cohomogeneity_one}). 

Next, suppose $B_\Upphi$ is isometric to $\C H^{n+1}, \, \H H^{n+1} \, (n \ge 1), \, \Oo H^2, \, \mm{G}_2^2/\SO(4),$ or $\mm{G}_2(\C)/\mm{G}_2$. According to the solution of the nilpotent construction problem on these spaces in \cite{berndt_bruck,berndt_tamaru_rank_one,protohomogeneous,berndt_tamaru_cohomogeneity_one}, the action of $H_\Upphi$ must belong to $\mc{M}_{\C H^{n+1}}, \, \mc{M}_{\H H^{n+1}},$ or $\mc{M}_{\Oo H^2}$, or else it must be orbit-equivalent to the action of $H_{2,0}$ (see the discussion preceding the \hyperlink{thm:main}{Main theorem} to recall this notation). Using the classification of symmetric spaces of noncompact type (see \cite[pp.\hspace{2pt}336--340]{submanifolds_&_holonomy}), we can deduce that the de Rham factor $M_i$ has to be as described in parts \eqref{thm:main:e:1}-\eqref{thm:main:e:5} of \eqref{thm:main:e} in the \hyperlink{thm:main}{Main theorem}. For example, if $B_\Upphi$ is isometric to $\H H^{n+1}$, then the Dynkin diagram $\DD_i$ of $M_i$ has to contain $\mm{BC}_1$ as its subdiagram, and the multiplicities of the corresponding simple root $\upalpha$ and its double $2\upalpha$ have to be $4n$ and $3$, respectively. From the classification, we see that $M_i$ must then be isometric to the noncompact quaternionic Grassmannian $\Gr^*(k, \H^{2k+n})$ for some $k \ge 1$. Note that the restrictions on $k$ and $n$ in \eqref{thm:main:e}-\eqref{thm:main:e:1} come from the facts that $\mc{M}_{\C H^2} = \varnothing$ and $\SO(3,\H)/\U(3) \simeq \C H^3$. It remains to notice that the canonical extension of $H_\Upphi \curvearrowright B_\Upphi$ to $M$ can be regarded as the product of its canonical extension to $M_i$ with transitive actions on all the other de Rham factors (\cite[Lem.\hspace{2pt}4.1, 4.2]{DR_DV_Otero_C1}). We conclude that the original action $H \curvearrowright M$ is described in part \eqref{thm:main:e} of the \hyperlink{thm:main}{Main theorem} in this case. Consequently, we may assume that $B_\Upphi$ has rank at least two and is not a $\mm{G}_2$-space. By combining Propositions \ref{prop:NC_DEF}, \ref{prop:NC_A}, \ref{prop:NC_C}, and \ref{prop:NC_B_BC}, we deduce that the action $H_\Upphi \curvearrowright B_\Upphi$ has no other options but to arise from nontrivial canonical extension or have a totally geodesic singular orbit---and both of these possibilities have already been excluded. This concludes the proof of the \hyperlink{thm:main}{Main theorem}. \qed

\bibliographystyle{Iaomalpha}
\bibliography{main}

\providecommand{\bysame}{\leavevmode\hbox to3em{\hrulefill}\thinspace}
\providecommand{\noopsort}[1]{}
\providecommand{\mr}[1]{\href{http://www.ams.org/mathscinet-getitem?mr=#1}{MR~#1}}
\providecommand{\zbl}[1]{\href{http://www.zentralblatt-math.org/zmath/en/search/?q=an:#1}{Zbl~#1}}
\providecommand{\jfm}[1]{\href{http://www.emis.de/cgi-bin/JFM-item?#1}{JFM~#1}}
\providecommand{\arxiv}[1]{\href{http://www.arxiv.org/abs/#1}{arXiv~#1}}
\providecommand{\doi}[1]{\url{https://doi.org/#1}}
\providecommand{\MR}{\relax\ifhmode\unskip\space\fi MR }
\providecommand{\MRhref}[2]{%
  \href{http://www.ams.org/mathscinet-getitem?mr=#1}{#2}
}
\providecommand{\href}[2]{#2}
\begin{thebibliography}{DRDVRV21}

\bibitem[Ant12]{maximal_compact_subgroup}
\bgroup\scshape{}S.~A. Antonyan\egroup{}, Characterizing maximal compact subgroups,  \emph{Arch. Math. (Basel)} \textbf{98} no.~6 (2012), 555--560. \mr{2935661}.

\bibitem[BB82]{berard-bergery_cohomogeneity_one}
\bgroup\scshape{}L.~B\'{e}rard-Bergery\egroup{}, Sur de nouvelles vari\'{e}t\'{e}s riemanniennes d'{E}instein,  in \emph{Institut \'{E}lie {C}artan, 6}, \emph{Inst. \'{E}lie Cartan} \textbf{6}, Univ. Nancy, Nancy, 1982, pp.~1--60. \mr{727843}.

\bibitem[BDV15]{berndt_DV_C1_rank_2}
\bgroup\scshape{}J.~Berndt\egroup{} and \bgroup\scshape{}M.~Dom\'{\i}nguez-V\'{a}zquez\egroup{}, Cohomogeneity one actions on some noncompact symmetric spaces of rank two,  \emph{Transform. Groups} \textbf{20} no.~4 (2015), 921--938. \mr{3416432}.

\bibitem[BB01]{berndt_bruck}
\bgroup\scshape{}J.~Berndt\egroup{} and \bgroup\scshape{}M.~Br\"{u}ck\egroup{}, Cohomogeneity one actions on hyperbolic spaces,  \emph{J. Reine Angew. Math.} \textbf{541} (2001), 209--235. \mr{1876290}.

\bibitem[BCO16]{submanifolds_&_holonomy}
\bgroup\scshape{}J.~Berndt\egroup{}, \bgroup\scshape{}S.~Console\egroup{}, and \bgroup\scshape{}C.~E. Olmos\egroup{}, \emph{Submanifolds and holonomy}, second ed., \emph{Monographs and Research Notes in Mathematics}, CRC Press, Boca Raton, FL, 2016. \mr{3468790}.

\bibitem[BDRT10]{hyperpolar_homogeneous_foliations}
\bgroup\scshape{}J.~Berndt\egroup{}, \bgroup\scshape{}J.~C. D\'{\i}az-Ramos\egroup{}, and \bgroup\scshape{}H.~Tamaru\egroup{}, Hyperpolar homogeneous foliations on symmetric spaces of noncompact type,  \emph{J. Differential Geom.} \textbf{86} no.~2 (2010), 191--235. \mr{2772550}.

\bibitem[BSL23]{berndt_SL_CPC}
\bgroup\scshape{}J.~Berndt\egroup{} and \bgroup\scshape{}V.~Sanmart\'in-L\'opez\egroup{}, Submanifolds with constant principal curvatures in symmetric spaces,  \emph{Comm. Anal. Geom.} \textbf{31} no.~5 (2023), 1079--1123. \mr{4775042}.

\bibitem[BT03]{berndt_tamaru_foliations}
\bgroup\scshape{}J.~Berndt\egroup{} and \bgroup\scshape{}H.~Tamaru\egroup{}, Homogeneous codimension one foliations on noncompact symmetric spaces,  \emph{J. Differential Geom.} \textbf{63} no.~1 (2003), 1--40. \mr{2015258}.

\bibitem[BT04]{berndt_tamaru_totally_geodesic_singular_orbit}
\bgroup\scshape{}J.~Berndt\egroup{} and \bgroup\scshape{}H.~Tamaru\egroup{}, Cohomogeneity one actions on noncompact symmetric spaces with a totally geodesic singular orbit,  \emph{Tohoku Math. J. (2)} \textbf{56} no.~2 (2004), 163--177. \mr{2053317}.

\bibitem[BT07]{berndt_tamaru_rank_one}
\bgroup\scshape{}J.~Berndt\egroup{} and \bgroup\scshape{}H.~Tamaru\egroup{}, Cohomogeneity one actions on noncompact symmetric spaces of rank one,  \emph{Trans. Amer. Math. Soc.} \textbf{359} no.~7 (2007), 3425--3438. \mr{2299462}.

\bibitem[BT13]{berndt_tamaru_cohomogeneity_one}
\bgroup\scshape{}J.~Berndt\egroup{} and \bgroup\scshape{}H.~Tamaru\egroup{}, Cohomogeneity one actions on symmetric spaces of noncompact type,  \emph{J. Reine Angew. Math.} \textbf{683} (2013), 129--159. \mr{3181551}.

\bibitem[BK19]{bettiol_krishnan_4dim_C1_nonnegative}
\bgroup\scshape{}R.~G. Bettiol\egroup{} and \bgroup\scshape{}A.~M. Krishnan\egroup{}, Four-dimensional cohomogeneity one {R}icci flow and nonnegative sectional curvature,  \emph{Comm. Anal. Geom.} \textbf{27} no.~3 (2019), 511--527. \mr{4003002}.

\bibitem[BK23]{bettiol_krishnan_4dim_C1_positive}
\bgroup\scshape{}R.~G. Bettiol\egroup{} and \bgroup\scshape{}A.~M. Krishnan\egroup{}, Ricci flow does not preserve positive sectional curvature in dimension four,  \emph{Calc. Var. Partial Differential Equations} \textbf{62} no.~1 (2023), Paper No. 13, 21. \mr{4505156}.

\bibitem[B{\"{o}}h98]{böhm_C1_einstein}
\bgroup\scshape{}C.~B{\"{o}}hm\egroup{}, Inhomogeneous {E}instein metrics on low-dimensional spheres and other low-dimensional spaces,  \emph{Invent. Math.} \textbf{134} no.~1 (1998), 145--176. \mr{1646591}.

\bibitem[Bot56]{bott_variationally_complete}
\bgroup\scshape{}R.~Bott\egroup{}, An application of the {M}orse theory to the topology of {L}ie-groups,  \emph{Bull. Soc. Math. France} \textbf{84} (1956), 251--281. \mr{87035}.

\bibitem[BS58]{bott_samelson_variationally_complete}
\bgroup\scshape{}R.~Bott\egroup{} and \bgroup\scshape{}H.~Samelson\egroup{}, Applications of the theory of {M}orse to symmetric spaces,  \emph{Amer. J. Math.} \textbf{80} (1958), 964--1029, Errata: \textit{Amer. J. Math.} \textbf{83} (1961), 207--208. \mr{105694}.

\bibitem[BS89]{bryant_salamon_c1_g2_spin7}
\bgroup\scshape{}R.~L. Bryant\egroup{} and \bgroup\scshape{}S.~M. Salamon\egroup{}, On the construction of some complete metrics with exceptional holonomy,  \emph{Duke Math. J.} \textbf{58} no.~3 (1989), 829--850. \mr{1016448}.

\bibitem[Car38]{cartan_isoparametric}
\bgroup\scshape{}E.~Cartan\egroup{}, Familles de surfaces isoparam\'etriques dans les espaces \`a{} courbure constante,  \emph{Ann. Mat. Pura Appl.} \textbf{17} no.~1 (1938), 177--191. \mr{1553310}.

\bibitem[CLGS09]{transitive_CH^n}
\bgroup\scshape{}M.~Castrill\'on~L\'opez\egroup{}, \bgroup\scshape{}P.~M. Gadea\egroup{}, and \bgroup\scshape{}A.~F. Swann\egroup{}, Homogeneous structures on real and complex hyperbolic spaces,  \emph{Illinois J. Math.} \textbf{53} no.~2 (2009), 561--574. \mr{2594643}.

\bibitem[Con71]{conlon_hyperpolar_implies_variationally_complete}
\bgroup\scshape{}L.~Conlon\egroup{}, Variational completeness and {$K$}-transversal domains,  \emph{J. Differential Geometry} \textbf{5} (1971), 135--147. \mr{295252}.

\bibitem[D'A79]{d'atri_C1_HP^n}
\bgroup\scshape{}J.~E. D'Atri\egroup{}, Certain isoparametric families of hypersurfaces in symmetric spaces,  \emph{J. Differential Geometry} \textbf{14} no.~1 (1979), 21--40. \mr{577876}.

\bibitem[DRDVO23]{DR_DV_Otero_C1}
\bgroup\scshape{}J.~C. D\'iaz-Ramos\egroup{}, \bgroup\scshape{}M.~Dom\'inguez-V\'azquez\egroup{}, and \bgroup\scshape{}T.~Otero\egroup{}, Cohomogeneity one actions on symmetric spaces of noncompact type and higher rank,  \emph{Adv. Math.} \textbf{428} (2023), Paper No. 109165, 33 pp. \mr{4605341}.

\bibitem[DRDVRV21]{protohomogeneous}
\bgroup\scshape{}J.~C. D\'{\i}az-Ramos\egroup{}, \bgroup\scshape{}M.~Dom\'{\i}nguez-V\'{a}zquez\egroup{}, and \bgroup\scshape{}A.~Rodr\'{\i}guez-V\'{a}zquez\egroup{}, Homogeneous and inhomogeneous isoparametric hypersurfaces in rank one symmetric spaces,  \emph{J. Reine Angew. Math.} \textbf{779} (2021), 189--222. \mr{4319067}.

\bibitem[Dyn57a]{dynkin_subgroups_classical_english}
\bgroup\scshape{}E.~B. Dynkin\egroup{}, The maximal subgroups of the classical groups,  \emph{Amer. Math. Soc. Transl. Ser. 2} \textbf{6} (1957), 245--378. \doi{10.1090/trans2/006}.

\bibitem[Dyn57b]{dynkin_subalgebras_english}
\bgroup\scshape{}E.~B. Dynkin\egroup{}, Semisimple subalgebras of semisimple {L}ie algebras,  \emph{Amer. Math. Soc. Transl. Ser. 2} \textbf{6} (1957), 111--244. \doi{10.1090/trans2/006}.

\bibitem[DRLN23]{polar-c2-foliations}
\bgroup\scshape{}J.~C. Díaz-Ramos\egroup{} and \bgroup\scshape{}J.~M. Lorenzo-Naveiro\egroup{}, Codimension two polar homogeneous foliations on symmetric spaces of noncompact type, 2023. Available at \url{https://arxiv.org/abs/2302.08339}.

\bibitem[Ebe96]{eberlein_nonpositively_curved}
\bgroup\scshape{}P.~B. Eberlein\egroup{}, \emph{Geometry of nonpositively curved manifolds}, \emph{\textup{Chicago Lectures in Mathematics}}, University of Chicago Press, Chicago, IL, 1996. \mr{1441541}.

\bibitem[GWZ08]{grove_wilking_ziller_pos_curv_c1}
\bgroup\scshape{}K.~Grove\egroup{}, \bgroup\scshape{}B.~Wilking\egroup{}, and \bgroup\scshape{}W.~Ziller\egroup{}, Positively curved cohomogeneity one manifolds and 3-{S}asakian geometry,  \emph{J. Differential Geom.} \textbf{78} no.~1 (2008), 33--111. \mr{2406265}.

\bibitem[G{\"{u}}n10]{gundogan}
\bgroup\scshape{}H.~G{\"{u}}ndo\u{g}an\egroup{}, The component group of the automorphism group of a simple {L}ie algebra and the splitting of the corresponding short exact sequence,  \emph{J. Lie Theory} \textbf{20} no.~4 (2010), 709--737. \mr{2778233}.

\bibitem[Hel01]{helgason}
\bgroup\scshape{}S.~Helgason\egroup{}, \emph{Differential geometry, {L}ie groups, and symmetric spaces}, \emph{\textup{Graduate Studies in Mathematics}} \textbf{34}, American Mathematical Society, Providence, RI, 2001, Corrected reprint of the 1978 original. \mr{1834454}.

\bibitem[HL71]{hsiang_lawson}
\bgroup\scshape{}W.-y. Hsiang\egroup{} and \bgroup\scshape{}H.~B. Lawson, Jr.\egroup{}, Minimal submanifolds of low cohomogeneity,  \emph{J. Differential Geometry} \textbf{5} (1971), 1--38. \mr{298593}.

\bibitem[Iwa78]{iwata_C1_HP^n}
\bgroup\scshape{}K.~Iwata\egroup{}, Classification of compact transformation groups on cohomology quaternion projective spaces with codimension one orbits,  \emph{Osaka Math. J.} \textbf{15} no.~3 (1978), 475--508. \mr{510490}.

\bibitem[Iwa81]{iwata_C1_OP^2}
\bgroup\scshape{}K.~Iwata\egroup{}, Compact transformation groups on rational cohomology {C}ayley projective planes,  \emph{Tohoku Math. J. (2)} \textbf{33} no.~4 (1981), 429--442. \mr{643227}.

\bibitem[Kna02]{knapp}
\bgroup\scshape{}A.~W. Knapp\egroup{}, \emph{Lie groups beyond an introduction}, second ed., \emph{\textup{Progress in Mathematics}} \textbf{140}, Birkh\"{a}user Boston, Inc., Boston, MA, 2002. \mr{1920389}.

\bibitem[Kol17]{kollross_hyperpolar_reducible}
\bgroup\scshape{}A.~Kollross\egroup{}, Hyperpolar actions on reducible symmetric spaces,  \emph{Transform. Groups} \textbf{22} no.~1 (2017), 207--228. \mr{3620771}.

\bibitem[Kol02]{kollross_hyperpolar_c1_classification}
\bgroup\scshape{}A.~Kollross\egroup{}, A classification of hyperpolar and cohomogeneity one actions,  \emph{Trans. Amer. Math. Soc.} \textbf{354} no.~2 (2002), 571--612. \mr{1862559}.

\bibitem[KRV23]{t.g._exceptional_ss}
\bgroup\scshape{}A.~Kollross\egroup{} and \bgroup\scshape{}A.~Rodr\'iguez-V\'azquez\egroup{}, Totally geodesic submanifolds in exceptional symmetric spaces,  \emph{Adv. Math.} \textbf{418} (2023), Paper No. 108949, 37 pp. \mr{4556835}.

\bibitem[Leu75]{leung_classification_of_reflective_submanifolds}
\bgroup\scshape{}D.~S.~P. Leung\egroup{}, On the classification of reflective submanifolds of {R}iemannian symmetric spaces,  \emph{Indiana Univ. Math. J.} \textbf{24} (1974/75), 327--339, Errata: \textit{Indiana Univ. Math. J.} \textbf{24} (1975), no. 12, 1199. \mr{367873}.

\bibitem[Leu79]{leung_congruence}
\bgroup\scshape{}D.~S.~P. Leung\egroup{}, Reflective submanifolds. {III}. {C}ongruency of isometric reflective submanifolds and corrigenda to the classification of reflective submanifolds,  \emph{J. Differential Geometry} \textbf{14} no.~2 (1979), 167--177. \mr{587545}.

\bibitem[LC37]{levi-civita_isoparametric}
\bgroup\scshape{}T.~Levi-Civita\egroup{}, Famiglie di superficie isoparametriche nel l'ordinario spazio euclideo,  \emph{Atti Accad. Naz. Lincei Rend.} \textbf{26} no.~6 (1937), 355--362.

\bibitem[Mos50]{mostow_semisimple_subgroups}
\bgroup\scshape{}G.~D. Mostow\egroup{}, The extensibility of local {L}ie groups of transformations and groups on surfaces,  \emph{Ann. of Math. (2)} \textbf{52} (1950), 606--636. \mr{48464}.

\bibitem[Mos61]{mostow_maximal_subgroups}
\bgroup\scshape{}G.~D. Mostow\egroup{}, On maximal subgroups of real {L}ie groups,  \emph{Ann. of Math. (2)} \textbf{74} (1961), 503--517. \mr{142687}.

\bibitem[Oni62]{onishchik_transitive}
\bgroup\scshape{}A.~L. Onishchik\egroup{}, Inclusion relations between transitive compact transformation groups,  \emph{Trudy Moskov. Mat. Ob\v s\v c.} \textbf{11} (1962), 199--242. \mr{153779}.

\bibitem[Oni04]{onishchik_lectures}
\bgroup\scshape{}A.~L. Onishchik\egroup{}, \emph{Lectures on real semisimple {L}ie algebras and their representations}, \emph{ESI Lectures in Mathematics and Physics}, European Mathematical Society (EMS), Z\"{u}rich, 2004. \mr{2041548}.

\bibitem[SL24]{sanmartin-strings}
\bgroup\scshape{}V.~Sanmart\'{\i}n-L\'{o}pez\egroup{}, Strings in abstract root systems, 2024. Available at \url{https://arxiv.org/abs/2412.11219}.

\bibitem[Seg38]{segre_isoparametric}
\bgroup\scshape{}B.~Segre\egroup{}, Famiglie di ipersuperficie isoparametriche negli spazi euclidei ad un qualunque numero di dimensioni,  \emph{Atti Accad. Naz. Lincei Rend.} \textbf{27} no.~6 (1938), 203--207.

\bibitem[Sif17]{siffert_isoparam}
\bgroup\scshape{}A.~Siffert\egroup{}, A new structural approach to isoparametric hypersurfaces in spheres,  \emph{Ann. Global Anal. Geom.} \textbf{52} no.~4 (2017), 425--456. \mr{3735906}.

\bibitem[Sol21]{solonenko_foliations}
\bgroup\scshape{}I.~Solonenko\egroup{}, Homogeneous codimension-one foliations on reducible symmetric spaces of noncompact type, 2021. Available at \url{https://arxiv.org/abs/2112.02189}.

\bibitem[Sol22]{solonenko_automorphisms}
\bgroup\scshape{}I.~Solonenko\egroup{}, Automorphisms of real semisimple {L}ie algebras and their restricted root systems, 2022. Available at \url{https://arxiv.org/abs/2208.01131}.

\bibitem[Sol23]{solonenko_hypersurfaces}
\bgroup\scshape{}I.~Solonenko\egroup{}, Classification of homogeneous hypersurfaces in some noncompact symmetric spaces of rank two,  \emph{Ann. Mat. Pura Appl. (4)} \textbf{202} no.~6 (2023), 2915--2946. \mr{4648553}.

\bibitem[Sol24]{solonenko_thesis}
\bgroup\scshape{}I.~Solonenko\egroup{}, \emph{Homogeneous hypersurfaces in Riemannian symmetric spaces}, Ph.D. thesis, King's College London, 2024. Available at \url{https://kclpure.kcl.ac.uk/portal/en/studentTheses/homogeneous-hypersurfaces-in-riemannian-symmetric-spaces}.

\bibitem[Som19]{somigliana_isoparametric}
\bgroup\scshape{}C.~Somigliana\egroup{}, Sulle relazione fra il principio di huygens e l'ottica geometrica,  \emph{Atti Accad. Sc. Torino} \textbf{LIV} (1918-19), 974--979.

\bibitem[Tak73]{takagi_C1}
\bgroup\scshape{}R.~Takagi\egroup{}, On homogeneous real hypersurfaces in a complex projective space,  \emph{Osaka Math. J.} \textbf{10} (1973), 495--506. \mr{336660}.

\bibitem[Tam11]{tamaru_parabolic_Einstein}
\bgroup\scshape{}H.~Tamaru\egroup{}, Parabolic subgroups of semisimple {L}ie groups and {E}instein solvmanifolds,  \emph{Math. Ann.} \textbf{351} no.~1 (2011), 51--66. \mr{2824845}.

\bibitem[Zil10]{ziller_notes}
\bgroup\scshape{}W.~Ziller\egroup{}, Lie groups. representation theory and symmetric spaces, 2010. Available at \url{https://www2.math.upenn.edu/~wziller/math650/LieGroupsReps.pdf}.

\end{thebibliography}

\end{document}